\documentclass[11pt]{amsart}
\usepackage{latexsym, amsmath, amssymb,amsthm,amsopn,amsfonts}
\usepackage{version}
\usepackage{mathtools, epsfig,graphics,color,graphicx,graphpap}
\usepackage{amssymb}
\usepackage{todonotes}
\usepackage{listings}
\usepackage{mathrsfs}
\usepackage{cancel}
\usepackage[normalem]{ulem}
\usepackage[citecolor=blue]{hyperref}

\usepackage[framemethod=tikz]{mdframed}
\newcommand{\redsout}{\bgroup\markoverwith{\textcolor{red}{\rule[0.5ex]{2pt}{.4pt}}}\ULon}

\newcommand{\LV}{\left|}
\newcommand{\RV}{\right|}
\newcommand{\LC}{\left(}
\newcommand{\RC}{\right)}

\newcommand{\p}{\partial}

\numberwithin{equation}{section}
\newtheorem{theorem}{Theorem}[section]
\newtheorem{corollary}[theorem]{Corollary}
\newtheorem{proposition}[theorem]{Proposition}
\newtheorem{lemma}[theorem]{Lemma}
\newtheorem{definition}{Definition}[section]

\newtheorem{remark}{Remark}[section]
 
\newcommand{\R}{\mathbb R}

\usepackage{url}
\definecolor{mycolor}{rgb}{0.122, 0.435, 0.698}
\definecolor{aliceblue}{rgb}{0.94, 0.97, 1.0}

\newmdenv[innerlinewidth=0.5pt, roundcorner=4pt,linecolor=mycolor,innerleftmargin=6pt,
innerrightmargin=6pt,innertopmargin=6pt,innerbottommargin=6pt]{mybox}
\newmdenv[backgroundcolor=aliceblue,innerlinewidth=0.5pt, roundcorner=4pt,linecolor=mycolor,innerleftmargin=6pt,
innerrightmargin=6pt,innertopmargin=6pt,innerbottommargin=6pt]{mybox1}
\author[Lai]{Ru-Yu Lai}
\address{School of Mathematics, University of Minnesota, Minneapolis, MN 55455, USA}
\curraddr{}
\email{rylai@umn.edu}

\author[Zhou]{Hanming Zhou}
\address{Department of Mathematics, University of California Santa Barbara, Santa Barbara, CA 93106, USA}
\email{hzhou@math.ucsb.edu}

\thanks{\textbf{Key words}: Nonlinearity,  Inverse problems, BGK model, in-flow boundary condition}
\title[The BGK model with in-flow boundary condition: Forward and inverse problems]{The BGK model with in-flow boundary condition: Forward and inverse problems}  
 
\begin{document} 

\begin{abstract}
We study the Bhatnagar-Gross-Krook (BGK) equation in a bounded domain with in-flow boundary condition. The BGK model is a simple relaxation of the Boltzmann equation through replacing the quadratic nonlinearity by the so-called local Maxwellian, which consists of the density, the velocity, and the temperature.

The aim of this paper is twofold. First, we establish the Dirichlet boundary value problem for the BGK equation by showing a local existence result in the weighted $L^\infty$ norm if the initial data and boundary condition are close to the global Maxwellian. For the purpose of studying the inverse problem, we further derive an
expansion of the BGK solution with respect to a small parameter.
This allows us to decompose the nonlinear BGK into purely linear transport equations based on various order of the expansion.

Second, we investigate an inverse problem of determining a general collision frequency, depending on both density and temperature, in the BGK equation from the boundary measurement operator, which maps from the in-flow boundary data to the out-flow data. We apply highly concentrated in-flow test data to the solution of the resulting linearized BGK equations to extract hidden information of the collision frequency.   
\end{abstract}

\maketitle

\tableofcontents

\section{Introduction}
The dynamic of a gas system is described by the Boltzmann equation
\begin{align}\label{EQN:Boltzmann intro}
	\p_t F+v\cdot\nabla_x F = Q(F,F),
\end{align} 
where $F(t,x,v)$ denotes the distribution function representing density on the phase space point $(x,v)$ in $\Omega\times\R^3$ at time $t\geq 0$. Here $\Omega$ is a bounded domain in $\R^3$ with smooth boundary.
The quadratic collision operator, which models the rate of change of $F$ due to binary interactions,  
takes the form
$$
    Q(F_1,F_2) = \int_{\R^3}\int_{\mathbb{S}^{2}}K[F_1 (t,x,u')F _2(t,x,v')-F_1 (t,x,u)F_2 (t,x,v)]\,d\omega du,
$$
where $u$ and $v$ are velocities before a collision of particles with post-collision velocities  
\begin{equation}\label{incoming and outgoing velocities}
u' = u-[(u-v)\cdot \omega]\omega\quad\hbox{ and }\quad v' = v+[(u-v)\cdot \omega]\omega
\end{equation}
at an angle $\omega\in \mathbb{S}^{2}$. The function $K\equiv K(v,u,\omega)$ is called the collision cross section.  
The subtle interaction between the linear transport operator and the nonlinear collision operator makes important physical and mathematical features of the Boltzmann equation. 
From the computational point of view, however, the Boltzmann collision operator is very expensive to compute because of its nonlinear integral.  
A simplified model due to Bhatnagar-Gross-Krook \cite{BGK1954} is introduced:
\begin{align}\label{EQN:BGK intro}
 \p_t F+v\cdot\nabla_x F = q(M(F) -F),
\end{align}
which replaces Boltzmann binary collision term with the BGK collision operator on the right-hand side of \eqref{EQN:BGK intro} containing collision frequency $q$ and a local Maxwellian distribution $M(F)$, defined by
\begin{align}\label{DEF:maxwellian}
M(F) =  { \rho \over ( 2\pi T)^{3/2}}  \exp\LC{-|v-U|^2\over 2 T}\RC.
\end{align}
Here $\rho(t,x)$, $U(t,x)$, and $T(t,x)$ respectively are the mass density, velocity, and temperature at time $t$ and at a position $x$, and they are implicitly defined through the moments of $F$:  
\begin{align}\label{DEF:density_velocity_temp}
    \left(\begin{array}{c}
    \rho \\
    \rho U\\
    \rho|U|^2+3\rho T 
\end{array}\right)(t,x) = \int_{\R^3}\left(\begin{array}{c}
    1\\
    v\\
    |v|^2
\end{array}\right)F(t,x,v)\, dv,\quad t\geq 0,\quad x\in  \Omega ,
\end{align}
which is equivalent to 
\begin{align*} 
    \rho=\int_{\R^3} F\,dv,\quad U= \rho^{-1}\int_{\R^3} v F\,dv,\quad \hbox{and}\quad T= (3\rho)^{-1} \int_{\R^3} |v-U|^2 F\,dv.
\end{align*}  

This BGK model reduces significantly the computational cost  while preserving many basic properties of the Boltzmann equation. In fact, applying the definitions \eqref{DEF:maxwellian} and \eqref{DEF:density_velocity_temp} yields    
\begin{align}\label{ID:conservation}
		  \int_{\R^3}  q (M(F)-F)\left(\begin{array}{c}
			1\\
			v\\
			|v|^2 \\
		\end{array}\right) \,dv=0,
\end{align}
which ensures the conservation of mass, momentum, and energy.

The main difficulty of the BGK model comes from the nonlinear term, $q (M(F)-F)$, which is worse than the quadratic one of the Boltzmann collision $Q(F)$. 
For the constant setting (i.e., $q\equiv 1$), a global existence result of a weak solution in $\R^n$ was obtained by \cite{Perthame89} under the assumption that global mass, energy and entropy of the initial data are bounded. The unique existence of solutions in a weighted $L^\infty$ space in the three-dimensional torus was investigated by \cite{PerthameARMA}, whose uniqueness result was later extended to the whole space in \cite{Mischler96}.
Also, a global existence of the solution to the BGK in a bounded domain with Dirichlet boundary condition was proved in \cite{PerthameDinh}. 
As for the non-constant collision frequency, a global existence of the solution near a global Maxwellian regime was studied for the case $q =q_1(\rho) q_2(T)$ in $\R^3$ \cite{Bellouquid}. 
When $q =\rho^\alpha T^\beta$, the Cauchy problem for the BGK on three-dimensional torus $\R^3/\mathbb{Z}^3$ was shown in \cite{Yun10}   under the assumption that the energy is sufficiently small. With diffusive reflection boundary condition, a weighted $L^\infty$ solution to the BGK was shown in \cite{CKP2026}.

\subsection{Problem setup of the paper: BGK model with non-constant collision frequency}
 
In this paper, we consider the BGK model in a bounded domain with a collision frequency taking the following general form:  
$$
q(t,x,F)=\gamma(x)(\rho^{\alpha}T^{\beta})(t,x),
$$
where $\gamma$ is a function of $x$, and $\alpha,\,\beta$ are constants in $\R$.  It is clear that this $q$ depends nonlinearly on the solution $F$ when $\beta\neq 0$. 

Suppose that the domain $\Omega$ is convex and bounded with smooth boundary. For $t^*>0$, we consider the initial-boundary value problem for the following BGK model:
\begin{align}\label{EQN:BGK}
    \left\{
    \begin{array}{ll}
    \p_t F+v\cdot\nabla_x F = q(M(F) -F), & \quad (t,x,v)\in (0,t^*)\times\Omega\times\R^3,\\
    F(0,x,v)= F_{in}(x,v), & \quad ( x,v)\in  \Omega\times\R^3,\\
    F(t,x,v)= F_-(t,x,v),&\quad (t,x,v)\in (0,t^*)\times S_-,
    \end{array}
    \right. 
\end{align}
where the outgoing and incoming boundaries are denoted by
\begin{align*}
	S_\pm := \{(x,v)\in \p\Omega\times\R^3:\,  \pm  \nu(x)\cdot v > 0\},
\end{align*}
and $\nu(x)$ is the unit outward normal vector at $x\in\p \Omega$.

The purpose of this paper is twofold: to study well-posedness of the BGK and to recover the collision frequency. In Section~\ref{sec:intro Forward}, we will state our first main result on unique existence of the nonlinear BGK solution, which is considered as a perturbation of the equilibrium.  
For the study of inverse problems, the measurement we utilize is the albedo operator  
$$
\mathcal{A}_q: F_-  \mapsto F_+,  
$$
which is well-defined based on well-posed result in Theorem~\ref{theorem:main forward}, where $F_+:=F|_{(0,t^*)\times S_+}$ represents the outgoing density.
We are interested in the following question:
\begin{center}
    \noindent{\bf (IP)\quad}   \textit{From given albedo operator $\mathcal{A}_q$, can one uniquely determine both $\gamma(x)$ and $(\alpha,\,\beta)$  
    in the collision frequency $q$?}
\end{center}
The answer of this question is positive. We will state the unique determination result in Section~\ref{sec:intro IP}.

\subsection{Forward problem: bounded solutions with decay in the weighted $L^\infty$ norm}\label{sec:intro Forward}
We will first present a local existence result of the solution of the BGK equation \eqref{EQN:BGK}. This shows that the BGK solution exists in a finite time interval, and it serves the purpose of the study of inverse problem later. The investigation of a global existence result falls outside the current project's scope, and it is left for future study.

We consider a linearization of the BGK equation around the normalized global Maxwellian (uniform steady equilibrium):
$$
\mu(v) = (2\pi)^{-3/2} e^{-{|v|^2\over 2}}, \quad v\in\R^3,
$$
which satisfies $M(\mu)=\mu$ due to the properties of the normal distribution:
\begin{align}\label{normal distribution}
    \int_{\R^3} \mu(v) \,dv=1,\quad \int_{\R^3} v\mu(v) \,dv=0,\quad\hbox{ and }\quad \int_{\R^3} |v|^2\mu(v)  \,dv=3.
\end{align}

\begin{definition}
      For any fixed $(t,x)$, we define the macroscopic projection $\mathbf{P}$ as follows:
\begin{align*}
    \mathbf{P} f= \sum^{5}_{i=1} \langle f,e_i\rangle e_i,
\end{align*}
where $\{e_i\}$ forms an orthonormal basis for $5$ dimensional linear space spanned by 
$$
    e_1=\sqrt{\mu},\quad e_{j+1}= v_j \sqrt{\mu} \quad\hbox{for }j=1,\,2,\,3,\quad e_{5}={|v|^2-3 \over \sqrt{6}}\sqrt{\mu}.
$$
\end{definition}

Following a standard perturbation argument, we sought the solution $F$ in the form:
$$
    F(t,x,v) =\mu + \sqrt{\mu}f(t,x,v).
$$ 
Let $F(0,x,v) = \mu+\sqrt{\mu}f_{in}$ and $F(t,x,v)|_{S_-}=\mu + \sqrt{\mu}f_-$. Substituting such $F$ into the BGK equation \eqref{EQN:BGK} and using $M(\mu)=\mu$ yield the linearized BGK equation for $f$:
\begin{align}\label{EQN:Linearized BGK}
    \left\{
    \begin{array}{ll}
     \p_tf + v\cdot\nabla _x f= L_\gamma f+\Gamma f, & \quad (t,x,v)\in (0,t^*)\times\Omega\times\R^3,\\
    f(0,x,v)= f_{in}(x,v), & \quad ( x,v)\in  \Omega\times\R^3,\\
    f(t,x,v)= f_-(t,x,v),&\quad (t,x,v)\in (0,t^*)\times S_-,
    \end{array}
    \right. 
\end{align}
where we split $( \sqrt{\mu})^{-1}q(M(F)-F)=L_\gamma f+\Gamma f$ with the linearized collision operator  
$
    L_\gamma = \gamma(\mathbf{P} -\mathbf{I})
$ and nonlinear collision operator $\Gamma$, where $\mathbf{I}$ is the identity operator. The explicit forms and detailed derivations of $L_\gamma$ and $\Gamma$ will be discussed in Proposition~\ref{Prop:linearization of M(F)} in Section~\ref{sec:linearization}.

We define a weight function:
$$
    w(v) = (1+ c_1|v|^{2})^{c_2}, \quad\hbox{for constants $c_1,\,c_2>0$},
$$
which satisfies $w^{-2} \in L^1(\R^3)$. Similar weight functions are also used to prove uniqueness for the solution of the BGK equation with periodic boundary conditions in \cite{PerthameARMA}, and to study the Boltzmann equation in a bounded domain in \cite{Guo2010}. In \cite{CKP2026}, the paper considers the exponential weight for the BGK with the diffusive reflection boundary condition.

The first main result is stated below:
\begin{theorem}\label{theorem:main forward} Let $\Omega$ be an open bounded and convex domain in $\R^3$ with smooth boundary $\p\Omega$. Suppose that $\gamma \in L^\infty(\Omega)$ satisfies $\gamma \geq c_0$ almost everywhere for some positive constant $c_0$. There exists $\varepsilon_0>0$ such that if $F(0,x,v) = \mu+\sqrt{\mu}f_{in}$ and $F(t,x,v)|_{S_-}=\mu + \sqrt{\mu}f_-$ satisfy
    \begin{align}\label{delta small condition}
            \|wf_{in}\|_{L^\infty_{x,v}}  + \sup_{0\leq t\leq t^*} \|wf_-(t)\|_{L^\infty_{x,v}}<\varepsilon, \quad \hbox{ with }0<\varepsilon<\varepsilon_0,
    \end{align}
then there exists a unique solution $F(t,x,v)=\mu+ \sqrt{\mu}f$ to the in-flow boundary value problem \eqref{EQN:BGK}. Moreover, the following estimate holds:
    \begin{align}\label{Theorem:weight estimate}
     \sup_{0\leq t\leq t^*} \|wf(t)\|_{L^\infty_{x,v}}\leq C\LC\|wf_{in}\|_{L^\infty_{x,v}}+\sup_{0\leq t\leq t^*} \|wf_-(t)\|_{L^\infty_{x,v}}\RC,  
    \end{align}
where the constant $C>0$ depends on $t^*$ and $\varepsilon_0$.
\end{theorem}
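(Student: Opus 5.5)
We prove the theorem by a contraction argument in the space
$$
X=\Big\{f:\ \|f\|_X:=\sup_{0\le t\le t^*}\|wf(t)\|_{L^\infty_{x,v}}\le 2C_0\varepsilon\Big\},
$$
where $C_0$ is the constant from the linear estimate below. We rewrite the perturbed equation \eqref{EQN:Linearized BGK} as $\p_t f+v\cdot\nabla_x f+\gamma f=\gamma\mathbf{P}f+\Gamma f$. The left side is a transport equation with positive absorption $\gamma\ge c_0$. For a given $g\in X$ we solve the \emph{linear} problem
$$
\p_t f+v\cdot\nabla_x f+\gamma f=\gamma \mathbf{P}g+\Gamma g,
$$
with data $f_{in}$, $f_-$. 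This defines the map $\Phi(g)=f$.

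\textbf{Step 1: linear estimate.} Because $\Omega$ is convex, every backward characteristic $s\mapsto x-(t-s)v$ leaves $\Omega$ exactly once. Let $t_b(x,v)$ be the backward exit time. Duhamel's formula along characteristics gives
$$
f(t,x,v)=e^{-\int\gamma}\,\big(f_{in}\ \text{or}\ f_-\big)+\int_{\max(0,t-t_b)}^t e^{-\int_s^t\gamma}\,(\gamma\mathbf{P}g+\Gamma g)(s,x-(t-s)v,v)\,ds.
$$
Multiplying by $w(v)$, which is constant along characteristics, and using $e^{-\int\gamma}\le1$ yields
$$
\|wf\|_X\le \|wf_{in}\|_{L^\infty}+\sup_t\|wf_-\|_{L^\infty}+t^*\|\gamma\|_{L^\infty}\sup_s\|w\mathbf{P}g(s)\|_{L^\infty}+t^*\sup_s\|w\Gamma g(s)\|_{L^\infty}.
$$
Since $w^{-2}\in L^1$, each coefficient $\langle g,e_i\rangle$ is bounded by $\|wg\|_{L^\infty}\|w^{-1}e_i\|_{L^1}$. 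Also $w e_i$ is bounded. Hence $\|w\mathbf{P}g\|_{L^\infty}\le C\|wg\|_{L^\infty}$.

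This term is linear in $g$ and not small, so it cannot be absorbed directly for arbitrary $t^*$. I would handle it in one of two ways. The first option is to treat $\gamma\mathbf{P}$ as part of the linear operator. We iterate the Duhamel formula on the linear problem with $\gamma\mathbf P$, obtaining a Gronwall-type bound $C(t^*)e^{Ct^*}$. The second option is to split $[0,t^*]$ into subintervals of length $\delta$ with $C\|\gamma\|_\infty\delta<1/2$ and concatenate. In either case we obtain $\|f\|_X\le C_0(\text{data})+C_0\sup_s\|w\Gamma g\|$ with $C_0=C_0(t^*)$. This dependence on $t^*$ is the one allowed in the theorem.

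\textbf{Step 2: nonlinear estimate.} This step is the main obstacle. From Proposition~\ref{Prop:linearization of M(F)}, $\Gamma g$ is the remainder of the Taylor expansion of $q(M(F)-F)/\sqrt\mu$ about $\mu$. It has the form of an integral of second derivatives of $(\rho,U,T)\mapsto \gamma\rho^\alpha T^\beta(M-F)$ evaluated at intermediate macroscopic values, times quadratic expressions in the moments of $g$.

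We must show:
\begin{itemize}
\item[(a)] If $\|g\|_X\le\eta$ is small, then $|\rho-1|,\ |U|,\ |T-1|\le C\eta$. This holds because the moments are controlled via $\int |v|^k\sqrt\mu\,|g|\,dv\le C\|wg\|_{L^\infty}$. In particular $\rho,T$ stay in $[1/2,2]$, so $\rho^\alpha T^\beta$ is smooth there for any real $\alpha,\beta$.
\item[(b)] The intermediate Maxwellians $M_\theta$, with parameters $(\rho_\theta,U_\theta,T_\theta)$ near $(1,0,1)$, satisfy $w(v)\,|\p^2 M_\theta|/\sqrt\mu\le C$. This holds since $T_\theta<2$ makes $e^{-|v-U_\theta|^2/2T_\theta}/\sqrt\mu$ bounded times a polynomial.
\end{itemize}
The terms where $q$ multiplies $-F$ contribute $(q-\gamma)g$, and $\|w(q-\gamma)g\|\le C\eta\|wg\|$. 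Together these give
$$
\|w\Gamma g\|\le C\|wg\|^2,\qquad \|w(\Gamma g-\Gamma h)\|\le C(\|wg\|+\|wh\|)\|w(g-h)\|.
$$

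\textbf{Step 3: conclusion.} With $\varepsilon_0$ small, $\Phi$ maps $X$ into itself, since $C_0\varepsilon+C_0C(2C_0\varepsilon)^2\le2C_0\varepsilon$. The map $\Phi$ is also a contraction. This holds because the difference $\Phi(g)-\Phi(h)$ solves the linear problem with zero data and source $\Gamma g-\Gamma h$, which is bounded by $C\varepsilon\|g-h\|_X$. The fixed point gives existence and the estimate \eqref{Theorem:weight estimate}.

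Uniqueness among solutions that are small in the weighted norm follows from the same difference estimate. Positivity of $F$ is not required by the statement. For small $\varepsilon$ it is in any case automatic at the level of the macroscopic quantities, which is all that the definition of $M(F)$ needs.
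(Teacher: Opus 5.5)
Your argument is correct, but it gets the linear estimate by a different and more elementary route than the paper.

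\textbf{The paper's route.} It first builds an $L^2$ theory for $\{\p_t+v\cdot\nabla_x-L_\gamma\}f=g$. This uses Green's identity and the coercivity of $L_\gamma$ from Lemma~\ref{lemma:coercivity}, which needs $\gamma\ge c_0$ and the hypothesis $\mathbf{P}g=0$; for $g=\Gamma f$ that hypothesis comes from the conservation laws. It then upgrades this to a weighted $L^\infty$ stability estimate by Guo's $L^2$--$L^\infty$ bootstrap (Proposition~\ref{prop:L infty estimate}). Duhamel is iterated twice, and the double integral is split into a short-time piece and a large-velocity piece. The remainder is converted into $\sup_t\|f(t)\|_{L^2_{x,v}}$ by the change of variables $v'\mapsto y=x_1-(s-s')v'$. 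The nonlinear step is then the same Picard iteration/contraction you describe. It uses $\|w\Gamma f\|\lesssim\|wf\|^2$ and the Lipschitz bound (Lemmas~\ref{lemma:estimate of Gamma} and~\ref{lemma:diff Gamma}), which match your Step 2.

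\textbf{Your route.} You observe that $\mathbf{P}$ is a rank-five operator with Gaussian kernel, bounded on the weighted $L^\infty$ space. On a finite interval the term $\gamma\mathbf{P}f$ can therefore be handled by Gronwall or by time-stepping. In fact the recursion in Step 1 of the proof of Proposition~\ref{prop:L infty estimate} already gives $\|h(t)\|_{L^\infty_{x,v}}\le Ae^{Ct}$, so the paper's Step 2 is not needed for the theorem as stated. Your argument dispenses with the $L^2$ theory, with coercivity (positivity of $\gamma$ enters only through $e^{-\int\gamma}\le 1$), and with $\mathbf{P}\Gamma f=0$. It gives the same $t^*$-dependent $\varepsilon_0$ and $C$ that the statement allows.

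\textbf{What the paper's route buys.} It is the framework that could be pushed toward a global-in-time result, where an $e^{Ct^*}$ factor is unacceptable and coercivity is essential. Its $L^2$/double-Duhamel machinery is also reused in Lemma~\ref{lemma:EST of f1} for concentrating boundary data that are not uniformly bounded in weighted $L^\infty$.

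\textbf{One small fix in your Step 2.} Keep $T_\theta\le 3/2$, say, rather than only $T_\theta<2$. At $T_\theta=2$ with $U_\theta\neq 0$ the ratio $wM_\theta/\sqrt{\mu}$ is unbounded. Your bound $|T-1|\le C\eta$ already gives the uniform margin.
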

Note that the positivity condition of $\gamma$ is necessary to ensure the coercivity of $L_\gamma$, see Lemma~\ref{lemma:coercivity}.
 
It is known that the highly nonlinear BGK collision operator introduces a challenge of studying the BGK equation. Since the nonlinear term $M(F)$ does not depend smoothly on the solution $F$, classical perturbation theory is not applicable and a treatment that takes into account the special structure of the $M(F)$ is necessary. For example, in \cite{PerthameARMA}, by estimating the moments of the solution $F$ in a weighted $L^\infty$ norm, the contraction mapping principle was applied to prove existence and uniqueness of the BGK solutions. 

In this paper, we seek the BGK solution taking the form $F(t,x,v) =\mu + \sqrt{\mu}f(t,x,v)$, and the main focus turns to solving the problem \eqref{EQN:Linearized BGK}.  
Notice that given the hypothesis \eqref{delta small condition}, the linearized BGK equation in \eqref{EQN:Linearized BGK} contains only a weakly nonlinear term $\Gamma f$, which can be viewed as a small perturbation of the linear part $L_\gamma f$. Hence, the well-posedness problem of the nonlinear BGK boils down to whether one can solve the following inhomogeneous linear transport equation:
\begin{align}\label{Intro:linear BGK}
    \p_t f + v\cdot\nabla _x f=  \gamma  ( \mathbf{P}  -\mathbf{I}) f +g,\quad \hbox{ with }f|_{t=0}= f_{in},\quad f|_{S_-}= f_-,
\end{align}
in a suitable functional space.
To this end, we will prove the $L^2$-$L^\infty$ existence of solution of \eqref{Intro:linear BGK}. 
After establishing the $L^2$ solution of \eqref{Intro:linear BGK}, the associated $L^2$ stability estimate is then utilized to control the term $\mathbf{P} f$ so that it leads to existence of the weighted $L^\infty$ solution of \eqref{Intro:linear BGK}, see Section~\ref{sec:L2 solution for the linear BGK} for $L^2$ solutions and Section~\ref{sec:bounded solution for the linear BGK} for bounded solutions to \eqref{Intro:linear BGK}. This technique resolves a difficulty of the study of the nonlinear BGK equation by controlling $\mathbf{P}f$ in a weighted $L^\infty$ space. Such a framework was discussed in \cite{Guo2010} for showing global existence results for the nonlinear Boltzmann equation with various boundary conditions, and the techniques developed there have significantly advanced the study of the Boltzmann equation on a bounded domain.  
Finally, Theorem~\ref{theorem:main forward} is established by the use of an iteration argument. 
Similar ideas were used in the study of the BGK solution under the diffusive reflection boundary condition in \cite{CKP2026}.

\subsection{Inverse problems - unique determination of collision frequency}\label{sec:intro IP}
The objective of the second part is to recover $\gamma$ and $(\alpha,\beta)$ in the collision frequency $q=\gamma \rho^\alpha T^\beta$.

For the purpose of the inverse problem, it is sufficient to consider the solution $F$ with initial data $F|_{t=0}=\mu$ (thus, $f_{in}\equiv 0$) since the unknown coefficient $\gamma$ only depends on the spatial variable $x$.

We denote the set 
$$
\mathcal{X}_{\varepsilon_0}:=\{\varphi:\,\, 
\varphi(t,x,v)=\mu+ \sqrt{\mu}f_-,\,\,\hbox{ where $f_-$ satisfies } \eqref{delta small condition}\},
$$
with the same $\varepsilon_0$ in Theorem~\ref{theorem:main forward}.

Since Theorem~\ref{theorem:main forward} ensures that the initial boundary value problem \eqref{EQN:BGK} is uniquely solvable,  
we can define the albedo operator  
$$
\mathcal{A}_q: F_-\in \mathcal{X}_{\varepsilon_0} \mapsto F_+,  
$$ 
where $F_+:=F|_{(0,t^*)\times S_+}=\mu+\sqrt{\mu}f_+$ with the outgoing density $f_+:=f|_{(0,t^*)\times S_+}$ satisfying $\sup_{0\leq t\leq t^*} \|w f_+(t)\|_{L^\infty_{x,v}}<\infty$.  
We are interested in whether the albedo operator $\mathcal{A}_q$ determines uniquely both $\gamma$ and $(\alpha,\beta)$ in the collision frequency $q$.

The first result is the unique determination of $\gamma$ without any smallness assumption. 
\begin{theorem}[Recover $\gamma$] \label{theorem:main inverse}  Let $\Omega$ be an open bounded and convex domain in $\R^3$ with smooth boundary $\p\Omega$. Suppose that $\gamma_j\in L^\infty(\Omega)$ $j=1,\,2$ satisfy $\gamma_j \geq c_0$ almost everywhere for some positive constant $c_0$.  
Let 
$$
    q_j(t,x,F)=\gamma_{j}(x)(\rho^{\alpha_j}T^{\beta_j})(t,x),\quad j=1,\,2.
$$ 
Let $\varepsilon_0>0$ be sufficiently small so that $\mathcal A_{q_j}$, $j=1,2$, are well-defined on $\mathcal X_{\varepsilon_0}$, if the corresponding albedo operators $\mathcal A_{q_1} =\mathcal A_{q_2}$ on $\mathcal{X}_{\varepsilon_0}$, then 
$$
     \gamma_{1} =\gamma_{2} \quad \hbox{ in } \Omega.
$$ 
\end{theorem}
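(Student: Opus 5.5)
We will recover $\gamma$ from the first-order term in a small-parameter expansion of the albedo map. For each $j$ we take boundary data $F_- = \mu+\sqrt{\mu}\,\epsilon g$ with $f_{in}=0$, where $g$ is a fixed bounded incoming profile with $\|wg\|_{L^\infty}$ finite and $\epsilon>0$ small, so that $F_-\in\mathcal X_{\varepsilon_0}$. Theorem~\ref{theorem:main forward} then gives solutions $f_j^\epsilon$ with $\sup_t\|wf_j^\epsilon\|_{L^\infty}\le C\epsilon$.

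We write $f_j^\epsilon=\epsilon f_j^{(1)}+R_j^\epsilon$. Here $f_j^{(1)}$ solves the linear problem \eqref{Intro:linear BGK} with $g\equiv0$, coefficient $\gamma_j$, zero initial data and boundary data $g$. The remainder satisfies the same linear equation with source $\Gamma_j f_j^\epsilon$. The paper promises that $\Gamma$ is at least quadratic in weighted $L^\infty$ near $\mu$, i.e. $\|w\Gamma_j f\|\lesssim \|wf\|^2$. Granting this, the weighted $L^\infty$ estimate for the linear problem gives $\|wR_j^\epsilon\|=O(\epsilon^2)$.

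The key observation is that the exponents $\alpha_j,\beta_j$ do not enter the linearization. At $F=\mu$ we have $\rho=T=1$, so $\rho^\alpha T^\beta=1$ and $L_{\gamma_j}=\gamma_j(\mathbf P-\mathbf I)$. This is why $\gamma$ can be recovered independently of $(\alpha,\beta)$. Dividing $\mathcal A_{q_1}=\mathcal A_{q_2}$ by $\epsilon$ and letting $\epsilon\to0$ then shows that the linear albedo maps $g\mapsto f^{(1)}_j|_{S_+}$ coincide.

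Next we use concentrated in-flow data to read off $\gamma$ along lines. We fix $(x_0,v_0)\in S_-$, a time $t_0$, and a direction $v_0$ with $|v_0|$ chosen. We take $g_\delta$ supported in a $\delta$-neighborhood of $(t_0,x_0,v_0)$ and normalized in $L^1$.

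We decompose $f^{(1)}_j=f_{j,0}+f_{j,1}$, where $f_{j,0}$ is the ballistic part solving $\partial_t f+v\cdot\nabla_x f=-\gamma_j f$. Explicitly,
\[
f_{j,0}(t,x,v)=g_\delta\big(t-\tau_-,x-\tau_- v,v\big)\exp\Big(-\int_0^{\tau_-}\gamma_j(x-sv)\,ds\Big).
\]
The remainder $f_{j,1}$ solves the linear problem with source $\gamma_j\mathbf Pf^{(1)}_j$ and zero data.

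Since $\mathbf P$ integrates in $v$, $\mathbf Pf_{j,0}$ is smoothed out: its $L^1$ norm is controlled by that of $g_\delta$, but it is not concentrated in velocity. The outgoing trace of $f_{j,1}$ is therefore, after pairing with a test function concentrated near the exit point $(t_0+\tau_+,x_0+\tau_+v_0,v_0)$ on $S_+$, of order $o(1)$ as $\delta\to0$. To see this, we pair the equation with a solution of the adjoint (backward) transport equation carrying the concentrated exit data, and use the Green identity for transport together with $L^1$-$L^\infty$ bounds. The same pairing applied to $f_{j,0}$ gives $\exp(-\int\gamma_j)$ along the segment from $x_0$ in direction $v_0$.

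Hence equality of the linear albedo maps yields that the X-ray transforms of $\gamma_1$ and $\gamma_2$ agree on every line through the convex domain $\Omega$. Injectivity of the X-ray transform on $L^\infty$ compactly supported functions (extending by zero) then gives $\gamma_1=\gamma_2$.

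The main obstacle is the singular decomposition step: showing that the scattered part $f_{j,1}$, driven by $\gamma_j\mathbf Pf$, contributes negligibly in the concentrated limit. For $f_{j,1}$ this is plausible because $\mathbf P$ is a finite-rank, smooth-in-$v$ operator. The delicate point is to do it quantitatively, with a well-posedness bound for the linear problem in $L^1$ (or with the adjoint) and $\gamma$ merely $L^\infty$. Lebesgue points of $\gamma$ along lines are handled by averaging over nearby $(x_0,v_0)$.
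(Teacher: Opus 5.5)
Your argument is correct in outline, but it follows a different route from the paper.

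The paper uses in-flow data concentrated in velocity only, $f_-^\delta=(\delta^{-3}\varphi((v-v_0)/\delta))^{1/2}$. This data is independent of $(t,x)$ and square-root normalized, so that $\|f_-^\delta\|_*$ and $\|f_-^\delta\|_{L^2(S_-)}$ stay bounded. The paper then proves a $\delta$-independent bound on $\|\langle|f^{(1),\delta}|\rangle\|_{L^\infty_{t,x}}$ (Lemma~\ref{lemma:EST of f1}). This uses the $L^2$ estimate of Proposition~\ref{prop: L2 estimate of boltzmann} together with Guo's double-Duhamel argument and the change of variables $v'\mapsto x_1-(s-s')v'$. From it the paper gets a uniform $L^\infty$ bound on the scattered term $H(f^{(1),\delta})$ (Corollary~\ref{corollary:H(u)}). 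Finally, it reads off $e^{-2\int\gamma}$ at a.e.\ exit point $(t,x)$ from the quadratic quantity $\int(f^{(1),\delta})^2\varphi((v-v_0)/\eta)\,dv$, letting $\delta\to0$ and then $\eta\to0$.

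You instead concentrate the data in all of $(t,x,v)$ with $L^1$ normalization and use duality. This is more elementary, and it makes the step you call ``delicate'' short. Let $\phi$ solve $-(\p_t+v\cdot\nabla_x)\phi+\gamma_j\phi=0$ with $\phi|_{t=t^*}=0$ and $\phi|_{S_+}=\psi$, where $|\psi|\le 1$ and $\psi$ is supported in $\{|v-v_0|<\eta\}$. Green's identity and $|\mathbf Pf(v)|\le c_b\mu^b(v)\langle|f|\rangle$ then give
\[
\Big|\int_0^{t^*}\int_{S_+} f_{j,1}\psi\,d\Sigma\,dt\Big|\le c_b\|\gamma_j\|_{L^\infty}\,|B_\eta(v_0)|\,\|f^{(1)}_j\|_{L^1((0,t^*)\times\Omega\times\R^3)}\lesssim \eta^3\,\|g_\delta\|_{L^1},
\]
uniformly in $\delta$.

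You still need to supply two things. First, the bound involves the full $f^{(1)}_j$, because the source of $f_{j,1}$ contains $\mathbf Pf_{j,1}$ and not only $\mathbf Pf_{j,0}$. So you need an $L^1$ well-posedness estimate for the linear problem. The paper does not provide one, but it follows from Gronwall, since $\mathbf P$ is bounded on $L^1_v$. Second, $\psi$ must be $L^\infty$-normalized and its velocity support must shrink, for example $\psi\equiv 1$ on an $O(\delta)$-neighborhood of the exit point, or a double limit. For a fixed $\psi$ the scattered contribution is $O(1)$. Moreover, unlike in the paper's setting, the single-scattered trace here is not bounded uniformly in $\delta$.

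What each route buys: the paper's gives pointwise recovery from $(t,x)$-independent data and reuses the $L^2$--$L^\infty$ machinery already built for the forward problem. Yours needs only $L^1$ theory and a functional that is linear in the measurement.
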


The second result states that $(\alpha,\,\beta)$
can be recovered provided that the following assumption holds. 
We rewrite the operator $\mathbf{P}$ with kernel $k(v,v')$ as follows:
$$
    \mathbf{P}f = \int_{\R^3} k(v,v')f(v')\, dv', 
\quad\hbox{ with }
    k(v,v') :=  \sqrt{\mu (v)}\sqrt{\mu (v')} \bigg(1+v\cdot v'+\frac{(|v|^2-3)(|v'|^2-3)}{6}\bigg).
$$ 

\noindent{\bf Assumption 1: }
Assume that for some $b\in (0,{1\over 2})$, there holds
\begin{align}\label{intro:Hypothesis small}
 t^* c_b  \|\gamma\|_{L^\infty(\Omega)}  \LC\int_{\R^3} \mu^{b}(v) \,dv\RC < \tilde\varepsilon \in (0,1), \quad\hbox{with}\quad c_b:=\max_{v,v'\in\R^3} \mu^{-b}(v) |k(v,v')|<\infty.
\end{align}

\begin{theorem}[Recover $\alpha,\,\beta$] \label{theorem:main inverse power}  
Let $\Omega$ be an open bounded and convex domain in $\R^3$ with smooth boundary $\p\Omega$. Let 
$$
    q_j(t,x,F)=\gamma(x)(\rho^{\alpha_j}T^{\beta_j})(t,x),\quad j=1,\,2.
$$ 
There exists $\tilde\varepsilon>0$ sufficiently small, so that if $\gamma\in L^\infty(\Omega)$ satisfies the {\bf Assumption 1} for $\tilde\varepsilon$ and $\gamma \geq c_0$ almost everywhere for some positive constant $c_0$, and $\mathcal A_{q_1}=\mathcal A_{q_2}$ on $\mathcal X_{\varepsilon_0}$ for sufficiently small $\varepsilon_0>0$, then
$$\alpha_1=\alpha_2,\quad \beta_1=\beta_2.$$ 
\end{theorem}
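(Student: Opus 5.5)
Our plan is a second-order linearization. Take boundary data $F_-=\mu+\sqrt{\mu}\,f_-$ with $f_-=\epsilon g$, where $\epsilon$ is small and $g$ is a fixed profile, and initial data $f_{in}=0$. Expand the solution as $f=\epsilon f^{(1)}+\epsilon^2 f^{(2)}+o(\epsilon^2)$ in the weighted $L^\infty$ norm, using the expansion announced in the abstract together with the estimate \eqref{Theorem:weight estimate} of Theorem~\ref{theorem:main forward}.

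\textbf{First-order term.} The first-order term solves the linear problem \eqref{Intro:linear BGK} with $g\equiv0$ and collision operator $L_\gamma=\gamma(\mathbf P-\mathbf I)$. The exponents $\alpha,\beta$ do not enter here, because $\rho=1$ and $T=1$ at the equilibrium $\mu$. Since $\gamma_1=\gamma_2=\gamma$, the first-order solutions $f^{(1)}$ coincide for $q_1$ and $q_2$.

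\textbf{Second-order term.} This term solves
\[
\p_t f^{(2)}+v\cdot\nabla_x f^{(2)}=\gamma(\mathbf P-\mathbf I)f^{(2)}+\Gamma_2(f^{(1)},f^{(1)}),
\]
with zero initial and boundary data. The quadratic part of $\Gamma$ contains the first variation of $q$ multiplied by the first variation of $M(F)-F$. That product has the form
\[
\gamma\,(\alpha\,\rho^{(1)}+\beta\,T^{(1)})\,(\mathbf P-\mathbf I)f^{(1)},
\]
plus terms independent of $(\alpha,\beta)$ coming from the second variation of $M$. Here $\rho^{(1)}=\langle f^{(1)},\sqrt\mu\rangle$ and $T^{(1)}$ is the corresponding temperature moment. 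Equality of the albedo operators gives equality of $f^{(2)}_+$. Subtracting the two second-order equations, $w:=f^{(2)}_1-f^{(2)}_2$ solves the linear equation with source
\[
\gamma\big((\alpha_1-\alpha_2)\rho^{(1)}+(\beta_1-\beta_2)T^{(1)}\big)(\mathbf P-\mathbf I)f^{(1)},
\]
with zero inflow data, zero initial data and zero outflow.

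\textbf{Integral identity.} Pair the equation for $w$ with a solution $h$ of the adjoint (backward) linear transport problem, which is again of the form \eqref{Intro:linear BGK} since $\mathbf P$ is self-adjoint. Take $h$ with vanishing final data and outflow data $h_+$, and integrate by parts. Because $w$ vanishes on both boundaries and at $t=0$, and $h$ vanishes at $t^*$, every boundary term drops out and we obtain
\[
\int_0^{t^*}\!\!\int_\Omega\!\int_{\R^3}\gamma\big(\delta\alpha\,\rho^{(1)}+\delta\beta\,T^{(1)}\big)\big((\mathbf P-\mathbf I)f^{(1)}\big)\,h\,dv\,dx\,dt=0
\]
for all admissible $g$ and $h_+$.

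\textbf{Choice of test data.} Choose the inflow data $g$ and the adjoint data $h_+$ concentrated near a line through a point $x_0$ in a fixed direction $v_0$, with a localized time profile. The solutions are then approximated by their ballistic parts: $f^{(1)}\approx g(t-\tau,x-\tau v,v)\,e^{-\int\gamma}$, and similarly for $h$. The scattering parts involve $\mathbf P$, whose kernel bound is \eqref{intro:Hypothesis small}. This is exactly where Assumption 1 enters. A Neumann-series argument shows that the operator $f\mapsto\int\gamma\,\mathbf Pf$ along characteristics, taken over time $t^*$, has norm at most $\tilde\varepsilon<1$ in a $\mu^{-b}$-weighted sup norm. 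Hence the remainders $f^{(1)}-f^{(1)}_{\rm ball}$ are smaller by a factor $\tilde\varepsilon$, uniformly in the concentration parameter.

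\textbf{Extracting $\delta\alpha$ and $\delta\beta$.} The moments $\rho^{(1)}$ and $T^{(1)}$ of the ballistic part are computed from the velocity profile of $g$. Choosing two different velocity profiles makes the vectors $(\rho^{(1)},T^{(1)})$ linearly independent; for example, take profiles in $v$ near $|v|^2=3$ and away from it to vary the $T$-moment against the $\rho$-moment. The leading term of the identity is then
\[
\big(\delta\alpha\cdot a_1+\delta\beta\cdot a_2\big)\times(\text{a positive integral of }\gamma e^{-\int\gamma}),
\]
and the error is $O(\tilde\varepsilon)$ relative to it. Taking two independent choices forces $\delta\alpha=\delta\beta=0$.

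\textbf{Main obstacle.} The hardest step is the last one. The moments $\rho^{(1)}$ and $T^{(1)}$ are averages over $v$, so velocity concentration alone kills them. We therefore need data concentrated in $(t,x)$ but spread in $v$, and must still control the nonlocal $\mathbf P$-terms. The smallness from Assumption 1 should dominate these terms. We would try first to keep $g$ smooth in $v$ with a fixed Gaussian-type profile, and to localize only in space-time.
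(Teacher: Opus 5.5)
Your reduction is correct up to the integral identity. $f^{(1)}$ does not depend on $(\alpha,\beta)$, and the $(\alpha,\beta)$-free terms $\gamma B(v,f^{(1)})$ cancel in the difference. Pairing $f^{(2)}_1-f^{(2)}_2$ (zero initial, inflow and outflow data) with a backward adjoint solution is a legitimate alternative to the paper, which instead reads $f^{(2)}_+$ directly along characteristics. The genuine gap is the decisive step, separating $\delta\alpha$ from $\delta\beta$, which you do not carry out. The plan you sketch for it rests on a false premise: velocity concentration does not kill the moments if it is $L^1$-normalized. The paper takes $f^\delta_-=\delta^{-3}\varphi((v-v_0)/\delta)$, so $\|f^\delta_-\|_*\le 1$. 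Assumption 1 then gives $\|\langle|f^{(1)}|\rangle\|_{L^\infty_{t,x}}\le(1-\tilde\varepsilon)^{-1}$ and $|H(f^{(1)})|\le\frac{\tilde\varepsilon}{1-\tilde\varepsilon}\,\mu^b(v)/\int\mu^b$, uniformly in $\delta$. Meanwhile, as $\delta\to0$, $\rho^{(1)}\to\sqrt{\mu(v_0)}\,e^{-\int\gamma}+\mathcal O(\tilde\varepsilon)$ and $T^{(1)}\to\sqrt{\mu(v_0)}\,\tfrac{|v_0|^2-3}{3}\,e^{-\int\gamma}+\mathcal O(\tilde\varepsilon)$, where $e^{-\int\gamma}$ denotes attenuation along $v_0$. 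Only $L^2$-normalized data such as $(\delta^{-3}\varphi)^{1/2}$, the choice used for $\gamma$, have vanishing moments. What the $L^1$ choice buys is that $f^{(1)}$ keeps a singular ballistic part of size $\delta^{-3}$. That part enters the second-order source only through $-2\gamma(\alpha\rho^{(1)}+\beta T^{(1)})f^{(1)}$, whereas $\mathbf P f^{(1)}$, $B(v,f^{(1)})$ and $\gamma\mathbf P f^{(2)}$ are bounded in $v$ uniformly in $\delta$. Testing $f^{(2)}$ against $\varphi((v-v_0)/\eta)$ and letting $\delta\to0$, then $\eta\to0$, removes all bounded contributions. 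What remains is $\Lambda_\gamma(t,x,v_0)\big(\alpha(\sqrt{\mu(v_0)}+\mathcal O(\tilde\varepsilon))+\beta(\sqrt{\mu(v_0)}\tfrac{|v_0|^2-3}{3}+\mathcal O(\tilde\varepsilon))\big)$, with $\Lambda_\gamma$ known and nonzero. Two speeds $|v_0|\neq|v_0'|$ then give an invertible $2\times2$ system.

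The route you propose instead (data smooth in $v$, localized in $(t,x)$) is not analyzed, and the claims attached to it do not follow. With $f^{(1)}$ and $h$ regular in $v$, the weight $\int((\mathbf P-\mathbf I)f^{(1)})h\,dv$ has no definite sign. Also, $(\rho^{(1)},T^{(1)})$ vary with $(t,x)$, so they cannot be pulled out of the $(t,x)$-integral as constants $(a_1,a_2)$ times a positive integral. Finally, the scattered corrections are $\tilde\varepsilon$-small only in the averaged norm $\langle|\cdot|\rangle$, not relative to a leading term that may itself cancel. Your test-data paragraph, which concentrates along a line in direction $v_0$, also contradicts this plan. The missing idea is exactly the singular-versus-regular separation, and it fits your adjoint framework. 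Take $g=\delta^{-3}\varphi((v-v_0)/\delta)$ and adjoint outflow data $h_+=\varphi((v-v_0)/\eta)\psi(t,x)$. As $\delta\to0$, the pairing $-\int f^{(1)}h\,dv$ tends to $-e^{-\int\gamma}h(t,x,v_0)$ up to $\mathcal O(\eta^3)$, and every other pairing is $\mathcal O(\eta^3)$. Since $h(t,x,v_0)$ is $\mathcal O(1)$, letting $\eta\to0$ and varying $\psi$ gives, line by line and up to a positive factor, the same relation as in the paper. Note finally that the leading coefficients are of size $e^{-|v_0|^2/4}$ with $|v_0|$ forced large, since the rays must reach $(t,x)$ in time. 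So $\tilde\varepsilon$ must be fixed after $v_0,v_0'$ for the $\mathcal O(\tilde\varepsilon)$ errors to be dominated.
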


\begin{remark}
    The choice of $\tilde\varepsilon$ depends on $t^*$, $\Omega$, and $|\alpha_j|+|\beta_j|$, $j=1,2$. 
\end{remark}

\begin{remark}
   Our proof of the recovery of $\gamma$ and $\alpha, \beta$ not only derives the uniqueness of the inverse problem, but also provides a reconstruction procedure.  
\end{remark}

The study on recovering coefficients in the Boltzmann equation has attracted great attention due to its broad applications. In particular, for the radiative transfer equation (known as a linear Boltzmann equation), unique reconstruction of absorption and scattering coefficients from given albedo operators was investigated in the works \cite{CS1, CS2, CS3, CS98, SU2d}, and the corresponding stability estimates were deduced in \cite{Bal14, Bal10, Bal18, DFRVinstability26, lai_inverse_2019, Wang1999, ZhaoZ18}, see also the relevant results for the nonlinear transport equation \cite{StefanovZhong2022} and for the transport equation in the Riemannian setting \cite{AY15, MST10, MST10stability, MST11, McDowall04} and review papers \cite{Balreview, Stefanov_2003}. In addition, the application of Carleman estimates was applied to study the dynamic transport equation with time-independent coefficients, see for example, \cite{Gaitan14,  Yamamoto2016, KlibanovP2006, Klibanov08, Lai-L, LaiUhlmannZhou22, Machida14}. As for the determination of the time-dependent coefficient, it was treated by constructing geometric optics solutions for the linear transport equation, see \cite{bellassouedInverseProblemLinear2019,BELLASSOUED19, LaiZhou2024} for more detailed discussions. Finally, based on an inversion of the Born series, a direct reconstruction method was proposed in \cite{MachidaSchotland15}.

There is less work on the inverse problem for the nonlinear Boltzmann equation. When considering the Boltzmann equation near the vacuum, the unique reconstruction of the collision kernel was showed in \cite{LaiUhlmannYang}. For the dynamic Boltzmann equation, the time-independent collision kernel was uniquely recovered through linearization around the global Maxwellian in \cite{LiOuyang2022}. Also, the stability estimate for the time-dependent kernel was derived in \cite{LY2023}. A related work by using the source-to-solution map to recover the metric is addressed in \cite{BKLL21}.

In this paper, we consider an inverse problem for the BGK with the goal of recovering the collision frequency $q$ from boundary measurements.
To achieve this, we introduce a small parameter $\varepsilon$ into the data, that is, 
\begin{align}\label{DEF:intro small data}
	F_\varepsilon|_{t=0}=\mu + \varepsilon \sqrt{\mu}f_{in}(x,v),\quad F_\varepsilon |_{S_-}=\mu + \varepsilon \sqrt{\mu}f_-(t,x,v),
\end{align}
and deduce an $\varepsilon$-expansion of the BGK solution: 
$$
    F_\varepsilon = \mu+\sqrt{\mu}f_\varepsilon \sim \mu+\sqrt{\mu}\LC \varepsilon f^{(1)}+{\varepsilon^2\over 2} f^{(2)} \RC +\hbox{higher-order terms}.
$$  
With the help of the nonlinearity of $\Gamma f$, one can decouple the effects associated with the linear term $L_\gamma f_\varepsilon$ and the nonlinear one $\Gamma f_\varepsilon$. Specifically, we have
\begin{align}\label{Intro:first linearization}
    \{\p_t + v\cdot\nabla _x -L_\gamma\} f^{(1)} =0,\quad\hbox{with } f^{(1)}|_{t=0}= f_{in},\quad f^{(1)}|_{S_-} = f_-,
\end{align}
and  
\begin{align}\label{Intro:second linearization}
     \{\p_t + v\cdot\nabla _x - L_\gamma\} f^{(2)} = \mathcal{G}(\alpha,\beta,\gamma, f^{(1)}), \quad\hbox{with } 
      f^{(2)}|_{t=0}=0,\quad f^{(2)}|_{S_-} = 0,
\end{align}
where $\mathcal{G}\equiv \mathcal{G}(\alpha,\beta,\gamma, f^{(1)})$, defined in \eqref{DEF:inhomogeneous G}, depends on $\alpha,\,\beta,\,\gamma,\,f^{(1)}$.
As a result, with appropriately chosen in-flow boundary condition and trivial initial data, we can recover $\gamma$ from \eqref{Intro:first linearization}, and then determine the powers $\alpha$ and $\beta$ from the succeeding linearized BGK model \eqref{Intro:second linearization}.

The main contributions of this work is as follows. First, we establish $L^\infty_{x,v}$ estimate for the nonlinear operator $\Gamma$ and prove the well-posedness result for the nonlinear BGK equation in a bounded domain with in-flow boundary condition. In addition, we derive a higher-order expansion of the BGK solution and utilize it to study an inverse problem for the non-constant collision frequency. It is worth pointing out that due to the established $L^2$-$L^\infty$ existence of the BGK solution, the recovery of $\gamma$ does not require a-priori smallness assumption. Finally, 
since the structure of $\mathcal{G}$ enhances the overall regularity, and makes it hard to recover the powers $(\alpha,\beta)$ from $\mathcal{G}$, we impose an additional assumption to distinguish the contribution from the singular part of $f^{(1)}$. We refer to Section~\ref{Sec:inverse problem} for more detailed discussions on how the reconstruction procedure works.

\subsection{Comparison between the Boltzmann and the BGK models}
While BGK model is considered as a relaxation of the Boltzmann, there are several distinct features between these two models in the study of both forward and inverse problems. In the following, we will make several comparisons between Boltzmann and BGK by considering the solution taking the form $F=\mu+\sqrt{\mu}f$. The Boltzmann equation \eqref{EQN:Boltzmann intro} with $K\equiv K(|v-u|,\omega)$ can be rewritten as
$$
     \{\p_t + v\cdot\nabla_x\}f  = {1\over \sqrt{\mu}} (Q(\mu,\sqrt{\mu} f) +Q(\sqrt{\mu}f,\mu))  +{1\over \sqrt{\mu}} Q(\sqrt{\mu} f,\sqrt{\mu} f)=: L^B(f)+\Gamma^B(f),
$$  
with the linear operator given by
$$
    L^B(f)= -q_B(v) f+ K^B f,\quad \hbox{ where }K^B f:=\int_{\R^3}k_B(v,v')f(v')\,dv',
$$
collision frequency $q_B(v)\sim (1+|v|)^b$ and $k_B(v,v')\lesssim {e^{-C|v-v'|^2}\over |v-v'|}$, see for instance \cite{Guo2010} for details.

Unlike the linear Boltzmann operator $K^B f$ having singularity in the kernel $k_B$, it is rather straightforward to prove that the linear BGK operator $\mathbf{P}$ satisfies
\begin{align*}
    w (v)\mathbf{P} f(t,x,v)&= w(v)\sum^{5}_{i=1}\LC\int_{\R^3} f(t,x,v')e_i(v')\,dv'  \RC e_i(v)\\
    &\leq \|wf \|_{L^\infty_{t,x}} \sum^{5}_{i=1}\LC\int_{\R^3} w^{-1}(v')e_i(v')\,dv'  \RC w(v)e_i(v)\lesssim \|wf\|_{L^\infty_{x,v}}.
\end{align*}
However, in contrast to the quadratic nonlinearity of $\Gamma^B f$, the term $\Gamma f$ of the BGK consists of relatively strong nonlinearity. In particular, as proved in Proposition~\ref{Prop:linearization of M(F)}, we obtain the Taylor expansion of $\Gamma f$, and the functions $\mathcal{P}_\ell$ and $\mathcal{Q}_{ij}$ depend on the macroscopic quantities $(\rho,U,T)$ of the solution $F$, which suggests nonlinear dependence on $f$. Such a complicated structure creates difficulty for controlling $\Gamma f$ in the weighted $L^\infty$ norm, that is, $\|w\Gamma(f)\|_{L^\infty_{x,v}}$, and therefore a careful treatment is needed to overcome this issue, see Appendix~\ref{sec:appendix} for relevant estimates of $\Gamma f$.

For inverse problems, the reconstruction of unknown coefficients also differs between the Boltzmann and BGK. Since $\Gamma^B f$ has quadratic nonlinearity, when taking small data \eqref{DEF:intro small data}, the first-order linearization of the Boltzmann is 
$
    \{\p_t + v\cdot\nabla _x -L^B\} f^{(1)}=0,
$
which looks similar to \eqref{Intro:first linearization}, yet the inhomogeneous part of its second-order linearization 
$
    \{\p_t + v\cdot\nabla _x -L^B\} f^{(2)}=\Gamma^B(f^{(1)})
$ 
is simpler than $\mathcal{G}$ in \eqref{Intro:second linearization}. Based on this linearization scheme (linearize around the vacuum), in \cite{LaiUhlmannYang}, for time-independent Boltzmann, the collision frequency was recovered by suitably choosing free transport solutions $f^{(1)}$ as test functions to infer the unknown kernel, see also \cite{LiOuyang2022} for inverse problem for the time-dependent one \eqref{EQN:Boltzmann intro}.

The remainder of this paper is organized as follows. In Section~\ref{sec:linearization}, we introduce notations, and then derive fundamental properties of the operators $L_\gamma$ and $\Gamma$ in order to expand $M(F)$. Section~\ref{sec:L2 solution for the linear BGK} is devoted to establish the $L^2$ existence of the solution for the linearized BGK equation. In Section~\ref{sec:bounded solution for the linear BGK}, an iteration scheme is applied to prove bounded solutions of the nonlinear BGK equation after showing the existence of the solution to the linearized problem in the weighted $L^\infty$ norm and deriving the related stability estimate. Finally, we study an inverse problem for the BGK equation in Section~\ref{Sec:inverse problem}. We show that both coefficient $\gamma$ and the powers $(\alpha,\beta)$ can be uniquely determined from the albedo operator. In Appendix~\ref{sec:appendix}, several basic estimates for the macroscopic quantities $(\rho,\,U,\,T)$ are discussed.
Moreover, in Appendix~\ref{sec:appendix B}, we provide detailed computations for the linearization of the BGK equation through directly differentiating the equation with respect to a small parameter.

\section{Preliminaries and analysis of BGK collision operator}\label{sec:linearization}
The objective of this section is to analyze the Maxwellian $M(F)$, and expand it near equilibrium $\mu$ under the assumption that $w(v)f(t,\cdot,\cdot) \in L^\infty_{x,v}$ so that $f(t,\cdot,\cdot)\in L^2(\Omega\times\R^3)$. Recall the weight function $w(v) = (1+ c_1|v|^{2})^{c_2}$ for some constants $c_1,$ $c_2$. The reason of considering these spaces will become clearer in the proof of Theorem~\ref{theorem:main forward}. 

The analysis developed here throughout the way will be essential to handle the nonlinearity in the BGK collision operator. Equipped with this, we will show 
the existence of the $L^2$ solution of the linearized BGK in Section~\ref{sec:L2 solution for the linear BGK}, and then establish the well-posedness for the BGK in Section~\ref{sec:bounded solution for the linear BGK}.

\subsection{Preliminaries}\label{sec:preliminaries}
To prepare the upcoming discussions, we first introduce notations of function spaces and norms, and also state basic preliminary results. 

 For $1\leq p<\infty$, we use $\|\cdot\|_{L^p_{x}}$, $\|\cdot\|_{L^p_{v}}$, $\|\cdot\|_{L^p_{x,v}}$, and $\|\cdot\|_{L^p}$ to denote the norm for the space $L^p(\Omega)$, $L^p(\R^3)$, $L^p(\Omega\times\R^3)$, and $L^p((0,t^*)\times\Omega\times\R^3)$, respectively. 
We denote phase boundary $\p\Omega\times\R^3$ by $S$ and define the space $L^p(\p\Omega\times\R^3)$ with the norm
$$
    \|g\|_{L^p(S) }:=\LC\int_{S } |g(t,x,v)|^p  d\Sigma\RC^{1/p},  
$$ 
where $d\Sigma=(\nu(x)\cdot v)d\mathcal{S}_xdv$ with the standard surface measure $d\mathcal{S}_x$ on $\p\Omega$. Moreover, for the spaces $L^p(S_\pm)$, the norms are
$$
\|g\|_{L^p(S_\pm)}=\LC \int_{S_\pm}|g(t,x,v)|^p\,d\Sigma\RC^{1/p}=\LC\int_{\p\Omega}\int_{\pm\nu(x)\cdot v>0}   |g(t,x,v)|^p |\nu(x)\cdot v|\, dvd\mathcal{S}_x\RC^{1/p}.
$$ 
When $p=\infty$, the above spaces are the standard vector spaces consisting of all functions that are essentially bounded. 

In addition, we use the notations $\langle\cdot,\cdot\rangle_v$ and $\langle\cdot,\cdot\rangle_{x,v}$ for the $L^2(\R^3)$ inner product in the $v$-variable and for the $L^2(\Omega\times\R^3)$ inner product in the $x,v$-variable, respectively.

\subsection{The expansion of $M(F)$ around $\mu$}
Let $F=\mu+\sqrt{\mu}f$. To expand $M(F)$, a new coordinate emerges naturally from the following lemma. 
\begin{lemma}\label{lemma:decomposition}
We obtain
    \begin{align}\label{change of variables 1}
        \rho = 1 + \langle f,e_1\rangle ,\quad \rho  U = (\langle f,e_{2}\rangle,\langle f,e_{3}\rangle,\langle f,e_{4}\rangle),  
    \end{align}
    \begin{align}\label{change of variables 2}
    G :={\rho  |U |^2+3\rho  T \over \sqrt{6}} - {3\over \sqrt{6}}\rho  
    =   \langle f,e_5\rangle.
    \end{align}
\end{lemma}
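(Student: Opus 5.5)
The plan is to substitute $F=\mu+\sqrt{\mu}f$ directly into the moment definitions \eqref{DEF:density_velocity_temp} and use the normal-distribution identities \eqref{normal distribution}. Each of $1$, $v_j$, $|v|^2$ times $\mu$ will then be rewritten in terms of the basis $e_i$, so that each moment of $\sqrt{\mu}f$ becomes an inner product $\langle f,e_i\rangle$ in $L^2(\R^3)$ for fixed $(t,x)$.

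First, for the density, I would compute
$$
\rho=\int_{\R^3}\mu\,dv+\int_{\R^3}\sqrt{\mu}\,f\,dv = 1+\langle f,\sqrt{\mu}\rangle_v = 1+\langle f,e_1\rangle,
$$
using $\int\mu\,dv=1$ and $e_1=\sqrt{\mu}$.

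Second, for the momentum, I would use $\int v\mu\,dv=0$ to get, for $j=1,2,3$,
$$
(\rho U)_j=\int_{\R^3} v_j\sqrt{\mu}\,f\,dv=\langle f,e_{j+1}\rangle .
$$

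Third, for the energy, the identity \eqref{DEF:density_velocity_temp} and $\int|v|^2\mu\,dv=3$ give
$$
\rho|U|^2+3\rho T = 3+\int_{\R^3}|v|^2\sqrt{\mu}\,f\,dv .
$$
Subtracting $3\rho=3+3\int\sqrt{\mu}f\,dv$ (from the first step) leaves
$$
\rho|U|^2+3\rho T-3\rho = \int_{\R^3}(|v|^2-3)\sqrt{\mu}\,f\,dv .
$$
Dividing by $\sqrt6$ then yields $G=\langle f,e_5\rangle$, which is \eqref{change of variables 2}.

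There is no real obstacle here. The only point needing care is justifying that the integrals converge, i.e.\ that $\sqrt{\mu}f$, $v\sqrt{\mu}f$ and $|v|^2\sqrt{\mu}f$ are integrable in $v$. This follows from the standing assumption $wf\in L^\infty_{x,v}$ together with the Gaussian decay of $\sqrt{\mu}$, or alternatively from $f\in L^2_v$ by Cauchy--Schwarz against $e_i\in L^2$.
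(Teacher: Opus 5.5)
Your proposal is correct and follows essentially the same route as the paper: substitute $F=\mu+\sqrt{\mu}f$ into the moment definitions and use \eqref{normal distribution}. Then subtract $3\rho=3+3\int\sqrt{\mu}f\,dv$ from the energy moment to obtain $\sqrt{6}\langle f,e_5\rangle$. Your added remark on integrability, via $wf\in L^\infty_{x,v}$ or Cauchy--Schwarz in $L^2_v$, is a harmless extra justification that the paper leaves implicit.
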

\begin{proof}
     By \eqref{normal distribution} and the definitions of $(\rho,\,U,\,T)$, 
     direct computations give
	$$
	\rho = \int_{\R^3} F\,dv=1+\int_{\R^3} \sqrt{\mu}f\,dv =1+\langle f,e_1\rangle ,\quad \rho U =  \int_{\R^3} vF\,dv=\int_{\R^3} v\sqrt{\mu}f\,dv = (\langle f,e_{2}\rangle,\langle f,e_{3}\rangle,\langle f,e_{4}\rangle),  
	$$
	and 
	\begin{align*}
		3\rho  T 
        &= 3+  \LC\int_{\R^3} |v|^2 \sqrt{\mu}f\,dv\RC -\rho  |U |^2\\
        &= 3+ \sqrt{6} \LC\int_{\R^3} {|v|^2-3\over \sqrt{6}} \sqrt{\mu}f\,dv\RC + 3 \LC\int_{\R^3}  \sqrt{\mu}f\,dv\RC-\rho  |U |^2.
	\end{align*}
    Also, using the definition of $e_5$ and $3\rho =3+3 \int_{\R^3} \sqrt{\mu}fdv$ leads to 
    $$
    \rho  |U |^2+3\rho  T -3\rho  = \sqrt{6}\langle f,e_5\rangle.
    $$
\end{proof}

Lemma~\ref{lemma:decomposition} indicates that the function $G$ arises from the projection of $f$ onto $e_5$ in \eqref{change of variables 2}.  
In particular, we have 
$
(\rho-1,  \rho U,G)=(\langle f, e_1\rangle, \ldots,\langle f, e_5\rangle).
$
Hence, this observation inspires application of the following change of variables.

\begin{lemma}\label{lemma:jacobian}
    The Jacobian matrix of the change of variables $(\rho,U,T)\rightarrow (\rho,\rho U,G)$ is
    \begin{align*}
        {\p(\rho,\rho U,G)\over \p(\rho, U, T)}
        =\left(\begin{array}{ccccc}
            1 &  U_1 & U_2 &U_3 & {|U |^2+3 T  -3\over\sqrt{6}} \\
             0& \rho&0 & 0 & {2\rho U_1 \over \sqrt{6}}\\
           0 & 0& \rho&0  & {2\rho U_2 \over \sqrt{6}}\\
            0& 0&0  &\rho& {2\rho U_3 \over \sqrt{6}}\\
         0 &0 &0&0& {3\rho\over \sqrt{6}} \\
        \end{array}\right)
    \end{align*}
    where we denote $U:=(U_1,U_2,U_3)$, and
        \begin{align}\label{Jacobia}
        {\p(\rho,\rho U,G)\over \p(\rho, U, T)}^{-1}
        =\left(\begin{array}{ccccc}
            1 &  -{U_1\over \rho} & -{U_2\over \rho} &-{U_3\over \rho} &{-3T+|U|^2+3 \over 3\rho}\\
            0 & {1\over \rho}&0 & 0 &-{2U_1\over 3\rho}\\
            0 & 0& {1\over \rho}&0  &-{2U_2\over 3\rho}\\
            0& 0&0 & {1\over \rho} &-{2U_3\over 3\rho}\\
            0 &0&0&0&{\sqrt{2\over 3}\over \rho} \\
        \end{array}\right).
    \end{align}
\end{lemma}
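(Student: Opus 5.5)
The proof is a direct computation, and the plan has three steps: differentiate, read off the matrix, and verify the inverse by multiplication.

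\textbf{Computing the partials.} Write $\rho U=(\rho U_1,\rho U_2,\rho U_3)$ and
$$G=\frac{\rho|U|^2+3\rho T-3\rho}{\sqrt6},$$
as in \eqref{change of variables 2}. Then:
\begin{itemize}
\item With respect to $\rho$: $\partial_\rho \rho=1$, $\partial_\rho(\rho U_i)=U_i$, and $\partial_\rho G=(|U|^2+3T-3)/\sqrt6$. This gives the first row of the displayed matrix.
\item With respect to $U_j$: $\partial_{U_j}\rho=0$, $\partial_{U_j}(\rho U_i)=\rho\delta_{ij}$, and $\partial_{U_j}G=2\rho U_j/\sqrt6$. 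This gives rows $2$ to $4$.
\item With respect to $T$: $\partial_T G=3\rho/\sqrt6$, and all other entries vanish. This gives the last row.
\end{itemize}
The matrix in the statement therefore lists the derivatives with respect to $(\rho,U,T)$ along rows and the outputs $(\rho,\rho U,G)$ along columns, i.e.\ it is the transpose of the usual Jacobian. I will state this convention explicitly, since it fixes which products must be checked. The matrix is upper triangular with diagonal $(1,\rho,\rho,\rho,3\rho/\sqrt6)$, so it is invertible whenever $\rho>0$.

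\textbf{Verifying the inverse.} Rather than derive \eqref{Jacobia}, I will check that $AB=I$, where $A$ is the matrix above and $B$ is the claimed inverse. Since both matrices are upper triangular, only the entries on or above the diagonal need checking.

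\begin{itemize}
\item The diagonal entries are $1$, $\rho\cdot\rho^{-1}$, and $\frac{3\rho}{\sqrt6}\cdot\frac{\sqrt{2/3}}{\rho}=1$, using $\sqrt{2/3}/\sqrt6=1/3$.
\item Entry $(1,i+1)$ for $i=1,2,3$ is $-U_i/\rho+U_i/\rho=0$.
\item Entry $(i+1,5)$ for $i=1,2,3$ is $\rho\bigl(-\tfrac{2U_i}{3\rho}\bigr)+\tfrac{2\rho U_i}{\sqrt6}\cdot\tfrac{\sqrt{2/3}}{\rho}=-\tfrac{2U_i}{3}+\tfrac{2U_i}{3}=0$.
\item Entry $(1,5)$ is
$$\frac{-3T+|U|^2+3}{3\rho}-\sum_i\frac{2U_i^2}{3\rho}+\frac{|U|^2+3T-3}{3\rho}=\frac{-3T+|U|^2+3-2|U|^2+|U|^2+3T-3}{3\rho}=0 .$$
\end{itemize}
This confirms $AB=I$, and since $A$ is square, also $BA=I$.

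\textbf{Main obstacle.} The only delicate point is the $(1,5)$ entry, where the $T$, $|U|^2$ and constant terms must cancel exactly. That cancellation relies on the normalization $\sqrt{2/3}/\sqrt6=1/3$, so I will carry out this entry in full. Everything else is routine bookkeeping.
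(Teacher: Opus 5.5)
Your proposal is correct and follows the paper's proof, which simply computes the partial derivatives and inverts the resulting matrix. Checking $AB=I$ entrywise, including the $(1,5)$ cancellation via $\sqrt{2/3}/\sqrt{6}=1/3$, is an equivalent way to confirm the stated inverse. Your observation that the displayed matrix is the transpose of the standard Jacobian is also right, and it is the convention that makes the paper's later chain-rule formula $D_{(\rho_\theta,\rho_\theta U_\theta,G_\theta)}M(F_\theta)=J_\theta V_\theta M(F_\theta)$ consistent.
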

\begin{proof}
    The proof follows directly by first computing the Jacobian $ {\p(\rho,\rho U,G)\over \p(\rho, U, T)}$ and then inverting this matrix function to get ${\p(\rho,\rho U,G)\over \p(\rho, U, T)}^{-1}$. 
\end{proof}

In the following, we will use the subscript $\theta$ in $F_\theta=\mu+\theta\sqrt{\mu} f$ to describe the transition from $\mu$ to $\mu+\sqrt{\mu}f$.    
We also denote the corresponding density, velocity, and temperature as follows:
\begin{align}\label{DEF:macro epsilon}
    \rho_\theta=\int_{\R^3} F_\theta \,dv,\quad U_\theta= \rho_\theta^{-1}\int_{\R^3} v F_\theta\,dv,\quad \hbox{and}\quad T_\theta= (3\rho_\theta)^{-1} \int_{\R^3} |v-U_\theta|^2 F_\theta \,dv.
\end{align}
In particular, when $\theta=0$, we have $F_0=\mu(v)$ and thus $(\rho_0,\rho_0 U_0,G_0)=(1,0,0)$ due to $\rho_0=1$, $U_0=0$, $T_0=1$, which follows from \eqref{normal distribution}. 
Also, when $\theta=1$, we simply denote
$$
      (\rho,U,T)\equiv (\rho_1,U_1,T_1), \quad\hbox{ and }\quad  F\equiv F_1.
$$
Following the same computations in the proof of Lemma~\ref{lemma:decomposition} (replacing $f$ by $\theta f$), we have  
\begin{align}\label{DEF:macro theta expansion}
    (\rho_\theta,\rho_\theta U_\theta,G_\theta) = (1-\theta) (1,0,0) + \theta (\rho,\rho U,G),\quad \theta\in [0,1],
\end{align}
which describes the transition from $(1,0,0)$ to $(\rho,\rho U,G)$.

\begin{lemma}\label{lemma:first order 1}
For $\theta\in [0,1]$, let $F_\theta=\mu+\theta\sqrt{\mu} f$, we have
$$
    M(F_{\theta}) = { \rho_{\theta} \over ( 2\pi T_{\theta})^{3/2}}  \exp\LC{-|v-U_{\theta}|^2\over 2 T_{\theta}}\RC,\quad v=(v_1,v_2,v_3)\in\R^3.  
$$
Then its derivatives $D_{(\rho_{\theta },  U_{ \theta }, T_{ \theta })}  M(F_{\theta })
        :=\LC{\p M(F_\theta)\over \p \rho_\theta},{\p M(F_\theta)\over \p U_\theta},{\p M(F_\theta)\over \p T_\theta}\RC^{tr}$ is as follows:
\begin{align*}
        D_{(\rho_{\theta },  U_{ \theta }, T_{ \theta })}  M(F_{\theta })
        =   V_\theta M(F_\theta),\quad \hbox{ where we denote }V_\theta := \left(\begin{array}{c}
            {1\over \rho_{\theta }}\\
              {v_1-U_{\theta ,1} \over T_{\theta }}\\
               {v_2-U_{\theta ,2} \over T_{\theta }}\\
                {v_3-U_{\theta ,3} \over T_{\theta }}\\
               {|v-U_{\theta }|^2-3T_{\theta }\over 2T_{\theta }^2}\\
    \end{array}\right),
\end{align*}
where the transpose of a matrix $A$ is denoted by $A^{tr}$.
In particular, when $\theta=0$, we have
\begin{align*}
        \left. D_{(\rho_{\theta },  U_{ \theta }, T_{ \theta })}  M(F_{\theta }) \right|_{\theta=0}=   \left(\begin{array}{c}
           1\\
           v_1 \\
           v_2 \\
           v_3 \\
          {|v|^2-3\over 2} \\
        \end{array}\right) \mu(v).
\end{align*}
\end{lemma}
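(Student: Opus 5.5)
The plan is a direct differentiation of the explicit formula for $M(F_\theta)$, treating $(\rho_\theta, U_\theta, T_\theta)$ as five independent variables. Write
$$
M(F_\theta)=\rho_\theta (2\pi T_\theta)^{-3/2}\exp\LC-\frac{|v-U_\theta|^2}{2T_\theta}\RC .
$$
This is a product of a factor linear in $\rho_\theta$, a power of $T_\theta$, and an exponential in $(U_\theta,T_\theta)$. The cleanest route is logarithmic differentiation. Since $M(F_\theta)>0$, which requires $\rho_\theta,T_\theta>0$ (true near $\theta=0$ and for small $f$), we have
$$
\log M(F_\theta)=\log\rho_\theta-\tfrac32\log(2\pi T_\theta)-\frac{|v-U_\theta|^2}{2T_\theta},
$$
and $D M = M\, D\log M$. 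So the whole computation reduces to finding the gradient of this scalar function.

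The components follow one at a time:
\begin{itemize}
\item $\p_{\rho_\theta}\log M = 1/\rho_\theta$.
\item For $i=1,2,3$, $\p_{U_{\theta,i}}\log M = (v_i-U_{\theta,i})/T_\theta$, since $\p_{U_{\theta,i}}|v-U_\theta|^2=-2(v_i-U_{\theta,i})$.
\item $\p_{T_\theta}\log M = -\tfrac{3}{2T_\theta}+\frac{|v-U_\theta|^2}{2T_\theta^2}=\frac{|v-U_\theta|^2-3T_\theta}{2T_\theta^2}$.
\end{itemize}
Collecting these into a column vector gives exactly $V_\theta$, and multiplying by $M(F_\theta)$ yields the first claim.

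For the case $\theta=0$, we use the earlier observation that $F_0=\mu$, so $(\rho_0,U_0,T_0)=(1,0,1)$ by \eqref{normal distribution}, and $M(F_0)=M(\mu)=\mu$. Substituting these values gives $V_0=(1,v_1,v_2,v_3,(|v|^2-3)/2)^{tr}$, and hence the stated formula.

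There is no real obstacle. The only point needing care is interpretive: the derivative is taken with $(\rho_\theta,U_\theta,T_\theta)$ treated as independent variables, not as functions of $\theta$. The chain rule through $\theta$, using Lemma~\ref{lemma:jacobian} and \eqref{DEF:macro theta expansion}, is a separate step that will be used later.
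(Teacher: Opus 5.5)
Your proposal is correct and is essentially the paper's proof: the paper simply says the identity follows by straightforward computation, then substitutes $\rho_0=T_0=1$, $U_0=0$, $M(F_0)=\mu$, and your logarithmic differentiation is just a clean way to carry out that computation. One minor remark: positivity of $\rho_\theta$ is not actually needed for the $\rho_\theta$-component, since $M(F_\theta)$ is linear in $\rho_\theta$; only $T_\theta>0$ is required for the formula to make sense.
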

\begin{proof}
    The first identity can be verified by straightforward computations.
    When $\theta=0$, we have $F_0(t,x,v)=\mu(v)$, and combine it with $\rho_0=T_0=1$, $U_0=0$ leading to the desired result.
\end{proof}

Now we derive an expansion of the nonlinear term $M(F)$, see \cite{CKP2026, Yun10} for relevant discussions.
\begin{proposition}\label{Prop:linearization of M(F) new} Let $F =\mu+ \sqrt{\mu}f$. 
    Then the BGK operator $M(F)$ can be expanded around $\mu$ as follows:
		\begin{align*}
			M(F) = \mu +  \sqrt{\mu}\, \mathbf{P} f+ \sum_{i,j=1}^5 \LC \int^1_0  D^2_{(\rho_{\theta }, \rho_{ \theta } U_{ \theta }, G_{ \theta })}   M(F_{\theta})(1-\theta)\,d\theta\RC_{ij} \langle f,e_i\rangle\langle f,e_j\rangle . 
		\end{align*}
\end{proposition}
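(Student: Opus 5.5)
We regard $M(F_\theta)$ as a function of the variables $Y_\theta:=(\rho_\theta,\rho_\theta U_\theta,G_\theta)\in\R^5$. By \eqref{DEF:macro theta expansion}, $Y_\theta$ depends affinely on $\theta$: $Y_\theta=(1,0,0)+\theta Z$ with $Z=(\langle f,e_1\rangle,\dots,\langle f,e_5\rangle)$, which follows from Lemma~\ref{lemma:decomposition}. Hence $\frac{d}{d\theta}Y_\theta=Z$ and $\frac{d^2}{d\theta^2}Y_\theta=0$. We then apply Taylor's formula with integral remainder to the scalar function $\phi(\theta):=M(F_\theta)$ on $[0,1]$:
\begin{align*}
\phi(1)=\phi(0)+\phi'(0)+\int_0^1(1-\theta)\phi''(\theta)\,d\theta .
\end{align*}
Since $Y_\theta$ is affine in $\theta$, the chain rule gives $\phi'(\theta)=D_{Y_\theta}M(F_\theta)\cdot Z$ and $\phi''(\theta)=\sum_{i,j}(D^2_{Y_\theta}M(F_\theta))_{ij}Z_iZ_j$, with no first-derivative correction term. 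Together with $\phi(0)=M(\mu)=\mu$, this already yields the quadratic remainder term in the statement.

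It remains to show $\phi'(0)=\sqrt{\mu}\,\mathbf{P}f$. By the chain rule,
\begin{align*}
D_{(\rho,\rho U,G)}M = \Big(\tfrac{\p(\rho,\rho U,G)}{\p(\rho,U,T)}\Big)^{-tr} D_{(\rho,U,T)}M .
\end{align*}
We evaluate at $\theta=0$, where $\rho=T=1$ and $U=0$. From \eqref{Jacobia}, the inverse Jacobian reduces to $\mathrm{diag}(1,1,1,1,\sqrt{2/3})$, because its first-row entry $(-3T+|U|^2+3)/(3\rho)$ vanishes there. Lemma~\ref{lemma:first order 1} gives $D_{(\rho,U,T)}M|_{\theta=0}=(1,v_1,v_2,v_3,\tfrac{|v|^2-3}{2})^{tr}\mu$. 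Multiplying, the gradient in the new variables is $(1,v_1,v_2,v_3,\tfrac{|v|^2-3}{\sqrt6})^{tr}\mu$. Each component is exactly $\sqrt{\mu}\,e_i$; for the last one, $\sqrt{2/3}\cdot\tfrac{|v|^2-3}{2}=\tfrac{|v|^2-3}{\sqrt6}$. Therefore
$\phi'(0)=\sum_i \sqrt{\mu}\,e_i\langle f,e_i\rangle=\sqrt{\mu}\,\mathbf{P}f$.

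The main point needing care is justifying the Taylor expansion. We need $M(F_\theta)$ to be $C^2$ in $\theta$ along the whole path, which requires $\rho_\theta>0$ and $T_\theta>0$ for $\theta\in[0,1]$. This holds as long as $F_\theta$ stays in the region where the macroscopic quantities are well defined: $\rho_\theta=1+\theta\langle f,e_1\rangle>0$, and $T_\theta$ is positive whenever $\rho_\theta>0$ and $F_\theta\ge0$ is nonzero, or more generally under smallness of $wf$, which the appendix estimates control. Under that standing assumption, $M$ is a smooth function of $(\rho,U,T)$ on $\{\rho>0,T>0\}$, and the change of variables of Lemma~\ref{lemma:jacobian} is a diffeomorphism there. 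The remaining algebra is routine.
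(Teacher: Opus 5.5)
Your proposal is correct and follows the paper's proof essentially step by step: a Taylor expansion with integral remainder of $\theta\mapsto M(F_\theta)$, using that $(\rho_\theta,\rho_\theta U_\theta,G_\theta)$ is affine in $\theta$, the chain rule for the Hessian term, and the evaluation of $h'(0)$ through the inverse Jacobian \eqref{Jacobia}, which reduces to $\mathrm{diag}(1,1,1,1,\sqrt{2/3})$ at $\theta=0$, to obtain $\sqrt{\mu}\,\mathbf{P}f$. Your remark that $\rho_\theta,T_\theta$ must stay positive along the path (so that $M(F_\theta)$ is $C^2$ in $\theta$) makes explicit a regularity point the paper leaves implicit.
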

\begin{proof}
We define a function  
    $$
    h(\theta):= M(F_{\theta }) ,\quad \theta\in [0,1],
    $$
    which represents the transition from the global Maxwellian $h(0)=M(F_0
    )=\mu$ to the local Maxwellian $h(1) =M(F )$.

    Applying the Taylor's theorem around $\theta=0$ yields 
    \begin{align*}
        h(1) = h(0)+h'(0) + \int^1_0h''(\theta)(1-\theta)d\theta.
    \end{align*}
    To compute $h'(0)$, recall that we can view $M(F_\theta)$ as a function of the variables $(\rho_\theta,\rho_\theta U_\theta,G_\theta)$. We apply \eqref{DEF:macro theta expansion}, Lemma~\ref{lemma:first order 1}, and the chain rule to get
    \begin{align*}
        h'(0)&=   \left. {d\over d\theta}M(F_{\theta }) \right|_{\theta=0}\\
        &=  {d\over d\theta} (\rho_{ \theta }, \rho_{ \theta } U_{\theta }, G_{\theta })
        \left. D_{(\rho_{\theta }, \rho_{ \theta } U_{ \theta }, G_{ \theta })}  M(F_{\theta })\right|_{\theta=0}\\
         &=(\rho-1, \rho  U , G ) {\p(\rho_{\theta} , \rho_{\theta} U_{\theta} , G_{\theta})\over \p(\rho_{ \theta}, U_{ \theta}, T_{ \theta})}^{-1} \left. D_{(\rho_{\theta},  U_{\theta }, T_{\theta})} M(F_{\theta })\right|_{\theta=0}\\
         &=(\rho-1, \rho U, G)
         \left(\begin{array}{ccccc}
            1 &  0 & 0 &0 &0\\
            0& 1&0 & 0 &0\\
            0& 0&1 &0  &0\\
            0& 0&0 &1&0\\
           0 & 0 &0 &0& \sqrt{2\over 3} \\
        \end{array}\right)
        \left(\begin{array}{c}
            1 \\
            v_1 \\
             v_2\\
             v_3\\
             |v|^2-3\over 2 \\
        \end{array}\right) \mu (v) 
        =\sqrt{\mu}\,\mathbf{P}f,
    \end{align*}
   where in the last identity above, the $5\times 5$ matrix follows by using \eqref{Jacobia} in Lemma~\ref{lemma:jacobian} and $\rho_0=1$, $U_0=0$, and $T_0=1$.
   Also, we applied the definition of the operator $\mathbf{P}$ and the fact that 
   $$
       (\rho-1, \rho U, G) =(\langle f,e_1\rangle, \ldots, \langle f,e_5\rangle).
   $$
   Finally, we conclude the proof by using the chain rule again to get  
   \begin{align*}
       h''(\theta)&={d^2\over d\theta^2}M(F_{\theta })\\
       &= (\rho-1, \rho U, G) D^2_{(\rho_{\theta}, \rho_{ \theta} U_{ \theta}, G_{ \theta})} M(F_{\theta })(\rho-1, \rho U, G)^{tr}\\
       &=(\langle f,e_1\rangle ,  \ldots,\langle f,e_5\rangle)
       D^2_{(\rho_{\theta}, \rho_{ \theta} U_{ \theta}, G_{ \theta})}  M(F_{\theta })
       (\langle f,e_1\rangle , \ldots, \langle f,e_5\rangle)^{tr}.
   \end{align*}
\end{proof}

\begin{proposition}\label{Prop:linearization of q} Let $F =\mu+ \sqrt{\mu}f$  
    and
    $$      
    q(t,x,F)=\gamma(x)\rho^{\alpha}T^{\beta}.
    $$
Then $q$ can be expanded as follows:
		\begin{align*}
			q(t,x,F)= \gamma +\sum^{5}_{i=1} \LC\int^1_0 D_{(\rho_{\theta},\rho_{\theta} U_{\theta}, G_{\theta})} q(t,x, F_\theta) d\theta\RC_i\langle f,e_i\rangle.
		\end{align*}
\end{proposition}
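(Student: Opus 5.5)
This is a first-order Taylor (mean value) formula along the same path $\theta\mapsto F_\theta=\mu+\theta\sqrt{\mu}f$ used in Proposition~\ref{Prop:linearization of M(F) new}. Define
$$
k(\theta):=q(t,x,F_\theta)=\gamma(x)\rho_\theta^{\alpha}T_\theta^{\beta},\qquad \theta\in[0,1].
$$
At $\theta=0$ we have $F_0=\mu$, so $\rho_0=T_0=1$ and $k(0)=\gamma(x)$. The fundamental theorem of calculus then gives
$$
q(t,x,F)=k(1)=k(0)+\int_0^1 k'(\theta)\,d\theta=\gamma+\int_0^1 k'(\theta)\,d\theta .
$$

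To compute $k'(\theta)$, regard $q(t,x,F_\theta)$ as a function of the variables $(\rho_\theta,\rho_\theta U_\theta,G_\theta)$. This is legitimate because the change of variables $(\rho,U,T)\mapsto(\rho,\rho U,G)$ of Lemma~\ref{lemma:jacobian} is smooth with the explicit inverse Jacobian \eqref{Jacobia}, valid as long as $\rho_\theta>0$ and $T_\theta>0$. By \eqref{DEF:macro theta expansion}, $(\rho_\theta,\rho_\theta U_\theta,G_\theta)$ is affine in $\theta$ with derivative $(\rho-1,\rho U,G)$. Lemma~\ref{lemma:decomposition} identifies this derivative with $(\langle f,e_1\rangle,\ldots,\langle f,e_5\rangle)$. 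The chain rule therefore gives
$$
k'(\theta)=\sum_{i=1}^5\big(D_{(\rho_\theta,\rho_\theta U_\theta,G_\theta)}q(t,x,F_\theta)\big)_i\,\langle f,e_i\rangle .
$$
The pairings $\langle f,e_i\rangle$ do not depend on $\theta$, so they come out of the integral, which yields the stated formula. If one wants the gradient explicitly, it is $(\partial(\rho,\rho U,G)/\partial(\rho,U,T))^{-1}$ applied to $(\alpha q/\rho_\theta,\,0,\,0,\,0,\,\beta q/T_\theta)^{tr}$. This is not needed for the identity itself.

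The main obstacle is justifying differentiability along the whole path. Since $\alpha,\beta$ are arbitrary real numbers, we need $\rho_\theta$ and $T_\theta$ to stay positive, and bounded away from zero, for every $\theta\in[0,1]$. For $\rho_\theta=1+\theta\langle f,e_1\rangle$ this follows whenever $|\langle f,e_1\rangle|<1$. For $T_\theta$, solve \eqref{change of variables 2} to get
$$
3\rho_\theta T_\theta=3\rho_\theta+\sqrt6\,G_\theta-\rho_\theta|U_\theta|^2 .
$$
This is close to $3$ when $\|wf\|_{L^\infty_{x,v}}$ is small, because each $|\langle f,e_i\rangle|\lesssim\|wf\|_{L^\infty_{x,v}}$ by the weighted estimate from the introduction, using $w^{-1}e_i\in L^1$. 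So I would state the proposition under the standing smallness assumption on $f$, or under the assumption $\rho>0$, $T>0$ along the segment, and then apply the chain-rule argument pointwise in $(t,x)$.
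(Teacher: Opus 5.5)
Your argument is correct and is essentially the paper's own proof. You apply the fundamental theorem of calculus to $\theta\mapsto q(t,x,F_\theta)$, which equals $\gamma$ at $\theta=0$, and then the chain rule in the affine variables $(\rho_\theta,\rho_\theta U_\theta,G_\theta)$, whose $\theta$-derivative is $(\langle f,e_1\rangle,\ldots,\langle f,e_5\rangle)$.

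Your caveat that $\rho_\theta$ and $T_\theta$ must stay positive along the whole segment is a hypothesis the paper leaves implicit, so it is worth keeping; it holds under the standing smallness of $\|wf\|_{L^\infty_{x,v}}$, as in Lemma~\ref{prop:estimate integral Pi Qij}.
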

 
\begin{proof}
We denote $\tilde{h}(\theta):=q(t,x,F_{\theta})$.
From Taylor's theorem, 
\begin{align*}
        \tilde{h}(1) = \tilde{h}(0)+  \int^1_0 \tilde{h}' (\theta) d\theta,
\end{align*}
where $\tilde{h}(0)=\gamma$ as $\rho_0=T_0=1$. We compute
    \begin{align*}
        \tilde{h}'(\theta)&=(\rho-1, \rho U, G) D_{(\rho_{\theta},\rho_{\theta} U_{\theta}, G_{\theta})}q(t,x,F_{\theta})  \\
        &= (\langle f,e_1\rangle , \ldots, \langle f,e_5\rangle) {\p(\rho_{\theta},\rho_{\theta} U_{\theta},G_{\theta})\over \p(\rho_{\theta}, U_{\theta}, T_{\theta})}^{-1}  D_{(\rho_{\theta}, U_{\theta},T_{\theta})} q(t,x,F_{\theta}),
    \end{align*}
    where more precisely,
    \begin{align}\label{first derivative of q}
        D_{(\rho_{\theta},\rho_{\theta} U_{\theta}, G_{\theta})}q(t,x,F_{\theta})
        &={\p(\rho_{\theta},\rho_{\theta} U_{\theta},G_{\theta})\over \p(\rho_{\theta}, U_{\theta}, T_{\theta})}^{-1}  D_{(\rho_{\theta}, U_{\theta},T_{\theta})} q(t,x,F_{\theta}) \notag\\
        &=\underbrace{\left(\begin{array}{ccccc}
            1 &  -{U_{\theta,1}\over \rho_\theta} & -{U_{\theta,2}\over \rho_\theta} &-{U_{\theta,3}\over \rho_\theta} &{-3T_{\theta}+|U_{\theta}|^2+3 \over 3\rho_{\theta}}\\
            0 & {1\over \rho_{\theta}}&0 & 0 &-{2U_{\theta,1}\over 3\rho_{\theta}}\\
            0 & 0& {1\over \rho_{\theta}}&0  &-{2U_{\theta,2}\over 3\rho_{\theta}}\\
            0& 0&0 & {1\over \rho_{\theta}} &-{2U_{\theta,3}\over 3\rho_{\theta}}\\
            0 &0&0&0&{\sqrt{2\over 3}\over \rho_{\theta}} \\
        \end{array}\right)}_{\hbox{ denoted by $J_{\theta}$}}
        \underbrace{\left(\begin{array}{c}
            \gamma(x)\alpha\rho_{\theta}^{\alpha-1} T_{\theta}^{\beta}\\
            0\\
            0\\
            0\\
            \gamma(x)\beta \rho_{\theta}^{\alpha} T_{\theta}^{\beta-1}\\
        \end{array}\right)}_{D_{(\rho_{\theta}, U_{\theta},T_{\theta})} q(t,x,F_{\theta})}.
    \end{align}
\end{proof}

Fix $\varepsilon>0$, following arguments in \cite{CKP2026, Yun10}, we denote 
$$
 \mathcal{P}_i (f):= \LC D_{(\rho_{\theta},\rho_{\theta} U_{\theta}, G_{\theta})} \tilde{h}(\theta)\RC_i,\quad  1\leq i\leq 5, 
$$
and 
$$
    \mathcal{Q}_{ij}(f) : = \{D^2_{(\rho_{\theta}, \rho_{ \theta} U_{ \theta}, G_{\theta})}  h(\theta)\}_{ij},\quad 1\leq i,\,j\leq 5,
$$
where we recall $ h(\theta)=M(F_{\theta}) =  { \rho_{\theta} \over ( 2\pi T_{\theta})^{3/2}}  \exp\LC{-|v-U_{\theta}|^2\over 2 T_{\theta}}\RC $.
Without fully expanding the expressions of $\mathcal{P}_i$ and $\mathcal{Q}_{ij}$, the following forms of $\mathcal{P}_i$ and $\mathcal{Q}_{ij}$ will be sufficient for our analysis later in proving the well-posedness results. 

For a positive integer $m$, we denote $m$-dimensional multi-index  
$\beta=(\beta_1,\ldots,\beta_{m})\in \R^m$ of non-negative integers $\beta_j$, and $|\beta|=\beta_1+\ldots+\beta_{m}$. 
We write $x^\beta = x_1^{\beta_1}\ldots x_{m}^{\beta_{m}}$ for a vector $x\in \R^{m}$.

From the above computations of $\tilde{h}'(\theta)$ in Proposition~\ref{Prop:linearization of q}, it is clear that the denominator of each term of $D_{(\rho_{\theta},\rho_{\theta} U_{\theta}, G_{\theta})}q$ only consists of functions $\rho_{\theta}$ and $T_{\theta}$. Thus, we have the following result.

\begin{lemma}\label{lemma:first derivative of P}
For each $1\leq i\leq 5$, we have
\begin{align*}
       \mathcal{P}_i(f) = {P_{i} ( \rho_{ \theta},U_{ \theta}, T_{ \theta})\over R_{i}(\rho_{ \theta},T_{ \theta})}, 
    \end{align*}
    where $R_{i}(\rho_{ \theta},T_{ \theta}) = a_\sigma (\rho_{ \theta})^{\sigma_1} (T_{ \theta})^{\sigma_2}$ is a monomial (a polynomial which has only one term) with constants $a_{\sigma}>0$ and $\sigma_1,\,\sigma_2\geq 0$ are constants. And
    \begin{align*}
        P_{i}(\rho_{ \theta},U_{ \theta}, T_{ \theta}) = \sum_{\kappa\in S_{i} }b_\kappa (\rho_{ \theta})^{\kappa_1}  (U_{ \theta,1})^{\kappa_2} (U_{ \theta,2})^{\kappa_3}(U_{ \theta,3})^{\kappa_4}   (T_{ \theta})^{\kappa_5},
    \end{align*}
    where $b_\kappa $ is a constant, $\kappa=(\kappa_1,\ldots,\kappa_5)$ with $\kappa_1,\ldots,\kappa_5\geq 0$ are constants, and $S_i$ is a collection of finitely many $\kappa$.
    Note that $(\rho_{\theta}, U_{\theta}, T_{\theta})$ is a $5$-dim vector function.
\end{lemma}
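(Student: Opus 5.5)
The plan is to read off the structure of $\mathcal{P}_i(f)$ directly from the explicit product in \eqref{first derivative of q}. There we have $D_{(\rho_\theta,\rho_\theta U_\theta,G_\theta)}q = J_\theta\, D_{(\rho_\theta,U_\theta,T_\theta)}q$, and only the first and fifth entries of $D_{(\rho_\theta,U_\theta,T_\theta)}q$ are nonzero. Hence, for each $i$,
\begin{align*}
\mathcal{P}_i(f) = (J_\theta)_{i1}\,\gamma\alpha\rho_\theta^{\alpha-1}T_\theta^{\beta} + (J_\theta)_{i5}\,\gamma\beta\rho_\theta^{\alpha}T_\theta^{\beta-1}.
\end{align*}
I will then go row by row through $J_\theta$ using \eqref{Jacobia}. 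The relevant entries are $(J_\theta)_{11}=1$, $(J_\theta)_{15}=(-3T_\theta+|U_\theta|^2+3)/(3\rho_\theta)$, $(J_\theta)_{j1}=0$ and $(J_\theta)_{j5}=-2U_{\theta,j-1}/(3\rho_\theta)$ for $j=2,3,4$, and $(J_\theta)_{51}=0$, $(J_\theta)_{55}=\sqrt{2/3}/\rho_\theta$. Every entry is a polynomial in $(\rho_\theta,U_\theta,T_\theta)$ divided by a nonnegative power of $\rho_\theta$.

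Next I bring each $\mathcal{P}_i$ to a common denominator. The factors $\rho_\theta^{\alpha-1}T_\theta^\beta$ and $\rho_\theta^\alpha T_\theta^{\beta-1}$ may carry negative or non-integer exponents. I will split each exponent into positive and negative parts: the negative parts go into the denominator monomial $R_i=a_\sigma\rho_\theta^{\sigma_1}T_\theta^{\sigma_2}$ with $\sigma_1,\sigma_2\ge0$, and the positive parts stay in the numerator. For instance, for $i=1$ I will multiply numerator and denominator by $\rho_\theta T_\theta$ (times whatever additional powers are needed when $\alpha$ or $\beta$ is negative). The numerator then becomes a finite sum of terms $b_\kappa\rho_\theta^{\kappa_1}U_{\theta,1}^{\kappa_2}U_{\theta,2}^{\kappa_3}U_{\theta,3}^{\kappa_4}T_\theta^{\kappa_5}$ with real exponents $\kappa_\ell\ge0$. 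This is permitted, since the lemma only asks that the $\kappa$'s be nonnegative constants, not integers. The $U$-exponents come only from $|U_\theta|^2$ and $U_{\theta,j}$, so they are nonnegative integers.

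Finally I will absorb the constants. The coefficients $\gamma(x)\alpha$, $\gamma(x)\beta$, $\sqrt{2/3}$, $-2/3$ and so on go into $b_\kappa$, which may depend on $x$ through $\gamma$; I treat this as harmless since $x$ is fixed. I also need $a_\sigma>0$, which holds because the denominators come from $3\rho_\theta$ and $\rho_\theta$ with positive constants, and any signs are pushed into $b_\kappa$. A zero entry ($\alpha=0$ or $\beta=0$) simply drops terms, so no separate case is needed.

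The only delicate point is bookkeeping: choosing $\sigma_1,\sigma_2$ uniformly so that both summands share one denominator with nonnegative numerator exponents. Taking $\sigma_1 = 1+\max(0,1-\alpha)$ and $\sigma_2=\max(0,1-\beta)$ suffices; both are nonnegative, and they leave nonnegative exponents in the numerators.
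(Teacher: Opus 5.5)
Your proposal is correct and follows the paper's route. The paper reads the structure directly off the explicit product $J_\theta\, D_{(\rho_\theta,U_\theta,T_\theta)}q$ in \eqref{first derivative of q}, noting only that every denominator involves just $\rho_\theta$ and $T_\theta$. Your row-by-row computation, your choice of $\sigma_1,\sigma_2$ to clear possibly negative or non-integer powers, and your remark that $b_\kappa$ carries the factor $\gamma(x)$ simply supply the bookkeeping the paper leaves implicit.
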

 
\begin{remark}
     Notice that in the definition of $ q(t,x,F)=\gamma(x)\rho^{\alpha}T^{\beta}$, we only suppose that $\alpha,\,\beta$ are some constants, and thus the powers (i.e., $\sigma_j$ and $\kappa_k$) in $\mathcal{P}_i(f)$ above are not necessarily integers.  
\end{remark}

The second derivatives of $M(F_{\theta})$ appearing in $h''(\theta)$ in the proof of Proposition~\ref{Prop:linearization of M(F) new} can be expressed implicitly as follows:
\begin{lemma}\label{lemma:first derivative of Qij}
For each $1\leq i,\,j\leq 5$, we have
    \begin{align*}
        \mathcal{Q}_{ij}(f)={P_{ij}(\rho_{ \theta},v-U_{ \theta},U_{ \theta}, T_{ \theta})\over R_{ij}(\rho_{ \theta},T_{ \theta})}M(F_{ \theta}),\quad  
    \end{align*}
    where $R_{ij}(\rho_{ \theta},T_{ \theta}) = a_\sigma(\rho_{ \theta} T_{ \theta})^{\sigma}$ is a monomial with constants $a_{\sigma}>0$ and $\sigma\in (\mathbb{N}\cup \{0\})^2$ is a multi-index depending on $R_{ij}$. Moreover,
    \begin{align*}
        P_{ij}(\rho_{ \theta},v-U_{ \theta},U_{ \theta}, T_{ \theta}) = \sum_{ \kappa \in S_{ij} }b_\kappa [ \rho_{ \theta} (v-U_{ \theta}) U_{ \theta}  T_{ \theta})]^\kappa,
    \end{align*}
    where $b_\kappa $ is a constant, $\kappa\in (\mathbb{N}\cup \{0\})^{8}$ is a multi-index, and $S_{ij}$ is a collection of finitely many $\kappa$. 
    Note that $(\rho_{ \theta} , (v-U_{ \theta}) ,U_{ \theta},  T_{ \theta})$ is an $8$-dim vector function.  
\end{lemma}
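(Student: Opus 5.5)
The plan is to compute $D^2_{(\rho_\theta,\rho_\theta U_\theta,G_\theta)}M(F_\theta)$ by the chain rule. We pass through the variables $(\rho_\theta,U_\theta,T_\theta)$, exactly as was done for the first derivative in the proof of Proposition~\ref{Prop:linearization of M(F) new} and for $q$ in Proposition~\ref{Prop:linearization of q}. Abbreviate $y=(\rho_\theta,\rho_\theta U_\theta,G_\theta)$ and $z=(\rho_\theta,U_\theta,T_\theta)$, and set $J_\theta=\partial z/\partial y$, which is the inverse matrix \eqref{Jacobia} evaluated at $\theta$. The chain rule gives
\begin{align*}
D_y M = J_\theta\, D_z M = J_\theta V_\theta M(F_\theta),
\end{align*}
using Lemma~\ref{lemma:first order 1}. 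Differentiating once more in $y$ and applying the chain rule again gives
\begin{align*}
D^2_y M = \sum_{k} (J_\theta)_{k\cdot}\, D_y\big[(V_\theta)_k M(F_\theta)\big] + \sum_k D_y\big[(J_\theta)_{k\cdot}\big](V_\theta)_k M(F_\theta).
\end{align*}
Here $D_y[\cdot]=J_\theta D_z[\cdot]$, so every term is either $J_\theta$ or $D_z J_\theta$ combined with $D_zV_\theta$ or $V_\theta V_\theta^{tr}$, and each term multiplies $M(F_\theta)$. In particular, every entry $\mathcal{Q}_{ij}(f)$ is a finite sum of products of entries from the following list, times $M(F_\theta)$:
\begin{itemize}
\item entries of $J_\theta$;
\item their $z$-derivatives;
\item entries of $V_\theta$;
\item their $z$-derivatives.
\end{itemize}

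The key step is to record the structure of each of these entries. From \eqref{Jacobia}, each entry of $J_\theta$ is a polynomial in $(U_\theta,T_\theta)$ with integer coefficients divided by $\rho_\theta$ (the constants $3$, $\sqrt{2/3}$ are harmless). Its $z$-derivatives are likewise polynomials in $(U_\theta,T_\theta)$ divided by $\rho_\theta$ or $\rho_\theta^2$. From Lemma~\ref{lemma:first order 1}, the entries of $V_\theta$ are $1/\rho_\theta$, $(v_k-U_{\theta,k})/T_\theta$, and $(|v-U_\theta|^2-3T_\theta)/(2T_\theta^2)$. Their $z$-derivatives have the same form:
\begin{itemize}
\item polynomials in $(v-U_\theta, T_\theta)$ divided by $\rho_\theta^2$, $T_\theta$, $T_\theta^2$ or $T_\theta^3$;
\item derivatives in $U_\theta$ acting on $v-U_\theta$ just produce constants, by the chain rule.
\end{itemize}
The only subtlety in this bookkeeping is to keep $v$ appearing only through $v-U_\theta$. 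Since $M(F_\theta)$ and $V_\theta$ depend on $v$ only through $v-U_\theta$, this holds automatically.

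Finally, I would multiply out. Each product is a polynomial in the 8-vector $(\rho_\theta,v-U_\theta,U_\theta,T_\theta)$ with nonnegative integer exponents, divided by a monomial $\rho_\theta^{\sigma_1}T_\theta^{\sigma_2}$ with integer $\sigma_i\ge0$. To bring a finite sum of such terms to the single form claimed in the lemma, I would put them over the common denominator $R_{ij}=a_\sigma\rho_\theta^{\sigma_1}T_\theta^{\sigma_2}$ with the maximal exponents, taking $a_\sigma=1$ or any positive normalization. The numerators then become polynomials $P_{ij}$ with finitely many multi-indices $\kappa\in(\mathbb N\cup\{0\})^8$.

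I expect no genuine obstacle here, only careful bookkeeping. The point to check is that no non-integer powers appear, unlike in Lemma~\ref{lemma:first derivative of P}, where $\alpha,\beta$ enter. This holds because $M$ itself involves only $\rho_\theta$, $T_\theta^{-3/2}$ and the exponential. Differentiating these reproduces $M(F_\theta)$ times rational factors, since $\partial_{T}T^{-3/2}=-\tfrac32 T^{-1}T^{-3/2}$, so the factor $T^{-3/2}$ is absorbed into $M(F_\theta)$.
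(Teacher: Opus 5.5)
Your proposal is correct and takes essentially the same route as the paper. You apply the chain rule through $(\rho_\theta,U_\theta,T_\theta)$ to get $D_{(\rho_\theta,\rho_\theta U_\theta,G_\theta)}M(F_\theta)=J_\theta V_\theta M(F_\theta)$, apply it once more as $J_\theta D_{(\rho_\theta,U_\theta,T_\theta)}[J_\theta V_\theta M(F_\theta)]$, and then do explicitly the bookkeeping the paper leaves implicit: the entries of $J_\theta$, $V_\theta$ and their derivatives, the $v$-dependence only through $v-U_\theta$, and the passage to a common monomial denominator.
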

\begin{proof}
From Lemma~\ref{lemma:jacobian} and Lemma~\ref{lemma:first order 1}, by recalling the definition of $V_\theta$ and $J_\theta={\p(\rho_{\theta},\rho_{\theta} U_{\theta},G_{\theta})\over \p(\rho_{\theta}, U_{\theta}, T_{\theta})}^{-1}$, we have
\begin{align*}
        D_{(\rho_{\theta }, \rho_\theta U_{ \theta }, G_{ \theta })}  M(F_{\theta })
        = {\p(\rho_{ \theta }, \rho_{ \theta } U_{ \theta }, G_{\theta})\over \p(\rho_{ \theta }, U_{ \theta }, T_{ \theta })}^{-1}
       D_{(\rho_{\theta },  U_{\theta }, T_{\theta })} M(F_\theta)
       =   J_\theta V_\theta M(F_\theta).
\end{align*}
This then leads to
\begin{align*}
     \{D^2_{(\rho_{\theta }, \rho_\theta U_{ \theta }, G_{ \theta })}  M(F_{\theta })\}_{ij}
     &=\{D_{(\rho_{\theta }, \rho_\theta U_{ \theta }, G_{ \theta })} D_{(\rho_{\theta }, \rho_\theta U_{ \theta }, G_{ \theta })}  M(F_{\theta }) \}_{ij}\\
     &= \{ D_{(\rho_{\theta }, \rho_\theta U_{ \theta }, G_{ \theta })} [J_\theta V_\theta M(F_\theta)]\}_{ij}\\
     &=\{ J_{\theta}D_{(\rho_{\theta },  U_{\theta }, T_{\theta  })}[J_\theta V_\theta M(F_\theta)]\}_{ij}\\
     &={P_{ij}(\rho_{ \theta},v-U_{ \theta},U_{ \theta}, T_{ \theta})\over R_{ij}(\rho_{ \theta},T_{ \theta})}M(F_{ \theta}).
\end{align*}
\end{proof}

Finally, we end this section by stating the following linearization of the nonlinear BGK collision.
\begin{proposition}\label{Prop:linearization of M(F)} Let $F=\mu+ \sqrt{\mu}f$. Then we have
		\begin{align*}
			q(t,x,F ) \LC M(F )-F \RC = \sqrt{\mu}L_\gamma f+ \sqrt{\mu}( \Gamma f),
		\end{align*}
    where the linear term is denoted as
    \begin{align}\label{DEF:L f}
    L_\gamma (f) := \gamma (\mathbf{P} -\mathbf{I})f,
\end{align}
    and the nonlinear term is denoted as 
    \begin{align}\label{DEF:Gamma f}
    \Gamma (f)
    &=:   \Gamma_1(f)+  \Gamma_2(f)+ \Gamma_3(f),
\end{align}
with
$$
     \Gamma_1(f):= (\mathbf{P}f-f) \underbrace{\LC  \sum^{5}_{\ell=1} \LC\int^1_0 \mathcal{P}_\ell (f)d\theta\RC\langle f,e_\ell\rangle \RC}_{\hbox{denoted by }\Upsilon_1(f)},  
$$
$$
    \Gamma_2(f):= \gamma \sqrt{\mu}^{-1} \underbrace{\sum_{i,j=1}^5\LC \int^1_0  \mathcal{Q}_{ij}(f)(1-\theta)\,d\theta\RC \langle f,e_i\rangle\langle f,e_j\rangle }_{\hbox{denoted by }\Upsilon_2(f)}   ,
$$
and
$$
    \Gamma_3(f):= \sqrt{\mu}^{-1} \underbrace{\LC \sum_{i,j,\ell=1}^5 \LC\int^1_0 \mathcal{P}_\ell (f)d\theta\RC\LC \int^1_0  \mathcal{Q}_{ij}(f)(1-\theta)\,d\theta\RC\langle f,e_i\rangle\langle f,e_j\rangle\langle f,e_\ell\rangle\RC}_{\hbox{denoted by }\Upsilon_3(f)=\Upsilon_1(f)\Upsilon_2(f)}.
$$
\end{proposition}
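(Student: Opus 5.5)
The identity follows by multiplying out the two expansions already established, Proposition~\ref{Prop:linearization of q} for $q$ and Proposition~\ref{Prop:linearization of M(F) new} for $M(F)$, and regrouping the terms by order.

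First we rewrite Proposition~\ref{Prop:linearization of q} in the notation of the statement. By definition, $\mathcal P_\ell(f)$ is the $\ell$-th component of $D_{(\rho_\theta,\rho_\theta U_\theta,G_\theta)}q(t,x,F_\theta)$. Hence
\[
q(t,x,F)=\gamma+\Upsilon_1(f).
\]
Similarly, $\mathcal Q_{ij}(f)$ is the $(i,j)$ entry of $D^2_{(\rho_\theta,\rho_\theta U_\theta,G_\theta)}M(F_\theta)$, so Proposition~\ref{Prop:linearization of M(F) new} reads
\[
M(F)=\mu+\sqrt{\mu}\,\mathbf P f+\Upsilon_2(f).
\]
Since $F=\mu+\sqrt\mu f$, this gives
\[
M(F)-F=\sqrt{\mu}(\mathbf P f-f)+\Upsilon_2(f).
\]

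Next we multiply the two factors:
\[
q(M(F)-F)=\big(\gamma+\Upsilon_1(f)\big)\big(\sqrt\mu(\mathbf P-\mathbf I)f+\Upsilon_2(f)\big).
\]
Expanding produces four terms:
\begin{itemize}
\item $\gamma\sqrt\mu(\mathbf P-\mathbf I)f=\sqrt\mu\,L_\gamma f$, which is the linear part, by \eqref{DEF:L f};
\item $\Upsilon_1(f)\sqrt\mu(\mathbf P f-f)=\sqrt\mu\,\Gamma_1(f)$;
\item $\gamma\Upsilon_2(f)=\sqrt\mu\,\Gamma_2(f)$;
\item $\Upsilon_1(f)\Upsilon_2(f)=\Upsilon_3(f)=\sqrt\mu\,\Gamma_3(f)$.
\end{itemize}
Summing the four terms gives the claimed identity.

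There are two small points to check. First, in $\Upsilon_3$ the double sum over $\ell$ and over $(i,j)$ factorizes into the product $\Upsilon_1\Upsilon_2$. This holds because the $\theta$-integrals are taken separately in each factor, so the product of the two sums is exactly the triple sum written in the statement. Second, dividing by $\sqrt\mu$ is legitimate because $\mu>0$ everywhere.

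There is no real obstacle here. The only care needed is bookkeeping: one must confirm that the definitions of $\mathcal P_\ell$ and $\mathcal Q_{ij}$ (written in terms of $\tilde h$ and $h$) coincide with the integrands appearing in Propositions~\ref{Prop:linearization of q} and~\ref{Prop:linearization of M(F) new}. One must also note that $\gamma$ depends only on $x$, so it can be pulled out of the velocity integrals.
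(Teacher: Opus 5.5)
Your proposal is correct and is essentially the paper's own argument. Like the paper, you write $q=\gamma+\Upsilon_1(f)$ and $M(F)-F=\sqrt{\mu}(\mathbf{P}-\mathbf{I})f+\Upsilon_2(f)$ from the two preceding expansions, then multiply out to get the four terms $\sqrt{\mu}L_\gamma f$, $\sqrt{\mu}\Gamma_1(f)$, $\sqrt{\mu}\Gamma_2(f)$, and $\sqrt{\mu}\Gamma_3(f)$.
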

  
\begin{proof}
From Proposition~\ref{Prop:linearization of M(F) new} and Proposition~\ref{Prop:linearization of q}, Lemma~\ref{lemma:first derivative of P} and Lemma~\ref{lemma:first derivative of Qij}, the proof is complete by expanding the following:
\begin{align*}
    & \gamma \rho^{\alpha} T^{\beta}  \LC M(F )-F \RC\\
    &=\left[ \gamma +  
      \LC  \sum^{5}_{\ell=1} \LC\int^1_0 \mathcal{P}_\ell d\theta\RC\langle f,e_\ell\rangle \RC   \right] 
    \cdot\left[  \sqrt{\mu} (\mathbf{P} f-f)+  \sum_{i,j=1}^5\LC \int^1_0  \mathcal{Q}_{ij}(1-\theta)\,d\theta\RC \langle f,e_i\rangle\langle f,e_j\rangle \right]\\
    &= \gamma\sqrt{\mu}(\mathbf{P} f-f) +   \sqrt{\mu} (\mathbf{P} f-f)  \LC  \sum^{5}_{\ell=1} \LC\int^1_0 \mathcal{P}_\ell d\theta\RC\langle f,e_\ell\rangle \RC   + \gamma \sum_{i,j=1}^5\LC \int^1_0  \mathcal{Q}_{ij}(1-\theta)\,d\theta\RC \langle f,e_i\rangle\langle f,e_j\rangle  \\
    &\quad + \LC  \sum^{5}_{\ell=1} \LC\int^1_0 \mathcal{P}_\ell d\theta\RC\langle f,e_\ell\rangle \RC \LC \sum_{i,j=1}^5\LC \int^1_0  \mathcal{Q}_{ij}(1-\theta)\,d\theta\RC \langle f,e_i\rangle\langle f,e_j\rangle\RC.
\end{align*}
\end{proof}

\section{Theory for the $L^2$ estimate in the linear BGK}\label{sec:L2 solution for the linear BGK}
The goal of this section is to establish the existence of $L^2$ solution to the linear BGK equation:
\begin{align}\label{EQN: sec 3 linear BGK}
    \left\{\begin{array}{ll}
          \{\p_t + v\cdot\nabla _x -L_\gamma \} f=  g ,&   \\
         f|_{t=0}= f_{in}(x,v),& \\
          f|_{S_-}= f_-(t,x,v),& \\
    \end{array}\right.
\end{align}
where by the definition of $\mathbf{P}=\sum^{5}_{i=1} \langle f,e_i\rangle e_i $, we write
$$
    \mathbf{P}f = \int_{\R^3} k(v,v')f(v')\, dv', 
\quad\hbox{ with }
    k(v,v') :=  \sqrt{\mu (v)}\sqrt{\mu (v')} \bigg(1+v\cdot v'+\frac{(|v|^2-3)(|v'|^2-3)}{6}\bigg).
$$

We first show some important properties of the operators $L_\gamma=\gamma(x)(\mathbf{P}-\mathbf{I})$ and $\Gamma f$.
\begin{lemma} \label{lemma:coercivity}
Let $f(t)\in L^2_{x,v}$ and $\gamma \in L^\infty(\Omega)$. The operator $L_\gamma$ satisfies the coercivity property:
\begin{align*}
    -\langle L_\gamma f, f\rangle_{x,v}  =  \int_\Omega \gamma(x) \|(\mathbf{P}-\mathbf{I})f\|^2_{L^2_{v}}\,dx\geq c_0 \|(\mathbf{P}-\mathbf{I})f\|^2_{L^2_{x,v}},
\end{align*}
provided that $\gamma(x)\geq c_0$ for some positive constant $c_0$.
Moreover, $\Gamma (f)$ is in the kernel of $\mathbf{P}$, that is,
    $$
    \mathbf{P}(\Gamma(f))=0.
    $$
\end{lemma}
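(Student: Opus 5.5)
The coercivity part rests on $\mathbf{P}$ being an orthogonal projection in $L^2_v$ for each fixed $(t,x)$. Since $\{e_i\}_{i=1}^5$ is orthonormal, $\mathbf{P}^2=\mathbf{P}$ and $\mathbf{P}$ is self-adjoint. Hence
\[
\langle (\mathbf{P}-\mathbf{I})f,f\rangle_v=\langle (\mathbf{P}-\mathbf{I})f,(\mathbf{P}-\mathbf{I})f+\mathbf{P}f\rangle_v=-\|(\mathbf{P}-\mathbf{I})f\|_{L^2_v}^2 ,
\]
where we used $\langle(\mathbf{P}-\mathbf{I})f,\mathbf{P}f\rangle_v=0$. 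We multiply by $\gamma(x)$ and integrate over $\Omega$; because $\gamma$ depends only on $x$, this gives the identity in the statement. The lower bound then follows from $\gamma\ge c_0$ a.e. We only need $f(t)\in L^2_{x,v}$ and $\gamma\in L^\infty$ so that every integral is finite.

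For $\mathbf{P}(\Gamma f)=0$, I would not expand $\Gamma_1,\Gamma_2,\Gamma_3$ individually. Instead I would use the identity of Proposition~\ref{Prop:linearization of M(F)},
\[
\sqrt{\mu}\,\Gamma f=q(t,x,F)(M(F)-F)-\sqrt{\mu}\,L_\gamma f .
\]
Taking $\langle\cdot,e_i\rangle_v$ amounts to integrating $\sqrt{\mu}\,\Gamma f$ against $1,v,\frac{|v|^2-3}{\sqrt6}$, all of which are linear combinations of $1,v,|v|^2$.

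First, $q=\gamma\rho^\alpha T^\beta$ is independent of $v$, so it factors out of the $v$-integral. By \eqref{ID:conservation}, that is, because $M(F)$ and $F$ share the moments $\rho,\rho U,\rho|U|^2+3\rho T$ by the definitions \eqref{DEF:maxwellian} and \eqref{DEF:density_velocity_temp}, the first term integrates to zero against $1,v,|v|^2$.

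Second, $\int \sqrt{\mu}\,L_\gamma f\,(1,v,|v|^2)\,dv=\gamma\,\langle(\mathbf{P}-\mathbf{I})f,\sqrt{\mu}(1,v,|v|^2)\rangle_v$. Since $\sqrt{\mu}(1,v,|v|^2)$ lies in $\mathrm{span}\{e_i\}$ and $(\mathbf{P}-\mathbf{I})f$ is orthogonal to that span, this also vanishes. Therefore $\langle\Gamma f,e_i\rangle_v=0$ for every $i$, which means $\mathbf{P}\Gamma f=0$.

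The step requiring the most care is justifying \eqref{ID:conservation}. We need $\rho>0$ and $T>0$ so that $M(F)$ is well defined. We also need the moments of $M(F)$ to match those of $F$: this requires computing the Gaussian moments $\int M\,dv=\rho$, $\int vM\,dv=\rho U$, and $\int|v|^2M\,dv=\rho|U|^2+3\rho T$. Finally, we need integrability of $F$ against $(1+|v|^2)$, which holds under the standing assumption $wf\in L^\infty$ with a suitable weight (Section~\ref{sec:linearization}). The rest of the argument is purely algebraic.
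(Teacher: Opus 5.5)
Your proposal is correct and follows essentially the same route as the paper. For coercivity you use $\langle \mathbf{P}f,(\mathbf{I}-\mathbf{P})f\rangle_v=0$. For $\mathbf{P}(\Gamma f)=0$ you write $\Gamma f=\sqrt{\mu}^{-1}q(M(F)-F)-L_\gamma f$, pull the $v$-independent $q$ outside the integral and apply \eqref{ID:conservation}, and kill the $L_\gamma f$ part with $\mathbf{P}(\mathbf{P}-\mathbf{I})=0$; your orthogonality phrasing is the same fact. Your extra remarks on positivity of $\rho,T$ and finiteness of the moments are justifications the paper leaves implicit.
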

 
\begin{proof} 
Since $\{e_i\}$ forms an orthonormal basis, the orthoganality of $\langle \mathbf{P} f, (\mathbf{I}-\mathbf{P})f\rangle_{v}  =0$ is satisfied. Using this, we get
$$
 \langle (\mathbf{P}-\mathbf{I})f , f\rangle_{v} 
=\langle (\mathbf{P}-\mathbf{I})f , \mathbf{P} f+  (\mathbf{I}-\mathbf{P})f\rangle_{v} =-\langle (\mathbf{P}-\mathbf{I})f , (\mathbf{P}-\mathbf{I})f \rangle_{v} .
$$
From the definition of $L_\gamma$, we can then derive
\begin{align*}
    -\langle L_\gamma f, f\rangle_{v} 
     =  -\gamma (x)\langle (\mathbf{P}-\mathbf{I})f , f\rangle_{v}   
     =\gamma(x)\|(\mathbf{P}-\mathbf{I})f\|^2_{L^2_{v}}\geq c_0 \|(\mathbf{P}-\mathbf{I})f\|^2_{L^2_{v}}.
\end{align*}

Note that $\mathbf{P}^2=\mathbf{P}$, which implies $\mathbf{P}(L_\gamma (f))= \gamma\mathbf{P}(\mathbf{P}-\mathbf{I})f=\gamma(\mathbf{P}^2f-\mathbf{P} f)=0$. Recall that $( \sqrt{\mu})^{-1}q(M(F)-F)=L_\gamma f+\Gamma f$. Due to the conservation of mass, momentum, and energy in \eqref{ID:conservation}, we can derive 
    \begin{align*}
         \mathbf{P}(\Gamma(f))&=\mathbf{P}\left( {q(t,x,F) (M(F) -F) \over  \sqrt{\mu}} \right) -  \mathbf{P}(L_\gamma (f)) \\
         &=   q(t,x,F)  \mathbf{P}\left({(M(F) -F) \over \sqrt{\mu}} \right)\\
         &= q\sum_{k=1}^{3} \LC\int_{\R^3}  (M(F )-F ){e_k\over \sqrt{\mu}}\, dv \RC e_k=0.
    \end{align*}
Here we used the fact that $q=\gamma \rho^\alpha T^\beta$ is independent of $v$.    
\end{proof}

\subsection{The $L^2$ solutions}
We introduce the following crucial Green's identity, whose proof can be found in \cite[Chapter 9, Eq. (2.18)]{Cercignani1994book}.
\begin{lemma}
    Let $t^*>0$. Assume that $f(t,x,v)$, $\psi(t,x,v)\in L^\infty([0,t^*];L^2(\Omega\times\R^3))$, and $\p_t f+v\cdot\nabla_x f$, $\p_t \psi+v\cdot\nabla_x \psi\in L^2([0,t^*]\times\Omega\times\R^3)$. Suppose that $f|_{\p \Omega\times \R^3},\, \psi|_{\p \Omega\times \R^3}\in L^2([0,t^*]\times\p\Omega\times\R^3)$. Then for almost all $t \in [0,t^*]$, 
    \begin{align}\label{Green's ID}
    &\int_0^t \int\int_{\Omega\times\R^3} ( \p_t f+v\cdot\nabla_x f )\psi\, dvdxds + \int_0^t \int\int_{\Omega\times\R^3} ( \p_t\psi+v\cdot\nabla_x \psi )f\, dvdxds  \notag\\
    &=\int\int_{\Omega\times\R^3} \LC f(t)\psi(t)-f(0)\psi(0)\RC \,dvdx+\int_0^t \int_S f\psi\,d\Sigma ds.
    \end{align}
\end{lemma}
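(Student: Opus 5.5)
The plan is to prove the identity first for smooth functions, where it is just the divergence theorem in $(t,x)$, and then pass to the stated class by density.

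\textbf{Step 1: smooth case.} Take $f,\psi\in C^1([0,t^*]\times\overline\Omega\times\R^3)$ with compact support in $v$. Since $v$ does not depend on $x$, the product rule gives
$$
(\p_t f+v\cdot\nabla_x f)\psi+(\p_t\psi+v\cdot\nabla_x\psi)f=\p_t(f\psi)+\nabla_x\cdot(vf\psi).
$$
We integrate over $(0,t)\times\Omega\times\R^3$. The $\p_t$ term integrates in $s$ to $\int\int (f(t)\psi(t)-f(0)\psi(0))\,dvdx$. For the divergence term, Gauss' theorem on $\Omega$, applied for each fixed $(s,v)$, gives $\int_{\p\Omega}(\nu(x)\cdot v)f\psi\,d\mathcal{S}_x$. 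Integrating this in $v$ and $s$ produces exactly $\int_0^t\int_S f\psi\,d\Sigma\,ds$, because $d\Sigma=(\nu\cdot v)\,d\mathcal{S}_xdv$ is a signed measure on $S=S_+\cup S_-$. This gives \eqref{Green's ID} for every $t$.

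\textbf{Step 2: approximation.} Now let $f$ lie in the space $W=\{f\in L^\infty_tL^2_{x,v}:\ \p_tf+v\cdot\nabla_xf\in L^2,\ f|_S\in L^2(d\Sigma\,ds)\}$. We want smooth $f_n\to f$ with the following convergences:
\begin{itemize}
\item $f_n\to f$ in $L^2$;
\item the transport derivatives $(\p_t+v\cdot\nabla_x)f_n$ converge in $L^2$;
\item the traces $f_n|_S$ converge in $L^2(|\nu\cdot v|\,d\mathcal{S}_xdv\,ds)$;
\item $f_n(t)\to f(t)$ in $L^2_{x,v}$ for a.e.\ $t$.
\end{itemize}
To build them:
\begin{enumerate}
\item Cut off in $v$ with $\chi(v/R)$. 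This commutes with the transport operator, so it only truncates the velocity support.
\item Mollify in $(t,x)$ after a local flattening of $\p\Omega$ near the boundary. Alternatively, use convexity and a dilation $x\mapsto x_0+\lambda(x-x_0)$ with $\lambda\uparrow1$, which maps $\Omega$ into itself; composing with it commutes with transport up to rescaling the velocity.
\end{enumerate}
We apply Step 1 to $f_n,\psi_n$ and pass to the limit. The interior terms converge by Cauchy--Schwarz in $L^2$. The boundary terms converge by Cauchy--Schwarz with respect to $|\nu\cdot v|\,d\mathcal{S}_xdv\,ds$. The time-slice terms converge for a.e.\ $t$, after passing to a subsequence.

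\textbf{Main obstacle: the trace.} For $f\in W$ the trace on $S$ is defined only a.e.\ along characteristics, and controlling it near the grazing set $\nu\cdot v=0$ is delicate. Here the hypothesis that the full trace is in $L^2(S)$ is what is needed: it lets one use the characteristic representation
$$
f(s,x,v)=f(s-\tau,x-\tau v,v)+\int_0^\tau(\p_t+v\cdot\nabla_x)f\,d\sigma .
$$
This representation shows that the mollified traces converge, in the same way Ukai's trace lemma gives convergence away from grazing. Given this subtlety, in the paper one simply cites Cercignani, Chapter 9, Eq. (2.18), as the statement already indicates.
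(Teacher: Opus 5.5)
\textbf{Verdict: the proof has a genuine gap.} The paper gives no argument for this lemma; it simply quotes the cited book of Cercignani (Ch.~9, Eq.~(2.18)). So falling back on that citation is consistent with the paper, but your own sketch stops exactly where the lemma becomes nontrivial. Step~1 is correct. Your dilation $f_\lambda(t,x,v)=f(t,x_0+\lambda(x-x_0),\lambda v)$ does commute with $\p_t+v\cdot\nabla_x$, so the interior terms are harmless.

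What you never establish is the convergence of the boundary terms $\int_0^t\int_S f_n\psi_n\,d\Sigma\,ds$. After your velocity cutoff, Ukai's trace lemma applied to $|f_n-f|^2$ only gives
\[
\int_0^t\int_{\{|\nu\cdot v|\ge\epsilon\}}|f_n-f|^2\,|\nu\cdot v|\,d\mathcal{S}_x\,dv\,ds\to0 ,
\]
with a constant that blows up as $\epsilon\to0$. You also need $\sup_n\int_0^t\int_{\{|\nu\cdot v|<\epsilon\}}|f_n|^2|\nu\cdot v|\,d\mathcal{S}_x\,dv\,ds\to0$ as $\epsilon\to0$, and the same for $\psi_n$.

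The hypothesis $f|_S\in L^2(S)$ gives this smallness for $f$ by dominated convergence, but not for $f_n$. The trace of $f_\lambda$ on $\p\Omega$ is the restriction of $f$ to the interior surface $\p\Omega_\lambda$. A ray nearly tangent to $\p\Omega_\lambda$ can cross $\p\Omega$ transversally; think of a ray parallel to a flat face of $\Omega$, just inside it. Controlling this requires following each ray to $\p\Omega$ and using that the flux measure $|\nu\cdot v|\,d\mathcal{S}_xdv$ is invariant along rays. That is precisely the characteristic argument you invoke but do not carry out.

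A smaller omission: convergence of $f_n(t)$ for a.e.\ $t$ says nothing about the slice $t=0$, which appears in the identity and needs separate treatment.

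The simplest repair is to drop the approximation and integrate along characteristics directly.
\begin{enumerate}
\item Parametrize $(0,t)\times\Omega\times\R^3$ by the starting point of the characteristic through each point, plus the elapsed time. Starting points lie either on $\{0\}\times\Omega\times\R^3$, with Jacobian $1$, or on $(0,t)\times S_-$, with Jacobian $|\nu\cdot v|$.
\item On a.e.\ characteristic, $f$ and $\psi$ are absolutely continuous up to the endpoints; this is how the weak transport derivative and the trace are defined.
\item The derivative $\frac{d}{ds}(f\psi)=(\p_t f+v\cdot\nabla_x f)\psi+f(\p_t\psi+v\cdot\nabla_x\psi)$ is integrable along a.e.\ characteristic by Fubini, since both products lie in $L^1$.
\item Apply the fundamental theorem of calculus on each segment and integrate against these measures. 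This yields $\int\int (f\psi)(t)-\int\int (f\psi)(0)$ plus $\int_0^t\int_{S_+}f\psi\,|\nu\cdot v|-\int_0^t\int_{S_-}f\psi\,|\nu\cdot v|$. The exit measure on $S_+$ is again $|\nu\cdot v|\,d\mathcal{S}_x\,dv\,ds$ by flux conservation.
\end{enumerate}
The grazing set is $d\Sigma$-null, so nothing has to be controlled near it. The $L^2$ hypotheses on the time slices and on the traces, via Cauchy--Schwarz, are exactly what justify splitting the endpoint integrals.
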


Now we are ready to state and show the well-posedness result for the problem~\eqref{EQN: sec 3 linear BGK}.
\begin{proposition}\label{prop: L2 estimate of boltzmann}
      Let $\Omega$ be a bounded domain in $\R^3$ with $C^1$ boundary. Suppose that $\gamma \in L^\infty(\Omega)$ satisfies $\gamma \geq c_0$ almost everywhere for some positive constant $c_0$.
      Suppose that 
      $$\mathbf{P}g(t,x,v)=0\quad (t,x,v)\in (0,t^*)\times \Omega\times\R^3.$$ 
      If the initial data $f_{in}$, in-flow data $f_-$, and $g$ satisfy
      \begin{align}\label{prop:L2 data}
                \|f_{in}\|^2_{L^2_{x,v}} +\int_0^{t^*}  \|f_-(s)\|^2_{L^2(S_{-})}\, ds+\int_0^{t^*}  \|g(s)\|^2_{L^2_{x,v}}\,ds<\infty,
      \end{align}
      then there exists a unique solution $f\in L^2$ to 
      \begin{align}\label{EST:L2 existence for f}
        \{\p_t + v\cdot\nabla _x-L_\gamma\} f=g,\quad  f|_{t=0}=f_{in},\quad f|_{S_-}=f_-.
      \end{align}
      
      Moreover, there exists a constant $C>0$ depending on $\gamma$ such that for almost all $t\in [0,t^*]$, 
      \begin{align}\label{EST:L2 estimate for f}
          \|f(t)\|^2_{L^2_{x,v}} \leq C\LC\|f_{in}\|_{L^2_{x,v}}^2 + \int^t_0  \|f_-(s)\|^2_{L^2(S_-)}\, ds + \int^t_0 \| g(s)\|^2_{L^2_{x,v}}\,ds\RC .
      \end{align}
\end{proposition}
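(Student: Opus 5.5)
The plan is a standard energy method: an a priori $L^2$ estimate from the Green identity \eqref{Green's ID}, and then existence through a Picard iteration (equivalently, a contraction on a short time interval that is repeated) built on the free transport equation with damping. Uniqueness will follow from the estimate applied to the difference of two solutions, since that difference solves \eqref{EST:L2 existence for f} with zero data and zero source.

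\textbf{A priori estimate.} Take $\psi=f$ in \eqref{Green's ID} and use the equation $\p_t f+v\cdot\nabla_x f=L_\gamma f+g$. This gives
\begin{align*}
\|f(t)\|^2_{L^2_{x,v}}+\int_0^t\|f(s)\|^2_{L^2(S_+)}ds-2\int_0^t\langle L_\gamma f,f\rangle_{x,v}ds
=\|f_{in}\|^2_{L^2_{x,v}}+\int_0^t\|f_-(s)\|^2_{L^2(S_-)}ds+2\int_0^t\langle g,f\rangle_{x,v}ds .
\end{align*}
Here the boundary term splits into its $S_+$ and $S_-$ parts, and the $S_-$ part carries the sign of $d\Sigma$. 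By Lemma~\ref{lemma:coercivity}, the term $-\langle L_\gamma f,f\rangle$ is nonnegative. Since $\mathbf{P}g=0$, we have $\langle g,f\rangle=\langle g,(\mathbf{I}-\mathbf{P})f\rangle$, and Young's inequality bounds this by $\tfrac{c_0}{2}\|(\mathbf{I}-\mathbf{P})f\|^2+\tfrac{1}{2c_0}\|g\|^2$. The first piece is absorbed by the coercive term. This yields \eqref{EST:L2 estimate for f} directly, with $C$ depending on $c_0$, and without Gronwall. If I prefer not to use $\mathbf{P}g=0$ at this stage, Cauchy--Schwarz plus Gronwall still gives the estimate, with a $t^*$-dependent constant.

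\textbf{Existence.} Write $L_\gamma f=-\gamma f+\gamma\mathbf{P}f$. I treat $\gamma\mathbf{P}f$ as a source, which is a bounded operator on $L^2_{x,v}$ with norm at most $\|\gamma\|_{L^\infty}$. Set $f^0=0$ and define $f^{n+1}$ as the solution of
\[
\p_t f^{n+1}+v\cdot\nabla_x f^{n+1}+\gamma f^{n+1}=\gamma\mathbf{P}f^n+g,
\]
with the same initial and in-flow data. Each step is solved explicitly by integrating along characteristics $s\mapsto (t-s,x-sv,v)$ back to $t=0$ or to $S_-$, with the integrating factor $\exp(-\int\gamma)$. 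This Duhamel formula gives an $L^2$ solution with traces in $L^2$, and Green's identity applies to it. For the differences $f^{n+1}-f^n$, the energy identity above (now with $\gamma\ge 0$ damping only) gives
\[
\sup_{[0,\tau]}\|f^{n+1}-f^n\|^2\le 2\tau\|\gamma\|_{L^\infty}\sup_{[0,\tau]}\|f^{n+1}-f^n\|\,\sup_{[0,\tau]}\|f^n-f^{n-1}\|.
\]
So the map is a contraction in $L^\infty_tL^2_{x,v}$ for $\tau\|\gamma\|_{L^\infty}$ small. Because $\tau$ does not depend on the data, I iterate on $[0,\tau],[\tau,2\tau],\dots$ to cover $[0,t^*]$. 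The limit solves \eqref{EST:L2 existence for f} in the distributional sense, and the estimates pass to the limit.

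\textbf{Main obstacle.} The delicate point is justifying Green's identity for the limit solution. Its hypotheses require $\p_t f+v\cdot\nabla_x f\in L^2$ and boundary traces in $L^2$. The first holds because it equals $L_\gamma f+g$. The traces are controlled uniformly along the iteration by the $S_+$ term in the energy identity, so a weak limit on $S_+$ exists and coincides with the trace. I would cite the trace theory of Cercignani's book (or Ukai's trace lemma) for this step. Alternatively, I could avoid the issue by first proving everything for smooth, compactly supported data and passing to the limit by density, using the linear estimate \eqref{EST:L2 estimate for f}.
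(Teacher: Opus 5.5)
Your proposal is correct and follows the paper's proof essentially step by step: the same damped-transport Picard iteration with $\gamma\mathbf{P}f^{n}$ moved to the source, the same energy estimate from Green's identity using coercivity and $\mathbf{P}g=0$ to absorb $\langle g,(\mathbf{I}-\mathbf{P})f\rangle$, and uniqueness from the estimate applied to the difference. The only variation is that you close the iteration by a short-time contraction and then restart on $[\tau,2\tau],\dots$, whereas the paper uses a factorial recurrence $\|\tilde f^m(t)\|^2\lesssim (Ct)^m/m!$ on all of $[0,t^*]$; this difference is immaterial. Your remark on justifying Green's identity for the limit via traces or density also addresses a point the paper leaves implicit.
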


\begin{proof} 
{\bf Step 1. Existence of $L^2$ solution.}
The existence of such $L^2$ solution will be established via the approximating sequence $\{f^m\}_{m\geq 0}$ with $f^{-1}=0$:
\begin{align*}
    \left\{\begin{array}{ll}
          \{\p_t + v\cdot\nabla_x +\gamma(x) \} f^{m} =  \gamma(x)\mathbf{P} f^{m-1} +g,&   \\
          f^{m}|_{t=0}=f_{in} ,& \\
          f^{m}|_{S_-}= f_- .& \\
    \end{array}\right.
\end{align*}

Write $\tilde f^m=f^m-f^{m-1}$ for $m\geq 1$, then $\tilde f^m$ satisfies
$$\{\p_t+v\cdot\nabla_x+\gamma(x)\}\tilde f^m=\gamma(x)\mathbf P \tilde f^{m-1},\quad \mbox{with}\quad \tilde f^{m}|_{t=0}=0, \quad \tilde f^m|_{S_-}=0.$$
Note that by Young's inequality, for $0<\delta<1$,
$$
    \langle \mathbf{P} \tilde f^{m-1},\tilde f^{m}\rangle_{x,v} (t)\leq \delta^{-1}\| \mathbf{P} \tilde f^{m-1}(t)\|^2_{L^2_{x,v}}+\delta \|  \tilde f^{m}(t)\|^2_{L^2_{x,v}}
    \lesssim \delta^{-1}\|\tilde f^{m-1}(t)\|^2_{L^2_{x,v}}+\delta \|\tilde f^{m}(t)\|^2_{L^2_{x,v}}.
$$
The Green's identity implies that for $0<\delta<1$,
\begin{align*} 
        &{1\over 2}\|\tilde f^{m}(t)\|^2_{L^2_{x,v}} + c_0 \int^t_0\| \tilde f^{m}(s)\|^2_{L^2_{x,v}}\,ds+ {1\over 2}\int^t_0 \| \tilde f
        ^{m}(s)\|^2_{L^2(S_+)}\, ds  \notag\\
        &\lesssim \int_0^t \delta^{-1}\|\tilde f^{m-1}(s)\|^2_{L^2_{x,v}}\,ds+\int_0^t \delta \|\tilde f^{m}(s)\|^2_{L^2_{x,v}}\, ds,
\end{align*}
where $A \lesssim B$ means that $A\leq CB$ for some constant $C>0$ that only depends on $\gamma$. 
For $\delta$  sufficiently small, the second term on the right-hand side (RHS) can be absorbed by the left-hand side (LHS), and thus we obtain
$$\|\tilde f^m(t)\|^2_{L^2_{x,v}}\lesssim \delta^{-1}\int_0^t \|\tilde f^{m-1}(s)\|^2_{L^2_{x,v}}\,ds.$$
A recurrence argument gives for $m\geq 1$,
$$\|\tilde f^m(t)\|^2_{L^2_{x,v}}\lesssim {1 \over m!} \LC{t\over \delta}\RC^m \sup_{0\leq t\leq t^*}\|f^{0}(t)\|^2_{L^2_{x,v}}.$$
Let $K=t/\delta$. Since $e^K=\sum (K^m/m!)$ converges for all $K$, we deduce that $\{f^m\}_{m=0}^\infty$ is a Cauchy sequence in the space $L^\infty([0,t^*];L^2(\Omega\times\R^n))$ and it must converges to a function $f$. This shows the existence of solution $f$ to the problem~\eqref{EST:L2 existence for f}.

{\bf Step 2. $L^2$ Estimate.}
    By Green's identity \eqref{Green's ID} and Lemma~\ref{lemma:coercivity}, we get the $L^2$ energy estimate: 
    \begin{align}\label{EST:L^2 scale 1}
        &\|f(t)\|^2_{L^2_{x,v}} + \int^t_0\|(\mathbf{I}-\mathbf{P})f(s)\|^2_{L^2_{x,v}}\,ds + \int^t_0 \|f(s)\|^2_{L^2(S_+)}\, ds   \notag\\
        &\lesssim  \|f(0)\|^2_{L^2_{x,v}}+\int_0^t \int\int_{\Omega\times\R^3}fg\, dvdxds +\int^t_0 \|f(s)\|^2_{L^2(S_-)}\, ds. 
    \end{align}  
    We apply $\mathbf{P} g=0$ and $\langle f,\mathbf{P}h\rangle_v=\langle \mathbf{P}f,h\rangle_v$ to derive   
    \begin{align*}   
        \LV \int_0^t \int\int_{\Omega\times\R^3} fg\, dvdxds \RV
        &= \LV\int_0^t \int\int_{\Omega\times\R^3} f(\mathbf{I}-\mathbf{P})g\, dvdxds \RV \\
        &=\LV\int_0^t \int\int_{\Omega\times\R^3} g(\mathbf{I}-\mathbf{P})f  \, dvdxds \RV \\
        &\lesssim \delta^{-1} \int_0^t \|g\|_{L^2_{x,v}}^2 \,ds + \delta \int^t_0 \| (\mathbf{I}-\mathbf{P})f(s)\|_{L^2_{x,v}}^2 \,ds,\quad \delta>0. 
    \end{align*}
    For small $\delta$, as the term $\delta \int^t_0 \|(\mathbf{I}-\mathbf{P})f\|_{L^2_{x,v}}^2 \,ds$ can be absorbed by the left-hand side of \eqref{EST:L^2 scale 1}, we can further deduce from \eqref{EST:L^2 scale 1} that
        \begin{align*} 
        & \|f(t)\|^2_{L^2_{x,v}} + \int^t_0\|(\mathbf{I}-\mathbf{P})f(s)\|^2_{L^2_{x,v}}\,ds +\int^t_0 \|f(s)\|^2_{L^2(S_+)}\, ds\\
         &\lesssim \|f(0)\|^2_{L^2_{x,v}} + \int^t_0 \|g(s)\|^2_{L^2_{x,v}}\,ds+\int^t_0 \|f(s)\|^2_{L^2(S_-)}\, ds,
    \end{align*} 
    which completes the proof of stability.

   {\bf Step 3. Uniqueness.}
    Suppose there are two $L^2$ solutions $f$ and $\tilde{f}$ to the problem~\eqref{EST:L2 existence for f}. Then we have
    $$
     \{\p_t + v\cdot\nabla _x-L_\gamma\} (f-\tilde{f})=0,\quad  (f-\tilde{f})|_{t=0}=0,\quad (f-\tilde{f})|_{S_-}=0.
    $$
    Since the source term, initial and boundary conditions are trivial, applying \eqref{EST:L2 estimate for f} yields that $\|(f- \tilde{f})(t)\|^2_{L^2_{x,v}} =0$, which implies $f=\tilde{f}$ almost everywhere.    
\end{proof}

\section{Existence of bounded solutions of the BGK}\label{sec:bounded solution for the linear BGK}
In this section, we will prove a local existence result for the solution of the nonlinear BGK in Theorem~\ref{theorem:main forward} in a weighted $L^\infty$ norm.
 
\subsection{Boundedness of the solution to the linearized BGK}
Now we establish the uniqueness of bounded solutions in a weighted space by considering the following problem for the linear BGK:
\begin{align}\label{EQN:h}
    \left\{\begin{array}{ll}
          \{\p_t + v\cdot\nabla _x +\gamma(x)(\mathbf{I}-\mathbf{P}_w)\} h = wg ,&   \\
         h|_{t=0}=w f_{in} ,& \\
         h|_{S_-}=w f_- ,& \\
    \end{array}\right.
\end{align}
where the function $w(v) = (1+ c_1|v|^2)^{c_2}$ satisfies $w^{-2}\in L^1_v$ with constants $c_1,\,c_2>0$.
We define a new operator $\mathbf{P}_w$ as follows:
\begin{align}\label{DEF:Pw}
    \mathbf{P}_w h(t,x,v):=w(v)\mathbf{P}\LC{h\over w}\RC(t,x,v)=\int K_w(v,v')h(v')\, dv',\quad\hbox{with }K_w(v,v'):=k(v,v'){w(v)\over w(v')},
\end{align}
by recalling the definition of the macroscopic projection $\mathbf{P}$:
$$
    \mathbf{P}f = \int k(v,v')f(v')\, dv', 
\quad\hbox{ with }
    k(v,v') :=  \sqrt{\mu (v)}\sqrt{\mu (v')} \bigg(1+v\cdot v'+\frac{(|v|^2-3)(|v'|^2-3)}{6}\bigg).
$$
We have $K_w(v,\cdot),\,K_w(\cdot,v')\in L^1_v$, see Lemma~\ref{lemma:K}. 
  
    \begin{proposition}\label{prop:L infty estimate}
    Let $\Omega$ be a convex, open and bounded domain with smooth boundary.  Suppose that 
      $$\mathbf{P}g(t,x,v)=0\quad (t,x,v)\in (0,t^*)\times \Omega\times\R^3.$$ 
    Suppose that the functions $f_{in}$, $f_-$, and $g$ satisfy
    \begin{align}\label{prop:Linfty data}
    \|wf_{in}\|_{L^\infty_{x,v}}  + \sup_{0\leq t\leq t^*} \|wf_-(t)\|_{L^\infty_{x,v}}+\sup_{0\leq t\leq t^*} \| wg(t)\|_{L^\infty_{x,v}}<\infty.
    \end{align}
    Then there exists a unique $L^\infty$ solution $h$ to the problem~\eqref{EQN:h}.

    Moreover, we have the estimate 
    \begin{align}
        \sup_{0\leq t\leq t^*}  \|h(t)\|_{L^\infty_{x,v}} \leq C \LC\|wf_{in}\|_{L^\infty_{x,v}} + \sup_{0\leq t\leq t^*}\|wf_-(t)\|_{L^\infty_{x,v}}+\sup_{0\leq t\leq t^*} \| wg(t)\|_{L^\infty_{x,v}}\RC,
    \end{align}
    where the constant $C>0$ depends on $t^*,\,w$, and $\gamma$.
\end{proposition}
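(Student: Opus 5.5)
The approach is to follow the $L^2$--$L^\infty$ framework of \cite{Guo2010}: I will bound $h$ along characteristics (Duhamel's formula), use the smoothing structure of $\mathbf{P}_w$ twice, and reduce the worst remaining term to an $L^2$ quantity controlled by Proposition~\ref{prop: L2 estimate of boltzmann}.

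\emph{Setup.} Convexity of $\Omega$ gives, for each $(t,x,v)$, a backward exit time $t_b(x,v)$ and exit point $x_b=x-t_bv$. Let $\nu(s)=\int_s^t\gamma(x-(t-\tau)v)\,d\tau$, so that $e^{-\nu(s)}\le e^{-c_0(t-s)}\le1$. The Duhamel form of \eqref{EQN:h} is
\[
h(t,x,v)=\mathbf{1}_{t\le t_b}e^{-\nu(0)}wf_{in}(x-tv,v)+\mathbf{1}_{t>t_b}e^{-\nu(t-t_b)}wf_-(t-t_b,x_b,v)+\int_{\max(0,t-t_b)}^t e^{-\nu(s)}\gamma\big(\mathbf{P}_wh+wg\big)(s,x-(t-s)v,v)\,ds .
\]

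\emph{Existence.} Existence and uniqueness of an $L^\infty$ solution come from iterating this formula, $h^{m+1}=\text{data}+\int e^{-\nu}\gamma\mathbf{P}_wh^m$. Lemma~\ref{lemma:K} gives $\sup_v\int|K_w(v,v')|dv'<\infty$, so $\|\mathbf{P}_wh\|_{L^\infty}\le C\|h\|_{L^\infty}$. The iteration therefore contracts on short time intervals of length $\sim(C\|\gamma\|_\infty)^{-1}$, and covering $[0,t^*]$ by finitely many such steps gives existence and uniqueness with a crude constant $e^{Ct^*}$. The constant in the proposition may depend on $t^*$, so this Gronwall argument alone already yields the stated estimate.

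\emph{Sharper estimate.} To obtain the estimate in the form the $L^2$ theory suggests, for later use, I will iterate Duhamel once more inside $\mathbf{P}_wh$. This produces a double integral of $K_w(v,v')K_w(v',v'')h(s',x-(t-s)v-(s-s')v',v'')$. I split it into three pieces:
\begin{itemize}
\item $|v'|$ or $|v''|$ larger than $N$: small, by the Gaussian decay of $K_w$, contributing $\frac1N\sup\|h\|$;
\item $s-s'<\kappa$: contributing $\kappa\sup\|h\|$;
\item the remaining part: bounded kernel, where the change of variables $y=x-(t-s)v-(s-s')v'$ with Jacobian $(s-s')^{-3}\ge\kappa^{-3}$ converts $\int|h|\,dv'$ into $C_{N,\kappa}\|h(s')/w\|_{L^2_{x,v}}=C\|f(s')\|_{L^2}$.
\end{itemize}

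\emph{Closing the bound.} Since $\mathbf{P}g=0$, Proposition~\ref{prop: L2 estimate of boltzmann} applies to $f=h/w$. With $w^{-2}\in L^1$ and $\Omega$ bounded, it gives $\|f\|_{L^2}\lesssim$ the $L^\infty$ norms of the data. Choosing $N$ large and $\kappa$ small then absorbs the $\sup\|h\|$ terms and closes the bound.

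\emph{Main obstacle.} The hard step is the change of variables near the boundary: the trajectory may exit $\Omega$, so the $v'$ integral must be restricted to $\{y\in\Omega\}$. Convexity guarantees the segments stay inside whenever their endpoints do, which is what makes this restriction harmless.
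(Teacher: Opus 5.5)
Your proposal is correct. The existence part is the paper's Step~1: Picard iteration on the Duhamel formula. The paper sums the factorial bounds $(Ct)^m/m!$ over all of $[0,t^*]$, while you contract on short intervals and restart, which is equivalent. Your ``sharper estimate'' is the paper's Step~2: a second Duhamel iteration inside $\mathbf{P}_w h$, the split into $s-s'<\kappa$ (the paper's $\xi$) and $\{|v'|\ge N\text{ or }|v''|\ge N\}$, and a remainder treated by $v'\mapsto y=x-(t-s)v-(s-s')v'$ together with the $L^2$ bound of Proposition~\ref{prop: L2 estimate of boltzmann}.

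Where you genuinely depart from the paper is in observing that the stated estimate already follows by Gronwall from $\|h(t)\|_{L^\infty_{x,v}}\le A+C\|\gamma\|_{L^\infty}\int_0^t\|h(s)\|_{L^\infty_{x,v}}\,ds$, with $A$ the data norm. The paper writes this inequality down for the iterates in Step~1 but does not exploit it. Since the statement allows $C=C(t^*,w,\gamma)$ and Theorem~\ref{theorem:main forward} is local in time, this suffices. It is also more economical. It needs neither convexity, nor $\mathbf{P}g=0$, nor Proposition~\ref{prop: L2 estimate of boltzmann}, nor the identification of $h/w$ with the $L^2$ solution (Remark~\ref{remark:L2-Linfty}). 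Uniqueness of mild $L^\infty$ solutions follows from the same inequality applied to a difference.

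What the paper's bootstrap buys is better dependence on time. Gronwall produces a constant of size $e^{C\|\gamma\|_{L^\infty}t^*}$, because it only uses $\|\mathbf{P}_w h\|_{L^\infty_{x,v}}\le C\|h\|_{L^\infty_{x,v}}$. In the $L^2$--$L^\infty$ argument, the macroscopic part is instead controlled through the $L^2$ energy estimate, where $L_\gamma$ is dissipative (Lemma~\ref{lemma:coercivity}). If one tracks constants, $t^*$ then enters only polynomially, through the time integrals and the choices $\xi\sim 1/t^*$, $N^2\sim\log t^*$. That is the mechanism Guo's framework pushes to long-time bounds, but this proposition does not require it.

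There are three small slips, none affecting the argument.
\begin{itemize}
\item In your Duhamel formula, $\gamma$ should multiply only $\mathbf{P}_w h$, not $wg$, since the equation is $\{\p_t+v\cdot\nabla_x+\gamma(\mathbf{I}-\mathbf{P}_w)\}h=wg$.
\item The Jacobian bound should read $(s-s')^{-3}\le\kappa^{-3}$.
\item The ``main obstacle'' is not really one. The constraint $y\in\Omega$ is enforced automatically by the lower limit $s'\ge s-t_b(x-(t-s)v,v')$ of the inner Duhamel integral, by the definition of the backward exit time, so no convexity argument is needed there.
\end{itemize}
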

 
\begin{remark}\label{remark:L2-Linfty} The weight function $w$ is used to control the $L^2$-norm by the weighted $L^\infty$-norm.  
Specifically, by applying $w^{-2}\in L^1_v$, one gets
\begin{align}\label{L2toLinfty}
\|g(s)\|_{L^2_{x,v}}=\left\| {w \over w}g(s)\right\|_{L^2_{x,v}}\leq C\|wg\|_{L^\infty_{x,v}} \LC\int w^{-2}\,dv \RC^{1/2}\leq C\|wg\|_{L^\infty_{x,v}}.
\end{align}  
Similar argument also leads to $\|f_{in}\|_{L^2_{x,v}} \leq C\|w f_{in}\|_{L^\infty_{x,v}}$ and $\|f_-(s)\|_{L^2_{x,v}}\leq C \|wf_-(s)\|_{L^\infty_{x,v}}$.
Hence, under the hypothesis \eqref{prop:Linfty data}, the condition \eqref{prop:L2 data}  holds in Proposition~\ref{prop: L2 estimate of boltzmann}, which suggests that   there exists a unique solution $f\in L^2$ to
\begin{align*} 
        \{\p_t + v\cdot\nabla _x-L_\gamma\} f=g,\quad  f|_{t=0}=f_{in},\quad f|_{S_-}=f_-.
\end{align*}
In fact, the existence of $h\in L^\infty$ implies $h/w\in L^2$ satisfies the same equation. Applying the uniqueness of the $L^2$ solution yields $f=h/w$. 
\end{remark} 
\begin{proof}
{\bf Step 1. Existence of weighted $L^\infty$ solution.}
The existence of such $L^\infty$ solution will be established via the approximating sequence $\{h^m\}_{m\geq 0}$ with $h^{-1}\equiv 0$:
\begin{align*}
      \{\p_t + v\cdot\nabla_x +\gamma(x) \}h^{m } = \gamma(x)\mathbf{P}_w h^{m-1} + wg , \quad \hbox{with }  h^{m }|_{t=0}=w f_{in},\quad h^{m }|_{S_-} = w f_-. 
\end{align*}
We show that $h^{m}$ is a Cauchy sequence and thus, converges to $h$.

We apply the method of characteristic to express the solution as
\begin{align}\label{characteristic h}
     h^m (t,x,v) &=\mathbf{1}_{t-\tau_-\leq 0}\, w f_{in}(x-vt,v)\exp\LC-\int^t_0 \gamma(x-sv)\,ds\RC &\Big\} I_1\notag\\
     &\quad + \mathbf{1}_{t-\tau_->0}\, \exp\LC-\int^{\tau_-(x,v)}_0 \gamma(x-sv)\,ds \RC wf_-(t-\tau_-(x,v),x-v\tau_-(x,v),v) &\Big\} I_2\notag\\
     &\quad + \int^t_{\max\{0,t-\tau_-\}} \exp\LC-\int^{t-s}_0 \gamma(x-\tau v)\,d\tau\RC wg(s,x-v(t-s),v)\,ds   &\Big\} I_3  \notag\\
     &\quad + \int^t_{\max\{0,t-\tau_-\}} \exp\LC-\int^{t-s}_0 \gamma(x-\tau v)\,d\tau\RC [\gamma \mathbf{P}_w(h^{m-1} )](s,x-v(t-s),v)\,ds. &\Big\} I_4 
\end{align}
Here $\mathbf{1}_{x\leq 0}$ is the Heaviside function, which is $1$ if $x<0$, and is $0$ if $x>0$.
Suppose that $\gamma\geq c_0>0$.  
The first two terms can be estimated as follows:
    \begin{align}\label{EST:I1-I2}
        |I_1+I_2 | \leq \|w f_{in} \|_{L^\infty_{x,v}}  + \sup_{0\leq t\leq t^*} \|wf_-(t)\|_{L^\infty_{x,v}}.
    \end{align}
    Also, for the third term, we obtain 
    \begin{align}\label{EST:I3}
        |I_3|\leq \sup_{0\leq t\leq t^*} \|wg(t)\|_{L^\infty_{x,v}} \int^t_{\max\{0,t-t_-\}} e^{-(t-s)c_0} \,  ds\leq{1\over c_0}\sup_{0\leq t\leq t^*} \|wg(t)\|_{L^\infty_{x,v}}.
    \end{align}
Using the fact that $\|\mathbf{P}_w(h^{m-1} )(t)\|_{L^\infty_{x,v}}\leq C\|h^{m-1}(t)\|_{L^\infty_{x,v}}$, we obtain
\begin{align}\label{EST:I4 0}
        |I_4|\leq C\int^t_0\|h^{m-1}(s)\|_{L^\infty_{x,v}}\,ds.
\end{align}
With these estimates \eqref{EST:I1-I2}-\eqref{EST:I4 0}, from \eqref{characteristic h}, it yields that 
\begin{align*}
         \|h^{m} (t)\|_{L^\infty_{x,v}}
      &\leq \|wf_{in}\|_{L^\infty_{x,v}} + \sup_{0\leq t\leq t^*}\|wf_-(t)\|_{L^\infty_{x,v}}+C\sup_{0\leq t\leq t^*} \| wg(t)\|_{L^\infty_{x,v}} 
      +C\int^t_0\|h^{m-1}(s)\|_{L^\infty_{x,v}}\,ds.
\end{align*}

Write $\tilde{h}^{m}=h^{m}-h^{m-1}$ for $m\geq 1$. Then $\tilde{h}^{m}$ satisfies
\begin{align*}
      \{\p_t + v\cdot\nabla_x +\gamma(x) \}\tilde{h}^{m} = \gamma(x)\mathbf{P}_w\tilde{h}^{m-1}, \quad \hbox{with } \tilde{h}^{m}|_{t=0}= 0,\quad \tilde{h}^{m}|_{S_-} = 0 
\end{align*}
and, moreover,
\begin{align*}
         \|\tilde{h}^{m}(t)\|_{L^\infty_{x,v}}
      &\leq C \int^t_0 \|\tilde{h}^{m-1}(s)\|_{L^\infty_{x,v}} \,ds.
\end{align*}
A recurrence gives for $m\geq 1$ that
\begin{align*}
        \|\tilde{h}^{m}(t)\|_{L^\infty_{x,v}} 
      &\leq  {(Ct)^{m}\over m!} \sup_{0\leq t\leq t^*} \|h^0 (t)\|_{L^\infty_{x,v}} .
\end{align*}
Let $K=Ct^*$. We have
\begin{align*}
      \|h^{m}-h^{m-1}\|_{L^\infty} 
      &\leq  {K^{m}\over m!} \|h^0 \|_{L^\infty} .
\end{align*}
Since $\sum (K^m/m!) <\infty$, we deduce that $h^m$ is a Cauchy sequence in the space $L^\infty$, and it implies $h^m$ converges to a function $h$ in $L^\infty$. Applying \eqref{L2toLinfty} to deduce $h/w\in L^2$, and thus, the unique existence of $L^2$ solution stated in Proposition~\ref{prop: L2 estimate of boltzmann} yields that such solution $h/w$ satisfies the estimate \eqref{EST:L2 estimate for f}.

\noindent{\bf Step 2. Derivation of stability estimate.} Similar to \eqref{characteristic h}, we apply the method of characteristic to express the solution as a form of the integral identity:
\begin{align}\label{characteristic h 2}
     h (t,x,v) &=\mathbf{1}_{t-\tau_-\leq 0}\, w f_{in}(x-vt,v)\exp\LC-\int^t_0 \gamma(x-sv)\,ds\RC &\Big\} I_1\notag\\
     &\quad + \mathbf{1}_{t-\tau_->0}\, \exp\LC-\int^{\tau_-(x,v)}_0 \gamma(x-sv)\,ds \RC wf_-(t-\tau_-(x,v),x-v\tau_-(x,v),v) &\Big\} I_2\notag\\
     &\quad + \int^t_{\max\{0,t-\tau_-\}} \exp\LC-\int^{t-s}_0 \gamma(x-\tau v)\,d\tau\RC wg(s,x-v(t-s),v)\,ds   &\Big\} I_3  \notag\\
     &\quad + \int^t_{\max\{0,t-\tau_-\}} \exp\LC-\int^{t-s}_0 \gamma(x-\tau v)\,d\tau\RC [\gamma \mathbf{P}_w(h )](s,x-v(t-s),v)\,ds. &\Big\} I_4^h
\end{align}
As the estimates of $I_1$-$I_3$ are the same as in the step 1, we only need to control $I_4^h$.

For this purpose, we first define $x_1=x-v(t-s)$, $\tau_-' = \tau_-(x_1,v')$. By iterating the formula \eqref{characteristic h 2} to evaluate $ \mathbf{P}_w(h )$, we get
\begin{align}\label{Iterate h}
      &[\gamma \mathbf{P}_w(h )](s,x-v(t-s),v) = \int K_w(v,v') \gamma (x_1)h(s ,x_1,v')dv' \notag\\
      &=\int K_w(v,v') \gamma(x_1) \LC \mathbf{1}_{s-\tau'_-\leq 0}\, w f_{in}(x_1-vs,v)e^{ -\int^s_0 \gamma(x_1-\tau v')\,d\tau } \RC\,dv'  &\Big\}{\bf A_1}\notag\\
      &\quad +  \int K_w(v,v')\gamma(x_1) \mathbf{1}_{s-\tau'_->0}\, e^{ -\int^{\tau_-(x_1,v')}_0 \gamma(x_1-\tau v')\,d\tau}  wf_-(s-\tau_-' ,x_1-v'\tau_-',v')\,dv'&\Big\}{\bf A_2} \notag\\
      & \quad +\int K_w(v,v')\gamma(x_1) \int^s_{\max\{0,s-\tau'_-\}}e^{-\int_0^{s-s'}\gamma(x_1-\tau v')\,d\tau} w g(s',x_1-v'(s-s'),v')\,ds'dv' &\Big\} {\bf A_3}\notag\\
      &\quad +  \int \int^s_{\max\{0,s-\tau'_-\}}e^{-\int^{s-s'}_0 \gamma(x_1-\tau v')\,d\tau} \int K_w(v,v') K_w(v',v{''}) \gamma (x_1)\gamma  h  (s',x_1-v'(s-s'),v{''}) dv{''} ds'dv'. &\Big\} {\bf A_4} 
\end{align}
By substituting \eqref{Iterate h} into $I_4^h$ of $h(t,x,v)$, we get 
\begin{align}\label{EST:I4}
    I_4^h= \int^t_{\max\{0,t-\tau_-\}} e^{-\int^{t-s}_0 \gamma(x-\tau v)\,d\tau}\LC {\bf A_1} +\ldots + {\bf A_4}\RC\,ds.
\end{align}
All the terms above on the right, except the last term, are bounded by 
\begin{align}\label{EST:I4:A1-A3}
    &\int^t_{\max\{0,t-\tau_-\}} e^{-\int^{t-s}_0 \gamma(x-\tau v)\,d\tau}\LC {\bf A_1} +{\bf A_2}+ {\bf A_3}\RC\,ds \notag\\
    &\lesssim t^* \LC \|w f_{in} \|_{L^\infty_{x,v}}  + \sup_{0\leq s\leq t^*} \|wf_-(s)\|_{L^\infty_{x,v}} +\sup_{0\leq s\leq t^*} \|wg(s)\|_{L^\infty_{x,v}}\RC,
\end{align}
where we used the fact that $\int K_w(v,v')dv'<\infty$ in Lemma~\ref{lemma:K}.
Therefore, it remains to estimate the last term in $I_4^h$: for a fixed small $\xi>0$,  and $(t,x)\in [0,t^*]\times \overline\Omega$,
 
\begin{itemize}
    \item {\bf Case 1.} If $\xi >t-\tau_-(x,v)$, then we split the integral into 
\begin{align*}
 \hbox{Last term in $I_4^h$}
 &=\int^t_{\max\{0,t-\tau_-\}} e^{-\int^{t-s}_0 \gamma(x-\tau v)\,d\tau} {\bf (A_4)} \,ds\\
  &=\int^t_{\max\{0,t-\tau_-\}} \int \int^s_{\max\{0,s-\tau'_-\}} e^{-\int^{t-s}_0 \gamma(x-\tau v)\,d\tau}e^{ -\int^{s-s'}_0 \gamma(x_1-\tau v')\,d\tau} \\
       &\quad \cdot\int K_w(v,v') K_w(v',v{''}) \gamma \gamma  h  (s',x_1-v' (s-s') ,v{''})\,dv{''} ds'dv'ds\\
 &= \int^{ \xi}_{\max\{0,t-\tau_-\}}\int\int^s_{\max\{0,s-\tau'_-\}}  \cdots \,ds'dv'ds+\int^t_{\xi}\int \LC\int^s_{s-\xi} + \int^{s-\xi}_{\max\{0,s-\tau'_-\}}\RC \cdots \,ds'dv'ds;
\end{align*}
 
    \item {\bf Case 2.} If $0<\xi <t-\tau_-(x,v)$, then $s-\xi>0$ automatically holds as $t-\tau_-<s<t$. We obtain
\begin{align*}
 \hbox{Last term in $I_4^h$}
 &=\int^t_{\max\{0,t-\tau_-\}} e^{-\int^{t-s}_0 \gamma(x-\tau v)\,d\tau} {\bf (A_4)} \,ds\\
 &=\int^{t}_{\max\{0,t-\tau_-\}} \int \LC \int^s_{s-\xi}+ \int^{s-\xi}_{\max\{0,s-\tau'_-\}}\RC\cdots\,ds'dv'ds=:{\bf B_{1}}+{\bf B_{2}}.
\end{align*}    
\end{itemize}

We will only discuss Case 2, as similar arguments also work for Case 1.  \\
\noindent{\bf Step 2-1. Analyze ${\bf B_{1}}$ in last term in $I_4^h$.} Since the kernel $K_w$ is in $L^1$, we derive  
\begin{align}\label{EST:B1}
    |{\bf B_{1}}|&\lesssim   \sup_{0\leq t\leq t^*}\| h(t)\|_{L^\infty_{x,v}}\int^t_{\max\{0,t-\tau_-\}} \int^s_{s-\xi}\int_{\R^3}\int_{\R^3} |K_w(v,v')|\cdot |K_w(v',v'')| \,dv' dv''ds' ds \notag \\
    &\lesssim \xi t^*\sup_{0\leq t\leq t^*}\| h(t)\|_{L^\infty_{x,v}}.
\end{align}

\noindent{\bf Step 2-2. Analyze ${\bf B_{2}}$ in last term in $I_4^h$.} Let $D=\{|v'|\geq N,\,\hbox{or }|v''|\geq N\}$. We split ${\bf B_2}$ into
$$
    |{\bf B_{2}}|= \int^t_{\max\{0,t-\tau_-\}} \LC\int_D +\int_{(\R^3\times\R^3)\setminus D
    } \RC\int^{s-\xi}_{\max\{0,s-\tau'_-\}} \cdots\,  ds'dv'ds=:{\bf B_{2,D}}+{\bf B_{2,D^c}}.
$$
To estimate ${\bf B_{2,D}}$, in the region $|v'|\geq  N$ or $|v''|\geq  N$, we have by Lemma~\ref{lemma:K}, when $\theta,\,\delta>0$ satisfy $\theta+\delta <1/4$, 
\begin{align*}
   &  \LV\int_{D}K_w(v,v') K_w(v',v'') \,dv' dv'' \RV \\
   &\leq \int_{D} Ce^{\LC-{1\over 4} +\theta+\delta\RC |v|^2}e^{\LC-{1\over 4} -\theta+\delta\RC |v'|^2}Ce^{\LC-{1\over 4} +\theta+\delta\RC |v'|^2}e^{\LC-{1\over 4} -\theta+\delta\RC |v''|^2}\,dv'dv'' \lesssim e^{-CN^2}.
\end{align*}
With this estimate, we can deduce
\begin{align*}
    {\bf B_{2,D}}&\leq\int^t_{\max\{0,t-\tau_-\}} \int_D \int^{s-\xi}_{\max\{0,s-\tau'_-\}}     | K_w(v,v')K_w(v',v'')|   |h  (s',x_1-v'(s-s'),v{''})|
        \,ds'dv'dv''ds\\
     &\lesssim e^{-CN^2} ((t^*)^2+t^*)\sup_{0\leq t\leq t^*}\|  h(t)\|_{L^\infty_{x,v}}.
\end{align*}
To estimate ${\bf B_{2,D^c}}$, in the region $|v'|< N$ and $|v''|<N$, we use the change of variable $v'\mapsto y=x_1-(s-s')v'\in \Omega$ with Jacobian 
$$
    \LV \det\LC {\partial y\over \partial v'}\RC\RV=(s-s')^3\geq \xi^3.
$$
\begin{align*}
    {\bf B_{2,D^c}} &\leq\int^t_{\max\{0,t-\tau_-\}} \int_{|v''|<N}   \int_{|v'|<N}\int^{s-\xi}_{\max\{0,s-\tau'_-\}}| K_w(v,v')K_w(v',v'')| |h  (s',x_1-v' (s-s') ,v{''})|  \,ds'dv'dv''ds\\
    &\lesssim C_N{1\over \xi^{3}}\int^t_{\max\{0,t-\tau_-\}}\int^{s-\xi}_0 \LC  \int_{|v''|<N} \int_{\Omega}  | f  (s',y,v{''})|^2 dydv'' \RC^{1/2} \,ds'ds\\
    &\quad (\hbox{here we write $h=w (h/w)=wf $ and use H\"older estimate})\\
     &\lesssim  C_N   ((t^*)^2+t^*){1\over \xi^{3}}  \sup_{0\leq t\leq t^*}\| f(t)\|_{L^2_{x,v}}\quad (\hbox{where $f=h/w$ is a $L^2$ solution})\\
     &\lesssim  C_N  ((t^*)^2+t^*) {1\over \xi^{3}} \Bigg (  \|f_{in}\|_{L^2_{x,v}} + \LC\int^t_0  \|f_-(s)\|^2_{L^2(S_-)}\, ds \RC^{1/2}+ \LC\int^t_0 \| g(s)\|^2_{x,v}\,ds\RC^{1/2} \Bigg) \quad \hbox{(used \eqref{EST:L2 estimate for f})}\\
     &\leq  C_{N,t^*}  {1\over \xi^{3}} \LC\|w f_{in}\|_{L^\infty_{x,v}} + \sup_{0\leq t\leq t^*} \|wf_-(s)\|_{L^\infty_{x,v}}+\sup_{0\leq t\leq t^*} \| wg(s)\|_{L^\infty_{x,v}}\RC \quad \hbox{(used \eqref{L2toLinfty})}. 
\end{align*}

From the estimates for ${\bf B_{2,D}},\, {\bf B_{2,D^c}}$ above, we obtain
\begin{align}\label{EST:B2} 
    |{\bf B_{2}}|& \leq |{\bf B_{2,D}}|+ |{\bf B_{2,D^c}}| \notag\\
    &\lesssim e^{-CN^2} \sup_{0\leq t\leq t^*}\| h(t)\|_{L^\infty_{x,v}} + \LC\|w f_{in}\|_{L^\infty_{x,v}} + \sup_{0\leq t\leq t^*} \|wf_-(t)\|_{L^\infty_{x,v}}+\sup_{0\leq t\leq t^*}\| wg(t)\|_{L^\infty_{x,v}}\RC.
\end{align}
Finally, by combining estimates \eqref{EST:I1-I2}, \eqref{EST:I3}, \eqref{EST:I4}, \eqref{EST:I4:A1-A3}, \eqref{EST:B1}, and \eqref{EST:B2}, it follows that for $t\in [0,t^*]$,
\begin{align*}
      h(t,x,v)\lesssim (e^{-CN^2}+\xi) \sup_{0\leq t\leq t^*}\| h(t)\|_{L^\infty_{x,v}}+\LC\|w f_{in}\|_{L^\infty_{x,v}}+ \sup_{0\leq t\leq t^*} \|wf_-(t)\|_{L^\infty_{x,v}}+\sup_{0\leq t\leq t^*}\| wg(t)\|_{L^\infty_{x,v}}\RC.
\end{align*}
Note that only ${\bf B_{1}}$ and ${\bf B_{2}}$ have the term $\sup_{0\leq t\leq t^*}\| h(t)\|_{L^\infty_{x,v}}$ in their estimates.
By choosing small $\xi$ and large $N>0$, the term $\sup_{0\leq t\leq t^*}\| h(t)\|_{L^\infty_{x,v}}$ on the right-hand side can be absorbed to the left-hand side, and thus we complete the proof.

{\bf Step 3. Uniqueness.}
To show uniqueness of solution, it follows by applying a similar argument as in the proof of Proposition~\ref{prop: L2 estimate of boltzmann} and thus we omit the proof here. 
\end{proof}

\subsection{Local existence of the nonlinear BGK - Proof of Theorem~\ref{theorem:main forward}}
After establishing several lemmas and propositions that ensure the unique existence of weighted bounded solutions $f=h/w$ to the linear equation:
\begin{align*} 
    \{\p_t + v\cdot\nabla _x -L_\gamma\} f=  g,    
\end{align*} 
where $h$ is the solution to \eqref{EQN:h}.
Now we are ready to show the existence of the solution to the nonlinear BGK in the weighted $L^\infty$ space.

\begin{proof}[Proof of Theorem~\ref{theorem:main forward}]
To this end, it is sufficient to prove the well-posedness of the problem:
\begin{align*}
    \{\p_t + v\cdot\nabla _x +\gamma(x)(\mathbf{I}-\mathbf{P}_w)\} h = w\Gamma \LC{h\over w}\RC ,\quad \hbox{with } h|_{t=0}=w f_{in},\quad h|_{S_-} =w f_-. 
\end{align*}
 
\noindent {\bf Step 1. Iteration.}   
Let $h^{-1}\equiv 0$. We consider the following iteration sequence $h^m$ for $m\geq 0$:
\begin{align*}
     \{\p_t + v\cdot\nabla_x +\gamma(x)(\mathbf{I}-\mathbf{P}_w) \} h^{m} =  w\Gamma \LC{h^{m-1}\over w}\RC, \quad\hbox{with }  h^{m}|_{t=0}=w f_{in},\quad h^{m}|_{S_-} =w f_-.
\end{align*}

Since $h^{-1}=0$ implies $\Gamma\LC{h^{-1}\over w}\RC=0$, based on Proposition~\ref{prop:L infty estimate}, there exists a unique solution $h^0$ so that 
\begin{align}\label{EST: theorem h0}
    \sup_{0\leq t\leq t^*} \|h^0(t)\|_{L^\infty_{x,v}}\leq  C_1\LC\|w f_{in}\|_{L^\infty_{x,v}}+\sup_{0\leq t\leq t^*} \|wf_-(t)\|_{L^\infty_{x,v}}\RC<C_1\varepsilon,
\end{align}
where $C_1>0$ is a constant, and the parameter $\varepsilon>0$ comes from the hypothesis of Theorem~\ref{theorem:main forward}.
By the induction argument, we suppose that when $m=k$, there holds
\begin{align}\label{EST: theorem hk}
    \sup_{0\leq t\leq t^*} \|h^{k}(t)\|_{L^\infty_{x,v}}\leq 2C_1\varepsilon.
\end{align}  
Note that Lemma~\ref{lemma:coercivity} implies $\mathbf{P}\LC\Gamma \LC{h^{k}\over w}\RC\RC=0$, and also from Lemma~\ref{lemma:estimate of Gamma}, we have  
$$
      \left\| w\Gamma \LC{h^{k}\over w}\RC (t)\right\|_{L^\infty_{x,v}}\lesssim \|h^{k}(t)\|^2_{L^\infty_{x,v}}.
$$
With these, we can apply Proposition~\ref{prop:L infty estimate} to derive that there exist constants $C_1,\,C_2>0$ so that the unique solution $h^{k+1}$ satisfies
\begin{align}\label{EST: theorem hk+1}
     \sup_{0\leq t\leq t^*} \|h^{k+1}(t)\|_{L^\infty_{x,v}}\leq C_1\LC\|w f_{in}\|_{L^\infty_{x,v}}+\sup_{0\leq t\leq t^*} \|wf_-(t)\|_{L^\infty_{x,v}}\RC+C_2  \sup_{0\leq t\leq t^*}  \|h^{k}(t) \|^2_{L^\infty_{x,v}}.
\end{align}
In particular, for $h^1$, and sufficiently small $\varepsilon>0$ that satisfies
$$
     C_2 C_1 \varepsilon < {1\over 4},
$$
this then yields, by applying \eqref{EST: theorem h0} and \eqref{EST: theorem hk+1}, that
$$
    \sup_{0\leq t\leq t^*} \|h^1(t)\|_{L^\infty_{x,v}}\leq C_1\varepsilon + 
    C_2(C_1\varepsilon)^2  
    < 2C_1\varepsilon.
$$
Moreover, for $m=k+1$, using the assumption \eqref{EST: theorem hk}, it implies that
\begin{align*}
    \sup_{0\leq t\leq t^*} \|h^{k+1}(t)\|_{L^\infty_{x,v}}
    &\leq C_1\LC\|w f_{in}\|_{L^\infty_{x,v}}+\sup_{0\leq t\leq t^*} \|wf_-(t)\|_{L^\infty_{x,v}}\RC + C_2 \sup_{0\leq t\leq t^*} \|h^k(t)\|^2_{L^\infty_{x,v}}\\
    &\leq C_1\LC\|w f_{in}\|_{L^\infty_{x,v}}+\sup_{0\leq t\leq t^*} \|wf_-(t)\|_{L^\infty_{x,v}}\RC + C_2 (2C_1\varepsilon)^2 <C_1\varepsilon+ C_1\varepsilon=2C_1\varepsilon.
\end{align*}
Hence, according to the induction argument, we can conclude that $\sup_{0\leq t\leq t^*} \|h^{m}\|_{L^\infty_{x,v}}\leq 2C_1\varepsilon$ for every integer $m\geq 0$.  

\noindent {\bf Step 2. Cauchy sequence.}
The function $\tilde{h}^m:=h^{m}-h^{m-1}$, $m\geq 1$, satisfies the following problem:
\begin{align*}
    \left\{\begin{array}{ll}
          \{\p_t + v\cdot\nabla_x +\gamma(x)(\mathbf{I}-\mathbf{P}_w)\}\tilde{h}^m=  w\LC \Gamma \LC{h^{m-1}\over w}\RC-\Gamma \LC{h^{m-2}\over w}\RC\RC ,&   \\
         \tilde{h}^m|_{t=0}=0,& \\
        \tilde{h}^m|_{S_-}=0.& \\
    \end{array}\right.
\end{align*}
By Lemma~\ref{lemma:diff Gamma} replacing $f$ by $h^m/w$, we can bound the inhomogeneous term
$$
    \left\| w \LC \Gamma \LC{h^{m-1}\over w}\RC-\Gamma \LC{h^{m-2}\over w}\RC \RC(t)\right\|_{L^\infty_{x,v}} 
    \lesssim \varepsilon  \left\|(h^{m-1}-h^{m-2})(t)\right\|_{L^\infty_{x,v}}=\varepsilon \|\tilde{h}^{m-1}(t) \|_{L^\infty_{x,v}}.
$$ 
We then apply Proposition~\ref{prop:L infty estimate} to deduce that the solution $\tilde{h}^m$ satisfies the estimate
$$
    \sup_{0\leq t\leq t^*} \|\tilde{h}^m(t)\|_{L^\infty_{x,v}}
    \leq C_1 \varepsilon\sup_{0\leq t\leq t^*}\|\tilde{h}^{m-1}(t)\|_{L^\infty_{x,v}}, \quad m\geq 1,
$$
which implies
$$
    \sup_{0\leq t\leq t^*} \|\tilde{h}^m(t)\|_{L^\infty_{x,v}}
    \leq (C_1 \varepsilon)^m\sup_{0\leq t\leq t^*}\|\tilde{h}^{0}(t)\|_{L^\infty_{x,v}}, \quad m\geq 1,
$$
For small $0<\varepsilon \ll 1$, since we can express 
$$
    h^m = \sum^m_{k=1} (h^k-h^{k-1})+h^0,\quad m\geq 1,
$$
this immediately shows that $h^m$ is a Cauchy sequence in $L^\infty$, and as $m\rightarrow\infty$, its limit $h$ is the desired unique solution. This gives the solution $f$ to the linearized BGK by letting $f=h/w$:
\begin{align*}
    \{\p_t + v\cdot\nabla _x +\gamma(x)(\mathbf{I}-\mathbf{P} )\} f=  \Gamma \LC{f}\RC,\quad\hbox{with } f|_{t=0}=  f_{in},\quad f|_{S_-} = f_-. 
\end{align*}

\noindent {\bf Step 3. Uniqueness:}
Suppose that there are two solutions $F=\mu+\sqrt{\mu}f$ and $\tilde{F}=\mu+\sqrt{\mu}\tilde{f}$ to \eqref{EQN:BGK}. Then 
$$
    \{\p_t + v\cdot\nabla_x-L_\gamma \} (f-\tilde{f}) = \Gamma (f)-\Gamma(\tilde{f}),\quad \hbox{with }  (f-\tilde{f})|_{t=0}=0,\quad  (f-\tilde{f})|_{S_-}=0.
$$
By Lemma~\ref{lemma:diff Gamma} and Proposition~\ref{prop:L infty estimate}, it follows for constant $C_3>0$,
$$
    \sup_{0\leq t\leq t^*} \|w(f-\tilde{f})(t)\|_{L^\infty_{x,v}}
    \leq C_3 \varepsilon\sup_{0\leq t\leq t^*} \|w(f-\tilde{f})(t)\|_{L^\infty_{x,v}}.
$$
Since $\varepsilon$ can be chosen sufficiently small so that $C_3\varepsilon <1$, we thus obtain $f=\tilde{f}$. This completes the proof.
\end{proof}

\subsection{Perturbation of the BGK solution}
From Theorem~\ref{theorem:main forward}, we have known that there exists a constant $\varepsilon_0>0$ so that for $0< \varepsilon <\varepsilon_0$, there exists a unique solution $F_\varepsilon(t,x,v)=\mu+ \sqrt{\mu}f_\varepsilon$ to the in-flow boundary value problem \eqref{EQN:BGK} with data
\begin{align}
	F_\varepsilon|_{t=0}=\mu + \varepsilon \sqrt{\mu}f_{in},\quad 
	F_\varepsilon |_{S_-}=\mu + \varepsilon \sqrt{\mu}f_-,
\end{align} 
where  
    $$
    \|wf_{in}\|_{L^\infty_{x,v}}  + \sup_{0\leq t\leq t^*} \|wf_-(t)\|_{L^\infty_{x,v}}< 1.
    $$
Moreover, the solution $f_\varepsilon$ to linearized BGK problem   
\begin{align}\label{EQN: decomposition sec 4 linear BGK new}
    \left\{\begin{array}{ll}
          \{\p_t + v\cdot\nabla _x +\gamma(x)(\mathbf{I}-\mathbf{P} )\} f_\varepsilon=  \Gamma \LC{f_\varepsilon}\RC ,&   \\
         f_\varepsilon|_{t=0}= \varepsilon  f_{in},& \\
          f_\varepsilon |_{S_-}= \varepsilon f_-,& \\
    \end{array}\right.
\end{align}
exists and there holds
$$
     \sup_{0\leq t\leq t^*} \|wf_\varepsilon\|_{L^\infty_{x,v}}\leq C\varepsilon \LC\|w f_{in} \|_{L^\infty_{x,v}}+\sup_{0\leq t\leq t^*} \|wf_-(t)\|_{L^\infty_{x,v}}\RC.  
$$

The following result shows that we have the expansion of the solution $f_\varepsilon$.   
\begin{theorem}\label{theorem:linearization}
    Let $\Omega$ be an open bounded and convex domain in $\R^3$ with smooth boundary $\p\Omega$.
Let $0< \varepsilon <\varepsilon_0$ with $\varepsilon_0$ in Theorem~\ref{theorem:main forward}. Then we have the $\varepsilon$-expansion of $f_\varepsilon$ as follows:
$$
    f_\varepsilon= \varepsilon f^{(1)}+ {\varepsilon^2\over 2}  f^{(2)} + R^\varepsilon.
$$
 
Moreover, there exists a constant $C>0$, independent of $f_{in}$ and $f_-$, so that 
$$
     \sup_{0\leq t\leq t^*}\|w R^\varepsilon(t) \|_{L^\infty_{x,v}}\leq C \varepsilon^3 \LC\|w f_{in} \|_{L^\infty_{x,v}}  + \sup_{0\leq t\leq t^*} \|wf_-(t)\|_{L^\infty_{x,v}} \RC^3 . 
$$
\end{theorem}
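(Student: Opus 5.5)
We define $f^{(1)}$ as the solution of the linear problem \eqref{Intro:first linearization} with data $(f_{in},f_-)$ and zero source. We define $f^{(2)}$ as the solution of \eqref{Intro:second linearization} with zero data and source $\mathcal G := 2\,\Gamma^{(2)}(f^{(1)})$. Here $\Gamma^{(2)}$ is the quadratic part of $\Gamma$, obtained by freezing every $\theta$-dependent coefficient at $\theta=0$ (i.e.\ at $(\rho,U,T)=(1,0,1)$):
\[
\Gamma^{(2)}(f)=(\mathbf P f-f)\sum_{\ell}\mathcal P_\ell(0)\langle f,e_\ell\rangle+\tfrac12\gamma\sqrt\mu^{-1}\sum_{i,j}\mathcal Q_{ij}(0)\langle f,e_i\rangle\langle f,e_j\rangle .
\]

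By Lemma~\ref{lemma:coercivity} applied to $\Gamma(\varepsilon f)$ with $\varepsilon\to0$, or by direct check from the conservation laws, we have $\mathbf P\Gamma^{(2)}=0$. Hence Proposition~\ref{prop:L infty estimate} applies to both problems and gives
\[
\sup_t\|wf^{(1)}\|_{L^\infty}\lesssim \delta,\qquad \sup_t\|wf^{(2)}\|_{L^\infty}\lesssim \delta^2 ,
\]
where $\delta:=\|wf_{in}\|_{L^\infty_{x,v}}+\sup_t\|wf_-\|_{L^\infty_{x,v}}$. The second bound uses $\|w\Gamma^{(2)}(f)\|_{L^\infty}\lesssim\|wf\|^2_{L^\infty}$. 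That estimate holds because $\mathcal Q_{ij}(0)$ is a polynomial in $v$ times $\mu$, so $w\sqrt\mu^{-1}\mathcal Q_{ij}(0)$ is bounded, and because $|\langle f,e_i\rangle|\lesssim\|wf\|_{L^\infty}$.

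Next we set $R^\varepsilon:=f_\varepsilon-\varepsilon f^{(1)}-\tfrac{\varepsilon^2}{2}f^{(2)}$. Subtracting the equations shows that $R^\varepsilon$ has zero initial and inflow data and solves
\[
\{\p_t+v\cdot\nabla_x-L_\gamma\}R^\varepsilon=\Gamma(f_\varepsilon)-\varepsilon^2\Gamma^{(2)}(f^{(1)}) .
\]
Since both terms on the right lie in $\ker\mathbf P$, Proposition~\ref{prop:L infty estimate} reduces the claim to the bound $\|w(\Gamma(f_\varepsilon)-\varepsilon^2\Gamma^{(2)}(f^{(1)}))\|_{L^\infty}\lesssim \varepsilon^3\delta^3$. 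We split the difference as
\[
\bigl[\Gamma(f_\varepsilon)-\Gamma^{(2)}(f_\varepsilon)\bigr]+\bigl[\Gamma^{(2)}(f_\varepsilon)-\Gamma^{(2)}(\varepsilon f^{(1)})\bigr].
\]

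For the second bracket, $\Gamma^{(2)}$ is a bounded symmetric bilinear form in the weighted norm. It is therefore bounded by $\|w(f_\varepsilon+\varepsilon f^{(1)})\|\,\|w(f_\varepsilon-\varepsilon f^{(1)})\|$. We first need a preliminary estimate $\|w(f_\varepsilon-\varepsilon f^{(1)})\|\lesssim\varepsilon^2\delta^2$, which follows from Proposition~\ref{prop:L infty estimate} applied to the equation for $f_\varepsilon-\varepsilon f^{(1)}$, whose source is $\Gamma(f_\varepsilon)$, together with Lemma~\ref{lemma:estimate of Gamma} and Theorem~\ref{theorem:main forward}. The second bracket is then of order $\varepsilon\delta\cdot\varepsilon^2\delta^2$.

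The main obstacle is the first bracket, namely showing that $\Gamma(f)-\Gamma^{(2)}(f)$ is cubic in the weighted norm. The factor $\Upsilon_3=\Upsilon_1\Upsilon_2$ is already cubic by the Appendix estimates. For $\Gamma_1$ and $\Gamma_2$ we must control
\[
\int_0^1\bigl(\mathcal P_\ell(f)-\mathcal P_\ell(0)\bigr)\,d\theta
\quad\text{and}\quad
\int_0^1\bigl(\mathcal Q_{ij}(f)-\mathcal Q_{ij}(0)\bigr)(1-\theta)\,d\theta .
\]
We would use the mean value theorem in the variables $(\rho_\theta,U_\theta,T_\theta)$, which by \eqref{DEF:macro theta expansion} move from $(1,0,1)$ by $O(\theta\|wf\|)$. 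Lemmas~\ref{lemma:first derivative of P} and \ref{lemma:first derivative of Qij} express these quantities as rational functions with denominators in $\rho_\theta,T_\theta$ only, and these stay bounded away from zero by the Appendix bounds since $\|wf_\varepsilon\|$ is small. The delicate point is the $v$-dependence inside $\mathcal Q_{ij}$: its derivative in $(U_\theta,T_\theta)$ produces polynomial factors in $v$ times $M(F_\theta)$. We must bound
\[
w\,\sqrt\mu^{-1}\,\mathrm{poly}(v)\,M(F_\theta),
\]
uniformly, which requires $T_\theta<2$ strictly, say $|T_\theta-1|\le\tfrac14$, so that $M(F_\theta)/\sqrt\mu$ decays like a Gaussian. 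This is the same mechanism used in Lemma~\ref{lemma:estimate of Gamma}, which we would redo at one order higher. Combining the two brackets gives the source bound $\varepsilon^3\delta^3$, and Proposition~\ref{prop:L infty estimate} yields the claimed estimate on $R^\varepsilon$.
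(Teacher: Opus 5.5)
Your proposal is correct and follows the same route as the paper. The paper also defines $f^{(1)}$ and $f^{(2)}$ (with $\frac{\varepsilon^2}{2}\mathcal G=\varepsilon^2\Gamma^{(2)}(f^{(1)})$), first bounds $W=f_\varepsilon-\varepsilon f^{(1)}$ by $\|wf_\varepsilon\|^2$, and then controls the source of $R^\varepsilon$ by $C\|wW\|\,\|\varepsilon wf^{(1)}\|+C\|wf_\varepsilon\|^3$; this is exactly your two-bracket split, and your ``main obstacle'' (the cubic bound on $\Gamma-\Gamma^{(2)}$) is already covered by Lemma~\ref{lemma:second derivative estimates} with $f_2=0$, so it need not be redone.
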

\begin{proof}
As we are looking for an approximation expansion of $f_\varepsilon$, we substitute the ansatz $f_\varepsilon = \varepsilon f^{(1)}+{\varepsilon^2\over 2} f^{(2)}+R^\varepsilon$ into \eqref{EQN: decomposition sec 4 linear BGK new}. Based on the order of $\varepsilon$, we have
\begin{align}\label{EQN:Linearized BGK f_1}
    \{\p_t + v\cdot\nabla _x -L_\gamma\} f^{(1)} =0,\quad\hbox{with } f^{(1)}|_{t=0}= f_{in},\quad f^{(1)}|_{S_-} = f_-,
\end{align}
and  
\begin{align}\label{EQN:Linearized BGK f_2}
     \{\p_t + v\cdot\nabla _x - L_\gamma\} f^{(2)} = \mathcal{G}, \quad\hbox{with } 
      f^{(2)}|_{t=0}=0,\quad f^{(2)}|_{S_-} = 0 . 
\end{align}
Here we apply the finite difference approximations to the second-order derivative to express the inhomogeneous term $\mathcal{G}$. Specifically, when $\varepsilon=0$, the well-posedness of \eqref{EQN: decomposition sec 4 linear BGK new} with trivial data implies $f_0\equiv 0$ and thus $\Gamma (f_0)=0$. We then denote 
$$
    \mathcal{G} 
    :=  \lim_{\varepsilon\rightarrow 0}{1\over \varepsilon^2}\LC \Gamma(f_{2\varepsilon})-2 \Gamma(f_{\varepsilon})\RC,
$$
where $\mathcal{G}$ is a function of $t,\,x,\,v$ variables. 
It is clear that $\mathbf{P}(\mathcal{G})=0$ and $w\mathcal{G}(t)\in L^\infty_{x,v}$ by direct computations. In addition, we get 
\begin{align}\label{DEF:inhomogeneous G}
    \mathcal{G} =\p^2_\varepsilon|_{\varepsilon=0}\Gamma (f_\varepsilon)
    &:=2 (\mathbf{P}f^{(1)}-f^{(1)}) \LC  \sum^{5}_{\ell=1} \LC\int^1_0 \mathcal{P}_\ell (0)d\theta\RC\langle f^{(1)},e_\ell\rangle \RC \notag\\
    &\quad + 2\gamma \sqrt{\mu}^{-1} \sum_{i,j=1}^5\LC \int^1_0  \mathcal{Q}_{ij}(0)(1-\theta)\,d\theta\RC \langle f^{(1)},e_i\rangle\langle f^{(1)},e_j\rangle ,
\end{align}
which is equivalent to the RHS of \eqref{EQN:2 linearization appendix} in Appendix. To see this, applying \eqref{first derivative of q} gives
\begin{align*}
    \left(\begin{array}{c}
           \mathcal{P}_1 (0) \\
            \mathcal{P}_2(0)\\
            \mathcal{P}_3 (0)\\
           \mathcal{P}_4(0)\\
            \mathcal{P}_5 (0) \\
        \end{array}\right)=
    \left(\begin{array}{ccccc}
            1 & 0 & 0& 0&0\\
            0 & 1&0 & 0 &0\\
            0 & 0& 1&0  &0\\
            0& 0&0 & 1&0\\
            0 &0&0&0& \sqrt{2\over 3} \\
        \end{array}\right) 
     \left(\begin{array}{c}
            \gamma(x)\alpha \\
            0\\
            0\\
            0\\
            \gamma(x)\beta \\
        \end{array}\right) 
\end{align*}
and thus 
\begin{align*}
   \sum^{5}_{\ell=1} \LC\int^1_0 \mathcal{P}_\ell (0)d\theta\RC\langle f^{(1)},e_\ell\rangle 
        = \gamma\LC \alpha \langle f^{(1)},e_1\rangle+\sqrt{2\over 3}\beta\langle f^{(1)},e_5\rangle\RC = \gamma(\alpha \rho^{(1)}+\beta T^{(1)}),
\end{align*}
where we used $\rho^{(1)}=\p_\varepsilon \rho_\varepsilon |_{\varepsilon=0}$ and $T^{(1)}=\p_\varepsilon T_\varepsilon |_{\varepsilon=0}$ in Lemma~\ref{first order 2}.

The existence of bounded solutions $f^{(1)}$ and $f^{(2)}$ are guaranteed by using the well-posedness results in Theorem~\ref{prop: L2 estimate of boltzmann} and Proposition~\ref{prop:L infty estimate}, and moreover, there hold
\begin{align}\label{EST:f^1}
    \sup_{0\leq t\leq t^*} \|wf^{(1)}(t)\|_{L^\infty_{x,v}} \leq  C \LC\|wf_{in}\|_{L^\infty_{x,v}}  + \sup_{0\leq t\leq t^*} \|wf_-(t)\|_{L^\infty_{x,v}} \RC,
\end{align}
and
\begin{align}\label{EST:f^2}
    \sup_{0\leq t\leq t^*}  \|wf^{(2)}(t)\|_{L^\infty_{x,v}}
    &\leq   \sup_{0\leq t\leq t^*} \|w\mathcal{G}(t)\|_{L^\infty_{x,v}} 
    \leq \sup_{0\leq t\leq t^*} \|wf^{(1)}(t)\|_{L^\infty_{x,v}}^2 \notag\\
    &\leq  C\LC\|wf_{in}\|_{L^\infty_{x,v}}  + \sup_{0\leq t\leq t^*}\|wf_-(t)\|_{L^\infty_{x,v}} \RC^2.
\end{align}
We denote $W:= f_\varepsilon- \varepsilon f^{(1)}$. It satisfies
$$
    \{\p_t + v\cdot\nabla _x -L_\gamma\}W= \Gamma (f_\varepsilon),  \quad\hbox{with } 
      W|_{t=0}=0,\quad W|_{S_-} = 0 . 
$$
Thus, by Lemma~\ref{lemma:estimate of Gamma},
\begin{align}\label{EST:wW}
    \sup_{0\leq t\leq t^*} \|wW(t)\|_{L^\infty_{x,v}} \leq  \sup_{0\leq t\leq t^*} \|w\Gamma (f_\varepsilon)  (t)\|_{L^\infty_{x,v}}^2\leq  \sup_{0\leq t\leq t^*} \|w f_\varepsilon  (t)\|_{L^\infty_{x,v}}^2.
\end{align}

Finally, we denote $R^\varepsilon:= f_\varepsilon- \varepsilon f^{(1)}- {\varepsilon^2\over 2} f^{(2)}$. It satisfies
$$
\{\p_t + v\cdot\nabla _x -L_\gamma\}R^\varepsilon = \Gamma (f_\varepsilon) -  {\varepsilon^2\over 2}\mathcal{G} , \quad\hbox{with } 
      R^\varepsilon|_{t=0}=0,\quad R^\varepsilon|_{S_-} = 0 . 
$$
By \eqref{EST:f^1} and \eqref{EST:wW}, a direct computation shows that the RHS is controlled by 
\begin{align*}
    \|w(\Gamma (f_\varepsilon) -  {\varepsilon^2\over 2} \mathcal{G} )\|_{L^\infty_{x,v}}   
    &\leq C \|wW(t)\|_{L^\infty_{x,v}} \|\varepsilon (wf^{(1)})(t)\|_{L^\infty_{x,v}}+ C\sup_{0\leq t\leq t^*} \|w f_\varepsilon  (t)\|_{L^\infty_{x,v}}^3\\
    &\leq C \varepsilon^3\LC\|wf_{in}\|_{L^\infty_{x,v}}  + \sup_{0\leq t\leq t^*} \|wf_-(t)\|_{L^\infty_{x,v}} \RC^3.
\end{align*}
Applying Proposition~\ref{prop:L infty estimate} yields
\begin{align*}
     \sup_{0\leq t\leq t^*}  \|wR^\varepsilon(t)\|_{L^\infty_{x,v}} \leq  C\varepsilon^3\LC\|wf_{in}\|_{L^\infty_{x,v}}  + \sup_{0\leq t\leq t^*} \|wf_-(t)\|_{L^\infty_{x,v}} \RC^3,
\end{align*}
which completes the proof.
\end{proof}

\begin{corollary}\label{COR:derivative of F} 
Suppose all the hypothesis of Theorem~\ref{theorem:linearization} hold. Then 
we have 
$$
    \lim_{\varepsilon\rightarrow 0}\left\|w \LC {F_{\varepsilon}-\mu\over \varepsilon}    -f^{(1)}\RC\right\|_{L^\infty_{x,v}}=0,
\quad\hbox{ and }\quad
    \lim_{\varepsilon\rightarrow 0}\left\|w \LC {F_{2\varepsilon}-2F_\varepsilon+\mu \over \varepsilon^2}    -f^{(2)}\RC\right\|_{L^\infty_{x,v}}=0.
$$
\end{corollary}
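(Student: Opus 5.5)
Since $F_\varepsilon=\mu+\sqrt{\mu}f_\varepsilon$, I read the statement as concerning $f_\varepsilon$ directly, i.e. $(F_\varepsilon-\mu)/\varepsilon$ is understood through $f_\varepsilon/\varepsilon$ and $(F_{2\varepsilon}-2F_\varepsilon+\mu)/\varepsilon^2$ through $(f_{2\varepsilon}-2f_\varepsilon)/\varepsilon^2$. The dictionary between $F$ and $f$ only multiplies by $\sqrt{\mu}$, which is bounded, so the weighted limits transfer. The whole proof then rests on the expansion of Theorem~\ref{theorem:linearization}. The remainder bound there is uniform in $\varepsilon\in(0,\varepsilon_0)$, and $f^{(1)},f^{(2)}$ do not depend on $\varepsilon$. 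Throughout, set $A:=\|wf_{in}\|_{L^\infty_{x,v}}+\sup_{t}\|wf_-(t)\|_{L^\infty_{x,v}}<1$. To make sense of $f_{2\varepsilon}$, restrict to $\varepsilon<\varepsilon_0/2$.

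\textbf{First limit.} By Theorem~\ref{theorem:linearization},
$$
\frac{f_\varepsilon}{\varepsilon}-f^{(1)}=\frac{\varepsilon}{2}f^{(2)}+\frac{R^\varepsilon}{\varepsilon}.
$$
I would multiply by $w$ and take sup norms. The estimate \eqref{EST:f^2} gives $\|wf^{(2)}\|\le CA^2$. The remainder bound gives $\|wR^\varepsilon\|/\varepsilon\le C\varepsilon^2A^3$. Altogether the norm is $\le C(\varepsilon A^2+\varepsilon^2A^3)$, which tends to $0$.

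\textbf{Second limit.} Apply the expansion at both $\varepsilon$ and $2\varepsilon$. The point to check is that $R^{2\varepsilon}$ is the remainder for the same $f^{(1)},f^{(2)}$. It is, because the data $2\varepsilon f_{in}$, $2\varepsilon f_-$ are just $\varepsilon'=2\varepsilon$ times the same data, so Theorem~\ref{theorem:linearization} applies with $\varepsilon'=2\varepsilon<\varepsilon_0$. Then
$$
f_{2\varepsilon}-2f_\varepsilon=\big(2\varepsilon-2\varepsilon\big)f^{(1)}+\Big(\frac{4\varepsilon^2}{2}-\frac{2\varepsilon^2}{2}\Big)f^{(2)}+R^{2\varepsilon}-2R^\varepsilon
=\varepsilon^2f^{(2)}+R^{2\varepsilon}-2R^{\varepsilon}.
$$
Dividing by $\varepsilon^2$ and using the remainder bound at both scales gives
$$
\Big\|w\Big(\frac{f_{2\varepsilon}-2f_\varepsilon}{\varepsilon^2}-f^{(2)}\Big)\Big\|\le C\,\frac{(2\varepsilon)^3+2\varepsilon^3}{\varepsilon^2}A^3\to0.
$$

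\textbf{Main obstacle.} The only delicate point is that the constant $C$ in Theorem~\ref{theorem:linearization} must be uniform in $\varepsilon$ (and the same at $\varepsilon$ and $2\varepsilon$). I would verify this from its proof: $C$ comes from Proposition~\ref{prop:L infty estimate} and the $\Gamma$ estimates (Lemma~\ref{lemma:estimate of Gamma}), which depend only on $t^*$, $w$, $\gamma$, $\alpha$, $\beta$ and the a priori bound $\|wf_\varepsilon\|\le C\varepsilon_0$. Once that uniformity is confirmed, the rest is the algebra above.
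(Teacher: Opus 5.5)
Your proposal is correct and follows the paper's proof essentially line for line. You substitute the expansion of Theorem~\ref{theorem:linearization} at the scales $\varepsilon$ and $2\varepsilon$, observe that the $f^{(1)}$ terms cancel and $f^{(2)}$ appears with coefficient $\varepsilon^2$, and conclude with the uniform $\varepsilon^3$ remainder bound together with \eqref{EST:f^2}. Like the paper, you work with $f_\varepsilon$ rather than literally with $F_\varepsilon-\mu=\sqrt{\mu}f_\varepsilon$; read literally, the statement should have $\sqrt{\mu}f^{(1)}$ and $\sqrt{\mu}f^{(2)}$ in place of $f^{(1)}$ and $f^{(2)}$. Your explicit restriction to $\varepsilon<\varepsilon_0/2$ covers a point the paper leaves implicit.
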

\begin{proof}
When $\varepsilon=0$, the well-posedness theorem of the BGK implies $F_0\equiv \mu$. Using the estimates for $f^{(2)}$ and $R^\varepsilon$, we have
\begin{align*}
    \left\|w\LC {F_{\varepsilon} -\mu\over \varepsilon}-f^{(1)}\RC\right\|_{L^\infty_{x,v}}
    &= \left\|   {\varepsilon \over 2}  wf^{(2)} + \varepsilon^{-1} w R^\varepsilon \right\|_{L^\infty_{x,v}}\\
    &\lesssim \varepsilon\sup_{0\leq t\leq t^*}  \|wf^{(2)}(t)\|_{L^\infty_{x,v}}+\varepsilon^2\LC\|wf_{in}\|_{L^\infty_{x,v}}  + \sup_{0\leq t\leq t^*} \|wf_-(t)\|_{L^\infty_{x,v}} \RC^3\\
    &\leq \varepsilon+ \varepsilon^2,
\end{align*}
which goes to zero as $\varepsilon\rightarrow 0$. 

Similarly, for sufficiently small $\varepsilon$, using the estimate of $R^\varepsilon$ in Theorem~\ref{theorem:linearization} yields that
\begin{align*}
     \lim_{\varepsilon\rightarrow 0}\left\|w \LC {F_{2\varepsilon}-2F_\varepsilon+\mu \over \varepsilon^2}    -f^{(2)}\RC\right\|_{L^\infty_{x,v}} 
     = \lim_{\varepsilon\rightarrow 0}\left\| \varepsilon^{-2} w ( R^{2\varepsilon} - 2R^{ \varepsilon} )  \right\|_{L^\infty_{x,v}}=0.
\end{align*}
\end{proof}
 
\begin{remark}
Based on Corollary~\ref{COR:derivative of F}, we have 
$$
     \p_\varepsilon|_{\varepsilon=0} f_\varepsilon(t,x,v) =f^{(1)}(t,x,v),\quad \p^2_\varepsilon|_{\varepsilon=0} f_\varepsilon(t,x,v) =f^{(2)}(t,x,v) 
$$ 
in the weighted $L^\infty$ norm.
\end{remark}

\section{Inverse Problems - Proof of Theorem~\ref{theorem:main inverse} and Theorem~\ref{theorem:main inverse power}}\label{Sec:inverse problem}

In this section, we study inverse problems of recovering both $\gamma$ and $(\alpha,\beta)$ in the BGK collision frequency. 
Let $F_\varepsilon=\mu+\sqrt{\mu} f_\varepsilon$  be the solution to 
\begin{align*}
       \p_t F+v\cdot\nabla_x F = q(t,x,F)(M(F) -F),\quad\hbox{with } F|_{t=0}=\mu+\varepsilon\sqrt{\mu}f_{in},\quad F |_{S_-} =\mu+\varepsilon\sqrt{\mu}f_-,
\end{align*}
where $q(t,x,F)=\gamma (x)\rho^{\alpha}T^{\beta}.$
Moreover, the function $f_\varepsilon$ is the unique solution to the linearized BGK problem
\begin{align}\label{EQN: decomposition sec 4 linear BGK}
    \left\{\begin{array}{ll}
          \{\p_t + v\cdot\nabla _x +\gamma(x)(\mathbf{I}-\mathbf{P} )\} f_\varepsilon=  \Gamma \LC{f_\varepsilon}\RC ,& \hbox{ in }(0,t^*)\times\Omega\times\R^3,   \\
         f_\varepsilon|_{t=0}= \varepsilon f_{in}\,,& \hbox{ on }\{t=0\}\times\Omega\times\R^3,\\
          f_\varepsilon |_{S_-}= \varepsilon f_-\,,  &  \hbox{ on }(0,t^*)\times S_-.\\
    \end{array}\right.
\end{align}

We will show that $\gamma$ is uniquely determined from the albedo operator (Theorem~\ref{theorem:main inverse}) in Section~\ref{sec:proof of theorem gamma}. Moreover, Section~\ref{sec:proof of theorem power} is devoted to recover $(\alpha,\,\beta)$ under an extra condition, see Theorem~\ref{theorem:main inverse power}.

\subsection{Proof of Theorem~\ref{theorem:main inverse} - recover $\gamma$}\label{sec:proof of theorem gamma}

\subsubsection{\bf  First-order linearization: bounded solutions}
\label{recover gamma}
Based on Theorem~\ref{theorem:linearization} and the remark, when taking $\p_ \varepsilon|_{\varepsilon=0}$ on the BGK equation \eqref{EQN: decomposition sec 4 linear BGK}, the nonlinear operator $\Gamma(f_\varepsilon)$ will vanish as it contains at least the order of $\varepsilon^2$, while the linear part $\p_ \varepsilon|_{\varepsilon=0}L_\gamma (f_\varepsilon)$ will stay.
Hence, $\p_\varepsilon|_{\varepsilon=0} f_\varepsilon(t,x,v) =f^{(1)}(t,x,v)$ solves the first-order linearization of the BGK equation:  
\begin{align}\label{EQN:Linearized BGK IP in section 5}
    \{\p_t + v\cdot\nabla _x +\gamma(x) (\mathbf{I}-\mathbf{P}) \}f^{(1)}=0, \quad\hbox{with } f^{(1)}|_{t=0}=  f_{in},\quad f^{(1)} |_{S_-} = f_-.
\end{align}

We denote the norm $\|\cdot\|_*$ by  
\begin{align*}
    \|g\|_*:= \left\|\int_{\R^3} \mathbf{1}_{t-\tau_->0} |g(t-\tau_-(x,v),x-v\tau_-(x,v),v)|\,dv\right\|_{L^\infty_{t,x}}.
\end{align*}
Since $\gamma$ only depends on $x$, it suffices to take the trivial initial data, i.e., $f_{in}\equiv 0$.
\begin{lemma}\label{lemma:EST of f1} 
Suppose $\sup_{0\leq t\leq t^*} \|wf_-(t)\|_{L^\infty_{x,v}}+\|f_-\|_*<\infty$. Let $f^{(1)}\in L^2$ satisfy $wf^{(1)}\in L^\infty$ and solve the following problem:
    \begin{align}\label{Def:delta f1 function}
        \left\{\begin{array}{ll}
          \{\p_t + v\cdot\nabla _x +\gamma(x) (\mathbf{I}-\mathbf{P}) \}f^{(1)}=0 ,& \hbox{ in }(0,t^*)\times\Omega\times\R^3,  \\
         f^{(1)}|_{t=0} =0,& \hbox{ on }\{t=0\}\times\Omega\times\R^3,\\
        f^{(1)}|_{S_-}  = f _- , & \hbox{ on }(0,t^*)\times S_-.\\
    \end{array}\right.
    \end{align}
    Then there exists a positive constant $C$ depending on $\Omega$,  $\gamma$, $\mu$, and $t^*$ so that
    \begin{align*}
        \|\langle |f^{(1)}|\rangle\|_{L^\infty_{t,x}} \leq  C \LC    
      \| f_-\|_*   +  \LC\int_0^{ t^*} \|f_-(s)\|^2_{L^2(S_-)}\, ds \RC^{1/2}  \RC. 
    \end{align*}
\end{lemma}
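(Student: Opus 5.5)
The plan is to use the Duhamel (characteristic) representation of $f^{(1)}$, exactly as in \eqref{characteristic h} but with weight $w\equiv 1$ and $g\equiv 0$, $f_{in}\equiv 0$:
\begin{align*}
f^{(1)}(t,x,v)&=\mathbf{1}_{t-\tau_->0}\,e^{-\int_0^{\tau_-}\gamma(x-sv)\,ds} f_-(t-\tau_-,x-v\tau_-,v)\\
&\quad+\int_{\max\{0,t-\tau_-\}}^t e^{-\int_0^{t-s}\gamma(x-\tau v)\,d\tau}\,\gamma\,\mathbf{P}f^{(1)}(s,x-v(t-s),v)\,ds .
\end{align*}
Taking absolute values, using $\gamma\ge c_0>0$ to bound the exponentials by $1$, and integrating in $v$ over $\R^3$, the first term contributes at most $\|f_-\|_*$ after taking the supremum in $(t,x)$. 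This is precisely why the norm $\|\cdot\|_*$ was introduced.

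For the second term, write $\mathbf{P}f^{(1)}(s,y,v)=\int k(v,v')f^{(1)}(s,y,v')\,dv'$ with $|k(v,v')|\le C\mu^{b}(v)\mu^{b}(v')$ for some $b\in(0,1/2)$. The $v$–integral of this term is then bounded by
$$
\|\gamma\|_{L^\infty}\int_{\R^3}\int_{\max\{0,t-\tau_-\}}^t \mu^b(v)\int_{\R^3}\mu^b(v')\,|f^{(1)}(s,x-v(t-s),v')|\,dv'\,ds\,dv .
$$
I would not simply bound the inner integral by $\|\langle |f^{(1)}|\rangle\|_{L^\infty_{t,x}}$. That would yield a Gronwall-type inequality involving the same quantity, but with a prefactor like $t^*\|\gamma\|_\infty$ that need not be small, and Lemma~\ref{lemma:EST of f1} carries no smallness assumption. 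Instead I will iterate once, as in Step 2 of Proposition~\ref{prop:L infty estimate}: substitute the Duhamel formula again for $f^{(1)}(s,x_1,v')$.

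After the substitution, the boundary part gives terms of the form
$$
\int\!\!\int \mu^b(v)\mu^b(v')\,|f_-(s-\tau_-',x_1-v'\tau_-',v')|\,dv'\,ds\,dv .
$$
Here the $dv'$-integral is bounded by $\|f_-\|_*$ (drop $\mu^b(v')\le 1$), and the remaining integrals over $s$ and $v$ give a constant depending on $t^*$. The doubly iterated $\mathbf{P}$ term is split, as in Steps 2-1 and 2-2, into three pieces.
\begin{itemize}
\item \emph{Short-time part} $s-s'<\xi$: this is $\lesssim \xi\,\|\langle|f^{(1)}|\rangle\|_{L^\infty_{t,x}}$.
\item \emph{Large-velocity part} $|v'|\ge N$ or $|v''|\ge N$: the Gaussian tails $\mu^b$ make this $\lesssim e^{-cN^2}\|\langle|f^{(1)}|\rangle\|_{L^\infty_{t,x}}$.
\item \emph{Main part}: change variables $v'\mapsto y=x_1-(s-s')v'$, with Jacobian at least $\xi^3$, then apply Cauchy--Schwarz. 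This yields $C_{N,\xi,t^*}\sup_t\|f^{(1)}(t)\|_{L^2_{x,v}}$.
\end{itemize}
The $L^2$ norm is controlled by \eqref{EST:L2 estimate for f} of Proposition~\ref{prop: L2 estimate of boltzmann} with $f_{in}=0$ and $g=0$, which gives $(\int_0^{t^*}\|f_-\|^2_{L^2(S_-)}ds)^{1/2}$. Choosing $\xi$ small and $N$ large then absorbs the $\|\langle|f^{(1)}|\rangle\|$ terms into the left-hand side.

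The main obstacle is the absorption step: it requires the a priori finiteness of $\|\langle|f^{(1)}|\rangle\|_{L^\infty_{t,x}}$. This follows from $wf^{(1)}\in L^\infty$, since $\langle|f^{(1)}|\rangle\le\|wf^{(1)}\|_\infty\int w^{-1}\,dv$. If $w^{-1}\notin L^1$, I would instead use $\langle|f|\rangle$ weighted by $\mu^b$, which is what actually appears after the first substitution.

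A second, minor technical point is making sure the $v$-integral of the iterated boundary term is controlled by $\|f_-\|_*$ uniformly in the intermediate point $x_1$. This works because the $*$-norm takes the supremum over all $(t,x)$.
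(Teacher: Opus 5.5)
Your proposal is correct and is essentially the paper's proof: the Duhamel formula along characteristics, one further substitution of it into $\mathbf{P}f^{(1)}$, boundary pieces bounded by $\|f_-\|_*$, and the doubly iterated term split into a short-time part, a large-velocity part, and a main part handled by $v'\mapsto x_1-(s-s')v'$ together with \eqref{EST:L2 estimate for f}, followed by absorption. Two side remarks: first, the a priori finiteness of $\|\langle|f^{(1)}|\rangle\|_{L^\infty_{t,x}}$ is most easily read off from the Duhamel formula itself, since $|\mathbf{P}f^{(1)}|\lesssim \mu^{b}(v)\|wf^{(1)}\|_{L^\infty}$, whereas only $w^{-2}\in L^1$ is assumed; second, your reason for rejecting the direct bound is mistaken, because keeping the time integral gives $m(t)\le \|f_-\|_*+\|\gamma\|_{L^\infty}c_b\bigl(\int_{\R^3}\mu^{b}\,dv\bigr)\int_0^t m(s)\,ds$ for $m(t)=\sup_x\langle|f^{(1)}|\rangle(t,x)$, and this closes by Gronwall with no smallness and even without the $L^2$ term.
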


\begin{proof}
We will follow a similar argument to the one used in the proof of Proposition~\ref{prop:L infty estimate}, with slight adjustments for this case by factoring out $\mu^b(v)$ from $f^{(1)}$, $0<b<{1\over 2}$, so that $\langle |f^{(1)}|\rangle$ is still bounded. To simplify the notation, in this proof, we denote $f^{(1)}$ by $u$. We first write the solution along the characteristic and get
\begin{align}\label{characteristic u IP}
     u (t,x,v)&= \mathbf{1}_{t-\tau_->0}\, \exp\LC-\int^{\tau_-(x,v)}_0 \gamma(x-sv)\,ds \RC  f_-(t-\tau_-(x,v),x-v\tau_-(x,v),v) \notag\\
     &\quad + \underbrace{\int^t_{\max\{0,t-\tau_-\}} \exp\LC-\int^{t-s}_0 \gamma(x-\tau v)\,d\tau\RC [\gamma \mathbf{P}( u )](s,x-v(t-s),v)\,ds}_{H(u)} \notag \\
     &\leq \mathbf{1}_{t-\tau_->0} |f_-(t-\tau_-(x,v),x-v\tau_-(x,v),v)| + |H(u)(t,x,v)|.
\end{align}

\noindent{\bf Step 1. Estimate of $H(u)$.} 
It remains to control $H(u)$. 
For this purpose, we first define $x_1=x-v(t-s)$, $\tau_-' = \tau_-(x_1,v')$. Since the kernel $k(v,v')$ of $\mathbf{P}$ is exponentially decay in both $v$ and $v'$, we get for $0<b<{1\over 2}$ that
\begin{align*}
     k (v,v')& = \mu^{ b}(v) (\mu^{-b}(v) k(v,v') ) \\
    &=\mu^{b}(v) \LC  \mu^{{1\over 2}-b}(v) \sqrt{\mu (v')} \bigg(1+v\cdot v'+\frac{(|v|^2-3)(|v'|^2-3)}{6}\bigg)\RC\leq c_b \mu^{b}(v), 
\end{align*}
where the constant $c_b$ is defined by
\begin{align}\label{DEF:c_b}
    c_b:=\max_{v,v'} \mu^{-b}(v) |k(v,v')|<\infty.
\end{align}

By iterating the formula \eqref{characteristic u IP} to evaluate $\gamma\mathbf{P}(u)$, we get
\begin{align*} 
      &[\gamma \mathbf{P}(u)](s,x-v(t-s),v) = \mu^{b}(v) \int  \mu^{-b}(v)k(v,v') \gamma (x_1)u(s ,x_1,v')dv' \notag\\
      &= \mu^{b}(v) \underbrace{\int \mu^{-b}(v)k(v,v') \gamma(x_1) \mathbf{1}_{s-\tau'_->0}\, e^{ -\int^{\tau_-(x_1,v')}_0 \gamma(x_1-\tau v')\,d\tau} \, f_-(s-\tau_-' ,x_1-v'\tau_-',v')\,dv'}_{\bf A_1} \notag\\
      &\quad + \mu^{b}(v) \underbrace{ \int \int^s_{\max\{0,s-\tau'_-\}} e^{-\int^{s-s'}_0 \gamma(x_1-\tau v')\,d\tau} \int \mu^{-b}(v)k(v,v') k(v',v{''}) \gamma (x_1)\gamma u(s',x_1-v'(s-s'),v{''}) dv{''}\,ds'dv'}_{\bf A_2}.
\end{align*}
By substituting this $\gamma \mathbf{P}(u)$ into $H(u)$, it follows that
\begin{align}\label{EST:I2 IP}
    H(u)&= \mu^{b}(v) \int^t_{\max\{0,t-\tau_-\}} e^{-\int^{t-s}_0 \gamma(x-\tau v)\,d\tau}\LC {\bf A_1} +{\bf A_2}\RC\,ds \notag\\
    &\leq C_{b,\gamma} \mu^{b}(v) t^*   \| f_-\|_*+  \mu^{b}(v) \int^t_{\max\{0,t-\tau_-\}} e^{-\int^{t-s}_0 \gamma(x-\tau v)\,d\tau}{\bf (A_2)}\,ds .
\end{align}
Therefore, it is sufficient to estimate the last term of the RHS of \eqref{EST:I2 IP}. To this end, similar to the proof of Proposition~\ref{prop:L infty estimate},  for a fixed small $\xi>0$, and $(t,x)\in [0,t^*]\times \overline\Omega$, we consider 
\begin{itemize}
    \item {\bf Case 1.} If $\xi >t-\tau_-(x,v)$, then we split the last term of \eqref{EST:I2 IP} as follows:
\begin{align*}
 &\hbox{Last term in \eqref{EST:I2 IP}}\\
 &=\mu^{b}(v) \int^t_{\max\{0,t-\tau_-\}} e^{-\int^{t-s}_0 \gamma(x-\tau v)\,d\tau}{\bf (A_2)}\,ds \\
  &=\mu^{b}(v) \int^t_{\max\{0,t-\tau_-\}} \int \int^s_{\max\{0,s-\tau'_-\}}  e^{-\int^{t-s}_0 \gamma(x-\tau v)\,d\tau}e^{ -\int^{s-s'}_0 \gamma(x_1-\tau v')\,d\tau} \\
       &\quad \cdot\int \mu^{-b}(v)k (v,v') k(v',v{''}) \gamma \gamma  u(s',x_1-v' (s-s') ,v{''}) dv{''} ds' dv' ds\\
 &= \mu^{b}(v)\int^{ \xi}_{\max\{0,t-\tau_-\}} \int\int^s_{\max\{0,s-\tau'_-\}}  \cdots \, ds' dv'ds + \mu^{b}(v)\int^t_{\xi} \int\LC\int^s_{s-\xi} + \int^{s-\xi}_{\max\{0,s-\tau'_-\}}\RC \cdots \, ds' dv' ds;
\end{align*}
 
    \item {\bf Case 2.} If $0<\xi <t-\tau_-(x,v)$, then $s-\xi>0$ clearly holds as $t-\tau_-\leq s<t$. We obtain
\begin{align*}
 \hbox{Last term in \eqref{EST:I2 IP}}
 &=\mu^{b}(v)\int^{t}_{\max\{0,t-\tau_-\}} \int \LC \int^s_{s-\xi}+ \int^{s-\xi}_{\max\{0,s-\tau'_-\}}\RC\cdots\,ds' dv'ds=:{\bf B_{1}}+{\bf B_{2}}.
\end{align*}    
\end{itemize}

Again, it is sufficient to estimate Case 2 solely as similar analysis also works for Case 1.\\
\noindent{\bf Step 1-1. Analyze ${\bf B_{1}}$.}  
We obtain
\begin{align}\label{EST:B1 IP}
    |{\bf B_{1}}|&\lesssim  \mu^{b}(v)\| \langle |u |\rangle\|_{L^\infty_{t,x}}\int^t_{\max\{0,t-\tau_-\}} \int_{\R^3}\int^s_{s-\xi}   |\mu^{-b}(v)k(v,v')|\cdot \max_{v''}|k(v',v'')| \,ds' dv' ds \notag \\
    &\lesssim \xi t^* \mu^{b}(v)\|  \langle |u|\rangle\|_{L^\infty_{t,x}}.
\end{align}

\noindent{\bf Step 1-2. Analyze ${\bf B_{2}}$.} Let $D=\{|v'|\geq N,\,\hbox{or }|v''|\geq N\}$. We split ${\bf B_2}$ into
$$
    |{\bf B_{2}}|= \mu^{b}(v) \int^t_{\max\{0,t-\tau_-\}}  \LC\int_D +\int_{(\R^3\times\R^3)\setminus D
    } \RC
    \int^{s-\xi}_{\max\{0,s-\tau'_-\}}   \,  ds'dv'ds=:{\bf B_{2,D}}+{\bf B_{2,D^c}}.
$$
To estimate ${\bf B_{2,D}}$, in the region $|v'|\geq  N$ or $|v''|\geq  N$, we have for $0<\delta' <1/4$ and $ b/2+\delta' <1/4$, there holds in the region $D$: as $k(v,v')k(v',v'')$ exponentially decays in $v,\,v',\,v''$, it implies
\begin{align*}
\int|\mu^{-b}(v)k(v,v') k(v',v'')| \,dv'
&\leq C \int e^{\LC-{1\over 4} +  {b\over 2}+\delta'\RC |v|^2}e^{\LC-{1\over 4}  +\delta'\RC |v'|^2} e^{\LC-{1\over 4}  +\delta'\RC |v'|^2}e^{\LC-{1\over 4} +\delta'\RC |v''|^2} \,dv'
\lesssim e^{-CN^2}.
\end{align*}
With this estimate, we can deduce
\begin{align*}
    {\bf B_{2,D}}&\lesssim  \mu^{b}(v) \int^t_{\max\{0,t-\tau_-\}} \int_D\int^{s-\xi}_{\max\{0,s-\tau'_-\}} |\mu^{-b}(v)k(v,v') k(v',v'')|   |u  (s',x_1-v'(s-s'),v{''})|\,
      ds' dv'dv''ds\\
     &\lesssim e^{-CN^2} \mu^{b}(v)  ((t^*)^2+t^*) \|  \langle |u |\rangle\|_{L^\infty_{t,x}}. 
\end{align*}
To estimate ${\bf B_{2,D^c}}$, in the region $|v'|< N$ and $|v''|<N$, we use the change of variable $v'\mapsto y=x_1-(s-s')v'$ with Jacobian 
$$
    \LV \det\LC {\partial y\over \partial v'}\RC\RV=(s-s')^3\geq \xi ^3  
$$
and derive  
\begin{align*}
    {\bf B_{2,D^c}} &\leq C_\gamma \mu^{b}(v) \int^t_{\max\{0,t-\tau_-\}} \int_{|v'|<N} \int^{s-\xi}_{\max\{0,s-\tau'_-\}} \int_{|v''|<N} 
    |u(s',x_1-v' (s-s') ,v{''})|\, dv'' ds'dv'ds\\
    &\leq C_{\gamma N }  \mu^{b}(v)  {1\over \xi^{3}} \int^t_{\max\{0,t-\tau_-\}}\int^{s-\xi}_0  \LC  \int_{|v''|<N} \int_{\Omega}  | u (s',y,v{''})|^2 dydv'' \RC ^{1 \over 2}  \,ds'ds \quad (\hbox{by H\"older estimate})\\
     &\lesssim  C_{\gamma N } \mu^{b}(v)   ((t^*)^2+t^*){1\over \xi^{3}}  \sup_{0\leq t\leq t^*}\|u(t)\|_{L^2_{x,v}}\\
     &\lesssim  C_{\gamma N } \mu^{b}(v)  ((t^*)^2+t^*) {1\over \xi^{3}}  \LC\int_0^{ t^*}  \|f_-(s)\|^2_{L^2(S_-)}\, ds \RC^{1 \over 2}    \quad \hbox{(used $L^2$ estimate in \eqref{EST:L2 estimate for f})}.
\end{align*}
From the estimates for ${\bf B_{2,D}},\, {\bf B_{2,D^c}}$ above, we obtain
\begin{align}\label{EST:B2 IP} 
    |{\bf B_{2}}|& \leq |{\bf B_{2,D}}|+ |{\bf B_{2,D^c}}| \notag\\
    &\leq \mu^{b}(v)  \LC C_{t^*} e^{-CN^2} \| \langle |u|\rangle \|_{L^\infty_{t,x}} +C_{\gamma N 
    t^* \xi} \LC\int_0^{ t^*}  \|f_-(s)\|^2_{L^2(S_-)}\, ds \RC^{1 \over 2}  \RC.
\end{align}

Hence, by combining estimates \eqref{EST:I2 IP}, \eqref{EST:B1 IP}, and \eqref{EST:B2 IP}, we get 
\begin{align}\label{EST:H(u) in the proof}
     |H(u)(t,x,v)|\lesssim  \mu^{b}(v)  \LC t^*  \|f_-\|_*  + (e^{-CN^2}+t^*\xi)\|\langle |u |\rangle\|_{L^\infty_{t,x}}+ C_{\gamma N t^* \xi} \LC\int_0^{ t^*}  \|f_-(s)\|^2_{L^2(S_-)}\, ds \RC^{1 \over 2}  \RC .
\end{align}
\noindent{\bf Step 2. Estimate of $\langle |u |\rangle$.} 
Moreover, we apply \eqref{characteristic u IP} and take average in $v$ to deduce for all $(t,x)\in (0,t^*)\times\Omega$ that 
\begin{align*}
      \langle |u |\rangle(t,x) 
      &\leq \int_{\R^3} \mathbf{1}_{t-\tau_->0} |f_-(t-\tau_-(x,v),x-v\tau_-(x,v),v)| \,dv + \int_{\R^3} |H(u)(t,x,v)|\,dv\\
      &\lesssim \|f_-\|_* +  \LC \int_{\R^3} \mu^{b}(v)\,dv \RC \LC t^*  \| f_-\|_*  + (e^{-CN^2}+t^*\xi) \|\langle |u |\rangle\|_{L^\infty_{t,x}}+ C_{\gamma N  t^* \xi} \LC\int_0^{t^*}  \|f_-(s)\|^2_{L^2(S_-)}\, ds \RC^{1 \over 2}  
      \RC .
\end{align*}
Note that $\LC \int_{\R^3} \mu^{b}(v)\,dv \RC<\infty$ for $b>0$. Lastly, by choosing sufficiently large $N$ and small $\xi$ so that the term with $\|\langle |u |\rangle\|_{L^\infty_{t,x}}$ on the RHS can be absorbed by the left, we then achieve the following estimate:  
\begin{align*}
       \|\langle |u |\rangle\|_{L^\infty_{t,x}} \lesssim 
       (1+t^*) \| f_-\|_* + \LC\int_0^{ t^*} \|f_-(s)\|^2_{L^2(S_-)}\, ds \RC^{1 \over 2}  .
\end{align*}
\end{proof}

Let's return to the term $H(u)$ and derive its upper bound
\begin{corollary}\label{corollary:H(u)}
   Recall $$ H(f^{(1)})(t,x,v) = \int^t_{\max\{0,t-\tau_-\}} \exp\LC-\int^{t-s}_0 \gamma(x-\tau v)\,d\tau\RC [\gamma \mathbf{P}(f^{(1)})](s,x-v(t-s),v)\,ds.$$
    Then there exists a positive constant $C$ depending on $\Omega$,  $\gamma$, $\mu$, and $t^*$ so that
    \begin{align*}
    |H(f^{(1)})(t,x,v)|+
    \langle|H(f^{(1)})(t,x,v)|\rangle
      \leq  C \LC    
      \| f_-\|_*   +  \LC\int_0^{t^*} \|f_-(s)\|^2_{L^2(S_-)}\, ds \RC^{1 \over 2}  \RC.   
    \end{align*}
\end{corollary}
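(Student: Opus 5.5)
The plan is to combine the pointwise bound \eqref{EST:H(u) in the proof}, obtained in Step 1 of the proof of Lemma~\ref{lemma:EST of f1}, with the final estimate of that lemma. With $u=f^{(1)}$, Step 1 gives, for all $(t,x,v)$ and any fixed $N$, $\xi$,
\begin{align*}
|H(u)(t,x,v)|\lesssim \mu^{b}(v)\Big( t^*\|f_-\|_* + (e^{-CN^2}+t^*\xi)\,\|\langle|u|\rangle\|_{L^\infty_{t,x}} + C_{\gamma N t^*\xi}\Big(\int_0^{t^*}\|f_-(s)\|^2_{L^2(S_-)}\,ds\Big)^{1/2}\Big).
\end{align*}
This bound was derived \emph{before} any absorption step. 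So I would fix the same $N,\xi$ as in the lemma (any admissible choice works here, since nothing needs to be absorbed at this stage), which makes all constants depend only on $\Omega,\gamma,\mu,t^*$ and $b$.

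Next I would substitute the conclusion of Lemma~\ref{lemma:EST of f1}, namely $\|\langle|f^{(1)}|\rangle\|_{L^\infty_{t,x}}\le C(\|f_-\|_*+(\int_0^{t^*}\|f_-\|^2_{L^2(S_-)}ds)^{1/2})$, into the middle term. This gives
\begin{align*}
|H(f^{(1)})(t,x,v)|\le C\,\mu^{b}(v)\Big(\|f_-\|_*+\Big(\int_0^{t^*}\|f_-(s)\|^2_{L^2(S_-)}\,ds\Big)^{1/2}\Big).
\end{align*}
Since $\mu^b\le (2\pi)^{-3b/2}$, this already gives the bound on $|H(f^{(1)})|$.

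For the averaged quantity, I would integrate the same pointwise bound in $v$. Because $0<b<\tfrac12$, we have $\int_{\R^3}\mu^b(v)\,dv<\infty$, so $\langle|H(f^{(1)})|\rangle(t,x)$ is bounded by the same right-hand side times this finite constant, uniformly in $(t,x)$. Adding the two estimates proves the corollary.

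The only point needing care, and the one I expect to be the main obstacle, is making sure \eqref{EST:H(u) in the proof} holds for every $(t,x,v)$ and not just in Case 2. In Case 1 ($\xi>t-\tau_-$) the extra piece $\int_{\max\{0,t-\tau_-\}}^{\xi}$ is over a time interval of length at most $\xi$. It is therefore bounded by $\xi t^*\mu^b(v)\|\langle|u|\rangle\|_{L^\infty_{t,x}}$, exactly as $\mathbf{B_1}$ is, and the remaining pieces are handled as in Case 2. Hence the same form of the bound holds in both cases and the argument above goes through.
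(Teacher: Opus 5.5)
Your proposal is correct and follows the paper's own argument: substitute the conclusion of Lemma~\ref{lemma:EST of f1} into the pre-absorption pointwise bound \eqref{EST:H(u) in the proof} with $N,\xi$ fixed, then integrate the factor $\mu^b(v)$ to control $\langle|H(f^{(1)})|\rangle$, a step the paper leaves implicit. Your treatment of Case~1 fills in the detail that the paper only describes as ``similar.''
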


\begin{proof}
   From \eqref{EST:H(u) in the proof} and the estimate of $f^{(1)}$ in Lemma~\ref{lemma:EST of f1}, we have 
\begin{align*} 
    |H(f^{(1)})(t,x,v)|
    &\lesssim \mu^{b}(v) \Bigg( t^*  \| f_-\|_*  + (e^{-CN^2}+t^*\xi) \LC      
       \| f_-\|_* + \LC\int_0^{ t^*} \|f_-(s)\|^2_{L^2(S_-)}\, ds \RC^{1 \over 2}  \RC \\
       &\quad + C_{\gamma N t^* \xi} \LC\int_0^{ t^*} \|f_-(s)\|^2_{L^2(S_-)}\, ds \RC^{1 \over 2}  \Bigg)\\
    &\lesssim   \| f_-\|_* + \LC\int_0^{ t^*} \|f_-(s)\|^2_{L^2(S_-)}\, ds \RC^{1 \over 2}.
\end{align*}
\end{proof}

\subsubsection{\bf Reconstruction of $\gamma$}
Let $\varphi\in C^\infty_c(\R^3)$ be a smooth function and has compact support within a unit ball $B_1(0)$ with center at the origin and radius $1$ so that $0\leq \varphi\leq 1$, $\int_{\R^3} \varphi=1$ and $\varphi(0)=1$. Fix a nonzero vector $v_0\in \R^3$ with $|v_0|>1$. 
In order to ensure $f_-^\delta\in L^2(S_-)$ has a uniform bound in $\delta$, we take the boundary data to be
$$
     f^\delta_-(t,x,v) = \LC {1\over \delta^{3 }}  \varphi \LC {v-v_0\over \delta}\RC \RC^{1\over 2}, \quad 
     0<\delta<1.
$$
We then deduce that $\|f^\delta_-\|_* $ is finite for this particular choice of boundary data:
\begin{align*}
    \|f^\delta_-\|_* \leq \int_{\R^3} \LC {1\over \delta^{3 }}  \varphi \LC {v-v_0\over \delta}\RC \RC^{1\over 2}\,dv \leq \LC\int_{\R^3}  {1\over \delta^{3}}  \varphi \LC {v-v_0\over \delta}\RC \,dv\RC^{1\over 2} \LC\int_{|v-v_0|\leq \delta} 1\,dv\RC^{1 \over 2}\leq  \LC{4\pi\over 3}\RC^{1 \over 2}.
\end{align*}
Let $f^{(1),\delta}$ solve \eqref{Def:delta f1 function} with incoming data $f^\delta_-$. Note that the existence of the solution $f^{(1),\delta}\in L^2$ satisfying $wf^{(1),\delta}\in L^\infty$ is due to Proposition~\ref{prop:L infty estimate}. The following result shows that both $\langle |f^{(1),\delta}|\rangle$ and $H(f^{(1),\delta})$ are uniformly bounded by a constant, independent of $t,\,x,\,v$ and $\delta$. 
\begin{lemma}\label{lemma:control boundary}
    For a fixed $v_0\in \R^3$, we have 
    \begin{align*}
       \|f^\delta_-(t)\|^2_{L^2(S_-)} \leq |\p\Omega| \sup_{\substack{ x\in\p\Omega \\ |v-v_0|\leq 1 }}|\nu(x)\cdot v |\leq |\p\Omega|(|v_0|+1).
    \end{align*}
Moreover, there holds
\begin{align*}
     \|\langle |f^{(1),\delta}|\rangle\|^2_{L^\infty_{t,x}}+\|H(f^{(1),\delta})\|^2_{L^\infty}
      \leq  C  
     \LC{4\pi\over 3}\RC+ C  t^* |\p\Omega|(|v_0|+1).  
    \end{align*}
\end{lemma}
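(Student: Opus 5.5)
The lemma has two parts, and I would prove them in order. The boundary bound is a direct computation. The bound on $\langle|f^{(1),\delta}|\rangle$ and $H(f^{(1),\delta})$ then follows by inserting it, together with the estimate $\|f^\delta_-\|_*\le (4\pi/3)^{1/2}$ derived just above, into Lemma~\ref{lemma:EST of f1} and Corollary~\ref{corollary:H(u)}.

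\textbf{Boundary bound.} Since $f^\delta_-$ does not depend on $(t,x)$, I write
\begin{align*}
\|f^\delta_-(t)\|^2_{L^2(S_-)}=\int_{\p\Omega}\int_{\nu(x)\cdot v<0}\delta^{-3}\varphi\LC{v-v_0\over\delta}\RC|\nu(x)\cdot v|\,dv\,d\mathcal{S}_x .
\end{align*}
The integrand in $v$ is supported in $\{|v-v_0|\le\delta\}\subset\{|v-v_0|\le 1\}$. On this set I bound $|\nu(x)\cdot v|$ by its supremum over $x\in\p\Omega$ and $|v-v_0|\le1$. The remaining integral satisfies $\int\delta^{-3}\varphi((v-v_0)/\delta)\,dv=1$, because $\varphi\ge0$ and $\int\varphi=1$. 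Integrating over $\p\Omega$ then gives the first inequality. The second inequality follows from $|\nu(x)\cdot v|\le|v|\le|v_0|+1$ on the support. In particular the bound is uniform in $t$ and in $\delta$.

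\textbf{Averaged bound.} Lemma~\ref{lemma:EST of f1} and Corollary~\ref{corollary:H(u)} give
\begin{align*}
\|\langle|f^{(1),\delta}|\rangle\|_{L^\infty_{t,x}}+\|H(f^{(1),\delta})\|_{L^\infty}\le C\LC\|f^\delta_-\|_*+\LC\int_0^{t^*}\|f^\delta_-(s)\|^2_{L^2(S_-)}ds\RC^{1/2}\RC,
\end{align*}
where $C$ depends only on $\Omega,\gamma,\mu,t^*$. I would then square this and use $(a+b)^2\le 2a^2+2b^2$. Finally, I insert $\|f^\delta_-\|_*^2\le 4\pi/3$ and $\int_0^{t^*}\|f^\delta_-\|^2_{L^2(S_-)}\,ds\le t^*|\p\Omega|(|v_0|+1)$, and absorb the factor $2$ into $C$.

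\textbf{Hypotheses and the main obstacle.} Before applying the two results I must check their hypotheses. The first is $\sup_t\|wf^\delta_-\|_{L^\infty}<\infty$, which holds for each fixed $\delta$ since $f^\delta_-$ is bounded with compact support. This bound blows up as $\delta\to0$, but it enters only qualitatively, through the existence and regularity of $f^{(1),\delta}$ via Proposition~\ref{prop:L infty estimate}, and not the constants. The second is finiteness of $\|f^\delta_-\|_*$, which was verified above. The only delicate point is confirming that the constant $C$ in Lemma~\ref{lemma:EST of f1} is independent of $\delta$. Tracing that proof, the absorption step fixes $N$ and $\xi$ using only $\gamma$, $t^*$, $b$ and $\mu$, and the weighted $L^\infty$ norm of the data never appears. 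So $C$ is uniform in $\delta$, and the stated bound follows.
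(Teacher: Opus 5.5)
Your proposal is correct and matches the paper's proof: the paper bounds $\|f^\delta_-(t)\|^2_{L^2(S_-)}$ by $\int_{\p\Omega}\int_{\R^3}\delta^{-3}\varphi((v-v_0)/\delta)\,|\nu(x)\cdot v|\,dv\,d\mathcal{S}_x$ and then combines this with Lemma~\ref{lemma:EST of f1} and Corollary~\ref{corollary:H(u)}. Your additional details (the support and normalization of the mollifier, the squaring via $(a+b)^2\le 2a^2+2b^2$, and the check that the constants do not depend on $\delta$) are the steps the paper leaves implicit.
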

\begin{proof}
    By definition, 
\begin{align*}
    \|f^\delta_-(t)\|_{L^2(S_-)}^2
    & =\int_{\p\Omega}\int_{\nu(x)\cdot v<0} |f^\delta_-(t,x,v)|^2  |\nu(x)\cdot v |\, dvd\mathcal{S}_x \\
    &\leq  \int_{\p\Omega}\int_{\R^3}{1\over  \delta^{3}} \varphi\LC {v-v_0\over \delta}\RC |\nu(x)\cdot v |\, dvd\mathcal{S}_x.
\end{align*}

Combining it with Lemma~\ref{lemma:EST of f1} and Corollary~\ref{corollary:H(u)}, the last estimate follows immediately. 
\end{proof}
 
\begin{theorem}\label{theorem:recover gamma}
    Let $0<c_0\leq \gamma\in L^\infty(\Omega)$ for some constant $c_0$. For any $\mathfrak{t}\in (0,\,t^*)$, there exists $N_\mathfrak{t}>0$ such that for any $v_0\in \mathbb R^3$, $|v_0|>N_\mathfrak{t}$, and for almost every $(t,x)\in (\mathfrak{t},\,t^*)\times \overline{\Omega}$,   
    \begin{align*}
        \lim_{\eta\rightarrow 0}\lim_{\delta\rightarrow 0} \int_{\R^3} (f^{(1),\delta})^2 (t,x,v) \varphi\LC {v-v_0\over \eta}\RC \,dv = \exp\LC-2 \int^{\tau_-(x,v_0)}_0\gamma(x-sv_0)\,ds\RC.
    \end{align*}
\end{theorem}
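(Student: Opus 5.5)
We will start from the characteristic representation \eqref{characteristic u IP} of $f^{(1),\delta}$ with $f_{in}=0$:
\[
f^{(1),\delta}(t,x,v)=\mathbf{1}_{t-\tau_->0}\,E(x,v)\,f^\delta_-(v)+H(f^{(1),\delta})(t,x,v),\qquad E(x,v):=\exp\Big(-\int_0^{\tau_-(x,v)}\gamma(x-sv)\,ds\Big).
\]
Here we used that $f^\delta_-$ depends only on $v$. Squaring and integrating against $\varphi((v-v_0)/\eta)$ produces three terms: the ballistic term $\int \mathbf{1}_{t-\tau_->0}E^2 (f_-^\delta)^2\varphi(\frac{v-v_0}{\eta})\,dv$, the cross term $2\int \mathbf{1}E f^\delta_- H\,\varphi\,dv$, and the scattered term $\int H^2\varphi\,dv$. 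The plan is to show that the ballistic term gives the right-hand side in the double limit, while the other two vanish.

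\textbf{Ballistic term.} Since $(f^\delta_-)^2=\delta^{-3}\varphi((v-v_0)/\delta)$ is an approximate identity concentrating at $v_0$, we get
\[
\lim_{\delta\to0}\int \mathbf{1}_{t-\tau_-(x,v)>0}\,E^2(x,v)\,\delta^{-3}\varphi\Big(\frac{v-v_0}{\delta}\Big)\varphi\Big(\frac{v-v_0}{\eta}\Big)\,dv=\mathbf{1}_{t-\tau_-(x,v_0)>0}\,E^2(x,v_0)\,\varphi(0).
\]
This needs continuity of $v\mapsto E(x,v)$ and of $\mathbf{1}_{t-\tau_-(x,v)>0}$ near $v_0$, for a.e.\ $x$. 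For a convex smooth domain, $\tau_-(x,v)$ is continuous in $v\neq0$. The line integral $\int_0^{\tau_-}\gamma(x-sv)\,ds$ with $\gamma\in L^\infty$ is continuous in $v$ for a.e.\ $x$; this follows by approximating $\gamma$ in $L^1$ by continuous functions and using the averaging in $x$, or alternatively by working with Lebesgue points after integrating against test functions in $x$. This is where the ``almost every $(t,x)$'' qualifier enters.

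\textbf{Choice of $N_{\mathfrak t}$.} We have $\tau_-(x,v)\le \mathrm{diam}(\Omega)/|v|$, and $|v-v_0|\le 1$ on the support. So choosing $N_{\mathfrak t}$ with $\mathrm{diam}(\Omega)/(N_{\mathfrak t}-1)<\mathfrak t$ gives $t-\tau_->0$ for $t>\mathfrak t$. The indicator is then identically $1$ on the support, which removes its discontinuity. Since $\varphi(0)=1$, this yields $E^2(x,v_0)$, and the $\eta\to0$ limit is then trivial.

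\textbf{Cross and scattered terms.} The key is that $H$ carries the factor $\mu^b(v)$ and is bounded uniformly in $\delta$, by \eqref{EST:H(u) in the proof} combined with Lemma~\ref{lemma:control boundary}. For the scattered term this gives
\[
\int H^2\varphi\Big(\frac{v-v_0}{\eta}\Big)\,dv\le C\eta^3\to0,
\]
uniformly in $\delta$. For the cross term, Cauchy--Schwarz gives
\[
\Big|\int E f^\delta_- H\varphi\,dv\Big|\le C\Big(\int (f^\delta_-)^2\varphi\,dv\Big)^{1/2}\Big(\int_{|v-v_0|<\eta}1\,dv\Big)^{1/2}\le C\,\eta^{3/2},
\]
since $\int (f^\delta_-)^2\,dv=1$. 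This also vanishes as $\eta\to0$ after taking $\delta\to0$. The order of the limits matters here: these bounds are uniform in $\delta$, so we first take $\limsup_{\delta\to 0}$ and then send $\eta\to0$.

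\textbf{Main obstacle.} The delicate step is the pointwise-in-$v$ uniform bound on $H(f^{(1),\delta})$ independent of $\delta$. This requires $\|\langle|f^{(1),\delta}|\rangle\|_{L^\infty_{t,x}}$ bounded uniformly even though $f^\delta_-$ blows up like $\delta^{-3/2}$ pointwise. I would rely on Lemma~\ref{lemma:EST of f1} and Corollary~\ref{corollary:H(u)}, which control everything through $\|f^\delta_-\|_*\le(4\pi/3)^{1/2}$ and the $L^2(S_-)$ norm bounded via Lemma~\ref{lemma:control boundary}. The second concern, continuity of $E$ in $v$ for merely bounded $\gamma$, I would handle by the a.e.\ Lebesgue-point argument above.
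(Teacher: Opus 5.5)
Your proof follows the paper's: the same splitting along characteristics into ballistic, cross and scattered terms. The ballistic term is treated as an approximate identity, and the other two are killed as $\eta\to0$ by the $\delta$-uniform bound on $H(f^{(1),\delta})$ from Lemma~\ref{lemma:EST of f1}, Corollary~\ref{corollary:H(u)} and Lemma~\ref{lemma:control boundary}. Your choice of $N_{\mathfrak t}$ via $\tau_-(x,v)\le \mathrm{diam}(\Omega)/|v|$ improves on the paper's, since it is uniform in $(t,x)$ as the quantifiers in the statement require. One step needs repair, and one conclusion is slightly weaker than what is stated.

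\textbf{Continuity of $E$ in $v$.} The claim that $v\mapsto\int_0^{\tau_-(x,v)}\gamma(x-sv)\,ds$ is continuous for a.e.\ $x$ is not available for $\gamma\in L^\infty$. Ray integrals over a whole cone of directions through a fixed $x$ depend on the representative of $\gamma$: a Nikodym-type null set can contain, for a.e.\ $x$, rays from $x$ in directions arbitrarily close to $v_0$. Also, $L^1$-approximation plus averaging in $x$ does not yield pointwise continuity.

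You only need $v_0$, fixed first, to be a Lebesgue point of $v\mapsto E^2(x,v)$. Extend $\gamma$ by zero to $\tilde\gamma$ and set $S=\mathrm{diam}(\Omega)/(|v_0|-1)$. By convexity, $\int_0^{\tau_-(x,v)}\gamma(x-sv)\,ds=\int_0^S\tilde\gamma(x-sv)\,ds$ for $x\in\Omega$ and $|v-v_0|<1$. The substitution $y=x-sv$ then gives
\[
\frac{1}{\delta^{3}}\int_{|v-v_0|<\delta}\int_0^{S}\big|\tilde\gamma(x-sv)-\tilde\gamma(x-sv_0)\big|\,ds\,dv
=\int_0^{S}\frac{1}{(s\delta)^{3}}\int_{B(x-sv_0,\,s\delta)}\big|\tilde\gamma(y)-\tilde\gamma(x-sv_0)\big|\,dy\,ds .
\]
By Fubini, for a.e.\ $x$ and a.e.\ $s$ the point $x-sv_0$ is a Lebesgue point of $\tilde\gamma$. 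Dominated convergence in $s$ then sends this to $0$ for a.e.\ $x$. Since $|e^{-2a}-e^{-2b}|\le 2|a-b|$ for $a,b\ge0$, the ballistic limit $E^2(x,v_0)$ follows. The paper instead uses Lebesgue differentiation in $v_0$, which only gives a.e.\ $v_0$.

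\textbf{Existence of the inner limit.} Taking $\limsup_{\delta\to0}$ proves the statement with $\limsup/\liminf$ in $\delta$, but not that the inner limit exists. This is easily fixed. Note that $\|f^\delta_-\|_{L^1_v}=\delta^{3/2}\|\varphi^{1/2}\|_{L^1}\to0$. Set $m(s)=\sup_x\langle|u|\rangle(s,x)$. From $|H(u)|\le \|\gamma\|_{L^\infty}c_b\,\mu^b(v)\int_0^t m(s)\,ds$, a Gronwall argument in time gives $|H(f^{(1),\delta})|\lesssim \mu^b(v)\,\delta^{3/2}$. Hence the cross and scattered terms vanish already as $\delta\to0$ for each fixed $\eta$.
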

\begin{proof}
We use the same notation as those in lemma~\ref{lemma:EST of f1} with data $f_-=f^\delta_-$, and express the solution $u\equiv f^{(1),\delta}$ along the characteristic 
$$
u(t,x,v)= \mathbf{1}_{t-\tau_->0}\, \exp\LC-\int^{\tau_-(x,v)}_0 \gamma(x-sv)\,ds \RC f^\delta_-(t-\tau_-(x,v),x-v\tau_-(x,v),v) 
+ H(u).
$$ 
Note that for each $(t,x)\in (\mathfrak{t},\,t^*)\times \overline{\Omega}
$, since $\tau_-$ is continuous on $\overline\Omega\times \mathbb R^3$, one can find $N_\mathfrak{t}>0$ such that for any $v_0\in \mathbb R^3$ with $|v_0|>N_\mathfrak{t}$, there is $\delta_0>0$ with
$$\tau_-(x,v)\leq \mathfrak{t}\quad \hbox{ for all }|v-v_0|\leq \delta_0.$$
For such $(t,x,v)$, we have $t-\tau_-(x,v)>0$, so $\mathbf{1}_{t-\tau_->0}=1$.
Now we can evaluate
\begin{align*}
      &\lim_{\eta\rightarrow 0}\lim_{\delta\rightarrow 0} \int_{\mathbb R^3} u^2(t,x,v) \varphi\LC {v-v_0\over \eta}\RC \,dv \\
      &=\lim_{\eta\rightarrow 0}\lim_{\delta\rightarrow 0} \int_{\mathbb R^3} \exp\LC-2\int^{\tau_-(x,v)}_0 \gamma(x-sv)\,ds \RC {1\over \delta^{3}} \varphi\LC {v-v_0\over \delta}\RC \varphi\LC {v-v_0\over \eta}\RC \,dv   \\
      &\quad +\lim_{\eta\rightarrow 0}\lim_{\delta\rightarrow 0} \int_{\mathbb R^3} \left[ H(u)^2 +  2 H(u) \exp\LC- \int^{\tau_-(x,v)}_0 \gamma(x-sv)\,ds \RC \LC {1\over \delta^{3 }}  \varphi \LC {v-v_0\over \delta}\RC \RC^{1\over 2} \right] \varphi\LC {v-v_0\over \eta}\RC \,dv \\
      & =  \exp\LC -2\int^{\tau_-(x,v_0)}_0 \gamma(x-sv_0)\,ds\RC,
\end{align*}
by noting that if $g\in L^p(1\leq p\leq \infty)$, then $\int_{\R^3}{1\over  \delta^{3}} \varphi\LC {v-v_0\over \delta} \RC g(v) \,dv\rightarrow g(v_0)$ as $\delta\rightarrow 0$ for almost every $v_0$ \cite{Folland1984}. 
Moreover, in the last identity, we applied the following facts to control the last two terms involving $H(u)$. First, by using H\"older inequality and Corollary~\ref{corollary:H(u)}, we have
\begin{align*}
    &\int_{\mathbb R^3} H(u) \LC {1\over \delta^{3 }}  \varphi \LC {v-v_0\over \delta}\RC \RC^{1\over 2}  \varphi\LC {v-v_0\over \eta}\RC \,dv\\
     &\leq  \LC\int_{\mathbb R^3} |H(u)|^2   {1\over \delta^{3}} \varphi\LC {v-v_0\over \delta}\RC   \,dv\RC^{1\over 2}\LC \int_{\mathbb R^3}  \varphi^2
     \LC {v-v_0\over \eta}\RC \,dv \RC^{1 \over 2} \\
     &\leq  \|H(u)\|^2_{L^\infty} \LC \int_{\mathbb R^3}  \varphi^2
     \LC {v-v_0\over \eta}\RC \,dv \RC^{1\over 2} \rightarrow 0, \quad \hbox{as $\eta \rightarrow 0$}.
\end{align*}
Second fact we applied is as follows:
\begin{align*}
    \LV\int_{\mathbb R^3} H(u)^2   \varphi\LC {v-v_0\over \eta}\RC \,dv\RV
    &\leq \|H(u)\|^2_{L^\infty}\LV\int_{\mathbb R^3} \varphi\LC {v-v_0\over \eta}\RC \,dv \RV  \rightarrow 0,\quad \eta\rightarrow 0.
\end{align*}
\end{proof}

\begin{remark}
    With the choice of boundary data $f^\delta_-$, we observe from the proof above that $\langle |f^{(1)}|\rangle$ is smoother and is in $L^\infty_{t,x}$, compared to $f^{(1)}$, whose value increases when $\delta$ goes to zero.
\end{remark}

\begin{proof}[Proof of Theorem~\ref{theorem:main inverse}] 
Since $\gamma\in L^\infty$, we extend $\gamma$ by zero for $x\in\R^3\setminus\Omega$. From Theorem~\ref{theorem:recover gamma}, once we fix $t\in (0,t^*)$, there exists $N_0>0$, such that for any $(x,v)\in S_+$ with $|v|>N_0$, we recover the X-ray transform of $\gamma$, namely,
$$\int_0^{\tau_-(x,v)}\gamma (x-sv)\,ds.$$
Hence, applying the inversion formula for the X-ray transform, see e.g. \cite{Helgason1999, Natterer1986, SUbook}, we recover the coefficient $\gamma$.  
\end{proof}

\subsection{Proof of Theorem~\ref{theorem:main inverse power} - recover $\alpha,\,\beta$}\label{sec:proof of theorem power}
\subsubsection{\bf Second-order linearization: bounded solutions with an extra condition}  
After recovering $\gamma$ in the previous section, we will focus on the determination of the powers $\alpha$ and $\beta$ in $q=\gamma \rho^\alpha T^\beta$. 
For this purpose, 
similar to the first-order linearization in Section~\ref{recover gamma}, 
we apply Theorem~\ref{theorem:linearization} (or Lemma~\ref{prop:appendix second linearization of BGK} in the appendix) to derive the second-order linearization (taking $\p^2_ \varepsilon|_{\varepsilon=0}$) of the BGK equation \eqref{EQN: decomposition sec 4 linear BGK} and then obtain
\begin{align}\label{EQN:f2 linearization}
     \{\p_t + v\cdot\nabla_x +\gamma(x)(\mathbf{I}-\mathbf{P}) \} f^{(2)}  = 2\gamma (\alpha  \rho^{(1)}+\beta T^{(1)})(\mathbf{P}-\mathbf{I})f^{(1)}+\gamma B(v,f^{(1)}),
\end{align}
with $f^{(2)} |_{t=0}=f^{(2)} |_{S_-}=0$, where $B(v,f^{(1)})$, defined in Lemma~\ref{lemma:second linear MF}, depends only on $f^{(1)}$ and thus the term $\gamma B(v,f^{(1)})$ is known. Here $f^{(1)}$ is the solution to \eqref{EQN:Linearized BGK IP in section 5},  and from Lemma~\ref{first order 2}, we have
$$
    \rho^{(1)}:= \p_{\varepsilon}|_{\varepsilon=0} \rho_\varepsilon = \int_{\R^3} \sqrt{\mu} f^{(1)}\,dv=\langle f^{(1)},e_1\rangle,
$$
and 
$$
     T^{(1)}:= \p_{\varepsilon}|_{\varepsilon=0} T_\varepsilon =\frac{1}{3}\int_{\R^3} \sqrt{\mu} (|v|^2-3) f^{(1)}\,dv= \sqrt{2\over 3}\langle f^{(1)},e_5\rangle.
$$

We will show that under additional assumption \eqref{Hypothesis small} (i.e., {\bf Assumption 1} holds), the estimate of $\|\langle |f^{(1)}|\rangle\|_{L^\infty_{t,x}}$ in Lemma~\ref{lemma:EST of f1} still holds and the proof will be significantly simplified. Hence we have the following results.
 \begin{lemma}\label{lemma:EST of f1 part 2}
Suppose $\sup_{0\leq t\leq t^*} \|wf_-(t)\|_{L^\infty_{x,v}}+\|f_-\|_*<\infty$. 
For some $b\in (0,{1\over 2})$, suppose that
\begin{align}\label{Hypothesis small}
 t^* c_b  \|\gamma\|_{L^\infty(\Omega)}  \LC\int_{\R^3} \mu^{b}(v) \,dv\RC < \tilde\varepsilon\in (0,1),
\end{align}
where 
$ c_b=\max_{v,v'\in\R^3} \mu^{-b}(v) |k(v,v')|<\infty$ is defined in \eqref{DEF:c_b}.
Then the solution $f^{(1)}$ to the problem
    \begin{align}\label{EQN:f1 recover parameters}
        \left\{\begin{array}{ll}
          \{\p_t + v\cdot\nabla _x +\gamma(x) (\mathbf{I}-\mathbf{P}) \}f^{(1)}=0 ,& \hbox{ in }(0,t^*)\times\Omega\times\R^3,  \\
         f^{(1)}|_{t=0} =0,& \hbox{ on }\{t=0\}\times\Omega\times\R^3,\\
        f^{(1)}|_{S_-}   = f _-\, , & \hbox{ on }(0,t^*)\times S_-,\\
    \end{array}\right.
    \end{align}
satisfies
    \begin{align}\label{lemma:estimate of u in reconstruction of parameter}
        \|\langle |f^{(1)}|\rangle\|_{L^\infty_{t,x}} \leq    
        {1\over 1- \tilde\varepsilon} \| f _-\|_*,\quad\hbox{and}\quad |H(f^{(1)})(t,x,v)|
    \leq { \tilde\varepsilon\over 1- \tilde\varepsilon}{\mu^{b}(v)\over \LC\int \mu^{b}(v) \,dv\RC} \| f _-\|_*.
    \end{align}
\end{lemma}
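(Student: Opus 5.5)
Writing $u=f^{(1)}$, I would work directly from the characteristic representation \eqref{characteristic u IP}, now using the smallness assumption \eqref{Hypothesis small} instead of the double iteration and $L^2$ argument of Lemma~\ref{lemma:EST of f1}. Since $\gamma\geq c_0>0$, every exponential factor is at most $1$. The boundary term is therefore bounded by $\mathbf{1}_{t-\tau_->0}|f_-(t-\tau_-,x-v\tau_-,v)|$, and integrating it in $v$ gives at most $\|f_-\|_*$ by the definition of $\|\cdot\|_*$.

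For $H(u)$ I would use the pointwise kernel bound $|k(v,v')|\leq c_b\mu^b(v)$ from \eqref{DEF:c_b}. It gives
$$|\gamma\mathbf{P}u(s,x_1,v)|\leq \|\gamma\|_{L^\infty}\,c_b\,\mu^b(v)\int|u(s,x_1,v')|\,dv'\leq \|\gamma\|_{L^\infty}c_b\,\mu^b(v)\,\|\langle|u|\rangle\|_{L^\infty_{t,x}}.$$
Since the $s$-integral in $H(u)$ runs over an interval of length at most $t^*$, this yields
$$|H(u)(t,x,v)|\leq t^*c_b\|\gamma\|_{L^\infty}\mu^b(v)\,\|\langle|u|\rangle\|_{L^\infty_{t,x}} < \frac{\tilde\varepsilon\,\mu^b(v)}{\int\mu^b\,dv}\,\|\langle|u|\rangle\|_{L^\infty_{t,x}}.$$
Integrating in $v$ gives $\langle|H(u)|\rangle\leq\tilde\varepsilon\|\langle|u|\rangle\|_{L^\infty_{t,x}}$. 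Taking the $v$-integral of $|u|$ and then the supremum over $(t,x)$, I obtain
$$\|\langle|u|\rangle\|_{L^\infty_{t,x}}\leq\|f_-\|_*+\tilde\varepsilon\,\|\langle|u|\rangle\|_{L^\infty_{t,x}}.$$
Absorbing the last term (with $\tilde\varepsilon<1$) gives the first estimate. Substituting it back into the pointwise bound on $H(u)$ gives the second, with factor $\frac{\tilde\varepsilon}{1-\tilde\varepsilon}\frac{\mu^b(v)}{\int\mu^b}$.

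The only real point requiring care is the absorption step: it needs $\|\langle|u|\rangle\|_{L^\infty_{t,x}}<\infty$ a priori. I would get this from the hypothesis $wu\in L^\infty$ together with $w^{-1}\in L^1_v$, or from Lemma~\ref{lemma:EST of f1}, which already provides finiteness. I would choose $w$ with $c_2$ large enough that $w^{-1}$ is integrable; alternatively, the bound from Lemma~\ref{lemma:EST of f1} suffices. Measurability and the a.e.\ validity of the Duhamel formula along characteristics are inherited from Proposition~\ref{prop:L infty estimate}.
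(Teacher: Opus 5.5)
Your proof is correct and is essentially the paper's own argument. The bound $|k(v,v')|\le c_b\mu^b(v)$ gives $|H(u)(t,x,v)|\le t^*c_b\|\gamma\|_{L^\infty}\mu^b(v)\|\langle|u|\rangle\|_{L^\infty_{t,x}}$; integrating the characteristic representation in $v$ and absorbing with \eqref{Hypothesis small} gives the first estimate, and substituting back gives the second.

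You are right that the absorption step needs $\|\langle|u|\rangle\|_{L^\infty_{t,x}}<\infty$ a priori, which the paper passes over. You need neither a larger $c_2$ nor Lemma~\ref{lemma:EST of f1} for this. Since $w\ge 1$, you have $|\mathbf{P}u(s,x_1,v)|\le \|wu\|_{L^\infty}\int|k(v,v')|\,dv'\lesssim \mu^b(v)\|wu\|_{L^\infty}$. The same representation then already gives $\langle|u|\rangle\le \|f_-\|_*+C\|wu\|_{L^\infty}<\infty$.
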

 
\begin{proof}
Denote $u\equiv f^{(1)}$.
Hence, from \eqref{characteristic u IP}, we again obtain
\begin{align}\label{characteristic u IP new}
     |u (t,x,v)|
     &\leq \mathbf{1}_{t-\tau_->0} |f _-(t-\tau_-(x,v),x-v\tau_-(x,v),v)| + |H(u)(t,x,v)|.
\end{align}
 
Recall the notation $x_1:=x-v(t-s)$.  
From the definition of $H(u)$, using the estimate
\begin{align}\label{estimate of Pu}
    |\gamma\mathbf{P} u(s,x_1,v)|=\mu^{b}(v)\LV \int_{\R^3} \gamma(x_1)\mu^{-b}(v)k(v,v')\, u(s,x_1,v')  \,dv' \RV \leq c_b\|\gamma\|_{L^\infty} \|\langle | u|\rangle \|_{L^\infty_{t,x}} \mu^{b}(v)
\end{align}
to deduce the following uniform bound of $H(u)$ for $t,\,x$ variables:
\begin{align}\label{estimate of H new}
    |H( u)(t,x,v)|
    &=\LV\int^t_{\max\{0,t-\tau_-\}} \exp\LC-\int^{t-s}_0 \gamma(x-\tau v)\,d\tau\RC  \underbrace{\LC  \int_{\R^3} \gamma(x_1)k(v,v')\, u(s,x_1,v')  \,dv'\RC }_{\gamma\mathbf{P} u(s,x_1,v)} \,ds\RV \notag\\
    &\leq  t^* c_b \|\gamma\|_{L^\infty} \|\langle | u|\rangle \|_{L^\infty_{t,x}} \mu^{b}(v) .
\end{align}
Taking average of \eqref{characteristic u IP new} in $v$, from the hypothesis, we get
\begin{align*}
    \langle |u|\rangle (t,x)
    &\leq  \int_{\R^3} \mathbf{1}_{t-\tau_->0} |f_-(t-\tau_-(x,v),x-v\tau_-(x,v),v)| \,dv +  \int_{\R^3}  |H( u)|\,dv\\
    &\leq \|f _-\|_*+\underbrace{ t^* c_b  \|\gamma\|_{L^\infty}  \LC\int_{\R^3} \mu^{b}(v) \,dv\RC }_ {<\,  \tilde\varepsilon\hbox{ by hypothesis}}\|\langle |u|\rangle\|_{L^\infty_{t,x}},
\end{align*}
which yields \eqref{lemma:estimate of u in reconstruction of parameter}.
Finally, substituting it into \eqref{estimate of H new} implies
$$
|H( u)(t,x,v)|
 \leq  \tilde\varepsilon{\mu^{b}(v)\over \LC\int \mu^{b}(v) \,dv\RC}   \|\langle | u|\rangle \|_{L^\infty_{t,x}} \leq { \tilde\varepsilon\over 1- \tilde\varepsilon}{\mu^{b}(v)\over \LC\int \mu^{b}(v) \,dv\RC} \| f _-\|_*  .
$$
\end{proof}

\begin{lemma}\label{theorem:recover alpha beta}
Suppose that $f_-\equiv f_-(v)$. Suppose that
\eqref{Hypothesis small} holds. 
Then the solution of \eqref{EQN:f2 linearization} satisfies 
$$ 
    \|\langle |f^{(2)}|\rangle\|_{L^\infty_{t,x}} <\infty.
$$ 
\end{lemma}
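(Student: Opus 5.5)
The proof will mirror Lemma~\ref{lemma:EST of f1 part 2}, now with a source term. Write $u:=f^{(2)}$ and denote the right-hand side of \eqref{EQN:f2 linearization} by
$$
\mathcal S:=2\gamma(\alpha\rho^{(1)}+\beta T^{(1)})(\mathbf P-\mathbf I)f^{(1)}+\gamma B(v,f^{(1)}).
$$
Since $u$ has zero initial and in-flow data, the characteristic representation gives
$$
u(t,x,v)=\int^t_{\max\{0,t-\tau_-\}}e^{-\int_0^{t-s}\gamma(x-\tau v)d\tau}\big[\mathcal S+\gamma\mathbf P u\big](s,x-v(t-s),v)\,ds .
$$
Taking absolute values and integrating in $v$, the $\gamma\mathbf P u$ contribution is handled exactly as in \eqref{estimate of Pu}--\eqref{estimate of H new}. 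It is bounded by $t^*c_b\|\gamma\|_{L^\infty}\big(\int\mu^b\big)\|\langle|u|\rangle\|_{L^\infty_{t,x}}<\tilde\varepsilon\,\|\langle|u|\rangle\|_{L^\infty_{t,x}}$, which will be absorbed into the left-hand side. This leaves $\|\langle|u|\rangle\|_{L^\infty_{t,x}}\le (1-\tilde\varepsilon)^{-1}\sup_{t,x}\int_{\R^3}\int_0^t|\mathcal S|(s,x-v(t-s),v)\,ds\,dv$. To justify the absorption, I first need $\langle|u|\rangle$ finite a priori. This follows from $wu\in L^\infty$, which Proposition~\ref{prop:L infty estimate} gives since $w\mathcal G\in L^\infty$ and $w^{-1}\in L^1_v$ when $c_2$ is large; otherwise I would run a truncation/iteration in place of absorption.

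The main obstacle is bounding the $v$-integral of $\mathcal S$ along characteristics. The difficulty is that $f^{(1)}$ contains the singular ballistic part $\mathbf 1_{t-\tau_->0}e^{-\int\gamma}f_-(v)$, which is not uniformly bounded when $f_-$ is concentrated. For this I use the decomposition $f^{(1)}=u_b+H(f^{(1)})$ from \eqref{characteristic u IP}, where $|u_b|\le |f_-(v)|$ because $f_-$ depends only on $v$, together with the bounds of Lemma~\ref{lemma:EST of f1 part 2}. These give $\rho^{(1)}$ and $T^{(1)}$ bounded by $C\|\langle|f^{(1)}|\rangle\|_{L^\infty_{t,x}}\le C\|f_-\|_*$, since $\sqrt\mu$ and $|v|^2\sqrt\mu$ are bounded, and $|\mathbf P f^{(1)}|\le C\|f_-\|_*\mu^{b}(v)$. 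The first term of $\mathcal S$ is then pointwise bounded by
$$
C\|f_-\|_*\big(\mu^b(v)\|f_-\|_*+|f_-(v)|+|H(f^{(1)})|\big)\le C\|f_-\|_*\big(\mu^b(v)+|f_-(v)|\big).
$$
Integrating in $s\in(0,t^*)$ and then in $v$ gives a bound of $C t^*\|f_-\|_*(\int\mu^b+\|f_-\|_{L^1_v})$. This is finite because $\|f_-\|_*=\int|f_-(v)|dv$ when $f_-$ is independent of $t$ and $x$.

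For $\gamma B(v,f^{(1)})$, I would use the structure from Lemma~\ref{lemma:second linear MF}. I expect $B$ to be a quadratic form in the macroscopic moments $\langle f^{(1)},e_i\rangle$ multiplied by polynomial-times-$\mu$ (or $\sqrt\mu$) factors in $v$, as in $\mathcal Q_{ij}(0)/\sqrt\mu$. Each moment is bounded by $C\|f_-\|_*$, so $|B|\le C\|f_-\|_*^2\,\mu^{1/2-\delta}(v)$, which is integrable in $v$ uniformly along characteristics. If $B$ also contains terms linear in $f^{(1)}$ times moments, they are treated like the first term. Combining these estimates yields $\|\langle|f^{(2)}|\rangle\|_{L^\infty_{t,x}}\le C(1-\tilde\varepsilon)^{-1}\|f_-\|_*(1+\|f_-\|_*)<\infty$.
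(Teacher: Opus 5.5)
Your proposal is correct and follows essentially the paper's own argument: the characteristic representation of $f^{(2)}$, absorption of the $\gamma\mathbf{P}f^{(2)}$ contribution through \eqref{Hypothesis small}, and control of the source by splitting $f^{(1)}$ into its ballistic part (bounded by $|f_-(v)|$) plus $H(f^{(1)})$, with $B$ bounded by Gaussian-weighted products of moments of $f^{(1)}$; you also note that $\langle|f^{(2)}|\rangle$ must be known to be finite before absorbing, a point the paper passes over. One small slip: in general only $\|f_-\|_*\le\int_{\R^3}|f_-(v)|\,dv$ holds, not equality; this is harmless because the integral is finite here (it equals $1$ for $f^\delta_-$), or you can keep the indicator as the paper does, using $s-\tau_-(x-v(t-s),v)=t-\tau_-(x,v)$, to get the bound $t^*\|f_-\|_*$.
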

\begin{proof}  
By the method of characteristic, with zero initial and incoming data, the solution of \eqref{EQN:f2 linearization} is expressed as
\begin{align*}
    & f^{(2)}(t,x,v)= \underbrace{\int_{\max\{0,t-\tau_-\}}^t\exp\bigg(-\int_0^{t-s}\gamma(x-\tau v)\,d\tau\bigg)\bigg[\gamma \mathbf{P} f^{(2)}\bigg](s,x-v(t-s),v)\,ds}_{\hbox{denoted by }H(f^{(2)})}\\
   & + \underbrace{\int_{\max\{0,t-\tau_-\}}^t  \exp\bigg(-\int_0^{t-s}\gamma(x-\tau v)\,d\tau\bigg)\bigg[2 \gamma \big(\alpha \rho^{(1)}+\beta T^{(1)}\big) (\mathbf{P}-\mathbf{I}) f^{(1)} \bigg](s,x-v(t-s),v)\,ds}_{\hbox{denoted by }G(f^{(1)})}\\ 
    & \quad +\underbrace{\int_{\max\{0,t-\tau_-\}}^t \exp\bigg(-\int_0^{t-s}\gamma(x-\tau v)\,d\tau\bigg)\bigg[\gamma B(v,f^{(1)})\bigg](s,x-v(t-s),v)\,ds}_{\hbox{denoted by }J(f^{(1)})}.
\end{align*}
From \eqref{estimate of H new}, we have
\begin{align}\label{EST:H(u) in the proof second-order}
     |H(f^{(2)})(t,x,v)| \leq t^* c_b \|\gamma\|_{L^\infty} \|\langle | f^{(2)}|\rangle \|_{L^\infty_{t,x}} \mu^{b}(v). 
\end{align}
Also, from \eqref{estimate of Pu}, the following estimate holds:
\begin{align*} 
    |\gamma{\bf P}(f^{(1)})(t,x,v)|  \leq c_b\|\gamma\|_{L^\infty} \|\langle | f^{(1)}|\rangle \|_{L^\infty_{t,x}} \mu^{b}(v).
\end{align*} 
In the meantime, by the definition of $\rho^{(1)}$ and $T^{(1)}$,
we obtain
$$ |\rho^{(1)}(t,x)|\lesssim  \|\langle |f^{(1)} |\rangle\|_{L^\infty_{t,x}},\quad  |T^{(1)}(t,x)|\lesssim \|\langle |f^{(1)} |\rangle\|_{L^\infty_{t,x}},$$
which lead to
\begin{align*}
     &|\gamma\big(\alpha \rho^{(1)}+\beta T^{(1)}\big) (\mathbf{P}-\mathbf{I}) f^{(1)} (t,x,v)|\\
    &\leq  |\gamma\big(\alpha \rho^{(1)}+\beta T^{(1)}\big)  \mathbf{P}  f^{(1)}  (t,x,v)|+|\gamma\big(\alpha \rho^{(1)}+\beta T^{(1)}\big) f^{(1)}  (t,x,v)|\\
    &\lesssim  \|\gamma\|_{L^\infty} \|\langle |f^{(1)} |\rangle\|^2_{L^\infty_{t,x}} \mu^b (v) +  \|\langle |f^{(1)} |\rangle\|_{L^\infty_{t,x}}|f^{(1)}(t,x,v)|.  
\end{align*}
Hence, we have
\begin{align}\label{EST:Gf1}
    |G(f^{(1)})(t,x,v)|\lesssim  \|\langle |f^{(1)} |\rangle\|^2_{L^\infty_{t,x}} \mu^b (v) +  \|\langle |f^{(1)} |\rangle\|_{L^\infty_{t,x}} \LC \int^t_{\max\{0,t-\tau_-\}} |f^{(1)}(s,x-v(t-s),v)|\,ds \RC.
\end{align}
For $J(f^{(1)} )$, one can check that $|B(v,f^{(1)})|\lesssim \mu^b(v)\langle |f^{(1)} |\rangle^\ell$ for some power $\ell\in\mathbb{N}$, and therefore
\begin{align}\label{EST: Kf1}
    |J(f^{(1)})(t,x,v)|\lesssim \|\langle |f^{(1)} |\rangle\|_{L^\infty_{t,x}}^\ell \mu^b(v).
\end{align}

Combining the above estimates \eqref{EST:H(u) in the proof second-order}-\eqref{EST: Kf1}, we have
\begin{align*}
   \|\langle|f^{(2)}|\rangle\|_{L^\infty_{t,x}} 
   &\lesssim \underbrace{ \LC\int_{\R^3}\mu^{b}(v)\,dv\RC  t^* c_b \|\gamma\|_{L^\infty} }_{<\tilde{\varepsilon}\in(0,1)}\|\langle | f^{(2)}|\rangle \|_{L^\infty_{t,x}} +  \LC\int_{\R^3}\mu^{b}(v)\,dv\RC  \|\langle |f^{(1)} |\rangle\|^2_{L^\infty_{t,x}}   \\
   &\quad  + \|\langle |f^{(1)} |\rangle\|_{L^\infty_{t,x}}  \sup_{t,x}\underbrace{\LC  \int_{\R^3} \int^t_{\max\{0,t-\tau_-\}} |f^{(1)}(s,x-v(t-s),v)|\, dsdv \RC }_{G'(f^{(1)})(t,x)} \\ 
   &\quad + \LC\int_{\R^3}\mu^{b}(v)\,dv\RC  \|\langle |f^{(1)} |\rangle\|_{L^\infty_{t,x}}^\ell.
\end{align*}
Note that based on the hypothesis~\eqref{Hypothesis small}, we can absorb the term with $\|\langle |f^{(2)} |\rangle\|_{L^\infty_{t,x}}$ on the RHS to the left. Next, due to \eqref{lemma:estimate of u in reconstruction of parameter}, we get $\|\langle |f^{(1)} |\rangle\|_{L^\infty_{t,x}}<\infty$. 
It remains to bound $G'(f^{(1)})$. Recall $x_1=x-v(t-s)$. Applying \eqref{lemma:estimate of u in reconstruction of parameter} and noting that $f_-\equiv f_-(v)$ only depends on $v$ based on the hypothesis, we obtain
\begin{align*}
    &\int_{\R^3} \int^t_{\max\{0,t-\tau_-\}} |f^{(1)}(s,x_1,v)|\, dsdv\\
    &\leq \int_{\R^3} \int^t_{\max\{0,t-\tau_-\}} \mathbf{1}_{s-\tau_-(x_1,v)>0} |f_-(v)| \, dsdv+ \int_{\R^3} \int^t_{\max\{0,t-\tau_-\}}|H(f^{(1)})(s,x_1,v)|\, dsdv\\
 & \leq t\int_{\mathbb R^3} \mathbf{1}_{t-\tau_->0} |f_-(v)|\,dv+t\int_{\mathbb R^3} \frac{\tilde\varepsilon}{1-\tilde\varepsilon}\frac{\mu^b(v)}{\int \mu^b(v')\,dv'}\|f_-\|_*\,dv\\
    &\leq t^* \|f_- \|_*+t^* { \tilde\varepsilon\over 1- \tilde\varepsilon} \| f _-\|_*.
\end{align*} 
Therefore, we derive the uniform bound
$$ \|\langle|f^{(2)}|\rangle\|_{L^\infty_{t,x}}  <\infty.$$
\end{proof}

\subsubsection{\bf Reconstruction of $\alpha,\,\beta$}
For $0< \delta<1$, we take the boundary data of the solution $f^{(1),\delta}$ of \eqref{EQN:f1 recover parameters} to be
$$
    f^{\delta}_-(t,x,v) = {1\over \delta^{3 }} \varphi \LC {v-v_0\over \delta}\RC . 
$$
For a fixed $v_0\in \R^3$, since $\int_{\R^3} \varphi=1$ by the definition of $\varphi$, it is clear that
    \begin{align*}
        \|f^{ \delta}_-\|_* \leq \int_{\R^3} |f^{ \delta}_-(t-\tau_-(x,v),x-v\tau_-(x,v),v)| \,dv =1.
    \end{align*}
From Lemma~\ref{lemma:EST of f1 part 2}, it yields that
\begin{align}\label{characteristic u IP q}
     f^{(1),\delta} (t,x,v)&= \mathbf{1}_{t-\tau_->0}\, \exp\LC-\int^{\tau_-(x,v)}_0 \gamma(x-sv)\,ds \RC f^{\delta}_-(t-\tau_-(x,v),x-v\tau_-(x,v),v) \notag\\
     &\quad + \underbrace{\int^t_{\max\{0,t-\tau_-\}} \exp\LC-\int^{t-s}_0 \gamma(x-\tau v)\,d\tau\RC [\gamma \mathbf{P}(f^{(1),\delta})](s,x-v(t-s),v)\,ds}_{H(f^{(1),\delta})},
\end{align}
satisfies 
\begin{align}\label{estimate f1 and Hf1}
    \|\langle |f^{(1),\delta}|\rangle\|_{L^\infty_{t,x}} \leq {1\over 1-\tilde\varepsilon} ,\quad\hbox{ and }\quad  |H(f^{(1),\delta})(t,x,v)|
    \leq {\tilde\varepsilon\over 1-\tilde\varepsilon}{\mu^{b}(v)\over \LC\int \mu^{b}(v) \,dv\RC} .
\end{align}
Let $f^{{(2)},\delta}$ solve the problem \eqref{EQN:f2 linearization} by replacing $f^{(1)}$ with $f^{(1),\delta}$.

We are ready to derive the following result.
\begin{theorem}\label{Theorem:ID for f2}
 Let $0<c_0\leq \gamma\in L^\infty(\Omega)$ for some constant $c_0$. For any $\mathfrak{t}\in (0,\,t^*)$, there exists $N_\mathfrak{t}>0$ such that for any $v_0\in \mathbb R^3$, $|v_0|>N_\mathfrak{t}$, and for almost every $(t,x)\in (\mathfrak{t},\,t^*)\times \overline{\Omega}$, 
\begin{align}
    & \lim_{\eta\to 0}\lim_{ \delta\to 0}  \int_{\R^3} f^{{(2)},\delta}(t,x,v) \varphi\LC \frac{v-v_0}{\eta}\RC\,dv \notag \\
    &= \Lambda_\gamma(t,x,v_0) \Bigg(
      \alpha \LC \sqrt{\mu (v_0)}   + \mathcal{O}(\tilde\varepsilon)\RC 
      + \beta \LC \sqrt{\mu (v_0)} \LC {|v_0|^2-3\over 3}\RC   + \mathcal{O}(\tilde\varepsilon)\RC \Bigg), 
\end{align}
where $\Lambda_\gamma(t,x,v_0)$ is known data and is defined by
$$     \Lambda_\gamma(t,x,v_0):= -2 e^{-\int_0^{\tau_-(x,v_0)}\gamma(x-\tau v_0)\,d\tau} \LC 
      \int_{\max\{0,t-\tau_-\}}^t \gamma(x -v_0(t-s)) e^{ -\int^{\tau_-(x_1, v_0)}_0\gamma(x_1-\tau v_0)\,d\tau}\,ds\RC.
$$
\end{theorem}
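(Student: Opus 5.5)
We will read the statement off the Duhamel representation of $f^{(2),\delta}$ used in the proof of Lemma~\ref{theorem:recover alpha beta}, namely $f^{(2),\delta}=H(f^{(2),\delta})+G(f^{(1),\delta})+J(f^{(1),\delta})$. Pairing with $\varphi((v-v_0)/\eta)$ and letting $\delta\to0$, then $\eta\to0$, the aim is to show that only the part of $G$ coming from $-\mathbf{I}f^{(1),\delta}$ multiplied by the singular (ballistic) part of $f^{(1),\delta}$ survives, up to $\mathcal O(\tilde\varepsilon)$.

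\textbf{Negligible terms.} Under \eqref{Hypothesis small} we have the uniform bounds \eqref{estimate f1 and Hf1}, and Lemma~\ref{theorem:recover alpha beta} gives $\|\langle|f^{(2),\delta}|\rangle\|_{L^\infty_{t,x}}$ bounded uniformly in $\delta$; this requires tracking the constants there, which depend only on $\|f^\delta_-\|_*\le1$. Then \eqref{EST:H(u) in the proof second-order} gives $|H(f^{(2),\delta})|\lesssim\mu^b(v)$, and \eqref{EST: Kf1} gives $|J(f^{(1),\delta})|\lesssim\mu^b(v)$, both uniformly in $\delta$. Their integrals against $\varphi((v-v_0)/\eta)$ are therefore $O(\eta^3)$ and vanish. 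The same argument applies to the part of $G$ containing $\mathbf{P}f^{(1),\delta}$, which is bounded by $\mu^b$ times the square of $\|\langle|f^{(1),\delta}|\rangle\|$. Likewise, the part of $-(\alpha\rho^{(1)}+\beta T^{(1)})f^{(1),\delta}$ in which $f^{(1),\delta}$ is replaced by $H(f^{(1),\delta})$ is bounded pointwise by $\mu^b$ and also vanishes.

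\textbf{Main term.} What remains is
$$-2\int_{\max\{0,t-\tau_-\}}^t e^{-\int_0^{t-s}\gamma}\,\gamma(x_1)\big(\alpha\rho^{(1),\delta}+\beta T^{(1),\delta}\big)(s,x_1)\, e^{-\int_0^{\tau_-(x_1,v)}\gamma}\,\mathbf 1\,\delta^{-3}\varphi\big((v-v_0)/\delta\big)\,ds,$$
with $x_1=x-v(t-s)$. The two exponentials combine along the same line into $e^{-\int_0^{\tau_-(x,v)}\gamma(x-\tau v)d\tau}$, and the indicator equals $1$ for $|v_0|>N_{\mathfrak t}$, as in Theorem~\ref{theorem:recover gamma}.

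Next we evaluate $\rho^{(1),\delta}(s,y)=\int\sqrt{\mu(v')}f^{(1),\delta}(s,y,v')\,dv'$. The ballistic part of $f^{(1),\delta}$ concentrates at $v'=v_0$ and gives $\sqrt{\mu(v_0)}\,e^{-\int_0^{\tau_-(y,v_0)}\gamma(y-\tau v_0)d\tau}+o(1)$ as $\delta\to0$, provided $\tau_-(y,v_0)<s$. The $H$-part contributes $\mathcal O(\tilde\varepsilon)$ uniformly, by \eqref{estimate f1 and Hf1}. Similarly $T^{(1),\delta}=\frac13\int\sqrt\mu(|v'|^2-3)f^{(1),\delta}\,dv'$ gives $\sqrt{\mu(v_0)}\frac{|v_0|^2-3}{3}e^{-\int\gamma}+\mathcal O(\tilde\varepsilon)$. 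Substituting these, pairing with $\varphi((v-v_0)/\eta)$, and letting $\delta\to0$ collapses $v$ to $v_0$ via the approximate identity (Lebesgue points, as in Theorem~\ref{theorem:recover gamma}). Then $\eta\to0$ and $\int\varphi=1$ produce exactly $\Lambda_\gamma(t,x,v_0)\big(\alpha(\sqrt{\mu(v_0)}+\mathcal O(\tilde\varepsilon))+\beta(\sqrt{\mu(v_0)}\frac{|v_0|^2-3}{3}+\mathcal O(\tilde\varepsilon))\big)$. Here the $\mathcal O(\tilde\varepsilon)$ terms are multiplied by the $s$-integral of $\gamma\,e^{-\int\gamma}$, which is precisely how the factor $\Lambda_\gamma$ emerges.

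\textbf{Main obstacle.} The difficult step is justifying the interchange of limits in the main term. The factor $\rho^{(1),\delta}(s,x_1)$ depends on $\delta$ and is evaluated along the line $x-v(t-s)$ with $v$ near $v_0$, so we need convergence uniform in $(s,y)$ along that segment. We would first prove that $\rho^{(1),\delta}(s,y)\to\rho_0(s,y)$ uniformly in $s,y$ outside a small set, up to the $\mathcal O(\tilde\varepsilon)$ error. For this we would use that $\mu$ and $|v'|^2$ are continuous and that $\tau_-$ is continuous on the convex domain, together with dominated convergence in $s$, bounding the integrand by $\|\langle|f^{(1),\delta}|\rangle\|_{L^\infty}\le(1-\tilde\varepsilon)^{-1}$. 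The exceptional set where $s\approx\tau_-(y,v_0)$ has measure tending to zero and is handled by the same bound.
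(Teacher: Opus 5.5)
Your decomposition $f^{(2),\delta}=H(f^{(2),\delta})+G(f^{(1),\delta})+J(f^{(1),\delta})$ matches the paper's route. So does your disposal of the negligible pieces: $H(f^{(2),\delta})$, $J(f^{(1),\delta})$, the $\mathbf{P}f^{(1),\delta}$-part of $G$, and the $H(f^{(1),\delta})$-part of the $-\mathbf{I}f^{(1),\delta}$ term are all $\mathcal{O}(\eta^3)$ uniformly in $\delta$ by $\mu^b$-bounds. Your identification of the main term also matches.

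The gap is in the non-ballistic part of the moments. This is $\mathcal{H}_1^\delta(s,y)=\int_{\R^3}\sqrt{\mu(v')}\,H(f^{(1),\delta})(s,y,v')\,dv'$ in $\rho^{(1),\delta}$, and its analogue $\mathcal{H}_2^\delta$ in $T^{(1),\delta}$. You use only the uniform bound $|\mathcal{H}_1^\delta|=\mathcal{O}(\tilde\varepsilon)$ from \eqref{estimate f1 and Hf1}, and your last paragraph aims only at convergence of $\rho^{(1),\delta}$ ``up to the $\mathcal O(\tilde\varepsilon)$ error''. Unlike the discarded pieces, this contribution does not vanish as $\eta\to0$. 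It multiplies the ballistic factor $\delta^{-3}\varphi((v-v_0)/\delta)$, so it concentrates at $v_0$ and survives with size $\tilde\varepsilon$ independently of $\eta$. A uniform bound then only shows that $\limsup_{\delta\to0}$ and $\liminf_{\delta\to0}$ of the pairing both have the stated form, possibly with different remainders. It does not give existence of $\lim_{\delta\to0}$, which is part of the claim. That limit is also what makes the $\mathcal{O}(\tilde\varepsilon)$ entries well-defined numbers in the $2\times2$ system used afterwards. For the terms that are $\mathcal{O}(\eta^3)$ uniformly in $\delta$, a missing $\delta$-limit is harmless; for $\mathcal{H}_1^\delta$ and $\mathcal{H}_2^\delta$ it is not.

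The missing ingredient is the paper's Step~1-1.
\begin{itemize}
\item Write $f^{(1),\delta}=\sum_{k\ge0}H^k(f^\delta_0)$, with $f^\delta_0$ the ballistic part. Since $\langle|f^\delta_0|\rangle\le1$, iterating \eqref{estimate of H new} gives $|H^k(f^\delta_0)(t,x,v)|\le\tilde\varepsilon^k\mu^b(v)/\int_{\R^3}\mu^b$, so the series converges uniformly in $\delta$.
\item $H$ sees its argument only through $\mathbf{P}$, and $k(v,\cdot)$ is smooth, so $\mathbf{P}f^\delta_0(t,x,v)$ converges to $k(v,v_0)\mathbf{1}_{t-\tau_-(x,v_0)>0}\,e^{-\int_0^{\tau_-(x,v_0)}\gamma(x-\tau v_0)\,d\tau}$.
\item Hence each $H^k(f^\delta_0)$, $k\ge1$, has a $\delta$-limit by dominated convergence. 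Summing the dominated series gives existence of $\lim_{\delta\to0}\mathcal{H}_1^\delta$ and $\lim_{\delta\to0}\mathcal{H}_2^\delta$, each of modulus $\mathcal{O}(\tilde\varepsilon)$.
\end{itemize}
With this in hand, you can run your uniform-convergence argument on all of $\rho^{(1),\delta}$ and $T^{(1),\delta}$, not just their ballistic parts. The issue you flag there is real and is worth keeping: the moments are evaluated at $x-v(t-s)$ with $v$ moving with $\delta$, and the paper passes over this.

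One small correction. The $\mathcal{H}_1$-remainder is multiplied by $\gamma(x_1)$ and the outer attenuation, but not by $e^{-\int_0^{\tau_-(x_1,v_0)}\gamma(x_1-\tau v_0)\,d\tau}$. So factoring out $\Lambda_\gamma$ requires dividing by this factor. It is bounded below by $e^{-\|\gamma\|_{L^\infty}\tau_-(x,v_0)}$, so only the constant in $\mathcal{O}(\tilde\varepsilon)$ changes.
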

Here the notation $f(x)=\mathcal{O}(g(x))$ is read as $f(x)$ is a big $O$ of $g(x)$ if there exists a constant $M>0$ so that $|f(x)|\leq Mg(x)$ for all $x$ in the domain of $f$.
\begin{proof}
In this proof, we will simply write $f^{(k)}$, instead of $f^{(k),\delta}$ for simplicity of notation. Now we proceed similarly to the proof of Theorem~\ref{theorem:recover gamma}, in which we recovered $\gamma$.
Let's analyze the following limits:
\begin{align}\label{Identity f2}
    & \lim_{\eta\to 0}\lim_{ \delta\to 0}  \int_{\R^3} f^{(2)}(t,x,v) \varphi\LC \frac{v-v_0}{\eta}\RC\,dv  \notag\\
    &=\underbrace{ \lim_{\eta\to 0}\lim_{ \delta\to 0}   \int_{\R^3} H(f^{(2)}) \varphi\LC \frac{v-v_0}{\eta}\RC\,dv}_{=0}+ \lim_{\eta\to 0}\lim_{ \delta\to 0}  \int_{\R^3} G(f^{(1)}) \varphi\LC \frac{v-v_0}{\eta}\RC\,dv  \notag\\
    &\quad +\underbrace{ \lim_{\eta\to 0}\lim_{ \delta\to 0}   \int_{\R^3} J(f^{(1)}) \varphi\LC \frac{v-v_0}{\eta}\RC\,dv}_{=0},
\end{align}
where we used the fact that since both $\|\langle|f^{(1)}|\rangle\|_{L^\infty_{x,v}}$ and $\|\langle|f^{(2)}|\rangle\|_{L^\infty_{x,v}}$ are finite and their upper bounds are independent of $\delta$, the first and third terms on the RHS converge to zero when $\eta\rightarrow 0$.

It remains to analyze the second term on the RHS of \eqref{Identity f2}. 
We denote
\begin{align*}
    G(f^{(1)}) &= \alpha \underbrace{\int_{\max\{0,t-\tau_-\}}^t  \exp\bigg(-\int_0^{t-s}\gamma(x-\tau v)\,d\tau\bigg)\bigg[2 \gamma \rho^{(1)} (\mathbf{P}-\mathbf{I}) f^{(1)} \bigg](s,x-v(t-s),v)\,ds}_{G_1(\rho^{(1)},f^{(1)})} \\
    &\quad + \beta \underbrace{\int_{\max\{0,t-\tau_-\}}^t  \exp\bigg(-\int_0^{t-s}\gamma(x-\tau v)\,d\tau\bigg)\bigg[2 \gamma T^{(1)}  (\mathbf{P}-\mathbf{I}) f^{(1)} \bigg](s,x-v(t-s),v)\,ds}_{G_1(T^{(1)},f^{(1)})}.
\end{align*}

\noindent{\bf Step 1. Estimate $\rho^{(1)}$ and $T^{(1)}$.} We start by estimating the term with $\rho^{(1)}$ in $G(f^{(1)})$ first as one can deal with the one with $T^{(1)}$ similarly. 
By substituting $f^{(1)}$ in \eqref{characteristic u IP q} into $\rho^{(1)}$, one can derive
\begin{align}\label{limit of rho 1}
    \rho^{(1)}(s,x_1)&:=\int_{\R^3} \sqrt{\mu(v')} f^{(1)}(s,x_1,v') \,dv' \notag\\
    &= \underbrace{\int_{\R^3} \sqrt{\mu (v')} \mathbf{1}_{s-\tau_-(x_1,v')>0} e^{ -\int^{\tau_-(x_1,v' )}_0\gamma(x_1-\tau v' )\,d\tau} \LC  {1\over \delta^{3}} \varphi
    \LC {v'-v_0\over \delta}\RC\RC \,dv'}_{\hbox{converges to }\sqrt{\mu (v_0)}  \mathbf{1}_{s-\tau_-(x_1,v_0)>0} e^{ -\int^{\tau_-(x_1, v_0)}_0\gamma(x_1-\tau v_0)\,d\tau},\,\,\delta\rightarrow 0} \notag\\
    &\quad +\underbrace{\int_{\R^3} \sqrt{\mu (v')}  H(f^{(1)})(s,x -(t-s)v ,v') \,dv'}_{\hbox{denoted by $\mathcal{H}_1$}},
\end{align}
where the estimate of $H(f^{(1)})$ in \eqref{estimate f1 and Hf1} leads to, as $\tilde\varepsilon \rightarrow 0$,
\begin{align*}
    |\mathcal{H}_1|\leq   \int_{\R^3} \sqrt{\mu (v')} {\tilde\varepsilon\over 1-\tilde\varepsilon}{\mu^{b}(v')\over \int_{\R^3} \mu^{b}(v) \,dv }   \,dv'= {\tilde\varepsilon\over 1-\tilde\varepsilon} {\int_{\R^3}  \mu^{{1\over 2}+b} (v') \,dv'\over \int_{\R^3} \mu^{b}(v) \,dv }= \mathcal{O}(\tilde\varepsilon).
\end{align*}
{\bf Step 1-1. Existence of the limit of $\mathcal{H}_1$.}
To further analyze the convergence of $\mathcal H_1$ as $\delta\to 0$, we denote 
$$f^\delta_0(t,x,v)= \mathbf{1}_{t-\tau_->0}\, \exp\LC-\int^{\tau_-(x,v)}_0 \gamma(x-sv)\,ds \RC {1\over \delta^{3 }} \varphi \LC {v-v_0\over \delta}\RC,$$ 
then using the linearity of the operator $H$, one can derive by iteration that
$$f^{(1)}=f^{(1),\delta}=\sum_{k=0}^\infty H^k(f^\delta_0).$$
Using \eqref{estimate of H new} and $\langle|f^\delta_0|\rangle\leq 1$, it is easy to check by an induction argument that
$$|H^k(f^\delta_0)(t,x,v)|\leq \tilde\varepsilon^k {\mu^{b}(v)\over \int \mu^{b}(v) \,dv},\quad k\geq 1.$$
As $\tilde\varepsilon\in(0,1)$, this implies that $f^{(1)} =\sum^\infty_{k=0}H^k (f^\delta_0)$
converges uniformly and absolutely in $(0,t^*)\times\Omega\times \R^3$.
In the meantime, note that
\begin{align*}
    \lim_{\delta\to 0}{\bf P} (f^\delta_0)(t,x,v) & =\lim_{\delta\to 0}\int_{\mathbb R^3} k(v,v') \mathbf{1}_{t-\tau_->0}\, \exp\LC-\int^{\tau_-(x,v')}_0 \gamma(x-sv')\,ds \RC {1\over \delta^{3 }} \varphi \LC {v'-v_0\over \delta}\RC\, dv'\\
    & = k(v,v_0)\mathbf{1}_{t-\tau_-(x,v_0)>0}\exp\LC \int^{\tau_-(x,v_0)}_0 \gamma(x-sv_0)\,ds\RC,
\end{align*}
we can derive that $\lim_{\delta\to 0} H^k(f^\delta_0)$ exists for any $k\geq 1$.
Therefore the limit
\begin{align*}
    \lim_{\delta\to 0}\mathcal H_1 & =\lim_{\delta\to 0} \int_{\mathbb R^3}\sqrt{\mu(v')}\LC\sum_{k=1}^\infty H^k(f^\delta_0)\RC (s,x-(t-s)v,v')\,dv'\\
    & =\sum_{k=1}^\infty \int_{\mathbb R^3}\sqrt{\mu(v')}\LC \lim_{\delta\to 0} H^k(f^\delta_0)\RC (s,x-(t-s)v,v')\,dv',
\end{align*}
exists, and satisfies
$$|\lim_{\delta\to 0} \mathcal H_1 |\leq \mathcal O (\tilde\varepsilon).$$

\noindent{\bf Step 1-2. Analyze $T^{(1)}$.}
Similarly, we obtain
\begin{align}\label{limit of T 1}    
T^{(1)}(s,x_1)&:=\underbrace{\int_{\R^3} \sqrt{\mu(v')}\LC {|v'|^2-3\over 3}\RC \Big[  \mathbf{1}_{s-\tau_-(x_1,v')>0} e^{ -\int^{\tau_-(x_1,v' )}_0\gamma(x_1-\tau v' )\,d\tau} \LC  {1\over \delta^{3}} \varphi\LC {v'-v_0\over \delta}\RC\RC\Big]\,dv' }_{\hbox{converges to }\sqrt{\mu (v_0)}\LC {|v_0|^2-3\over 3}\RC \mathbf{1}_{s-\tau_-(x_1,v_0)>0}  e^{ -\int^{\tau_-(x_1, v_0)}_0\gamma(x_1-\tau v_0)\,d\tau},\,\,\delta\rightarrow 0} \notag\\
    &\quad +\underbrace{\int_{\R^3} \sqrt{\mu(v')}\LC {|v'|^2-3\over 3}\RC H(f^{(1)})(s,x_1,v')\,dv' }_{\mathcal{H}_2}, 
\end{align}
where as $\tilde\varepsilon \rightarrow 0$,
\begin{align*}
    |\mathcal{H}_2| \leq  {\tilde\varepsilon\over 1-\tilde\varepsilon} {\int_{\R^3} \mu^{{1\over 2}+b} (v') \LC {|v'|^2-3\over 3}\RC \,dv'\over \int_{\R^3} \mu^{b}(v) \,dv } = \mathcal{O}(\tilde\varepsilon).
\end{align*}
Moreover, $\lim_{\delta\to 0} \mathcal H_2$ exists and 
$$|\lim_{\delta\to 0}\mathcal H_2|\leq \mathcal O (\tilde\varepsilon).$$

\noindent{\bf Step 2. Estimate $G(f^{(1)})$ term in \eqref{Identity f2}.}\\
{\bf Step 2-1. Analyze $G_1(\rho^{(1)},f^{(1)})$.} We split it into two terms:
\begin{align*}
     &\alpha \lim_{\eta\to 0}\lim_{ \delta\to 0} \int_{\R^3} G_1(\rho^{(1)},f^{(1)})\varphi\LC \frac{v-v_0}{\eta}\RC\,dv  \\
     &= \alpha \lim_{\eta\to 0}\lim_{ \delta\to 0}  \underbrace{\int_{\R^3} \LC\int_{\max\{0,t-\tau_-\}}^t  e^{-\int_0^{t-s}\gamma(x-\tau v)\,d\tau} 2 \gamma  \rho^{(1)} \mathbf{P} f^{(1)} (s,x-v(t-s),v)\,ds\RC \varphi\LC \frac{v-v_0}{\eta}\RC\,dv}_{\bf A^\rho_1} \\
     &\quad - \alpha\lim_{\eta\to 0}\lim_{ \delta\to 0}  \underbrace{ \int_{\R^3} \LC \int_{\max\{0,t-\tau_-\}}^t  e^{-\int_0^{t-s}\gamma(x-\tau v)\,d\tau}2 \gamma  \rho^{(1)} f^{(1)} (s,x-v(t-s),v)\,ds \RC\varphi\LC \frac{v-v_0}{\eta}\RC\,dv}_{\bf A^\rho_2}. 
\end{align*}
Since $\mathbf{P}(f^{(1)})$ is uniformly bounded in the $L^\infty$ norm, we derive that the first term 
$$
{\bf A^\rho_1} =\mathcal{O}(\eta^3) \rightarrow 0 \quad \hbox{ as $\eta\rightarrow 0$.}
$$

Also, the ${\bf A^\rho_2}$ term can be estimated as follows:
\begin{align*}
    &\lim_{ \delta\to 0}\int_{\R^3} \LC \int_{\max\{0,t-\tau_-\}}^t  e^{-\int_0^{t-s}\gamma(x-\tau v)\,d\tau}2 \gamma  \rho^{(1)} f^{(1)} (s,x-v(t-s),v)\,ds \RC\varphi\LC \frac{v-v_0}{\eta}\RC\,dv \\
    &= \lim_{ \delta\to 0}\int_{\R^3} \Bigg( \int_{\max\{0,t-\tau_-\}}^t  e^{-\int_0^{t-s}\gamma(x-\tau v)\,d\tau}2 \gamma (x_1) \rho^{(1)} \\
    &\quad \hskip1cm\cdot \LC \mathbf{1}_{s-\tau_-(x_1,v)>0} e^{ -\int^{\tau_-(x_1,v)}_0\gamma(x_1-\tau v)\,d\tau} {1\over \delta^{3}} \varphi\LC {v-v_0\over \delta}\RC \RC\,ds \Bigg)\varphi\LC \frac{v-v_0}{\eta}\RC\,dv \\
    &\hskip9cm(\hbox{use \eqref{characteristic u IP q} and the boundedness of $H(f^{(1)})$})\\
    & = 2 e^{-\int_0^{\tau_-(x,v_0)}\gamma(x-\tau v_0)\,d\tau}  \int_{\max\{0,t-\tau_-\}}^t  \gamma(x-v_0(t-s)) \\
    &\quad \hskip3cm\cdot\LC \sqrt{\mu (v_0)} \mathbf{1}_{s-\tau_-(x-v_0(t-s),v_0)>0} e^{ -\int^{\tau_-(x_1, v_0)}_0\gamma(x_1-\tau v_0)\,d\tau}+ \mathcal{O} (\tilde\varepsilon)\RC\,ds.
\end{align*}
Note that $\mathbf{1}_{s-\tau_-(x-v_0(t-s),v_0)>0}=1$ since
$$
    s-\tau_-(x-v_0(t-s),v_0) =t-\tau_-(x ,v_0)>0.
$$
Thus, we obtain 
\begin{align}\label{EST:G1}
     \alpha \lim_{\eta\to 0}\lim_{ \delta\to 0} \int_{\R^3} G_1(\rho^{(1)},f^{(1)})\varphi\LC \frac{v-v_0}{\eta}\RC\,dv  = \alpha \Lambda_\gamma(t,x,v_0)  
      \LC \sqrt{\mu (v_0)} + \mathcal{O}(\tilde\varepsilon)\RC,
\end{align}
where $\Lambda_\gamma(t,x,v_0)$ is known data and is defined by
$$
     \Lambda_\gamma(t,x,v_0):=-2 e^{-\int_0^{\tau_-(x,v_0)}\gamma(x-\tau v_0)\,d\tau} \LC 
      \int_{\max\{0,t-\tau_-\}}^t \gamma(x -v_0(t-s)) e^{ -\int^{\tau_-(x_1, v_0)}_0\gamma(x_1-\tau v_0)\,d\tau}\,ds\RC.
$$

\noindent{\bf Step 2-2. Analyze $G_1(T^{(1)},f^{(1)})$.}
For the terms with $T^{(1)}$, following a similar discussion, we can also deduce that 
\begin{align}\label{EST:G2}
     &\beta \lim_{\eta\to 0}\lim_{ \delta\to 0} \int_{\R^3} G_2(T^{(1)},f^{(1)})\varphi\LC \frac{v-v_0}{\eta}\RC\,dv  \notag\\
     &= \beta \lim_{\eta\to 0}\lim_{ \delta\to 0}  \underbrace{\int_{\R^3} \LC\int_{\max\{0,t-\tau_-\}}^t  e^{-\int_0^{t-s}\gamma(x-\tau v)\,d\tau} 2 \gamma  T^{(1)} \mathbf{P} f^{(1)} (s,x-v(t-s),v)\,ds\RC \varphi\LC \frac{v-v_0}{\eta}\RC\,dv}_{\bf A^T_1} \notag\\
     &\quad - \beta \lim_{\eta\to 0}\lim_{ \delta\to 0}  \underbrace{ \int_{\R^3} \LC \int_{\max\{0,t-\tau_-\}}^t  e^{-\int_0^{t-s}\gamma(x-\tau v)\,d\tau}2 \gamma T^{(1)} f^{(1)} (s,x-v(t-s),v)\,ds \RC\varphi\LC \frac{v-v_0}{\eta}\RC\,dv}_{\bf A^T_2} \notag\\
     &= 2\beta e^{-\int_0^{\tau_-(x,v_0)}\gamma(x-\tau v_0)\,d\tau}  \int_{\max\{0,t-\tau_-\}}^t  \gamma(x-v_0(t-s)) \LC \sqrt{\mu (v_0)} \LC {|v_0|^2-3\over 3}\RC  e^{ -\int^{\tau_-(x_1, v_0)}_0\gamma(x_1-\tau v_0)\,d\tau}+ \mathcal{O} (\tilde\varepsilon) \RC\,ds  \notag\\
     &=\beta \Lambda_\gamma(t,x,v_0) 
      \LC \sqrt{\mu (v_0)} \LC {|v_0|^2-3\over 3}\RC + \mathcal{O}(\tilde\varepsilon)\RC,
\end{align}
where ${\bf A^T_j}$ was analyzed similarly as ${\bf A^\rho_j}$ for $j=1,\,2$.
By \eqref{Identity f2}, \eqref{EST:G1} and \eqref{EST:G2}, we arrive at
\begin{align*}
     I(t,x,v_0)&:=\lim_{\eta\to 0}\lim_{ \delta\to 0}  \int_{\R^3} f^{(2)}(t,x,v) \varphi\LC \frac{v-v_0}{\eta}\RC\,dv\\
     &=\lim_{\eta\to 0}\lim_{ \delta\to 0}  \int_{\R^3}G(f^{(1)}) \varphi\LC \frac{v-v_0}{\eta}\RC\,dv \\
     &=\Lambda_\gamma(t,x,v_0) \Bigg(
      \alpha \LC \sqrt{\mu (v_0)}   + \mathcal{O}(\tilde\varepsilon)\RC 
      + \beta \LC \sqrt{\mu (v_0)} \LC {|v_0|^2-3\over 3}\RC   + \mathcal{O}(\tilde\varepsilon)\RC \Bigg),
\end{align*}
where the LHS and $\Lambda_\gamma(t,x,v_0)$ are known.
\end{proof}

\begin{proof}[Proof of Theorem~\ref{theorem:main inverse power}]
By applying Theorem~\ref{Theorem:ID for f2}, we choose two vectors $v_0,\,v_0'$ with distinct magnitudes such that $|v_0|,\,|v_0'|$ sufficiently large. We obtain
\begin{align*}
  \begin{pmatrix}
  I(t,x,v_0)\\[1em]
  I(t,x,v_0')
   \end{pmatrix}
      &= \begin{pmatrix}
      \Lambda_\gamma(t,x,v_0)\LC ce^{- |v_0|^2\over 4} +\mathcal{O}(\tilde\varepsilon)\RC & \Lambda_\gamma(t,x,v_0)\LC ce^{- |v_0|^2\over 4} \LC {|v_0|^2-3\over 3}\RC+\mathcal{O}(\tilde\varepsilon)\RC \\[1em]
       \Lambda_\gamma(t,x,v_0')\LC ce^{-  |v_0'|^2\over 4}   +\mathcal{O}(\tilde\varepsilon)\RC & \Lambda_\gamma(t,x,v_0')\LC ce^{- |v_0'|^2\over 4}\LC { |v_0'|^2-3\over 3}\RC+\mathcal{O}(\tilde\varepsilon)\RC 
    \end{pmatrix} \cdot \begin{pmatrix}
     \alpha\\[1em]
     \beta
    \end{pmatrix},
\end{align*} 
where $c=(2\pi)^{-3/4}$.
Here the $2\times 2$ matrix is invertible provided that $\tilde{\varepsilon}$ is sufficiently small. This suggests the unique determination of $(\alpha,\beta)$.
\end{proof}


\newpage
\appendix
\section{Estimates}\label{sec:appendix} 
Let 
$
    w(v) = (1+ c_1|v|^{2})^{c_2},
$
for constants $c_1,\,c_2>0$, satisfy $w^{-2} \in L^1(\R^3)$. The following result states the boundedness of the kernel $K_w$ in the operator $\mathbf{P}_w$.
\begin{lemma}\label{lemma:K}
    There exist constants $\theta,\, \delta>0$ satisfying $0\leq \theta<{1\over 4}$ and $-{1\over 4} +\theta+\delta < 0$ so that
    $$
    \LV k(v,v'){w(v)\over w(v')}\RV\leq Ce^{\LC-{1\over 4} +\theta+\delta\RC |v|^2}e^{\LC-{1\over 4} -\theta+\delta\RC |v'|^2},
    $$
    where constant $C>0$ depends on $c_1,\,c_2$, $\theta$, and $\delta$.

    Moreover, 
    $$
    \int_{\R^3} \LV k(v,v'){w(v)\over w(v')}\RV\,dv'\leq C e^{\LC-{1\over 4} +\theta+\delta\RC |v|^2}. 
    $$
\end{lemma}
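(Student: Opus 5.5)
The bound follows from explicit pointwise estimates on the kernel $k$ combined with polynomial control of the weight ratio $w(v)/w(v')$, with all polynomial factors absorbed into small Gaussian losses. First, since $1+v\cdot v'+\frac{(|v|^2-3)(|v'|^2-3)}{6}$ is a polynomial, I bound it by $C(1+|v|^2)(1+|v'|^2)$. Combined with $\sqrt{\mu(v)}\sqrt{\mu(v')}=(2\pi)^{-3/2}e^{-|v|^2/4}e^{-|v'|^2/4}$, this gives
\[
|k(v,v')|\le C(1+|v|^2)(1+|v'|^2)e^{-\frac{|v|^2}{4}}e^{-\frac{|v'|^2}{4}} .
\]
For any $\delta>0$ we have $(1+|v|^2)\le C_\delta e^{\delta |v|^2}$, and likewise in $v'$. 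So the polynomial factors cost only an arbitrarily small Gaussian loss $e^{\delta|v|^2}e^{\delta|v'|^2}$.

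Next I handle the weight ratio. Since $w(v)=(1+c_1|v|^2)^{c_2}\le C_{\theta}e^{\theta|v|^2}$ for any $\theta>0$, and $1/w(v')\le 1$ because $c_1,c_2>0$, we get $w(v)/w(v')\le C_\theta e^{\theta |v|^2}$. This alone gives the exponent $-\frac14+\theta+\delta$ in $v$ and $-\frac14+\delta$ in $v'$.

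To match the stated form with $-\theta$ in the $v'$ exponent, I need one more observation. The $v'$-factor $e^{(-\frac14+\delta)|v'|^2}$ is bounded by $e^{(-\frac14-\theta+\delta')|v'|^2}$ after renaming $\delta'=\delta+\theta$. So, choosing $\theta,\delta$ small and then relabeling constants, both exponents can be arranged with $-\frac14+\theta+\delta<0$. Concretely, I fix $\theta\in[0,\frac14)$ and $\delta>0$ with $\theta+\delta<\frac14$, use weight growth $e^{\theta|v|^2/2}$ with loss $\delta/2$ on the $v$ side, and put the remaining slack on the $v'$ side. The point to check here is only that the $v'$ exponent stays strictly negative, which holds as long as $\theta+\delta<\frac14$ after relabeling. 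The constant $C$ depends on $c_1,c_2,\theta,\delta$, through $\sup_v (1+c_1|v|^2)^{c_2}e^{-\theta|v|^2}$ and $\sup_v (1+|v|^2)e^{-\delta|v|^2}$.

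Finally, for the integral bound I integrate the pointwise estimate in $v'$. The factor $e^{(-\frac14-\theta+\delta)|v'|^2}$ is integrable over $\R^3$ because its exponent is negative, which follows from $\delta<\frac14+\theta$. This yields $\int |K_w(v,v')|\,dv'\le C e^{(-\frac14+\theta+\delta)|v|^2}$. No step is a real obstacle; the only care needed is the bookkeeping of the small exponents so that both Gaussian exponents stay negative simultaneously.
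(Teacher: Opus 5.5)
Your argument is correct, but it handles the weight ratio differently from the paper. The paper imports from Guo the bound $|w(v)/w(v')|\le C(1+|v-v'|^2)^{c_2}e^{-\theta(|v'|^2-|v|^2)}$ and then multiplies it by the Gaussian bound on $k$, absorbing the polynomial factors into $e^{\delta|v|^2+\delta|v'|^2}$. You instead use the cruder, trivially valid estimate $w(v)/w(v')\le w(v)\le C_\theta e^{\theta|v|^2}$, which holds because $w\ge 1$. The $-\theta$ in the $v'$ exponent then comes purely from relabeling $\delta'=\delta+\theta$.

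Your route is more elementary, and here it is also more robust. Guo's inequality is designed for weights that contain a factor $e^{\theta|v|^2}$. For the purely polynomial $w$ of this paper it holds only with $\theta=0$: fix $v=0$ and let $|v'|\to\infty$, and the left side decays polynomially while the right side decays like a Gaussian. The Peetre-type factor $(1+|v-v'|^2)^{c_2}$ that the paper carries would matter for a kernel such as the Boltzmann $k_B$, which is not Gaussian in $v$ and $v'$ separately. Since $k$ factorizes as $e^{-|v|^2/4}e^{-|v'|^2/4}$ times a polynomial, nothing is lost with your bound.

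One piece of bookkeeping in your last paragraph should be corrected. The constraint that actually binds is essentially $\delta>\theta$. Negativity of the $v'$ exponent is not the issue: it follows automatically from $-\frac14+\theta+\delta<0$. At fixed $v$ with $|v|^2\neq 3$, $|k(v,v')w(v)/w(v')|$ behaves like $|v'|^{2-2c_2}e^{-|v'|^2/4}$. So no bound of the form $e^{(-\frac14-\theta+\delta)|v'|^2}$ with $\delta<\theta$ can hold, and your sentence ``fix $\theta\in[0,\frac14)$ and $\delta>0$ with $\theta+\delta<\frac14$'' is false for arbitrary such pairs. The proof itself still stands, because the lemma is existential: your relabeling gives $\delta'>\theta$, and $-\frac14+\theta+\delta'<0$ holds once $2\theta+\delta<\frac14$.
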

\begin{proof}
Following a similar argument in \cite[page 727]{Guo2010}, there exists a constant $C>0$ depending on $c_1,\,c_2$ such that
$$
    \LV{w(v)\over w(v')}\RV\leq  C\LC 1+|v-v'|^2\RC^{c_2} e^{-\theta\LC|v'|^2-|v|^2\RC}.
$$
Combining it with the definition of $k(v,v')$,  
we obtain
    \begin{align*}
        \LV k(v,v'){w(v)\over w(v')} \RV 
        &\leq C\LC 1+|v-v'|^2\RC^{c_2}\bigg(1+ |v||v'|+\frac{(|v|^2+3)(|v'|^2+3)}{6}\bigg) e^{\LC-{1\over 4} +\theta \RC |v|^2}e^{\LC-{1\over 4} -\theta \RC |v'|^2}.
    \end{align*}
Since 
$\LC 1+|v-v'|^2\RC^{c_2}\bigg(1+ |v||v'|+\frac{(|v|^2+3)(|v'|^2+3)}{6}\bigg)$ are polynomial in terms of $v$ and $v'$, they can be bounded by $C e^{ \delta |v|^2 +\delta|v'|^2}$ for some $\delta >0$. Thus we complete the proof.
\end{proof}

\subsection{Estimates of $\Gamma$}\label{sec:estimates of gamma}
We will present several useful estimates for functions defined in Section~\ref{sec:preliminaries}, and also for the nonlinear terms $\Gamma$ in the linearized BGK equation.
Similar arguments can be found in \cite{CKP2026}.
\begin{lemma}\label{prop:estimate integral Pi Qij}
Suppose that $ \sup_{0\leq t\leq t^*}\|wf(t)\|_{L^\infty_{x,v}}\lesssim \varepsilon$ for some $0<\varepsilon<1$. Then 
$$
    \|\rho -1\|_{L^\infty_{t,x}} + \|U \|_{L^\infty_{t,x}}+\|T -1\|_{L^\infty_{t,x}}\lesssim\varepsilon.
$$
Moreover, for almost every $(t,x,v)\in (0,t^*)\times\Omega\times\R^3$, we have
    \begin{align}\label{EST:int P_alpha_beta}
     \LV\int^1_0 \mathcal{P}_i (f)d\theta\RV\lesssim 1,
\end{align}    
and 
\begin{align}\label{EST:int Q_ij}
     w(v)\sqrt{\mu}^{-1}\LV\int^1_0  \mathcal{Q}_{ij}(f)(1-\theta)\,d\theta \RV\lesssim 1.
\end{align} 
\end{lemma}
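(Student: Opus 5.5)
The plan is to control the macroscopic quantities first, then use the explicit rational structure from Lemma~\ref{lemma:first derivative of P} and Lemma~\ref{lemma:first derivative of Qij}, uniformly in $\theta\in[0,1]$.

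\textbf{Step 1: the macroscopic quantities.} By Lemma~\ref{lemma:decomposition}, $(\rho-1,\rho U,G)=(\langle f,e_1\rangle,\dots,\langle f,e_5\rangle)$. Each coefficient satisfies
\[
|\langle f,e_i\rangle|\le \|wf\|_{L^\infty_{x,v}}\int w^{-1}|e_i|\,dv\lesssim\varepsilon ,
\]
since $e_i$ decays like a Gaussian. Hence $|\rho-1|\lesssim\varepsilon$, so $\rho\ge 1/2$ for $\varepsilon$ small. Then $|U|=|\rho U|/\rho\lesssim\varepsilon$. For the temperature, $3\rho T=\sqrt6\,G+3\rho-\rho|U|^2$, which gives $|T-1|\lesssim\varepsilon$.

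\textbf{Step 2: the interpolated quantities.} By \eqref{DEF:macro theta expansion}, $(\rho_\theta,\rho_\theta U_\theta,G_\theta)$ is the convex combination $(1-\theta)(1,0,0)+\theta(\rho,\rho U,G)$. Repeating Step 1 gives, uniformly in $\theta\in[0,1]$,
\[
|\rho_\theta-1|+|U_\theta|+|T_\theta-1|\lesssim\varepsilon .
\]
In particular $\rho_\theta,T_\theta\in[1/2,3/2]$.

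\textbf{Step 3: bound for $\mathcal P_i$.} Lemma~\ref{lemma:first derivative of P} writes $\mathcal P_i=P_i/R_i$. Here $R_i$ is a monomial in $\rho_\theta,T_\theta$ with possibly non-integer powers, and it is bounded below by a positive constant since $\rho_\theta,T_\theta\in[1/2,3/2]$. The numerator $P_i$ is a finite sum of products of powers of bounded quantities, so it is bounded. This holds uniformly in $\theta$, and integrating over $\theta\in[0,1]$ gives \eqref{EST:int P_alpha_beta}; the factor $\gamma\in L^\infty$ is harmless.

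\textbf{Step 4: bound for $\mathcal Q_{ij}$, the main obstacle.} This step is the weighted velocity bound \eqref{EST:int Q_ij}. By Lemma~\ref{lemma:first derivative of Qij}, $\mathcal Q_{ij}=\frac{P_{ij}}{R_{ij}}M(F_\theta)$. The denominator $R_{ij}$ is bounded below as in Step 3. The numerator $P_{ij}$ is a polynomial in $v-U_\theta$ with bounded coefficients, so $|P_{ij}|\lesssim(1+|v|)^m$ for some $m$. It remains to estimate
\[
w(v)\mu^{-1/2}(v)M(F_\theta)\lesssim (1+|v|)^{2c_2}\,e^{|v|^2/4}\,e^{-|v-U_\theta|^2/(2T_\theta)} .
\]
Since $T_\theta\le 1+C\varepsilon$, we have $1/(2T_\theta)\ge \tfrac12-C\varepsilon$. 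Since $|U_\theta|\lesssim\varepsilon$, $|v-U_\theta|^2\ge \tfrac{1}{1+\varepsilon}|v|^2-C$. Together these give an exponent at most $-(\tfrac14-C'\varepsilon)|v|^2+C$. That is a Gaussian with a strictly negative rate once $\varepsilon$ is small, and it absorbs every polynomial factor. Therefore the left side of \eqref{EST:int Q_ij} is bounded uniformly in $v$ and $\theta$, and integrating in $\theta$ finishes.

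The delicate point is the mismatch between $\mu^{-1/2}$ and $M(F_\theta)$. The decay rate $1/4$ from $\mu^{-1/2}$ must beat the loss caused by $T_\theta$ deviating from $1$. This is exactly why the smallness of $\|T-1\|_{L^\infty}$ from Step 1 is needed rather than mere boundedness.
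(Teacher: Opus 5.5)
Your proposal is correct and follows essentially the same route as the paper. You bound $\rho-1$, $U$ and $T-1$ through the moments $\langle f,e_i\rangle$ tested against $w^{-1}$, transfer these bounds to $(\rho_\theta,U_\theta,T_\theta)$, and bound the monomial denominators from below. For $\mathcal Q_{ij}$ you absorb the polynomial numerator and the weight $w\mu^{-1/2}$ into the Gaussian decay of $M(F_\theta)$, using $T_\theta\le 1+C\varepsilon$ and $|U_\theta|\lesssim\varepsilon$. Your treatment of $T$ via $G$ and your explicit lower bound on $|v-U_\theta|^2$ in terms of $|v|^2$ are slightly cleaner than the paper's ``for $|v|\gg 1$'' step, but the argument is the same.
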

 
\begin{proof}
Following the computations in Lemma~\ref{lemma:decomposition}, we can derive 
    \begin{align*}
        |\rho (t,x) - 1|=  \LV \int_{\R^3} \sqrt{\mu}f\,dv \RV\leq \|wf(t)\|_{L^\infty_{x,v}} \int_{\R^3}  \sqrt{\mu} w^{-1}\,dv\leq C\varepsilon,
    \end{align*}
    and thus, for sufficiently small $\varepsilon$, we obtain
    $$
       0<1-C\varepsilon \leq  |\rho (t,x)|\leq 1+C\varepsilon,
    $$
    for some constant $C>0$ depending on $w$.
Also, 
\begin{align*}
    |\rho U(t,x)| =  \LV\int_{\R^3} v\sqrt{\mu}f\,dv\RV \leq  \|wf(t)\|_{L^\infty_{x,v}}   \int_{\R^3} |v|\sqrt{\mu}w^{-1}\,dv\leq C\varepsilon,
\end{align*}
which yields
$$
    |U (t,x)| \leq {C\varepsilon\over 1-C\varepsilon}\lesssim \varepsilon.
$$
Next, from the proof of Lemma~\ref{lemma:decomposition},
\begin{align*}
		|3\rho T (t,x) - 3| 
        &\leq  \LV\int_{\R^3}  |v|^2 \sqrt{\mu}f\,dv\RV +|\rho ||U |^2\\
        &\lesssim  \|wf(t)\|_{L^\infty_{x,v}}  \LV\int_{\R^3} |v|^2 \sqrt{\mu}w^{-1}\,dv\RV 
        +\varepsilon^2\lesssim \varepsilon,
	\end{align*}
from which, we deduce that 
\begin{align*}
    |T (t,x) -1|&= \LV{\rho T -1 \over \rho } - {\rho  -1\over \rho }\RV  \lesssim {\varepsilon\over 1-C\varepsilon} + {\varepsilon \over 1-C\varepsilon}\lesssim \varepsilon 
\end{align*}
and 
$$
    0< 1-C\varepsilon \leq  |T (t,x)|\leq 1+C\varepsilon.
$$
Now we will apply the above estimates to show \eqref{EST:int P_alpha_beta}. Note that they also hold for $\rho_\theta,\,U_\theta,\,T_\theta$ for $\theta\in[0,1]$ defined in \eqref{DEF:macro epsilon}. We first estimate $\mathcal{P}_i$'s denominator. From the definition of $R_{i}(\rho_{ \theta },T_{ \theta })$, we get
$$
      1\lesssim a_\sigma (1-C\varepsilon)^{\sigma_1}(1-C\varepsilon)^{\sigma_2} \lesssim R_{i}(\rho_{ \theta },T_{ \theta }).
$$
This implies
$$
    \LV\int^1_0 \mathcal{P}_i d\theta\RV\lesssim  \sum_{ \kappa \in S_{i} }b_\kappa    (1+C\varepsilon)^{\kappa_1}\varepsilon^{\kappa_2+\kappa_3+\kappa_4}(1+C\varepsilon)^{\kappa_5} \lesssim 1.
$$

Finally, to estimate \eqref{EST:int Q_ij}, we first note that $\mathcal{Q}_{ij}$'s denominator is also bounded from below by a positive constant. Hence, for $\kappa=(\kappa_1,\ldots,\kappa_8)$, it is sufficient to give an upper bound for large $|v|$:
\begin{align*}
        |\mathcal{Q}_{ij}|&\lesssim |P_{ij}(\rho_{ \theta },v-U_{ \theta},U_{ \theta }, T_{ \theta })|M(F_{ \theta })= \sum_{\kappa\in S_{ij} }b_\kappa [ \rho_{ \theta } (v-U_{ \theta }) U_{ \theta }  T_{ \theta})]^\kappa M(F_{ \theta })\\
        &\lesssim \sum_{ \kappa \in S_{ij} }b_\kappa [(1+C\varepsilon)^{\kappa_1} |v-U_{\theta  }|^{\kappa_2+\kappa_3+\kappa_4}  \varepsilon^{\kappa_5+\kappa_6+\kappa_7} (1+C\varepsilon)^{\kappa_8}]  {1+C\varepsilon \over (2\pi(1-C\varepsilon))^{3/2}}e^{-{|v-U_{\theta }|^2\over 2(1+C\varepsilon)}}\\
        &\lesssim \sum_{\kappa \in S_{ij} } |v-U_{\theta }|^{\kappa_2+\kappa_3+\kappa_4} e^{-{|v-U_{\theta }|^2\over 2(1+C\varepsilon)}}\\
        &\lesssim  e^{-|v-U_{\theta }|^2\LC {1\over 2(1+C\varepsilon)}-c\RC}\leq e^{-(|v|^2- \varepsilon |v|)\LC {1\over 2(1+C\varepsilon)}-c\RC}\leq e^{- |v|^2 c'} \quad \hbox{ for }|v|\gg 1,
    \end{align*}
    where we used $|v-U_{\theta }|^{\kappa_2+\kappa_3+\kappa_4} \lesssim e^{c|v-U_{\theta }|^2}$ for some small constant $c>0$. Here we denote $c'= {1\over 2(1+C\varepsilon)}-c$ and it satisfies $c'\in (1/4,1/2)$ when $\varepsilon$ is small enough. For large $|v|$, we apply the estimate of $\mathcal{Q}_{ij}$ to bound
$$
 w(v)\sqrt{\mu}^{-1}(v)\LV\int^1_0  \mathcal{Q}_{ij}(1-\theta)\,d\theta\RV \lesssim   w(v)e^{|v|^2\over 4}e^{- |v|^2 c'} \lesssim 1.
$$
The completes the proof.
\end{proof}

\begin{lemma}\label{lemma:second derivative estimates}
We denote the macroscopic quantities $\rho_{j }$, $U_{j }$, and $T_{j }$, which are corresponding to the solution $F_{j}=\mu+ \sqrt{\mu}f_j$ to the BGK equation with collision frequency $q_j$ for $j=1,\,2$.
Suppose that $ \sup_{0\leq t\leq t^*}\|wf_j(t)\|_{L^\infty_{x,v}}\lesssim \varepsilon$ for some $0<\varepsilon<1$. Then
    \begin{align}\label{EST:int q diff}
        \int^1_0 |\mathcal{P}_i (f_1)-\mathcal{P}_i (f_2)| d\theta \lesssim  \|w(f_1-f_2)\|_{L^\infty_{x,v}},
    \end{align}
    and
    \begin{align}\label{EST:int Q_ij diff}
     w(v)\sqrt{\mu}^{-1}\int^1_0  |\mathcal{Q}_{ij}(f_1)-\mathcal{Q}_{ij}(f_2)|(1-\theta)\,d\theta \lesssim  \|w(f_1-f_2)\|_{L^\infty_{x,v}}.
    \end{align}
\end{lemma}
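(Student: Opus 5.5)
The plan is to reduce both estimates to Lipschitz bounds on the macroscopic quantities, and then apply the mean value theorem to the explicit rational and exponential expressions from Lemma~\ref{lemma:first derivative of P} and Lemma~\ref{lemma:first derivative of Qij}.

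\textbf{Lipschitz bounds on the moments.} Fix $\theta\in[0,1]$ and write $F_{j,\theta}=\mu+\theta\sqrt{\mu}f_j$, with moments $(\rho_{j,\theta},U_{j,\theta},T_{j,\theta})$. By \eqref{DEF:macro theta expansion} and Lemma~\ref{lemma:decomposition}, the variables $(\rho_{j,\theta},\rho_{j,\theta}U_{j,\theta},G_{j,\theta})$ are affine in $\theta$ with coefficients $\langle f_j,e_i\rangle$. Hence
\[
|\rho_{1,\theta}-\rho_{2,\theta}|+|\rho_{1,\theta}U_{1,\theta}-\rho_{2,\theta}U_{2,\theta}|+|G_{1,\theta}-G_{2,\theta}|\lesssim \sum_i|\langle f_1-f_2,e_i\rangle|\lesssim \|w(f_1-f_2)\|_{L^\infty_{x,v}},
\]
where the last step uses $\int w^{-1}|e_i|\,dv<\infty$. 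By Lemma~\ref{prop:estimate integral Pi Qij}, both tuples $(\rho_{j,\theta},U_{j,\theta},T_{j,\theta})$ lie in a fixed compact neighbourhood $K_\varepsilon$ of $(1,0,1)$, on which $\rho,T\ge 1-C\varepsilon$. The inverse change of variables $(\rho,\rho U,G)\mapsto(\rho,U,T)$ is smooth there, since its Jacobian \eqref{Jacobia} is bounded on $K_\varepsilon$. Therefore the same Lipschitz bound holds for $|\rho_{1,\theta}-\rho_{2,\theta}|+|U_{1,\theta}-U_{2,\theta}|+|T_{1,\theta}-T_{2,\theta}|$, uniformly in $\theta$ and $(t,x)$.

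\textbf{Proof of \eqref{EST:int q diff}.} By Lemma~\ref{lemma:first derivative of P}, $\mathcal{P}_i=P_i/R_i$. Here $R_i$ is a monomial with real powers, bounded below on $K_\varepsilon$, and $P_i$ is a finite sum of monomials with possibly non-integer powers. Since $\rho$ and $T$ stay near $1$, these powers are still smooth. So $\mathcal{P}_i$ is a $C^1$ function of $(\rho,U,T)$ on $K_\varepsilon$ with bounded gradient. The mean value theorem together with the Lipschitz bound above gives the estimate pointwise in $\theta$, and integrating over $\theta\in[0,1]$ finishes the argument.

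\textbf{Proof of \eqref{EST:int Q_ij diff}, the main obstacle.} Here the difference must be controlled with the growing weight $w(v)\mu^{-1/2}(v)\sim w(v)e^{|v|^2/4}$. By Lemma~\ref{lemma:first derivative of Qij}, $\mathcal{Q}_{ij}=\Phi(\rho,U,T;v)$ with
\[
\Phi=\frac{P_{ij}(\rho,v-U,U,T)}{R_{ij}(\rho,T)}\,\frac{\rho}{(2\pi T)^{3/2}}\exp\Big(-\frac{|v-U|^2}{2T}\Big).
\]
I will apply the mean value theorem along the segment joining the two parameter tuples; this segment stays in the convex set $K_\varepsilon$. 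The derivatives $\partial_\rho\Phi$, $\partial_U\Phi$, $\partial_T\Phi$ are of the form polynomial in $v$ times the same Gaussian. Derivatives of the exponential bring out factors $(v-U)/T$ and $|v-U|^2/T^2$, and the denominators stay bounded below. Repeating the large-$|v|$ argument of Lemma~\ref{prop:estimate integral Pi Qij} then gives
\[
|\nabla_{(\rho,U,T)}\Phi|\lesssim e^{-c'|v|^2}\quad\text{uniformly on }K_\varepsilon,
\]
with some $c'\in(1/4,1/2)$ once $\varepsilon$ is small. The key point is that the variance $T$ stays below $1+C\varepsilon$ along the whole segment. This leaves room to absorb the polynomial factors in $v$ and the weight $w(v)e^{|v|^2/4}$. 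Consequently
\[
w\mu^{-1/2}|\mathcal{Q}_{ij}(f_1)-\mathcal{Q}_{ij}(f_2)|\lesssim w(v)e^{|v|^2/4}e^{-c'|v|^2}\|w(f_1-f_2)\|_{L^\infty_{x,v}}\lesssim \|w(f_1-f_2)\|_{L^\infty_{x,v}},
\]
pointwise in $\theta$. Integrating against $(1-\theta)$ over $\theta\in[0,1]$ concludes the proof.
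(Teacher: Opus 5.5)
Your proposal is correct and follows essentially the same route as the paper: Lipschitz bounds on $(\rho_\theta,U_\theta,T_\theta)$ in terms of $\|w(f_1-f_2)\|_{L^\infty_{x,v}}$, lower bounds on the denominators of $\mathcal{P}_i$ and $\mathcal{Q}_{ij}$, and then control of the numerators. Your mean value argument with the Gaussian decay $e^{-c'|v|^2}$, $c'>1/4$, makes explicit the ``straightforward and laborious computations'' that the paper defers to the literature. The only difference is cosmetic: the paper gets the moment Lipschitz bounds directly from the moment formulas, whereas you pass through the affine coordinates $(\rho,\rho U,G)$. In that step the mean value argument should be run on a small convex ball around $(1,0,0)$ in those coordinates rather than in $(\rho,U,T)$-space, which is harmless.
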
    

\begin{proof} 
From the definitions of $\mathcal{P}_{j}$ and $\mathcal{Q}_{ij}$, we know their denominators are bounded from below by a positive constant due to 
$$
    \|\rho_{j,\theta } -1\|_{L^\infty_{t,x}} + \|U_{j,\theta } \|_{L^\infty_{t,x}}+\|T_{j,\theta } -1\|_{L^\infty_{t,x}}\lesssim\varepsilon,
$$
and thus it suffices to control their numerators. To this end, we will prove the following estimates:
\begin{align*}
    |\rho_{1,\theta}-\rho_{2,\theta }| +|U_{1,\theta}-U_{2,\theta}| +|T_{1,\theta}-T_{2,\theta}|\lesssim  \|w(f_1-f_2)\|_{L^\infty_{x,v}},
\end{align*}
which will yield then the estimates \eqref{EST:int q diff} and \eqref{EST:int Q_ij diff} after direct computations.

Now recalling the definition \eqref{DEF:macro epsilon}, we have
\begin{align*}
    |\rho_{1,\theta}-\rho_{2,\theta }| = \LV \int_{\R^3} F_{1,\theta} -F_{2,\theta}\,dv\RV = \LV \theta\int_{\R^3}   \sqrt{\mu}(f_1-f_2)\,dv \RV\lesssim  \|w(f_1-f_2)\|_{L^\infty_{x,v}},
\end{align*}
\begin{align*}
    |U_{1,\theta}-U_{2,\theta }| 
    &= \LV  \int_{\R^3} v(\rho_{1,\theta}^{-1} F_{1,\theta} -\rho_{2,\theta}^{-1} F_{2,\theta})\,dv\RV =  \LV  \rho_{1,\theta }^{-1}\int_{\R^3} v (F_{1,\theta} -  F_{2,\theta})\,dv\RV  + \LV \int_{\R^3} v F_{2,\theta} \,dv\RV\LV\rho_{1,\theta }^{-1}-\rho_{2,\theta }^{-1}\RV\\
    &\lesssim  \|w(f_1-f_2)\|_{L^\infty_{x,v}}+ \|w f_2 \|_{L^\infty_{x,v}} |\rho_{1,\theta}-\rho_{2,\theta}| (\rho_{1,\theta}\rho_{2,\theta})^{-1}\lesssim  \|w(f_1-f_2)\|_{L^\infty_{x,v}},
\end{align*}
and similar arguments yield that
\begin{align*}
    |T_{1,\theta}-T_{2,\theta }| 
    &= {1\over 3}\LV  \int_{\R^3} \rho_{1,\theta}^{-1}F_{1,\theta}|v-U_{1,\theta}|^2 -\rho_{2,\theta}^{-1}F_{2,\theta}|v-U_{2,\theta}|^2\,dv\RV \lesssim  \|w(f_1-f_2)\|_{L^\infty_{x,v}}.
\end{align*}
Finally, the results of this lemma follow by a straightforward and laborious computations. We refer the interested readers to \cite[proof of Lemma 6]{CKP2026} for more details.
\end{proof}

\begin{lemma}\label{lemma:estimate of Gamma}
        There hold
        \begin{align}\label{EST:f e_i}
        \LV\langle f,e_i\rangle \RV \lesssim \|wf\|_{L^\infty_{x,v}},\quad i= 1,\,\ldots,\,5.
        \end{align}
        Moreover, 
        $$       \|w\Gamma_j(f)\|_{L^\infty_{x,v}}\lesssim \|wf\|^2_{L^\infty_{x,v}}\quad\hbox{ for }j=1,\,2;
        \quad       \|w\Gamma_3(f)\|_{L^\infty_{x,v}}\lesssim \|wf\|^3_{L^\infty_{x,v}}.
        $$
\end{lemma}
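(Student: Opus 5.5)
The bound \eqref{EST:f e_i} comes straight from Cauchy--Schwarz in the form of a weighted $L^1$ bound. For each $i$, $e_i(v)$ is a polynomial times $\sqrt{\mu}$, and $w$ grows only polynomially, so $w^{-1}e_i\in L^1(\R^3)$. Therefore
\begin{align*}
|\langle f,e_i\rangle(t,x)|\le \|wf\|_{L^\infty_{x,v}}\int_{\R^3} w^{-1}(v)|e_i(v)|\,dv\lesssim \|wf\|_{L^\infty_{x,v}},
\end{align*}
uniformly in $(t,x)$. This is the same computation used for $\rho-1$ and $\rho U$ in Lemma~\ref{prop:estimate integral Pi Qij}.

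For $\Gamma_1$ I would use $\Gamma_1(f)=(\mathbf{P}f-f)\Upsilon_1(f)$. The first factor is controlled by $\|w(\mathbf{P}f-f)\|_{L^\infty_{x,v}}\le \|w\mathbf{P}f\|_{L^\infty_{x,v}}+\|wf\|_{L^\infty_{x,v}}\lesssim \|wf\|_{L^\infty_{x,v}}$. The bound on $w\mathbf{P}f$ follows either from \eqref{EST:f e_i} together with $we_i\in L^\infty$, or from the kernel bound in Lemma~\ref{lemma:K}. The second factor is independent of $v$. By \eqref{EST:int P_alpha_beta} in Lemma~\ref{prop:estimate integral Pi Qij} and \eqref{EST:f e_i} we have $|\Upsilon_1(f)|\le \sum_\ell |\int_0^1\mathcal{P}_\ell\,d\theta|\,|\langle f,e_\ell\rangle|\lesssim \|wf\|_{L^\infty_{x,v}}$. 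Multiplying the two bounds gives the quadratic estimate.

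For $\Gamma_2$, I would move the weight inside the sum:
\begin{align*}
|w\Gamma_2(f)|\le \|\gamma\|_{L^\infty}\sum_{i,j=1}^5 w\sqrt{\mu}^{-1}\LV\int_0^1\mathcal{Q}_{ij}(f)(1-\theta)\,d\theta\RV\,|\langle f,e_i\rangle||\langle f,e_j\rangle|.
\end{align*}
Estimate \eqref{EST:int Q_ij} bounds the $v$-dependent factor uniformly, and \eqref{EST:f e_i} bounds the two inner products. This gives $\|w\Gamma_2(f)\|_{L^\infty_{x,v}}\lesssim\|wf\|^2_{L^\infty_{x,v}}$.

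For $\Gamma_3$ I would write $w\Gamma_3(f)=\Upsilon_1(f)\cdot w\sqrt{\mu}^{-1}\Upsilon_2(f)$. The first factor is bounded by $\|wf\|_{L^\infty_{x,v}}$, as in the $\Gamma_1$ step. The second is bounded by $\|wf\|^2_{L^\infty_{x,v}}$, by the $\Gamma_2$ computation without the factor $\gamma$. Together these give the cubic bound.

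The main obstacle is that Lemma~\ref{prop:estimate integral Pi Qij} assumes $\sup_t\|wf(t)\|_{L^\infty_{x,v}}\lesssim\varepsilon$ with $\varepsilon$ small. That assumption is what keeps $\rho_\theta$ and $T_\theta$ away from zero, which is needed because the denominators $R_i$ and $R_{ij}$ involve them. The estimates here are therefore understood in that small-data regime, which is exactly where the lemma is applied in the proof of Theorem~\ref{theorem:main forward}. The one point I would check carefully is the absorption of the Gaussian tail: $w\sqrt{\mu}^{-1}M(F_\theta)$ stays bounded because the temperature satisfies $T_\theta\le 1+C\varepsilon$ and $U_\theta=O(\varepsilon)$, so $M(F_\theta)$ decays faster than $e^{-|v|^2/4}$. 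This is already verified inside the proof of \eqref{EST:int Q_ij}.
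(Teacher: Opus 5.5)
Your proposal is correct and follows the paper's proof essentially step for step: the moment bound via the $L^\infty$--$L^1$ pairing (this is H\"older rather than Cauchy--Schwarz, but the computation is right), then the bound $|w(\mathbf{P}f-f)|\lesssim\|wf\|_{L^\infty_{x,v}}$, then the two estimates of Lemma~\ref{prop:estimate integral Pi Qij} for the $\mathcal{P}_\ell$ and $\mathcal{Q}_{ij}$ factors, with $\Gamma_3$ handled as the product $\Upsilon_1\cdot w\sqrt{\mu}^{-1}\Upsilon_2$ (a case the paper omits). You are also right that the smallness hypothesis of Lemma~\ref{prop:estimate integral Pi Qij} is needed to keep $\rho_\theta$ and $T_\theta$ away from zero; the paper relies on it tacitly in exactly the same way.
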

\begin{proof}
Based on the definition of $e_i$, it follows
    \begin{align*}
        \LV\langle f,e_1\rangle \RV = \LV\int_{\R^3} fw(v)\sqrt{\mu}w^{-1}(v)dv\RV\leq \|wf\|_{L^\infty_{x,v}}\LV\int_{\R^3} \sqrt{\mu}w^{-1}(v)dv\RV\lesssim \|wf\|_{L^\infty_{x,v}},
    \end{align*}
    \begin{align*}
        \LV\langle f,e_\ell\rangle \RV = \LV\int_{\R^3} fw(v)v_\ell\sqrt{\mu}w^{-1}(v)dv\RV\leq \|wf\|_{L^\infty_{x,v}}\LV\int_{\R^3} v_\ell\sqrt{\mu}w^{-1}(v)dv\RV\lesssim \|wf\|_{L^\infty_{x,v}}, \quad \ell=2,\,3,\,4,
    \end{align*}
    and 
    \begin{align*}
        \LV\langle f,e_5\rangle \RV = {1\over \sqrt{6}} \LV\int_{\R^3} fw(v)(|v|^2-3)\sqrt{\mu}w^{-1}(v)dv\RV\leq \|wf\|_{L^\infty_{x,v}}\LV\int_{\R^3} (|v|^2-3)\sqrt{\mu}w^{-1}(v)dv\RV\lesssim \|wf\|_{L^\infty_{x,v}}.
    \end{align*}
    Also, by Lemma~\ref{lemma:K}, we have 
    \begin{align}\label{EST:Pf-f}
        |w(\mathbf{P}f-f)|&=\LV w(v) \int_{\R^3}  k(v,v')   f(v')dv' -wf\RV=\LV\int_{\R^3}  k(v,v'){w(v)\over w(v')} w(v')f(v')dv' -wf\RV \notag\\
        &\leq \|wf\|_{L^\infty_{x,v}} \LV\int_{\R^3}  k(v,v'){w(v)\over w(v')}  dv'+1\RV\lesssim \|wf\|_{L^\infty_{x,v}} .
    \end{align}
    To estimate $\Gamma_1$, with the above estimates, we apply Lemma~\ref{prop:estimate integral Pi Qij} to get
    \begin{align*}
        |w(v) \Gamma_1(f)| &\leq  \LV w(v) (\mathbf{P}f-f)\RV \LV  \sum^{5}_{\ell=1} \LC\int^1_0 \mathcal{P}_\ell d\theta\RC\langle f,e_\ell\rangle \RV\lesssim  \|wf\|^2_{L^\infty_{x,v}}.
    \end{align*}
    For $\Gamma_2$, using \eqref{EST:int Q_ij}, similar arguments give
    \begin{align*}
        |w(v) \Gamma_2(f)| &\leq |\gamma| \LV w \sqrt{\mu}^{-1} \sum_{i,j=1}^5\LC \int^1_0  \mathcal{Q}_{ij}(f)(1-\theta)\,d\theta\RC \langle f,e_i\rangle\langle f,e_j\rangle\RV \lesssim \|wf\|^2_{L^\infty_{x,v}}.
    \end{align*}
    The estimate for $w\Gamma_3(f)$ can be proved similarly and thus we omit it here. 
\end{proof}

\begin{lemma}\label{lemma:diff Gamma}
Suppose that $\sup_{0\leq t\leq t^*}(\|wf_1(t)\|_{L^\infty_{x,v}}+\|wf_2(t)\|_{L^\infty_{x,v}})\lesssim \varepsilon$ for some $0<\varepsilon<1$.
    Then 
    $$
    \|w(\Gamma (f_1)-\Gamma(f_2))\|_{L^\infty_{x,v}}\leq \varepsilon \|w(f_1-f_2)\|_{L^\infty_{x,v}}.
    $$
\end{lemma}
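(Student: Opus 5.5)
The plan is to split $\Gamma = \Gamma_1+\Gamma_2+\Gamma_3$ as in Proposition~\ref{Prop:linearization of M(F)} and estimate each difference separately. Each $\Gamma_k(f_j)$ is a product of factors of three kinds: the linear factor $(\mathbf{P}-\mathbf{I})f_j$; the moments $\langle f_j,e_i\rangle$; and the nonlinear coefficients $\int_0^1\mathcal{P}_\ell(f_j)\,d\theta$ and $w\sqrt{\mu}^{-1}\int_0^1\mathcal{Q}_{ij}(f_j)(1-\theta)\,d\theta$. For a difference of products we use the telescoping identity
\[
a_1b_1c_1-a_2b_2c_2=(a_1-a_2)b_1c_1+a_2(b_1-b_2)c_1+a_2b_2(c_1-c_2),
\]
and bound each resulting term with the estimates already available.

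\textbf{Step 1 (bounds on the factors).} The single-factor bounds come from earlier results:
\begin{itemize}
\item $|\langle f_j,e_i\rangle|\lesssim \|wf_j\|_{L^\infty_{x,v}}\lesssim\varepsilon$ by \eqref{EST:f e_i}.
\item $|w(\mathbf{P}f_j-f_j)|\lesssim \|wf_j\|_{L^\infty_{x,v}}$ by \eqref{EST:Pf-f}.
\item The coefficient integrals are $\lesssim 1$ by Lemma~\ref{prop:estimate integral Pi Qij}.
\end{itemize}
The difference bounds follow the same pattern. By linearity, $|\langle f_1-f_2,e_i\rangle|\lesssim \|w(f_1-f_2)\|_{L^\infty_{x,v}}$ and $|w(\mathbf{P}-\mathbf{I})(f_1-f_2)|\lesssim \|w(f_1-f_2)\|_{L^\infty_{x,v}}$. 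The differences of the coefficient integrals are $\lesssim \|w(f_1-f_2)\|_{L^\infty_{x,v}}$ by Lemma~\ref{lemma:second derivative estimates}.

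\textbf{Step 2 (the three differences).} We take the three pieces of $\Gamma$ in turn.
\begin{itemize}
\item For $\Gamma_1(f)=(\mathbf{P}f-f)\,\Upsilon_1(f)$, the difference $w(\Gamma_1(f_1)-\Gamma_1(f_2))$ splits into
\[
w(\mathbf{P}-\mathbf{I})(f_1-f_2)\,\Upsilon_1(f_1)+w(\mathbf{P}-\mathbf{I})f_2\,(\Upsilon_1(f_1)-\Upsilon_1(f_2)).
\]
By Step 1, $|\Upsilon_1(f_1)|\lesssim\varepsilon$. Telescoping $\Upsilon_1$ once more gives $|\Upsilon_1(f_1)-\Upsilon_1(f_2)|\lesssim \|w(f_1-f_2)\|_{L^\infty_{x,v}}$. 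Both terms are therefore $\lesssim\varepsilon\|w(f_1-f_2)\|_{L^\infty_{x,v}}$.
\item For $\Gamma_2$, bilinearity in the moments gives
\[
\langle f_1,e_i\rangle\langle f_1,e_j\rangle-\langle f_2,e_i\rangle\langle f_2,e_j\rangle=\langle f_1-f_2,e_i\rangle\langle f_1,e_j\rangle+\langle f_2,e_i\rangle\langle f_1-f_2,e_j\rangle,
\]
so this part of the difference is $O(\varepsilon)\|w(f_1-f_2)\|_{L^\infty_{x,v}}$. The part coming from the $\mathcal{Q}_{ij}$ coefficient difference carries a factor $\varepsilon^2$. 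We also use $\|\gamma\|_{L^\infty}<\infty$ here.
\item $\Gamma_3$ is handled the same way and produces even higher powers of $\varepsilon$.
\end{itemize}

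\textbf{Step 3 (constant).} Summing the three contributions gives $\|w(\Gamma(f_1)-\Gamma(f_2))\|_{L^\infty_{x,v}}\le C\varepsilon\|w(f_1-f_2)\|_{L^\infty_{x,v}}$. The constant $C$ depends on $w$, $\gamma$, $\alpha$, $\beta$, and it can be absorbed by the implicit constant in the hypothesis $\lesssim\varepsilon$, that is, by rescaling $\varepsilon$. This gives the stated inequality.

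The main obstacle is already packaged in Lemma~\ref{lemma:second derivative estimates}: showing that the coefficient integrals are Lipschitz in $f$, uniformly in $v$ after multiplication by $w\sqrt{\mu}^{-1}$. The difficulty is in the $\mathcal{Q}_{ij}$ factor, because $M(F_{1,\theta})-M(F_{2,\theta})$ must decay faster than $\sqrt{\mu}/w$. I would verify this with the mean value theorem in $(\rho,U,T)$, using the Gaussian bound $e^{-c'|v|^2}$ with $c'>1/4$ from the proof of Lemma~\ref{prop:estimate integral Pi Qij}. Given that lemma, the remaining steps are bookkeeping.
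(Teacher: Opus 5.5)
Your proposal is correct and follows essentially the same route as the paper: split $\Gamma$ into $\Gamma_1,\Gamma_2,\Gamma_3$, telescope each product difference, and bound the pieces with \eqref{EST:f e_i}, \eqref{EST:Pf-f}, Lemma~\ref{prop:estimate integral Pi Qij} and Lemma~\ref{lemma:second derivative estimates}. Your remark that the constant is absorbed by rescaling $\varepsilon$ in the hypothesis is a reasonable reading of the statement, since the paper's own argument only yields $\lesssim \varepsilon\|w(f_1-f_2)\|_{L^\infty_{x,v}}$.
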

\begin{proof}
    Recall the definition of $\Gamma(f)= \Gamma_1(f)+  \Gamma_2(f)+ \Gamma_3(f)$ in Proposition~\ref{Prop:linearization of M(F)}.
    We start with the estimate of the operator $\Gamma_1$ by first analyzing
    \begin{align*}
       \LV \Upsilon_1(f_1)- \Upsilon_1(f_2)\RV 
       &=  \LV \sum^{5}_{\ell=1} \LC\int^1_0 \mathcal{P}_\ell (f_1)-\mathcal{P}_\ell (f_2) d\theta\RC\langle f_1,e_\ell\rangle  +  \sum^{5}_{\ell=1} \LC\int^1_0 \mathcal{P}_\ell (f_2) d\theta\RC\langle f_1-f_2,e_\ell\rangle \RV\\
       &\lesssim \|w(f_1-f_2)\|_{L^\infty_{x,v}}  \|wf_1\|_{L^\infty_{x,v}} +\|w(f_1-f_2)\|_{L^\infty_{x,v}} \lesssim (\varepsilon +1) \|w(f_1-f_2)\|_{L^\infty_{x,v}},
    \end{align*}
    where we used Lemma~\ref{prop:estimate integral Pi Qij}-Lemma~
    \ref{lemma:estimate of Gamma}.
    Next, applying \eqref{EST:f e_i} and \eqref{EST:Pf-f} gives
    \begin{align*}
         |w(\Gamma_1(f_1) - \Gamma_1(f_2))| 
         &\leq |w\LC(\mathbf{P}(f_1-f_2)-(f_1-f_2)\RC|| \Upsilon_1(f_1)| + |w(\mathbf{P}f_2-f_2)|\LV \Upsilon_1(f_1)- \Upsilon_1(f_2)\RV\\
         &\leq  \|w(f_1-f_2)\|_{L^\infty_{x,v}} \|wf_1\|_{L^\infty_{x,v}}+\|w f_2 \|_{L^\infty_{x,v}} \|w(f_1-f_2)\|_{L^\infty_{x,v}}\lesssim \varepsilon \|w(f_1-f_2)\|_{L^\infty_{x,v}}.
    \end{align*}

    For $\Gamma_2$, we proceed similarly as follows. By Lemma~\ref{prop:estimate integral Pi Qij} and Lemma~\ref{lemma:second derivative estimates}, we derive
    \begin{align*}
         |w(\Gamma_2(f_1) - \Gamma_2(f_2))| 
         &=  |\gamma w\sqrt{\mu}^{-1} (\Upsilon_2(f_1)-\Upsilon_2(f_2))|\\
         &\lesssim  \sum^{5}_{i,j=1}\LV w\sqrt{\mu}^{-1} \int^1_0 (\mathcal{Q}_{ij}(f_1)-\mathcal{Q}_{ij}(f_2))(1-\theta)\,d\theta \RV |\langle f_1,e_i\rangle\langle f_1,e_j\rangle| \\
         &\quad +\sum^{5}_{i,j=1}\LV w \sqrt{\mu}^{-1}\int^1_0 \mathcal{Q}_{ij}(f_2) (1-\theta)\,d\theta \RV\LV \langle f_1,e_i\rangle\langle f_1,e_j\rangle-\langle f_2,e_i\rangle\langle f_2,e_j\rangle\RV\\
         &\leq \|w(f_1-f_2)\|_{L^\infty_{x,v}} \|wf_1\|_{L^\infty_{x,v}}^2 + \|w(f_1-f_2)\|_{L^\infty_{x,v}} (\|wf_1\|_{L^\infty_{x,v}}+\|wf_2\|_{L^\infty_{x,v}})\\
         &\lesssim \varepsilon \|w(f_1-f_2)\|_{L^\infty_{x,v}}.
    \end{align*}
    The derivation of the estimate for $\Gamma_3$ is similar. Recall $\Upsilon_3(f)=\Upsilon_1(f)\Upsilon_2(f)$. Under the smallness hypothesis, we get
    \begin{align*}
        |w(\Gamma_3(f_1) - \Gamma_3(f_2))| 
        &= | w\sqrt{\mu}^{-1}(\Upsilon_3(f_1)-\Upsilon_3(f_2))|\\
        &\lesssim |w\sqrt{\mu}^{-1}(\Upsilon_1(f_1)-\Upsilon_1(f_2)) \Upsilon_2(f_1)|+|w\sqrt{\mu}^{-1}(\Upsilon_2(f_1)-\Upsilon_2(f_2)) \Upsilon_1(f_2)|\\
        &\lesssim  \|w(f_1-f_2)\|_{L^\infty_{x,v}}\|wf_1\|_{L^\infty_{x,v}}^2 + \varepsilon \|w(f_1-f_2)\|_{L^\infty_{x,v}}\|wf_2\|_{L^\infty_{x,v}},
    \end{align*}
    which yields the desired estimate.
\end{proof}

\section{The first-order and second-order linearizations}\label{sec:appendix B}
In this section, we will compute the derivatives of $M(F_\varepsilon)$ with respect  to $\varepsilon$ explicitly provided that such derivatives exist, which are proved in Theorem~\ref{theorem:linearization} utilizing the asymptotic expansion.

Let $F_\varepsilon=\mu+\sqrt{\mu} f_\varepsilon$  be the solution to the BGK equation with $q(t,x,F)=\gamma (x)\rho^{\alpha}T^{\beta}$:
\begin{align*}
       \p_t F+v\cdot\nabla_x F = q(t,x,F)(M(F) -F),\quad\hbox{with } F|_{t=0}=\mu+\varepsilon\sqrt{\mu}f_{in},\quad F |_{S_-} =\mu+\varepsilon\sqrt{\mu}f_-.
\end{align*}

\subsection{First-order linearization}\label{first-order linearization}
By direct computations, we have
\begin{lemma}\label{first order 2}  
    \begin{align*}
        \p_\varepsilon \rho_\epsilon& = \int_{\R^3}  \p_\varepsilon F_\varepsilon\,dv,\\
        \p_\varepsilon U_\epsilon & = -\rho_\varepsilon^{-2}\p_\varepsilon \rho_\varepsilon \int v F_\varepsilon\,dv+\rho_\varepsilon^{-1} \int_{\R^3} v (\p_\varepsilon F_\varepsilon)\,dv, \\
        \p_\varepsilon T_\epsilon & = \frac{1}{3}\LC -\rho_\varepsilon^{-2}\p_\varepsilon \rho_\varepsilon \int_{\R^3} |v-U_\varepsilon|^2 F_\varepsilon\,dv+\rho_\varepsilon^{-1} \int_{\R^3} |v-U_\varepsilon|^2 (\p_\varepsilon F_\varepsilon)\,dv-2\rho_\varepsilon^{-1}\int_{\R^3} (v-U_\varepsilon)\cdot (\p_\varepsilon U_\varepsilon) F_\varepsilon\,dv\RC.
    \end{align*}
In particular,
 \begin{align*}
       \p_\varepsilon \rho_\epsilon|_{\varepsilon=0} & = \int_{\R^3} \sqrt{\mu} f^{(1)}\,dv,\quad 
       \p_\varepsilon U_\epsilon|_{\varepsilon=0} & = \int_{\R^3} v\sqrt{\mu} f^{(1)}\,dv,\quad
        \p_\varepsilon T_\epsilon|_{\varepsilon=0} & = \frac{1}{3}\int_{\R^3} \sqrt{\mu} (|v|^2-3) f^{(1)}\,dv.
    \end{align*}
\end{lemma}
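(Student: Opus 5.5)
The statement is Lemma~\ref{first order 2}, which is a direct differentiation of the moment definitions \eqref{DEF:density_velocity_temp} with respect to $\varepsilon$. I would first justify differentiating under the integral sign. By Corollary~\ref{COR:derivative of F}, the difference quotient $(F_\varepsilon-\mu)/\varepsilon$ converges to $\sqrt{\mu}f^{(1)}$ in the sense that $w\bigl((F_\varepsilon-\mu)/(\varepsilon\sqrt{\mu}) - f^{(1)}\bigr)\to 0$ in $L^\infty_{x,v}$. Since $\sqrt{\mu}\,(1+|v|+|v|^2)\,w^{-1}\in L^1_v$, moments of order up to two against $\sqrt{\mu}\,g$ are controlled by $\|wg\|_{L^\infty_{x,v}}$, exactly as in \eqref{EST:f e_i}. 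Hence $\p_\varepsilon\int \phi(v)F_\varepsilon\,dv=\int\phi(v)\,\p_\varepsilon F_\varepsilon\,dv$ for $\phi\in\{1,v,|v|^2\}$. The same argument applies at general $\varepsilon$ wherever $\p_\varepsilon F_\varepsilon$ exists in the weighted sense, as the appendix assumes. This gives the formula for $\p_\varepsilon\rho_\varepsilon$ immediately.

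For $U_\varepsilon=\rho_\varepsilon^{-1}\int vF_\varepsilon\,dv$, I would apply the product rule together with $\p_\varepsilon(\rho_\varepsilon^{-1})=-\rho_\varepsilon^{-2}\p_\varepsilon\rho_\varepsilon$. Here $\rho_\varepsilon$ is bounded below by Lemma~\ref{prop:estimate integral Pi Qij}. For $T_\varepsilon=(3\rho_\varepsilon)^{-1}\int|v-U_\varepsilon|^2F_\varepsilon\,dv$, differentiating gives three terms: one from $\rho_\varepsilon^{-1}$, one from $F_\varepsilon$, and one from $\p_\varepsilon|v-U_\varepsilon|^2=-2(v-U_\varepsilon)\cdot\p_\varepsilon U_\varepsilon$. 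These are precisely the three terms stated. The only point needing care is that the $\varepsilon$-dependence of the integrand through $U_\varepsilon$ is differentiable, which follows from the $U_\varepsilon$ formula just obtained.

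Finally, I would evaluate at $\varepsilon=0$. There $F_0=\mu$, $\rho_0=1$, $U_0=0$, $T_0=1$, and $\p_\varepsilon F_\varepsilon|_{\varepsilon=0}=\sqrt{\mu}f^{(1)}$. Then $\p_\varepsilon\rho_\varepsilon|_0=\int\sqrt{\mu}f^{(1)}\,dv$. Since $\int v\mu\,dv=0$, we get $\p_\varepsilon U_\varepsilon|_0=\int v\sqrt{\mu}f^{(1)}\,dv$. For the temperature, the third term vanishes because $\int v\mu\,dv=0$ and $U_0=0$. The first term equals $-\p_\varepsilon\rho_\varepsilon|_0\cdot\int|v|^2\mu\,dv=-3\int\sqrt{\mu}f^{(1)}\,dv$, using \eqref{normal distribution}. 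The second term equals $\int|v|^2\sqrt{\mu}f^{(1)}\,dv$. Adding and dividing by $3$ gives $\tfrac13\int\sqrt{\mu}(|v|^2-3)f^{(1)}\,dv$.

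The only mildly delicate step is the interchange of $\p_\varepsilon$ and $\int dv$. It is handled by the weighted $L^\infty$ convergence from Corollary~\ref{COR:derivative of F} combined with integrability of $w^{-1}$ against Gaussian polynomial moments. Everything else is routine calculus.
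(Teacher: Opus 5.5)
Your proposal is correct and follows the paper's proof. Both differentiate the moment definitions directly by the product and chain rules, then evaluate at $\varepsilon=0$ using $\rho_0=T_0=1$, $U_0=0$, $\int v\mu\,dv=0$ and $\int|v|^2\mu\,dv=3$. The one addition is your justification for interchanging $\p_\varepsilon$ and $\int dv$ via the weighted $L^\infty$ convergence of Corollary~\ref{COR:derivative of F}, which you correctly read as $w(f_\varepsilon/\varepsilon-f^{(1)})\to 0$; the paper leaves this under ``direct calculation'' and, for general $\varepsilon$, under the appendix's standing assumption that the derivatives exist.
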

\begin{proof}
    Recall that definitions of $\rho$, $U$ and $T$, by direct calculation, we obtain the first three equalities.
    Note that $\p_\varepsilon F_\varepsilon=\sqrt{\mu}\p_\varepsilon f_\varepsilon$. At $\varepsilon=0$, we have $\rho_0=T_0=1$, $U_0=0$, and $\p_\varepsilon F_\varepsilon|_{\varepsilon=0}=\sqrt{\mu} f^{(1)}$, and thus
    \begin{align*}
        \p_\varepsilon \rho_\epsilon|_{\varepsilon=0} 
        & = \int_{\R^3} \sqrt{\mu} f^{(1)}\,dv,\\
        \p_\varepsilon U_\epsilon|_{\varepsilon=0} 
        & = -\LC \int_{\R^3} \sqrt{\mu}f^{(1)}\,dv\RC\LC \int_{\R^3} v\mu(v)\,dv\RC+\int_{\R^3} \sqrt{\mu}v f^{(1)}\,dv ,\\
        \p_\varepsilon T_\epsilon|_{\varepsilon=0} 
        & = \frac{1}{3}\LC -\LC\int_{\R^3} \sqrt{\mu}f^{(1)}\,dv\RC\LC   \int_{\R^3} |v|^2\mu(v)\,dv\RC+ \int_{\R^3} \sqrt{\mu}|v|^2 f^{(1)}\,dv-2 (\p_\varepsilon U_\varepsilon|_{\varepsilon=0})\int_{\R^3} v \mu(v)\,dv \RC.
    \end{align*}
    The proof is complete by noting that $\mu$ is a normal distribution, $\int_{\R^3} \mu v\,dv=0$,  and $\int_{\R^3} \mu |v|^2\,dv=3$. 
\end{proof}

\begin{lemma}\label{lemma:first derivative of M}
    $$
    \left.\frac{\p M(F_\varepsilon)}{\p \varepsilon} \right|_{\varepsilon=0} = \sqrt{\mu(v)}\int_{\R^3} k(v,v') f^{(1)}(t,x,
    v')\,dv'=\sqrt{\mu} \,\mathbf{P} f^{(1)},
    $$
    where the kernel is defined as
    $$
    k(v,v') :=\sqrt{\mu(v)}\sqrt{\mu(v')}\bigg(1+v\cdot v'+\frac{(|v|^2-3)(|v'|^2-3)}{6}\bigg).
    $$
\end{lemma}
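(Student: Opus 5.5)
The plan is to differentiate the local Maxwellian $M(F_\varepsilon)$ directly with respect to $\varepsilon$ by the chain rule through the macroscopic quantities $(\rho_\varepsilon,U_\varepsilon,T_\varepsilon)$, and then evaluate at $\varepsilon=0$. Writing $M(F_\varepsilon)$ as a function of $(\rho_\varepsilon,U_\varepsilon,T_\varepsilon)$, Lemma~\ref{lemma:first order 1} (with $\theta$ replaced by $\varepsilon$, the computation being purely algebraic in the macroscopic variables) gives
$$
\p_\varepsilon M(F_\varepsilon)=M(F_\varepsilon)\Big(\rho_\varepsilon^{-1}\p_\varepsilon\rho_\varepsilon+\frac{(v-U_\varepsilon)\cdot\p_\varepsilon U_\varepsilon}{T_\varepsilon}+\frac{|v-U_\varepsilon|^2-3T_\varepsilon}{2T_\varepsilon^2}\,\p_\varepsilon T_\varepsilon\Big).
$$
The existence of $\p_\varepsilon F_\varepsilon|_{\varepsilon=0}=\sqrt{\mu}f^{(1)}$ in the weighted $L^\infty$ sense comes from Corollary~\ref{COR:derivative of F}. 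Since $\sqrt{\mu}w^{-1}(1+|v|^2)\in L^1_v$, the moments $\rho_\varepsilon,\rho_\varepsilon U_\varepsilon,G_\varepsilon$ are then differentiable at $\varepsilon=0$, and so are $(\rho_\varepsilon,U_\varepsilon,T_\varepsilon)$, because $\rho_0=1$ stays away from zero. This justifies the chain rule.

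Next, set $\varepsilon=0$. Using $\rho_0=T_0=1$, $U_0=0$ and $M(F_0)=\mu$, the bracket reduces to $\rho^{(1)}+v\cdot U^{(1)}+\frac{|v|^2-3}{2}T^{(1)}$. Lemma~\ref{first order 2} supplies these derivatives: $\rho^{(1)}=\langle f^{(1)},e_1\rangle$, $U^{(1)}=(\langle f^{(1)},e_{2}\rangle,\langle f^{(1)},e_3\rangle,\langle f^{(1)},e_4\rangle)$, and $T^{(1)}=\tfrac13\int\sqrt{\mu}(|v|^2-3)f^{(1)}\,dv=\sqrt{2/3}\,\langle f^{(1)},e_5\rangle$.

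Substituting these gives
$$
\mu(v)\int\sqrt{\mu(v')}\Big(1+v\cdot v'+\tfrac{(|v|^2-3)(|v'|^2-3)}{6}\Big)f^{(1)}(v')\,dv'.
$$
The coefficient of the last term comes out correctly because $\frac{|v|^2-3}{2}\cdot\frac{|v'|^2-3}{3}=\frac{(|v|^2-3)(|v'|^2-3)}{6}$. Factoring $\mu=\sqrt{\mu}\sqrt{\mu}$ then yields $\sqrt{\mu(v)}\int k(v,v')f^{(1)}(v')\,dv'=\sqrt{\mu}\,\mathbf{P}f^{(1)}$, which is the claim. This is consistent with the $h'(0)$ computation in Proposition~\ref{Prop:linearization of M(F) new}, and one could equally cite that computation with $f$ replaced by $f^{(1)}$.

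The only genuinely delicate step is justifying the differentiation, namely interchanging $\p_\varepsilon$ with the $v$-integrals that define the moments. I would handle it with the weighted convergence of difference quotients from Corollary~\ref{COR:derivative of F}, together with dominated convergence using the integrable majorant $\sqrt{\mu}w^{-1}(1+|v|^2)$. Everything after that is the explicit algebra above.
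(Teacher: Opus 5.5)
Your proposal is correct and follows the paper's proof: the chain rule through $(\rho_\varepsilon,U_\varepsilon,T_\varepsilon)$ using Lemma~\ref{lemma:first order 1} and Lemma~\ref{first order 2}, followed by the identity $\frac{|v|^2-3}{2}\cdot\frac{|v'|^2-3}{3}=\frac{(|v|^2-3)(|v'|^2-3)}{6}$ to assemble $k(v,v')$. Your justification of differentiability at $\varepsilon=0$, via Corollary~\ref{COR:derivative of F} and the integrability of $\sqrt{\mu}\,w^{-1}(1+|v|^2)$, fills in a step the paper only assumes.
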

\begin{proof}
By applying Lemma~\ref{lemma:first order 1} and Lemma~\ref{first order 2} above, and the chain rule, we derive
    \begin{align*}
    \left.\frac{\p M(F_\varepsilon)}{\p \varepsilon} \right|_{\varepsilon=0} & 
    =\frac{\p M(F_\varepsilon)}{\p \rho_\varepsilon}\p_\varepsilon \rho_\varepsilon |_{\varepsilon=0}+\frac{\p M(F_\varepsilon)}{\p U_\varepsilon}\p_\varepsilon U_\varepsilon |_{\varepsilon=0}+\frac{\p M(F_\varepsilon)}{\p T_\varepsilon} \p_\varepsilon T_\varepsilon |_{\varepsilon=0}\\
    & =\mu(v)\int_{\R^3} \sqrt{\mu(v')} f^{(1)}(v')\,dv'+\mu(v) v\cdot \int_{\R^3} \sqrt{\mu(v')}v' f^{(1)}(v')\,dv'\\
     & \quad +\mu(v)\LC\frac{|v|^2-3}{2}\RC\int_{\R^3} \sqrt{\mu(v')}\LC\frac{|v'|^2-3}{3}\RC f^{(1)}(v')\,dv'\\
     &= \sqrt{\mu(v)}\int_{\R^3} \sqrt{\mu(v)}\sqrt{\mu(v')}\bigg(1+v\cdot v'+\frac{(|v|^2-3)(|v'|^2-3)}{6}\bigg) f^{(1)}(v')\,dv'.
\end{align*}
\end{proof}

\subsection{Second-order linearization}
To derive the second-order linearization of the BGK equation, we need the following lemma.

\begin{lemma}\label{second order 1}
 \begin{align*}
       \p^2_\varepsilon \rho_\epsilon|_{\varepsilon=0} & = \int_{\R^3}  \sqrt{\mu} f^{(2)}\,dv,\\
       \p^2_\varepsilon U_\epsilon|_{\varepsilon=0} & = -2\LC\int_{\R^3}  \sqrt{\mu} f^{(1)}\,dv\RC\LC\int_{\R^3} \sqrt{\mu}v f^{(1)}\,dv\RC+\int_{\R^3}  \sqrt{\mu}v f^{(2)}\,dv,\\
        \p^2_\varepsilon T_\epsilon|_{\varepsilon=0} & =\int_{\R^3}  \sqrt{\mu}\LC\frac{|v|^2-3}{3}\RC f^{(2)}\,dv +\sqrt{\mu}A(v,f^{(1)}),
    \end{align*}
    where $A(v,f^{(1)})$ is a function of $f^{(1)}$.
\end{lemma}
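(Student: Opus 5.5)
The plan is to differentiate twice the formulas in Lemma~\ref{first order 2} and then evaluate at $\varepsilon=0$. The inputs are:
\begin{itemize}
\item $F_0=\mu$, $\rho_0=T_0=1$, $U_0=0$;
\item $\p_\varepsilon F_\varepsilon|_{\varepsilon=0}=\sqrt{\mu}f^{(1)}$ and $\p^2_\varepsilon F_\varepsilon|_{\varepsilon=0}=\sqrt{\mu}f^{(2)}$, which is legitimate by Corollary~\ref{COR:derivative of F};
\item the moment identities \eqref{normal distribution}.
\end{itemize}
Since $w^{-1}$ times polynomial weights in $v$ is integrable, the weighted $L^\infty$ convergence in Corollary~\ref{COR:derivative of F} justifies interchanging $\p_\varepsilon$ with the $v$-integrals of $F_\varepsilon$, $vF_\varepsilon$ and $|v|^2F_\varepsilon$.

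\textbf{The density.} Since $\rho_\varepsilon=\int F_\varepsilon\,dv$ is linear in $F_\varepsilon$, we get $\p_\varepsilon^2\rho_\varepsilon|_{0}=\int\sqrt{\mu}f^{(2)}\,dv$ immediately.

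\textbf{The velocity.} It is cleaner to work with the momentum $m_\varepsilon:=\rho_\varepsilon U_\varepsilon=\int vF_\varepsilon\,dv$, which is also linear in $F_\varepsilon$. Differentiating $m_\varepsilon=\rho_\varepsilon U_\varepsilon$ twice gives
$$
\p^2 m=\p^2\rho\,U+2\,\p\rho\,\p U+\rho\,\p^2U .
$$
At $\varepsilon=0$ we have $U_0=0$, $\rho_0=1$, $\p U|_0=\int v\sqrt{\mu}f^{(1)}\,dv$ and $\p\rho|_0=\int\sqrt{\mu}f^{(1)}\,dv$. Hence
$$
\p^2U|_0=\int v\sqrt{\mu}f^{(2)}\,dv-2\LC\int\sqrt{\mu}f^{(1)}\,dv\RC\LC\int v\sqrt{\mu}f^{(1)}\,dv\RC,
$$
which is the claimed formula.

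\textbf{The temperature.} Use the energy $E_\varepsilon:=\int|v|^2F_\varepsilon\,dv=\rho_\varepsilon|U_\varepsilon|^2+3\rho_\varepsilon T_\varepsilon$, again linear in $F_\varepsilon$. Differentiating twice and evaluating at $0$, every term containing $U_0$ vanishes. From $\p^2(\rho|U|^2)$ the only survivor is $2\rho_0|\p U|_0|^2$. Expanding $\p^2(3\rho T)$ gives
$$
\p^2E|_0=2|\p U|_0|^2+3\p^2\rho|_0+6\,\p\rho|_0\,\p T|_0+3\p^2T|_0 .
$$
Solve this for $\p^2T|_0$ and substitute $\p^2E|_0=\int|v|^2\sqrt{\mu}f^{(2)}\,dv$ and $\p^2\rho|_0=\int\sqrt{\mu}f^{(2)}\,dv$. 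The $f^{(2)}$ contributions then combine into $\frac13\int\sqrt{\mu}(|v|^2-3)f^{(2)}\,dv$.

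The remainder,
$$
-\tfrac23|\p U|_0|^2-2\,\p\rho|_0\,\p T|_0 ,
$$
is a quadratic expression in moments of $f^{(1)}$ alone, with $\p T|_0$ given by Lemma~\ref{first order 2}. This is the term denoted $\sqrt{\mu}A(v,f^{(1)})$ in the statement; it is really a function of $(t,x)$, and we define $A$ so that $\sqrt{\mu}A$ equals this quantity.

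\textbf{Main obstacle.} The algebra itself is routine. The step needing care is the rigorous existence of second derivatives of the nonlinear quotients $U_\varepsilon$ and $T_\varepsilon$. I will handle it by using the linear moments $\rho_\varepsilon,m_\varepsilon,E_\varepsilon$, whose second difference quotients converge by Corollary~\ref{COR:derivative of F}. Then I recover $U_\varepsilon=m_\varepsilon/\rho_\varepsilon$ and $T_\varepsilon=(E_\varepsilon-|m_\varepsilon|^2/\rho_\varepsilon)/(3\rho_\varepsilon)$ as smooth functions of these moments, since $\rho_\varepsilon\ge 1-C\varepsilon>0$ by Lemma~\ref{prop:estimate integral Pi Qij}. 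The chain rule then applies.
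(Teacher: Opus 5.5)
Your computation is correct, including the remainder, but it is organized differently from the paper's. The paper differentiates the quotient formulas for $\p_\varepsilon U_\varepsilon$ and $\p_\varepsilon T_\varepsilon$ from Lemma~\ref{first order 2} a second time. It then evaluates the many resulting terms at $\varepsilon=0$ using $\int v\mu\,dv=0$ and $\int|v|^2\mu\,dv=3$, and lumps everything not involving $f^{(2)}$ into an unspecified $A$.

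You instead use that $\rho_\varepsilon$, $m_\varepsilon=\rho_\varepsilon U_\varepsilon$ and $E_\varepsilon=\rho_\varepsilon|U_\varepsilon|^2+3\rho_\varepsilon T_\varepsilon$ are linear in $F_\varepsilon$, the same structure behind Lemma~\ref{lemma:decomposition}. So only the Leibniz rule on products is needed, and $U_0=0$ kills most terms at once.

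This gives you an explicit remainder. Write $\rho^{(1)},U^{(1)},T^{(1)}$ for the first derivatives at $\varepsilon=0$. Then $\sqrt{\mu}A=-\frac23|U^{(1)}|^2-2\rho^{(1)}T^{(1)}$, which agrees with what the paper's term-by-term differentiation produces. It also shows this term depends only on $(t,x)$, which clarifies the paper's odd notation $\sqrt{\mu}A(v,f^{(1)})$.

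Your route also gives a natural argument that the second derivatives exist, which the paper only asserts by pointing to Theorem~\ref{theorem:linearization}. To make it airtight, do the following:
\begin{itemize}
\item Integrate the expansion $f_\varepsilon=\varepsilon f^{(1)}+\frac{\varepsilon^2}{2}f^{(2)}+R^\varepsilon$, with $\|wR^\varepsilon\|_{L^\infty_{x,v}}\lesssim\varepsilon^3$, against $\sqrt{\mu}(1,v,|v|^2)$. This gives second-order expansions of $(\rho_\varepsilon,m_\varepsilon,E_\varepsilon)$ with $O(\varepsilon^3)$ remainders.
\item Taylor-expand the smooth map $(\rho,m,E)\mapsto(U,T)$ on $\{\rho\geq 1-C\varepsilon\}$.
\end{itemize}
This is more direct than working from the single second difference quotient $(F_{2\varepsilon}-2F_\varepsilon+\mu)/\varepsilon^2$ in Corollary~\ref{COR:derivative of F}. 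That quotient only supports a chain rule after an extra argument upgrading it, together with the first-order statement, to a second-order Taylor expansion.

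One small inaccuracy: the moment integrals are controlled because $\sqrt{\mu}\,|v|^k w^{-1}\in L^1$, thanks to the Gaussian factor. Under the sole assumption $w^{-2}\in L^1$, $w^{-1}$ times a polynomial in $v$ need not be integrable.
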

\begin{proof}
By Lemma \ref{first order 2}, we obtain
$$\p_\varepsilon^2 \rho_\varepsilon|_{\varepsilon=0}=\int_{\R^3}  \p_\varepsilon^2 F_\varepsilon |_{\varepsilon=0}\,dv=\int_{\R^3}  \sqrt{\mu} f^{(2)}\,dv,$$ 
    \begin{align*}
        \p^2_\varepsilon U_\varepsilon  
        & =-\p_\varepsilon(\rho_\varepsilon^{-2}\p_\varepsilon \rho_\varepsilon)\LC\int_{\R^3}  vF_\varepsilon\,dv\RC-2\rho^{-2}_\varepsilon \p_\varepsilon\rho_\varepsilon \LC\int_{\R^3}  v (\p_\varepsilon F_\varepsilon)\,dv\RC+\rho^{-1}_\varepsilon\LC\int_{\R^3} v (\p^2_\varepsilon F_\varepsilon)\,dv\RC.
    \end{align*}
Since $\int_{\R^3}  v F_0\,dv=\int_{\R^3}  v \mu(v)\,dv=0$, it implies
\begin{align*}
    \p_\varepsilon^2 U_\varepsilon|_{\varepsilon=0}=-2 \LC\int_{\R^3}  \sqrt{\mu} f^{(1)}\,dv\RC\LC\int_{\R^3}  \sqrt{\mu}v f^{(1)}\,dv\RC+\int_{\R^3}  \sqrt{\mu}v f^{(2)}\,dv.
\end{align*}
Similarly, we can derive
    \begin{align*}
        \p^2_\varepsilon T_\varepsilon 
        & =\frac{1}{3} \Bigg( -\underbrace{(\p^2_\varepsilon\rho_\varepsilon )\rho_\varepsilon^{-2}\LC \int_{\R^3} |v-U_\varepsilon|^2 F_\varepsilon\,dv\RC}_{\varepsilon=0:\quad  3\int \sqrt{\mu} f^{(2)}\,dv}-(\p_\varepsilon\rho_\varepsilon )\p_\varepsilon\LC \rho_\varepsilon^{-2}\int_{\R^3} |v-U_\varepsilon|^2 F_\varepsilon\,dv\RC\\
        &\quad -\rho_\varepsilon^{-2}\p_\varepsilon\rho_\varepsilon\LC \int_{\R^3} |v-U_\varepsilon|^2 (\p_\varepsilon F_\varepsilon)\,dv\RC  +\rho_\varepsilon^{-1}\LC \int_{\R^3} (\p_\varepsilon |v-U_\varepsilon|^2) (\p_\varepsilon F_\varepsilon)\,dv\RC\\
        & \quad +\underbrace{\rho_\varepsilon^{-1}\LC \int_{\R^3}  |v-U_\varepsilon|^2 (\p_\varepsilon^2 F_\varepsilon)\,dv\RC}_{\varepsilon=0:\quad  \int \sqrt{\mu} |v|^2 f^{(2)}\,dv } +2\rho_\varepsilon^{-2}\p_\varepsilon\rho_\varepsilon \LC\int_{\R^3}  (v-U_\varepsilon)\cdot (\p_\varepsilon U_\varepsilon) F_\varepsilon\,dv\RC\\
        & \quad -2\rho_\varepsilon^{-1} \LC\int_{\R^3}  \p_\varepsilon \LC(v-U_\varepsilon)F_\varepsilon\RC\cdot (\p_\varepsilon U_\varepsilon)\,dv\RC-2\rho_\varepsilon^{-1} (\p^2_\varepsilon U_\varepsilon) \cdot\LC\int_{\R^3}  (v-U_\varepsilon)  F_\varepsilon\,dv \RC\Bigg).
    \end{align*}
Since $\int |v-U_0|^2 F_0\,dv=\int |v|^2\mu(v)\,dv=3$,
\begin{align*}
    \p_\varepsilon^2 T_\varepsilon|_{\varepsilon=0}=\frac{1}{3}\LC -3\int_{\R^3} \sqrt{\mu} f^{(2)}\,dv+\int_{\R^3} \sqrt{\mu} |v|^2 f^{(2)}\,dv\RC +\sqrt{\mu}A(v,f^{(1)}),
\end{align*}
where $A(v,f^{(1)})$ is a function of $f^{(1)}$, independent of $f^{(2)}$. 
\end{proof}

\begin{lemma}\label{lemma:second linear MF}
   \begin{align*}
     \left.\frac{\p^2 M(F_\varepsilon)}{\p \varepsilon^2}\right|_{\varepsilon=0}=\sqrt{\mu}\int_{\R^3} k(v,v')f^{(2)}(v')\,dv'+\sqrt{\mu}B(v,f^{(1)})=\sqrt{\mu}\, \mathbf{P} f^{(2)}+\sqrt{\mu}B(v,f^{(1)}),
\end{align*}
where $B(v,f^{(1)})$ is a function of $f^{(1)}$.
\end{lemma}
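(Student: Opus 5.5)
The approach is to differentiate $M(F_\varepsilon)$ twice in $\varepsilon$ by the chain rule, viewing $M$ as a function of $(\rho_\varepsilon,U_\varepsilon,T_\varepsilon)$, and then to evaluate at $\varepsilon=0$. Writing $X_\varepsilon=(\rho_\varepsilon,U_\varepsilon,T_\varepsilon)$, we have
\begin{align*}
\p^2_\varepsilon M(F_\varepsilon)=\p^2_\varepsilon X_\varepsilon\cdot D_{(\rho,U,T)}M(F_\varepsilon)+(\p_\varepsilon X_\varepsilon)^{tr}D^2_{(\rho,U,T)}M(F_\varepsilon)\,\p_\varepsilon X_\varepsilon .
\end{align*}
The existence of these derivatives in the weighted $L^\infty$ sense is provided by Theorem~\ref{theorem:linearization} and Corollary~\ref{COR:derivative of F}, with $\p_\varepsilon F_\varepsilon|_{\varepsilon=0}=\sqrt{\mu}f^{(1)}$ and $\p^2_\varepsilon F_\varepsilon|_{\varepsilon=0}=\sqrt{\mu}f^{(2)}$.

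\emph{The second term.} At $\varepsilon=0$ we have $\rho_0=T_0=1$ and $U_0=0$. The Hessian $D^2_{(\rho,U,T)}M$ is then an explicit matrix of polynomials in $v$ times $\mu(v)$, which follows from differentiating $V_\theta M(F_\theta)$ in Lemma~\ref{lemma:first order 1} once more. By Lemma~\ref{first order 2}, the vector $\p_\varepsilon X_\varepsilon|_{\varepsilon=0}$ consists of moments of $\sqrt{\mu}f^{(1)}$ only. Hence this quadratic form is $\mu(v)$ times a polynomial in $v$, with coefficients quadratic in the moments of $f^{(1)}$. It contains no $f^{(2)}$, so after factoring out $\sqrt{\mu}$ I put it into $B(v,f^{(1)})$.

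\emph{The first term.} I substitute Lemma~\ref{second order 1} for $\p^2_\varepsilon X_\varepsilon|_{\varepsilon=0}$ and Lemma~\ref{first order 1} for $D_{(\rho,U,T)}M|_{\varepsilon=0}=(1,v,\tfrac{|v|^2-3}{2})^{tr}\mu$. The parts that are linear in $f^{(2)}$ are $\int\sqrt{\mu}f^{(2)}$, $\int\sqrt{\mu}v f^{(2)}$ and $\int\sqrt{\mu}\tfrac{|v'|^2-3}{3}f^{(2)}$. These combine exactly as in the proof of Lemma~\ref{lemma:first derivative of M}, with $f^{(1)}$ replaced by $f^{(2)}$, and give $\sqrt{\mu}\int k(v,v')f^{(2)}(v')\,dv'=\sqrt{\mu}\,\mathbf{P}f^{(2)}$. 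The remaining parts, namely $-2\langle f^{(1)},e_1\rangle\langle f^{(1)},e_{2..4}\rangle$ in $\p^2U$ and $\sqrt{\mu}A(v,f^{(1)})$ in $\p^2T$, are multiplied by $v\mu$ and $\tfrac{|v|^2-3}{2}\mu$ respectively. They depend only on $f^{(1)}$ and are also absorbed into $B$.

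\emph{Main obstacle.} The main point to watch is the term $A$ in Lemma~\ref{second order 1}. As written there, it carries a factor $\sqrt{\mu}$ and looks $v$-dependent, but $\p^2_\varepsilon T_\varepsilon$ is a function of $(t,x)$ only. I would recheck the computation in Lemma~\ref{second order 1} to confirm that the leftover is a scalar $A(f^{(1)})(t,x)$, a polynomial in the moments $\langle f^{(1)},e_i\rangle$. This ensures that it contains no hidden $f^{(2)}$ contribution and that $B$ is a genuine function of $f^{(1)}$; the same observation later yields the bound $|B|\lesssim\mu^b\langle|f^{(1)}|\rangle^\ell$ used afterwards.

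Collecting the first term's $f^{(1)}$-only parts and the whole second term, divided by $\sqrt{\mu}$, defines $B(v,f^{(1)})$ and proves the identity.
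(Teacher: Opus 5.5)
Your proposal is correct and matches the paper's proof. The paper also splits $\p^2_\varepsilon M(F_\varepsilon)$ by the chain rule into two parts. The first is the part built from first derivatives of $(\rho_\varepsilon,U_\varepsilon,T_\varepsilon)$, which involves only $f^{(1)}$. The second is $D_{(\rho,U,T)}M\cdot\p^2_\varepsilon(\rho_\varepsilon,U_\varepsilon,T_\varepsilon)$, whose $f^{(2)}$-moments recombine into $\sqrt{\mu}\,\mathbf{P}f^{(2)}$ exactly as in Lemma~\ref{lemma:first derivative of M}.

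Your suspicion about Lemma~\ref{second order 1} is also right. Differentiating $3\rho_\varepsilon T_\varepsilon=\int|v|^2F_\varepsilon\,dv-\rho_\varepsilon|U_\varepsilon|^2$ twice shows the leftover is the scalar $-2\rho^{(1)}T^{(1)}-\tfrac{2}{3}|U^{(1)}|^2$, with $U^{(1)}:=\p_\varepsilon U_\varepsilon|_{\varepsilon=0}$, rather than $\sqrt{\mu}A(v,f^{(1)})$. This slip does not affect the present identity.
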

\begin{proof}
    \begin{align*}
    \frac{\p^2 M(F_\varepsilon)}{\p \varepsilon^2} &=\p_\varepsilon \LC \frac{\p M(F_\varepsilon)}{\p \rho_\varepsilon}\p_\varepsilon \rho_\varepsilon+\frac{\p M(F_\varepsilon)}{\p U_\varepsilon}\p_\varepsilon U_\varepsilon +\frac{\p M(F_\varepsilon)}{\p T_\varepsilon} \p_\varepsilon T_\varepsilon  \RC\\
    & = \LC \p_\varepsilon \frac{\p M(F_\varepsilon)}{\p \rho_\varepsilon}(\p_\varepsilon \rho_\varepsilon) +\p_\varepsilon\frac{\p M(F_\varepsilon)}{\p U_\varepsilon}(\p_\varepsilon U_\varepsilon )+\p_\varepsilon\frac{\p M(F_\varepsilon)}{\p T_\varepsilon} (\p_\varepsilon T_\varepsilon) \RC \\
    & \quad +\LC \frac{\p M(F_\varepsilon)}{\p \rho_\varepsilon}\p^2_\varepsilon \rho_\varepsilon+\frac{\p M(F_\varepsilon)}{\p U_\varepsilon}\p^2_\varepsilon U_\varepsilon +\frac{\p M(F_\varepsilon)}{\p T_\varepsilon} \p^2_\varepsilon T_\varepsilon \RC.
\end{align*}
By Lemma \ref{first order 2} and Lemma~\ref{second order 1}, $\p^2_\varepsilon|_{\varepsilon=0} \rho_\varepsilon$, $\p^2_\varepsilon|_{\varepsilon=0}  U_\varepsilon$, $\p^2_\varepsilon|_{\varepsilon=0} T_\varepsilon$ give the first three terms containing $f^{(2)}$ on the RHS of the following identity:
\begin{align*}
     \left.\frac{\p^2 M(F_\varepsilon)}{\p \varepsilon^2}\right|_{\varepsilon=0}& =\mu(v)\LC \int_{\R^3} \sqrt{\mu(v')} f^{(2)}(v')\,dv'\RC +v\mu(v)\LC\int_{\R^3} \sqrt{\mu(v')} v' f^{(2)}(v')\,dv'\RC\\
     & \quad +\LC \frac{|v|^2-3}{2}\RC\mu(v)\LC\int_{\R^3} \sqrt{\mu(v')}\LC\frac{|v'|^2-3}{3}\RC f^{(2)}(v')\,dv'\RC+\sqrt{\mu}B(v,f^{(1)})\\
     &=\sqrt{\mu}\LC\int_{\R^3} k(v,v') f^{(2)}(v')\,dv'\RC+\sqrt{\mu} B(v,f^{(1)}).
\end{align*}
Here again $B(v,f^{(1)})$ is a function of $f^{(1)}$, independent of $f^{(2)}$.
\end{proof}

We conclude this section by linearizing the BGK equation: 
Let $F_\varepsilon=\mu+\sqrt{\mu} f_\varepsilon$  be the solution to 
\begin{align*}
       \p_t F+v\cdot\nabla_x F = q(t,x,F)(M(F) -F),\quad\hbox{with } F|_{t=0}=\mu+\varepsilon\sqrt{\mu}f_{in},\quad F |_{S_-} =\mu+\varepsilon\sqrt{\mu}f_-,
\end{align*}
where $q(t,x,F)=\gamma (x)\rho^{\alpha}T^{\beta}.$ We first linearize the nonlinear BGK collision operator.
\begin{proposition}\label{prop:appendix second linearization of BGK} We denote $\rho^{(1)}=\p_\varepsilon \rho_\varepsilon |_{\varepsilon=0}$ and $T^{(1)}=\p_\varepsilon T_\varepsilon |_{\varepsilon=0}$. Then
\begin{align*}
\p_{\varepsilon}|_{\varepsilon=0} (q(t,x,F_\varepsilon)(M(F_\varepsilon) -F_\varepsilon)) 
=\sqrt{\mu}\gamma (\mathbf{P}-\mathbf{I}) f^{(1)},
\end{align*}
and
\begin{align*}
&\p^2_{\varepsilon}|_{\varepsilon=0} (q(t,x,F_\varepsilon)(M(F_\varepsilon) -F_\varepsilon)) 
\\
&=\sqrt{\mu}\gamma (\mathbf{P}-\mathbf{I}) f^{(2)} +2 \sqrt{\mu} \gamma(x)(  \alpha \rho^{(1)}+ \beta T^{(1)})  (\mathbf{P}-\mathbf{I}) f^{(1)}+ \gamma\sqrt{\mu}B(v,f^{(1)}) .
\end{align*}
\end{proposition}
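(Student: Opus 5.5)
The plan is a direct computation: write the collision term as a product of a scalar factor and a velocity-dependent factor, and differentiate each factor using the lemmas of this appendix. Set
\[
\Phi(\varepsilon):=q(t,x,F_\varepsilon)\,\big(M(F_\varepsilon)-F_\varepsilon\big)=q_\varepsilon\, N_\varepsilon ,
\]
with $q_\varepsilon=\gamma\rho_\varepsilon^\alpha T_\varepsilon^\beta$ and $N_\varepsilon=M(F_\varepsilon)-F_\varepsilon$. The existence of the $\varepsilon$-derivatives at $\varepsilon=0$, in the weighted $L^\infty$ sense, is taken from Theorem~\ref{theorem:linearization} and Corollary~\ref{COR:derivative of F}. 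We use $\p_\varepsilon F_\varepsilon|_{\varepsilon=0}=\sqrt{\mu}f^{(1)}$ and $\p^2_\varepsilon F_\varepsilon|_{\varepsilon=0}=\sqrt{\mu}f^{(2)}$. At $\varepsilon=0$ we have $F_0=\mu$, so $N_0=M(\mu)-\mu=0$, $\rho_0=T_0=1$, and $q_0=\gamma$.

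First-order identity. By the Leibniz rule, $\Phi'(0)=q'(0)N_0+q_0N'(0)=\gamma N'(0)$, since $N_0=0$. Lemma~\ref{lemma:first derivative of M} gives $\p_\varepsilon M(F_\varepsilon)|_{\varepsilon=0}=\sqrt{\mu}\,\mathbf{P}f^{(1)}$. Therefore
\[
N'(0)=\sqrt{\mu}\,(\mathbf{P}-\mathbf{I})f^{(1)},
\]
which gives the first claimed identity.

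Second-order identity. The Leibniz rule gives
\[
\Phi''(0)=q''(0)N_0+2q'(0)N'(0)+q_0N''(0),
\]
and the first term vanishes because $N_0=0$. This cancellation matters: $q''(0)$ would involve $\alpha(\alpha-1)$, second derivatives of $\rho_\varepsilon,T_\varepsilon$, and $f^{(2)}$ moments, but none of it contributes.

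For the middle term, the chain rule gives
\[
q'(0)=\gamma\big(\alpha\rho_0^{\alpha-1}T_0^\beta\,\p_\varepsilon\rho_\varepsilon+\beta\rho_0^\alpha T_0^{\beta-1}\,\p_\varepsilon T_\varepsilon\big)\big|_{\varepsilon=0}=\gamma\big(\alpha\rho^{(1)}+\beta T^{(1)}\big).
\]
Combined with $N'(0)$ from the first-order step, this yields $2\sqrt{\mu}\,\gamma(\alpha\rho^{(1)}+\beta T^{(1)})(\mathbf{P}-\mathbf{I})f^{(1)}$. Equivalently, this is the vector $(\mathcal{P}_\ell(0))$ computed in the proof of Theorem~\ref{theorem:linearization}.

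For the last term, Lemma~\ref{lemma:second linear MF} gives $\p^2_\varepsilon M(F_\varepsilon)|_{\varepsilon=0}=\sqrt{\mu}\,\mathbf{P}f^{(2)}+\sqrt{\mu}B(v,f^{(1)})$. Subtracting $\sqrt{\mu}f^{(2)}$ gives
\[
\gamma N''(0)=\sqrt{\mu}\,\gamma(\mathbf{P}-\mathbf{I})f^{(2)}+\gamma\sqrt{\mu}B(v,f^{(1)}).
\]
Summing the two nonzero terms gives the stated formula.

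The main obstacle is justifying that $\varepsilon\mapsto\rho_\varepsilon,T_\varepsilon,M(F_\varepsilon)$ are twice differentiable at $0$, so that the Leibniz and chain rules apply pointwise in $(t,x,v)$. For this I would rely on the weighted $L^\infty$ expansion of Theorem~\ref{theorem:linearization}. The moments $\int(1,v,|v|^2)\sqrt{\mu}f_\varepsilon\,dv$ are continuous functionals of $wf_\varepsilon$, since $w^{-1}\sqrt{\mu}(1+|v|^2)\in L^1$. Hence they inherit the expansion $\varepsilon(\cdot)^{(1)}+\tfrac{\varepsilon^2}{2}(\cdot)^{(2)}+O(\varepsilon^3)$. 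The maps $(\rho,U,T)\mapsto M$ and $(\rho,T)\mapsto\rho^\alpha T^\beta$ are smooth near $(1,0,1)$, and by Lemma~\ref{prop:estimate integral Pi Qij} the macroscopic quantities stay in a small neighborhood of $(1,0,1)$. Composition then preserves the second-order expansion, which justifies the formal computation.
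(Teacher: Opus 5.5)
Your proposal is correct and matches the paper's own proof. Both apply the Leibniz rule to $q_\varepsilon N_\varepsilon$, use the vanishing of $N_0=M(\mu)-\mu$ to discard the $q''(0)$ term, compute $q'(0)=\gamma(\alpha\rho^{(1)}+\beta T^{(1)})$ by the chain rule, and read off $N'(0)$ and $N''(0)$ from Lemmas~\ref{lemma:first derivative of M} and~\ref{lemma:second linear MF}. Your closing paragraph, which derives the derivatives from the weighted $L^\infty$ expansion in the Taylor sense, is a bit more careful than the paper; the paper simply cites Theorem~\ref{theorem:linearization} for their existence.
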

\begin{proof}
Note that 
\begin{align*}
    \p_\varepsilon q(t,x,F_\varepsilon)=\p_\varepsilon (\gamma  \rho_\varepsilon^{\alpha }T_\varepsilon^{\beta } ) 
    = \gamma \bigg(\alpha \rho_\varepsilon^{\alpha-1}(\p_\varepsilon\rho_\varepsilon)T_\varepsilon^{\beta }+\beta \rho_\varepsilon^{\alpha }T_\varepsilon^{\beta -1}(\p_\varepsilon T_\varepsilon)\bigg),
\end{align*}
and
$$
    \p_{\varepsilon} (M(F_\varepsilon) -F_\varepsilon) = \p_{\varepsilon} M(F_\varepsilon)-\sqrt{\mu} \p_{\varepsilon}f_\varepsilon.  
$$
It follows that, by Lemma \ref{first order 2}, Lemma~\ref{lemma:first derivative of M}, and $\rho_0=T_0=1$, we obtain
$$
    q(t,x,F_\varepsilon)|_{\varepsilon=0} =\gamma,\quad \hbox{and}\quad\p_\varepsilon q(t,x,F_\varepsilon)|_{\varepsilon=0}= \gamma(x)(  \alpha  \p_\varepsilon \rho_\varepsilon |_{\varepsilon=0}+ \beta \p_\varepsilon T_\varepsilon|_{\varepsilon=0}).
$$
and 
$$
   (M(F_\varepsilon) -F_\varepsilon)|_{\varepsilon=0} =M(\mu)-\mu=0,\quad\hbox{and}\quad \p_{\varepsilon} (M(F_\varepsilon) -F_\varepsilon)|_{\varepsilon=0} =\sqrt{\mu} (\mathbf{P}-\mathbf{I}) f^{(1)}.
$$
Hence, we obtain
\begin{align*}
\p_{\varepsilon}|_{\varepsilon=0} (q(t,x,F_\varepsilon)(M(F_\varepsilon) -F_\varepsilon)) 
=\sqrt{\mu}\gamma (\mathbf{P}-\mathbf{I}) f^{(1)}.
\end{align*}

Moreover, by Lemma~\ref{lemma:second linear MF} and $(M(F_\varepsilon) -F_\varepsilon)|_{\varepsilon=0}=0$, we have
\begin{align*}
&\p^2_{\varepsilon}|_{\varepsilon=0} (q(t,x,F_\varepsilon)(M(F_\varepsilon) -F_\varepsilon)) \\
&= \LC \p^2_{\varepsilon}q(t,x,F_\varepsilon)(M(F_\varepsilon) -F_\varepsilon))+ 2\p_{\varepsilon}q(t,x,F_\varepsilon)  \p_{\varepsilon}(M(F_\varepsilon) -F_\varepsilon)) +q(t,x,F_\varepsilon)\p^2_{\varepsilon}(M(F_\varepsilon) -F_\varepsilon))\RC|_{\varepsilon=0} \\
&= 2 \sqrt{\mu} \gamma(x)(  \alpha \rho^{(1)}+ \beta T^{(1)})  (\mathbf{P}-\mathbf{I}) f^{(1)} +
\sqrt{\mu}\gamma (\mathbf{P}-\mathbf{I}) f^{(2)} + \gamma\sqrt{\mu}B(v,f^{(1)}) .
\end{align*}
\end{proof}

With this Proposition, now we differentiate both sides of the BGK equation with respect to $\varepsilon$ and let $\varepsilon=0$ to get the first-order linearization:
\begin{align*}
       \{\p_t  +v\cdot\nabla_x +\gamma(x) (\mathbf{I} -\mathbf{P})\}f^{(1)} =0,\quad\hbox{with } f^{(1)}|_{t=0}=  f_{in},\quad f^{(1)}|_{S_-} = f_-,
\end{align*}
and also the second-order linearization:
\begin{align}\label{EQN:2 linearization appendix}
    \{\p_t  +v\cdot\nabla_x +\gamma(x) (\mathbf{I} -\mathbf{P})\}f^{(2)} 
    =2   \gamma(x)(  \alpha \rho^{(1)}+ \beta T^{(1)})  (\mathbf{P}-\mathbf{I}) f^{(1)}+ \gamma B(v,f^{(1)})  ,
\end{align}
with $f^{(2)}|_{t=0}= 0$ and $f^{(2)}|_{S_-} = 0$.

\section*{Acknowledgements}
The first author would like to thank Prof. Chanwoo Kim for helpful discussions during the workshop held at the SLMath Institute in Summer 2025, and the author greatly appreciates the hospitality of the SLMath Institute.

Ru-Yu Lai is partially supported by the National Science Foundation (NSF) through grants DMS-2306221. Hanming Zhou is partly supported by the NSF grant DMS-2408369 and Simons Foundation Travel Support for Mathematicians MPS-TSM-00008046.

\bibliographystyle{abbrv}

\bibliography{1Ntransport}

\end{document}